\theoremstyle{plain}
\newtheorem{thm}{Theorem}[section]
\newtheorem{cor}[thm]{Corollary}
\newtheorem{prop}[thm]{Proposition}
\newtheorem{lem}[thm]{Lemma}
\newtheorem{defn}[thm]{Definition}
\newtheorem{remark}[thm]{Remark}
\title{On well-posedness and uniqueness for general hierarchy equations of Gross-Pitaevskii and Hartree type}
\author{Z.~Ammari $^\dagger$, Q.~Liard \thanks{LAGA, UMR-CNRS 9345, Universit\'e de Paris 13, av. J.B. Cl\'ement, 93430 Villetaneuse, France.} , C.~Rouffort \thanks{IRMAR, Universit{\'e} de Rennes I, UMR-CNRS 6625, Campus de Beaulieu, 35042 Rennes Cedex, France} }
\date{\today}
\begin{document}

%%%%%%%%%%%%%UserMacro %%%%%%%%%%%%
\def\N{\mathbb{N}}
\def\Z{\mathbb{Z}}
\def\R{\mathbb{R}}
\def\C{\mathbb{C}}
\def\P{\mathbb{P}}
\def\Tr{\mathrm{Tr}}
\def\Id{\mathrm{Id}}
\def\Im{\mathrm{Im}}
\def\Re{\mathrm{Re}}
\newcommand{\zed}{\mathscr{Z}_{0}}
\newcommand{\zeds}{\mathscr{Z}_{s}}
\newcommand{\zedsi}{\mathscr{Z}_{-\sigma}}
%%%%%%%%%%%%%%%%%%%%%%%%%%%%%%%

\maketitle
\begin{abstract}
Gross-Pitaevskii and Hartree hierarchies  are  infinite systems of coupled  PDEs  emerging  naturally from  the mean field theory of Bose gases. Their  solutions are known to be related to   initial value problems, respectively the  Gross-Pitaevskii and Hartree equations.    Due to their physical and mathematical relevance,  the issues of well-posedness and uniqueness for these hierarchies have recently  been studied thoroughly using specific  nonlinear and combinatorial techniques. In this article, we introduce a new approach for the study of such hierarchy equations by firstly establishing  a duality between them and certain  Liouville equations and secondly solving the uniqueness and existence questions for the latter. As an outcome, we  formulate a hierarchy equation starting from any initial value
problem which is $U(1)$-invariant and prove a general principle which can be stated formally as follows:
\begin{itemize}
\item [(i)]  \emph{Uniqueness for weak solutions of an initial value problem implies the uniqueness  of solutions
for the related hierarchy equation.}
\item [(ii)]   \emph{Existence of solutions for an initial value problem  implies  the existence of solutions
for the related hierarchy  equation.}
\end{itemize}
In particular, several new well-posedness results as well as a counterexample to uniqueness for the Gross-Pitaevskii hierarchy equation are proved.  The novelty in our work lies in the  aforementioned duality and the use of Liouville equations with powerful transport techniques  extended to infinite dimensional functional spaces.
\end{abstract}

\vspace{.7in}
\noindent
{\footnotesize{\it Mathematics subject  classification}: 81S30, 81S05, 81T10, 35Q55, 81P40}

\bigskip
\noindent
{\footnotesize{\it Keywords}:  Gross-Pitaevskii and Hartree hierarchies, reduced density matrices, quantum de Finetti theorem, Liouville and transport equation, probabilistic representation, NLS equation.}

\newpage
\setcounter{tocdepth}{2}
\tableofcontents
\newpage

\section{Introduction}
\label{sec:00}
The so-called Hartree and  Gross-Pitaevskii hierarchies are infinite systems of coupled PDEs  taking usually the following form,
\begin{equation}
\label{GPH-cubic}
  \left\{
    \begin{aligned}
    &i \partial_t \gamma^{(k)} = (-\Delta_{X_k}+\Delta_{X'_k}) \gamma^{(k)}  +  \, B_{k}\gamma^{(k+1)}\,, & \\
    &\gamma_{|t=0}^{(k)}=\gamma_0^{(k)}\,,& \\
    \end{aligned}
    \qquad
\forall k\in\N\,,
  \right.
\end{equation}
 where $\Delta_{X_k}$ (or $\Delta_{X'_k}$) denotes  the standard Laplacian on $\R^{dk}$ with  $X_k=(x_1,\cdots,x_k)\in\R^{dk}$, $X'_k=(x'_1,\cdots,x'_k)\in\R^{dk}$ are the configuration space coordinates and  $x_j,x'_j\in \R^d$ for each  $j\in\{1,\cdots, k\}$.  The solutions of these hierarchies are sequences of complex-valued functions $(\gamma^{(k)})_{k\in\N}$ such that $\gamma^{(k)}: (t,X_k,X'_k)\longmapsto \gamma^{(k)}(t,X_k,X'_k)$ is defined over a domain $I\times \R^{dk}\times \R^{dk}$ with  $I$ is a real interval  containing the origin;    and  $(\gamma^{(k)})_{k\in\N}$ fulfills   the equation \eqref{GPH-cubic} with a prescribed initial condition $\gamma_0^{(k)}:\R^{dk}\times \R^{dk}\longrightarrow \C$.
Furthermore, the interaction term in the right hand side of \eqref{GPH-cubic} is given by the following expression:
\begin{equation}
\label{eq.bk}
\begin{aligned}
B_{k}\gamma^{(k+1)}&:=&B_{k}^{+} \gamma^{(k+1)}-B_{k}^{-} \gamma^{(k+1)}\\
&:=&\sum_{j=1}^{k}B^{+}_{j,k}\gamma^{(k+1)}-\sum_{j=1}^{k}B^{-}_{j,k}\gamma^{(k+1)}\,,
\end{aligned}
\end{equation}
where $B^\pm_{j,k}$ are defined for $1\leq j\leq k$  by
\begin{equation}
\label{Bjk-1}
(B^+_{j,k}\gamma^{(k+1)})(t, X_k, X'_k) :=\int_{\R^d} \gamma^{(k+1)}(t,X_k, y, X_k', y)
\,V (x_j - y)\,dy\,,
\end{equation}
and
\begin{equation}
\label{Bjk-2}
(B^-_{j,k} \gamma^{(k)})(t, X_k, X_k') := \int_{\R^d} \gamma^{(k+1)}(t,X_k, y, X_k', y) \,
V (x'_j - y) \,dy\,.
\end{equation}
Here  $V$ is either  an even real-valued measurable function $V:\R^d\to \R$ in the case of the Hartree hierarchy  or a multiple of a Dirac delta function $V=\lambda\delta_0$ in the case of the Gross-Pitaevskii hierarchy. The  parameter $\lambda$  refers to a coupling constant  taking positive or negative values and accounting respectively for a repulsive (defocusing) or an attractive (focusing)  interaction in the Gross-Pitaevskii case.
Additionally, one usually requires two  further physical constraints on the solutions $(\gamma^{k})_{k\in\N}\,$, namely:
\begin{itemize}
\item \underline{Symmetry}: For any permutations $\sigma,\pi$ in the symmetric group $\mathfrak{S}_k$,
  \begin{equation}
  \label{cont1}
\gamma^{(k)}(t, x_{\sigma(1)},\cdots,x_{\sigma(k)};
x'_{\pi(1)},\cdots,x'_{\pi(k)})=\gamma^{(k)}(t, X_k; X'_k).
\end{equation}
\item \underline{Finite density}: Each $\gamma^{(k)}$ is the kernel of a trace class operator on $L^2(\R^{dk})$ satisfying the following inequality in the operator sense,
\begin{equation}
\label{cont2}
0\leq \gamma^{(k)} \leq 1 \,.
\end{equation}
\end{itemize}
The main questions that can be raised  about the equations \eqref{GPH-cubic} as  systems of PDEs are of course uniqueness, existence and stability of solutions. Although these hierarchy equations \eqref{GPH-cubic} are  physically relevant and they  have been known in the physical literature since long time  (see e.g.~ \cite{MR0223148, MR578142} and references therein), their  mathematical investigation started rather recently. Indeed, the  study of the  Hartree and the Gross-Pitaevskii hierarchies have started attracting a wide increasing interest,  only after a remarkable progress was made in the mean field theory of Bose gases (see e.g.~\cite{MR2257859,MR2276262,MR2525781,MR2680421,MR2504864,MR2657816}).  In particular, the  uniqueness
property of solutions for the Gross-Pitaevskii  hierarchy equation was pointed out as a crucial steep for the derivation of the dynamics of Bose-Einstein condensates from many-body quantum mechanics, see \cite{MR2209131,MR2276262,MR2377632}.   Since then, the issues of  well-posedness and uniqueness   for hierarchy equations similar to \eqref{GPH-cubic} are  considered to be   interesting problems combining specific combinatorial  and nonlinear analytic difficulties
\cite{MR3210237,MR3385343,MR3165917,MR3246038,MR3551830,MR3500833,MR3419755}.
Consequently, this stimulated  a current  trend among the PDE and mathematical physics communities, focused on the study of hierarchy equations for their own interest, see e.g.~\cite{MR3360742,MR3293448,MR3013052,MR3395127,MR3466843,MR3170216}.

The mathematical mean field theory of Bose gases was initiated in the pioneering works of Hepp \cite{MR0332046} and Ginibre-Velo \cite{MR530915,MR539736} in the 70s. Afterward, the subject has been revived under the impetus of several contributions, e.g.~ \cite{MR2104130,MR1869286, MR2291792,MR2821235, MR1926667}. One of the purposes of this topic was the rigorous  justification of the mean field approximation and the derivation of  the Hartree, NLS and Gross-Pitaevskii equations from first principles of quantum mechanics. Thus, ultimately  explaining how microscopic effects come at play into the physical macroscopic phenomena of Bose-Einstein condensates. For the sake of clarity, we briefly recall the relationship between the hierarchy equations \eqref{GPH-cubic} and the mean-field approximation of many-body quantum dynamics. For that, consider the many-body   Schr\"odinger operator
\begin{equation}
\label{eq.hn}
H_{n}=\sum_{i=1}^{n}-\Delta_{x_i}+\frac{1}{n}\sum_{1\leq i<j \leq n}W_{n}(x_i-x_j),\,
\end{equation}
where $\,W_{n}(x)=n^{d\beta}W(n^{\beta}x)$ for any $x \in \R^{d}$ and some $\beta \in [0,1]$  with $W$ is  a  real-valued even potential. Under  some reasonable assumptions on $W$, the operator $H_{n}$ is self-adjoint over the space $L_s^{2}(\mathbb{R}^{dn})$ and the quantum dynamics related to \eqref{eq.hn} are well-defined.
Here $L_s^{2}(\mathbb{R}^{dn})$ is the space of symmetric square integrable functions $\Psi\in
  L^{2}(\mathbb{R}^{dn})$ such that $\Psi(x_1,\cdots,x_n)=\Psi(x_{\sigma_1},\dots,x_{\sigma_n})
$
for any permutation $\sigma$ in $\mathfrak{S}_n$. In  particular, if $\varrho_n$ is a normal state or a
density matrix (i.e., a normalized positive trace class operator on $L_s^{2}(\mathbb{R}^{dn})$) describing the quantum system at the initial time $t=0$, then according to the Heisenberg equation the system evolves at time $t$ towards   the state,
\begin{equation}
\label{qstate}
\varrho_n(t)=e^{it H_n} \varrho_n e^{-it H_n}\,.
\end{equation}
 One of the popular methods that explains the mean-field approximation is inspired by classical statistical mechanics and known  as the BBGKY hierarchy approach. In fact, consider all the marginals $\varrho^{(k)}_{n}(t)$, $1\leq k\leq n$,  given by their kernels
\begin{equation}
\label{corr-funct}
\varrho^{(k)}_{n}(t,X_k;X'_k):=\displaystyle\int_{\R^{d(n-k)}} \varrho_n(t,X_k,y;X'_k,y)\,\,dy,
\quad \text{ for all }  \;(X_k,X'_k)\in \R^{2dk},
\end{equation}
then $\varrho^{(k)}_{n}(t)$ are density matrices over $L_s^{2}(\mathbb{R}^{dk})$ usually called
reduced density matrices of the state $\varrho_n(t)$. Thus, the Heisenberg equation on $\varrho_n(t)$  yields the so-called BBGKY hierarchy on the marginals,
\begin{equation}
\label{BBGKY}
\begin{aligned}
i \partial_{t}\varrho^{(k)}_{n}(t)&=
\displaystyle\sum_{j=1}^{k}\big[-\Delta_{x_{j}},\varrho_{n}^{(k)}(t)\big]+\frac{1}{n}\displaystyle \sum_{1\leq i<j\leq k} \big[W_n(x_i-x_j),\varrho_n^{(k)}(t)\big]\\
&+\frac{n-k}{n} \, \sum_{j=1}^{k}\Tr_{k+1}[W_n(x_j-x_{k+1}),\varrho_{n}^{(k+1)}(t)]\,,
\end{aligned}
\end{equation}
where the brackets $[\cdot,\cdot]$ denotes the commutator defined as $[A,B]=AB-BA$ and $\Tr_{k+1}$ denotes the partial trace over the last variable $x_{k+1}$. This means that  $\Tr_{k+1}[W_n(x_j-x_{k+1}),\varrho_{n}^{(k+1)}(t)] $ is an operator who's kernel $K$ is given by the following  formula,
\begin{eqnarray*}
K(X_k,X'_k)=\int_{\R^d} \gamma^{(k+1)}(t,X_k,x_{k+1}, X_k', x_{k+1}) \,W_n (x_j - x_{k+1})\,dx_{k+1} -\hspace{1in}\\
\hspace{2in}\int_{\R^d} \gamma^{(k+1)} (t,X_k, x_{k+1}', X_k', x_{k+1}') \, W_n(x'_j - x_{k+1}') \,dx_{k+1}'\,.
\end{eqnarray*}
 The mean field approximation of quantum dynamics is usually  understood within the BBGKY hierarchy approach as the  two following statements:
\begin{enumerate}[label=\textnormal{(\alph*)}]
\item \label{chaos1} If $\varrho_n$ is an uncorrelated initial state $\varrho_n=|\varphi^{\otimes n}\rangle\langle \varphi^{\otimes n}|$, with $||\varphi||_{L^2(\R^d)}=1$,
then the marginal  $\varrho_n^{(k)}(t)$ satisfying the equation \eqref{BBGKY}
converges as $n\to \infty$ towards a state $|\varphi_t^{\otimes k}\rangle\langle \varphi_t^{\otimes k}|$ for any $k\in\N$ and any time $t\in\R$ in the sense that,
\begin{eqnarray*}
\lim_{n\to\infty}\Tr[\varrho_n^{(k)}(t) A]=\langle\varphi_t^{\otimes k}, A \varphi_t^{\otimes k}\rangle_{L^2(\mathbb{R}^{dk})}, \;
\end{eqnarray*}
for any bounded (or compact) operator $A$ on $L^{2}(\mathbb{R}^{dk})$ ($k$ is kept fixed while $n\to \infty$).
\item \label{chaos2} Furthermore, $\varphi_t$ is the solution of the nonlinear  NLS or  Hartree equation
\begin{eqnarray}
\label{hartree}
\left\{
  \begin{array}[c]{l}
    i\partial_t \varphi_t=-\Delta \varphi_t+(V*\left|\varphi_t\right|^{2})\varphi_t\,,\\
    \varphi_{0}=\varphi\,,
  \end{array}
\right.
\end{eqnarray}
\end{enumerate}
with $V$ is a potential that depends on $W$  and the parameter $\beta$ according to the following dichotomy:
\begin{equation}
\label{eq.lambda}
V=
\begin{cases}
W\,  &\text{ if } \;\beta=0,\\
\lambda\;\delta_0  &\text{ if } \;0<\beta\leq 1.
\end{cases}
\end{equation}
Here $\lambda$ is a coupling constant  which depends on the value of $\beta$. The above statements usually go under the name of propagation of chaos \cite{MR2327286}. For a general overview and more details on the subject we refer to  the recent book  \cite{MR3382225}.  The strategy of the BBGKY approach goes through three steps that are:
\begin{description}
  \item (i) Compactness.
  \item  (ii) Convergence.
  \item (iii) Uniqueness.
\end{description}
Step (ii) and (iii) are the most delicate parts specially when $W$ is an unbounded potential or $\beta>0$
($\beta=1$ seems the most difficult case even if $W$ is smooth and compactly supported). Under favorable assumptions  by using a compactness argument one proves that up to extracting  a subsequence $\varrho^{(k)}_{n}(t)$ converges when $n\to\infty$, with respect to the weak-$*$ topology, to a positive  trace class operator  denoted $\gamma^{(k)}(t)$ for each $k\in\N$ and any time $t$, i.e.,
\begin{eqnarray*}
\lim_{n\to\infty}\Tr[\varrho_n^{(k)}(t) \;A]=\Tr[\gamma^{(k)}(t) \;A],
\end{eqnarray*}
for any $A$ compact operator on $L_s^{2}(\mathbb{R}^{dk})$. This is the step (i) and in particular one remarks that the kernels of the operators $(\gamma^{(k)}(t))_{k\in\N}$ satisfy the constraints \eqref{cont1}-\eqref{cont2}. Step (ii) consists in letting  $n$ tend  to infinity  in the BBGKY hierarchy  \eqref{BBGKY} and proving the convergence towards the Hartree or Gross-Pitaevskii  hierarchy  \eqref{GPH-cubic} written in its equivalent form in terms of trace class operators, i.e.,
\begin{equation}
\label{eq.infbbgky}
i \partial_{t}\gamma^{(k)}=
\displaystyle\sum_{j=1}^{k}\big[-\Delta_{x_{j}},\gamma^{(k)}\big]+B_{k}\gamma^{(k+1)}.
\end{equation}
 Here, the term $B_{k}\gamma^{(k+1)}$ is understood as an operator taking the short writing
  $$
 B_{k}\gamma^{(k+1)}= \displaystyle\sum_{j=1}^{k}\Tr_{k+1}[V(x_j-x_{k+1}),\gamma^{(k+1)}]\,,
 $$
 with its kernel coinciding with the one given in \eqref{eq.bk}-\eqref{Bjk-2}  according to the following identification,
$$
(B^+_{j,k}\gamma^{(k+1)})(t, X_k, X'_k) \;\text{ is the kernel of } \; \Tr_{k+1}[V(x_j-x_{k+1})\gamma^{(k+1)}]
$$
and
$$
(B^-_{j,k}\gamma^{(k+1)})(t, X_k, X'_k) \;\text{ is the kernel of } \; \Tr_{k+1}[\gamma^{(k+1)}V(x_j-x_{k+1})].
$$
Notice that the hierarchy equation  \eqref{GPH-cubic} or \eqref{eq.infbbgky} may not make sense. In fact,  some additional regularity  on $(\gamma^{(k)})_{k\in\N}$ is required in order to make every thing consistent. Nevertheless, at a formal level the limit of the BBGKY hierarchy \eqref{BBGKY} seems coherent  with the equation \eqref{eq.infbbgky}, although the question of convergence is more subtle than a straightforward limit.

 The last step (iii) consists of proving uniqueness of solutions for the hierarchy equations
 \eqref{GPH-cubic} or equivalently \eqref{eq.infbbgky}. In fact, the proof goes as follows.
Consider  an uncorrelated state $\varrho_n=|\varphi^{\otimes n}\rangle\langle \varphi^{\otimes n}|$ as in  \ref{chaos1}, the steps (i)-(ii) yield a solution $(\gamma^{(k)})_{k\in\N}$ of the  hierarchy equation \eqref{eq.infbbgky} satisfying the initial condition $\gamma^{(k)}_0=|\varphi^{\otimes k}\rangle\langle \varphi^{\otimes k}|$ for all $k\in\N$. Moreover,
one easily checks that  if $\varphi_t$ solves the NLS equation \eqref{hartree} then
$|\varphi_t^{\otimes k}\rangle\langle \varphi_t^{\otimes k}|$ is also a solution of the hierarchy equation \eqref{eq.infbbgky} satisfying  the same initial condition as $\gamma^{(k)}(t)$.  Therefore, if one proves that the Gross-Pitaevskii or the Hartree hierarchy \eqref{eq.infbbgky}
 admits  a single solution for each initial datum, then $\gamma^{(k)}(t)=|\varphi_t^{\otimes k}\rangle\langle \varphi_t^{\otimes k}|$ for all times. Since  the assertions  \ref{chaos1}-\ref{chaos2} are proved independently from any extraction of subsequences, then the propagation of chaos is established   in this way.  The above strategy was designed after several contributions that started with the  work of Spohn in \cite{MR578142} for bounded $W$ potentials then improved by Bardos-Golse-Mauser in \cite{MR1869286} (proof of (i)-(ii) for the Coulomb potential)  and in
   \cite{MR1926667} (proof of (iii) for the Coulomb potential) and later on enhanced into a powerful
   method in a series of papers by Erd\H{o}s-Schlein-Yau \cite{MR2680421,MR2525781,MR2276262,MR2257859} in order to tackle the dynamics of Bose-Einstein condensates.   There are of course other  approaches that justify the mean field approximation of quantum dynamics (see e.g.
   \cite{MR2953701,MR3379490,MR2657816,MR2504864,MR2313859,MR3317556,MR2821235}) and there are also other trends that focus for instance on rate of convergence \cite{MR2839064,MR3117522,MR3681700,MR3506807,MR3391830,MR2836427,MR2530155} or on the stationary mean field approximation of ground states for Bose gases (see e.g.~\cite{MR3161107,MR3310520,MR2143817}).
   Notice that it is not necessary to start with exceptional (uncorrelated) states as in \ref{chaos1}-\ref{chaos2}; one can formulate a stronger form for the mean field approximation which is state independent as suggested in the work of Ammari  and Nier (see \cite{MR2465733,MR2513969}).

Our main motivation in this article is the uniqueness or well posedness  problem (iii) for general hierarchy equations of type \eqref{GPH-cubic} as they may emerge from a mean field theory. So, we are not concerned with  (i) and (ii), although if these steps are proved then we expect that our results can be easily used to complete the BBGKY strategy and deduce the validity of the mean field approximation. Before explaining  our contribution, it is useful to highlight some remarkable results concerning the problem (iii). The first uniqueness results in the defocusing case ($\lambda>0$) for the Gross-Pitaeivskii hierarchy were  obtained in \cite{MR2680421,MR2525781,MR2276262,MR2257859} by using some sophisticated Feynman graph expansions. Later on, Klainerman and Machedon \cite{MR2377632} proved a {\it conditional} uniqueness theorem using a board game argument inspired by the Feynman graph expansion with space-time multilinear estimates based on the following a priori condition,
\begin{equation}
\label{klma}
\forall T>0, \exists R>0 \;\text{ s.t. } \quad \int_{0}^{T}\|S^{(k)}\;B_{j,k}^{\pm}\,\gamma^{(k)}(t)\|_{L^{2}(\R^{dk}\times \R^{dk})}\,dt < R^{k},\,\quad \forall k\in \N.
\end{equation}
Here $S^{(k)}$ denotes the following operator,
\begin{align}
\label{eq.tch1}
\mathcal{S}^{(k)}:=\prod_{j=1}^{k}(1-\Delta_{x_{j}})^{1/2}(1-\Delta_{x'_{j}})^{1/2}\,,
\end{align}
which  acts on the kernel of $B_{j,k}^{\pm}\gamma^{(k)}(t)$. A  large number of remarkable results
followed soon after these breakthroughs, see e.g.~ \cite{MR2988730,MR3013052,MR2747009,MR2683760,MR2662450,
MR3500833,MR3116008}. Moreover, other interesting aspects started to be explored like the mean field problem on tori \cite{MR3449225,MR3466843,MR3425265,MR3360742,MR3170216} or the focusing  case ($\lambda<0$)
\cite{MR3641881,MR3674169,MR3488534,MR3425265,MR2683760,MR2600687,MR2662450}.
Furthermore, in \cite{MR3385343}  the authors  gave a new proof, based on a  quantum de Finetti theorem (see \cite{MR2802894,MR3506807,MR3335056,MR3161107} and the discussion in Subsect.~\ref{sec:02}),  leading  to \emph{unconditional} uniqueness results for the Gross-Pitaevskii hierarchy  \eqref{GPH-cubic}. Subsequently, such proof was extended  to show low regularity well-posedness results and also to study the quintic Gross-Pitaevskii hierarchy equation,  see \cite{MR3419755,MR3395127}.

\bigskip
%breach
Beyond the importance and the profound implications  of these aforementioned uniqueness and well-posedness results, they all  relay on the same guiding idea which suggests the extension of nonlinear techniques
 (Strichartz, Morawetz, space-time inequalities, randomization\dots)  to the hierarchy equations  \eqref{GPH-cubic}.  This may seem somewhat surprising since the latter are linear equations. On another side, the hierarchy equations  \eqref{GPH-cubic}  inherit important properties from the BBGKY hierarchy \eqref{BBGKY}  revealing their statistical  nature and  which are usually neglected.   In particular, a  quantum de Finetti theorem, that will be explained in Subsect.~\ref{sec:02}, says that all the physically relevant solutions of  \eqref{GPH-cubic} have the following form for any $k\in \N $,
\begin{equation}
\label{int-dfinet}
\gamma^{(k)}(t)=\int_{L^2(\R^d)}  \,|\varphi^{\otimes k} \rangle \langle \varphi^{\otimes k} | \;d\mu_t,
\end{equation}
 where $\mu_t$ is a Borel probability measure on $L^2(\R^d)$ and $|\varphi^{\otimes k} \rangle \langle \varphi^{\otimes k} | $ is a rank one  projector over the space $L_s^2(\R^{dk})$.    Our first observation is  that any solutions
 of the hierarchy equation \eqref{GPH-cubic}  yields in a natural way  a solution $(\mu_t)_{t\in I}$  of a Liouville (or transport)   equation
 \begin{equation}
 \label{int-liouville}
\partial_{t} \mu_{t} + \nabla^{T} (v \cdot \mu_{t}) = 0\,,
\end{equation}
 with $\mu_t$ is the probability measure defined according to \eqref{int-dfinet}  and vice versa.    Here, $v$ is the vector field that defines the NLS equation \eqref{hartree}  (i.e., $v(u)=
 -\Delta u+ (V*|u|^2)u$)  and $\nabla^{T}$ denotes the transpose operation of the real gradient.
 The writing in \eqref{int-liouville} is formal and it is inspired by the finite dimensional form of
 transport or continuity equations;  but  it should be understood in a weak sense as
 \begin{equation}
\label{int.eq.transport}
      \displaystyle \int_{\R}\int_{L^2(\R^d)}\partial_{t}\varphi(t,u)+{\rm Re}\langle
  v(u),\nabla\varphi(t,u)\rangle \; d\mu_{t}(u)\,dt=0,
\end{equation}
 for all functions $ \varphi:\R\times L^2(\R^d)\to \R $ in a  certain class of  cylindrical smooth test functions
  $\mathscr C_{0,cyl}^{\infty}$  that will be detailed in Subsect.~\ref{fram}.   Thus, by establishing the above duality or equivalence  between hierarchies and transport equations we enter the realm of kinetic theory where powerful ideas have been flourishing for a while.
Our second observation is that the uniqueness problem for  \eqref{int.eq.transport} can be solved by a general
and powerful argument that is model independent with respect to the initial value problem \eqref{hartree}.  The idea
of this argument is in some sense related to the  well-known method of characteristics in kinetic theory and  the related recent advances in the case of non-smooth vector fields  \cite{MR2759545,MR1022305,MR2400257,MR2439520,MR2668627,MR2129498,MR2839299,MR2335089}.   Indeed, inspired by these methods
the authors in \cite{MR3721874,MR3379490} developed   a uniqueness theory for continuity equations defined over
arbitrary rigged Hilbert spaces. A further improvement of  these  results is needed  in this article.
So, thanks to the above duality and transport techniques one solves the problem of  uniqueness for hierarchies without appealing to Feynman graph expansion, nor board game argument nor any multilinear estimates of any kind.  The only thing that counts is that the initial value problem of type \eqref{hartree}  satisfies the uniqueness property for its weak solutions on some natural functional spaces.   This  reduces the problem (iii)  to the investigation of an initial value problem like NLS or Hartree and usually there is an  abundant literature on these questions for various nonlinear  PDEs. We believe
that the approach we suggest here is quite natural. In fact, one can argue that the mean field limit of quantum states \eqref{qstate} are probability distributions $\mu_t$ satisfying a continuity (Liouville)  equation because of conservation laws. On the other side, the $k$-point correlation functions \eqref{corr-funct} of the quantum system  should  converge at least formally  towards  the classical correlation functions of the same  probability distribution $\mu_t$.   Hence, the Liouville and hierarchy equations provide a dynamical statistical descriptions of the same classical system and so their are equivalent. Furthermore, working with Liouville equation is more advantageous since we do not need to control all the moments with respect to the probability distribution $\mu_t$ and this explains in some sense why we get rid of the combinatorial  and the nonlinear problems that surround the hierarchy equations.   We will describe  our main results in more details in the following subsections.

\bigskip
\noindent
%contribution:
In a broader perspective,  our main purpose in this article is the study of the relationships between an initial value problem like the NLS and Hartree equations and two distinguished descriptions of its statistical dynamics. Indeed, to adopt a statistical point of view starting from a Cauchy problem, there are at least two distinguished visions. The first consists on writing a Liouville equation, as in finite dimension, while the second consists on writing a hierarchy equation. The letter point of view requires that the initial value problem is invariant with respect to the gauge group $U(1)$  while the first is more natural and follows the original spirit of statistical mechanics. Our main contribution here is essentially the clarification of how the uniqueness and well posedness properties of each the above formulations  are interconnected to the others in full generality.
More precisely, the  main results of this article  can be summarized as follows:
\begin{itemize}
\item  {\it Duality}:  We prove  in full generality a natural duality  between hierarchy equations of type \eqref{GPH-cubic} and Liouville equations of the from \eqref{int-liouville}.
\item {\it Uniqueness and existence principle}:   We  establish  a general principle  saying:
  \begin{itemize}
\item [(i)]  Uniqueness for a hierarchy equation holds whenever the related initial value problem satisfies
a uniqueness property  for its weak solutions.
\item [(ii)]   Existence of solution to an $U(1)$-invariant  initial value problem  implies the existence of solutions
 for the related  hierarchy  equation of type  \eqref{GPH-cubic}.
\end{itemize}
\item {\it Applications}: We provide in Subsection \ref{sub.sec.highresult} several examples focused around the NLS and Hartree equations.  In particular, our work lifts straightforwardly to hierarchy equations \eqref{GPH-cubic} or \eqref{int-hier} the  landmark results of \cite{MR1383498,MR2474179,MR2361505,MR3056755,MR1992354}  on \emph{unconditional}  uniqueness  for  NLS equations.   Furthermore, we  formulate \emph{conditional} uniqueness results
for  solutions of hierarchy equations in the critical and subcritical cases. We also provide a counter-example showing  that uniqueness fails for a critical hierarchy equation  if a conditional  assumption is not assumed.
\item {\it Transport techniques}: We show that transport techniques are very powerful tools in solving the questions of uniqueness and well-posedness for general hierarchy equations.  In particular, we staidly push forward in this article
the attempt to build a unified  statistical theory of nonlinear PDEs which  sprang up in \cite{MR3379490}  and continued in \cite{MR3721874}.  We believe that such transport  techniques are very useful and would have fruitful applications  in various fields as hydrodynamics, nonlinear dispersive PDEs, integrable systems and quantum field theory (see e.g. \cite{MR3721874,MR3737034,MR3255099}).
\end{itemize}

\subsection{Preliminaries}
\label{fram}
We introduce below some notations and the general framework  that we shall follow throughout the article. This is particularly useful to  state clearly our main results in the next subsection.

\bigskip
\noindent
\emph{Notations}:  Let $\mathfrak{H}$ be a separable  Hilbert space. We use   $\mathscr{L}^k(\mathfrak{H})$, $1\leq k\leq \infty$, to denote the Schatten classes. In particular,  $\mathscr{L}^1(\mathfrak{H})$ and $\mathscr{L}^\infty(\mathfrak{H})$ are   the spaces of trace class and compact operators respectively. Two natural topologies
over $\mathscr{L}^1(\mathfrak{H})$  will be often used, namely the norm topology $||\cdot||_{ \mathscr{L}^1(\mathfrak{H})}$ and the weak-$*$ topology. The latter is inherited from the duality  $  \mathscr{L}^1(\mathfrak{H})= \mathscr{L}^\infty(\mathfrak{H})^*$  and leads to the following sequential convergence:
$$
B_n\overset{ *}{ \rightharpoonup} B  \quad\hbox{  if and only if } \quad \lim_{n\to\infty}\Tr[B_n \,K]=\Tr[B \, K], \quad \hbox{
for any } \; K\in  \mathscr{L}^\infty(\mathfrak{H}).
$$

\medskip
%\noindent
% probability measures
Let $X_r=B_{\mathfrak{H}}(0,r)$ denotes the closed ball  of radius $r>0$ in $\mathfrak{H}$.
The spaces of Borel probability measures on $\mathfrak{H}$ or on $X_r$ will be denoted by  $\mathfrak{P}(\mathfrak{H})$ and $\mathfrak{P}(X_r)$ respectively. Obviously,
a measure $\mu\in\mathfrak{P}(X_r)$ is  also a measure in $\mathfrak{P}(\mathfrak{H})$ which  concentrates on $X_r$, i.e.~$\mu(X_r)=1$. Consider
$$
U(1):=\{e^{i\theta}, \theta\in\R\}
$$
  to be the circle group. We say that a measure
$\mu\in \mathfrak{P}(\mathfrak{H})$ is $U(1)$-invariant if for any  bounded Borel function $\varphi:\mathfrak{H}\to \R$ and for any $\theta\in\R$,
$$
\int_{\mathfrak{H}} \varphi(e^{i\theta} x) \;d\mu(x) =
\int_{\mathfrak{H}} \varphi( x) \;d\mu(x)\,.
$$
The following set of probability measures on $\mathfrak{H}$ will be important  latter on,
\begin{equation}
\label{invmea}
\mathscr{M}(\mathfrak{H}):= \{\mu\in \mathfrak{P}(X_1): \mu \textit{ is  }  U(1)\textit{-invariant}\}\,.
\end{equation}
There is two  natural topologies on   $\mathfrak{P}(\mathfrak{H})$  that we shall systematically use, namely the strong and weak narrow convergence topologies.   We say that a sequence $(\mu_i)_{i\in\N}$ in $\mathfrak{P}(\mathfrak{H})$ converges weakly narrowly to  a   $\mu\in \mathfrak{P}(\mathfrak{H})$ if:
\begin{eqnarray}
\label{defnarroww}
\mu_i\rightharpoonup \mu & \Longleftrightarrow &\big(\int_{\mathfrak{H}} f \,d\mu_i \to \int_{\mathfrak{H}} f \,d\mu, \quad \forall f\in \mathscr{C}_b(\mathfrak{H}_w)\,\big)\,,
\end{eqnarray}
and strongly narrowly if:
\begin{eqnarray}
\label{defnarrows}
 \mu_i\to\mu &\Longleftrightarrow &\big( \int_{\mathfrak{H}} f \,d\mu_i \to \int_{\mathfrak{H}} f \,d\mu, \quad\forall f\in \mathscr{C}_b(\mathfrak{H}_s) \,\big)\,,
\end{eqnarray}
where  $\mathscr{C}_b(\mathfrak{H}_w)$ and $\mathscr{C}_b(\mathfrak{H}_s)$ are  the spaces of bounded continuous  functions with respect to the weak and  norm topology of  the Hilbert space $\mathfrak{H}$ respectively. Recall that the weak topology in $\mathfrak{H}$ is metrizable  on bounded sets. This can be seen using for instance the following distance,
\begin{equation}
\label{dweak}
d_w(x,y):=\sqrt{\sum_{n\in\N}
\frac{1}{2^n}  \,|\langle x-y, e_n\rangle|^2}\,,
\end{equation}
where $(e_n)_{n\in\N}$ is an O.N.B of $\mathfrak{H}$. In the same way, we define the weak and strong
narrow convergence topology in $\mathfrak{P}(X_r)$ as the limits in \eqref{defnarroww}-\eqref{defnarrows} with the functional spaces in the right hand side replaced by  $\mathscr{C}_b(X_r,d_w)$ and $\mathscr{C}_b(X_r,||\cdot||_{\mathfrak{H}})$ respectively.

\bigskip
\noindent
\emph{Rigged Hilbert spaces}:
Consider  a {separable} Hilbert space $\zed$  and   a self-adjoint operator $A$ with a domain $D(A) \subset \zed$ verifying :
\begin{equation}
\label{condopA}
\exists c > 0 ~,~ A \geq c \, 1\,.
\end{equation}
Using the operator $A$ one can build a natural  scale of Hilbert spaces indexed by a parameter $\tau \in \R$. Indeed, consider for every $\tau\in\R$ the inner products :
\[
\forall x,y \in D(A^{\frac{\tau}{2}}) ~,\qquad \langle x,y \rangle_{\mathscr{Z}_{\tau}} := \langle A^{\tau/2} x,A^{\tau/2}y \rangle_{\zed}\,,
\]
and let $\mathscr{Z}_{\tau}$ denotes the completion of the pre-Hilbert space $(D(A^{\frac{\tau}{2}}), \langle \cdot,\cdot\rangle_{\mathscr{Z}_{\tau}})$. Then for any  $s,\sigma\geq 0$, one has the
 canonical continuous and dense embeddings,
$$
\mathscr{Z}_{s} \hookrightarrow \mathscr{Z}_{0} \hookrightarrow \mathscr{Z}_{-\sigma}\,.
$$
Remark that $\mathscr{Z}_{-\sigma}$ identifies with the dual space $\mathscr{Z}_{\sigma}^{'}$ of $\mathscr{Z}_{\sigma}$ with respect to the inner product of $\zed$.
A simple example is provided for instance by the Sobolev spaces with $\zeds = H^{s}(\mathbb{R}^{d})$ and
$\mathscr{Z}_{-s}= H^{-s}(\mathbb{R}^{d})$ for $s \geq 0$.

\bigskip
\noindent
\emph{Initial value problem}:
Consider  a (possibly)  non-autonomous vector field   ${v} : \mathbb{R} \times \zeds \to \mathscr{Z}_{-\sigma}$,  with $0 \leq s \leq \sigma$. Here the spaces $ \zeds$ and  $\mathscr{Z}_{-\sigma}$ are defined according to the above paragraph.   Then the initial value problem defined by the vector field $v$, is the following  differential equation valued in  $\mathscr{Z}_{-\sigma}$  and defined over an open time bounded  interval $I$ as,
\begin{equation}
\label{IVP}\tag{{\it ivp}}
  \left\{
    \begin{aligned}
    &\dot{u}(t) \ = {v}(t,u(t))\,,& \\
    &u(t_{0}) \ = x \in \zeds\,.& \\
    \end{aligned}
  \right.
\end{equation}
Here $t_0\in I$ is a fixed  initial time. We shall require the following assumption on the vector field,
\begin{equation}
\label{A0}\tag{A0}
\left\{
\begin{aligned}
&v  \text{ is }  \text{ Borel  and  bounded on bounded  sets },\\
&v  \text{ is } U(1)-\text{invariant }\,.
\end{aligned}
\right.
\end{equation}
The $U(1)$-invariance means that  $v(t,e^{i\theta} x)=e^{i\theta} v(t, x)$ for all $\theta\in \R$ and $(t,x)\in \R\times \zeds$. There are at least two distinct notions of solutions for the above initial value problem.

\begin{defn}
\label{w-ssol}
Consider the initial value problem  \eqref{IVP} with a vector field satisfying \eqref{A0}. Then:
\begin{itemize}
\item[(i)] A weak solution of \eqref{IVP} over $I$ is a function $u: t \in I \longrightarrow u(t)$ belonging to the space $L^{\infty}(I,\zeds) \cap W^{1,\infty}(I,\zedsi)$ satisfying \eqref{IVP} for a.e.~$t \in I$ and for some $t_{0} \in I$.
\item[(ii)] A strong solution of \eqref{IVP} over $I$ is a function $u: t \in I \longrightarrow u(t)$ belonging to the space $\mathscr C(I,\zeds) \cap \mathscr C^{1}(I,\zedsi)$ satisfying \eqref{IVP} for all $t \in I$ and for some $t_{0} \in I$.
\end{itemize}
\end{defn}
Here $\mathscr C(I,\mathfrak{H})$ and $\mathscr C^1(I,\mathfrak{H})$ are respectively the spaces of
continuous and $\mathscr{C}^1$-functions valued in a given Hilbert space $\mathfrak{H}$. While
$W^{1,p}(I,\zedsi)$, $1\leq p\leq\infty$, are   the Sobolev spaces of classes of functions in $L^p(I,\zedsi)$ with  distributional first derivatives in  $L^p(I,\zedsi)$. Recall that any $u\in W^{1,p}(I,\zedsi)$ is an absolutely continuous curve in $\zedsi$  with almost everywhere defined derivatives in $\zedsi$ satisfying $\dot u\in L^p(I,\zedsi)$. The initial value problem \eqref{IVP} makes sense in the space $L^\infty(I,\zeds)\cap W^{1,\infty}(I,\zedsi)$ since weak solutions of \eqref{IVP} are weakly continuous maps  $u:\bar I\to\mathscr Z_s$ which are differentiable almost everywhere on $I$. Furthermore, it is easy to check using the assumption \eqref{A0} that any function $u\in L^\infty(I,\zeds)$ satisfying the Duhamel formula,
\begin{equation}
\label{int-form}
u(t)=x+\int_{t_0}^t v(\tau,u(\tau)) ds\,, \text{ for  a.e. } t\in I\,,
\end{equation}
is a weak solution of \eqref{IVP}. Conversely, any weak solution $u$ of \eqref{IVP} satisfies \eqref{int-form}.   Similarly, if we assume that the vector field $v$ is continuous then strong solutions of \eqref{IVP} over $I$ are exactly continuous curves   in $\mathscr{C}(I,\zeds)$ satisfying the Duhamel formula \eqref{int-form} for all $t\in I$
  (see e.g.~\cite{MR2002047} for more details).

\bigskip
\noindent
\emph{Hierarchy equations}:
Consider a normal state or a density matrix $\varrho_n\in \mathscr{L}^1(\vee^n\zed)$
(i.e., $\varrho_n\geq 0$ and $ \Tr[\varrho_n]=1$). Then the $k$-particles {reduced density matrix} of  $\varrho_n$, for $0\leq k\leq n$, is by definition the unique non-negative trace-class operator denoted by $\varrho_n^{(k)}\in  \mathscr{L}^1(\vee^k\zed) $ and satisfying:
\begin{equation}
\label{redmat}
\Tr_{\otimes^n \zed} \big[\varrho_n  \, A\otimes 1^{(n-k)}\big]=\Tr_{\vee^k \zed}
\big[\varrho_n^{(k)}  \, A\big]\,, \quad \forall A \in  \mathscr{L}(\vee^k\zed)\,.
\end{equation}
 We call a \emph{symmetric hierarchy} (or simply a hierarchy) any subsequential limit of  $(\gamma_n^{(k)})_{k\in \N}$, with respect to the weak-$*$ topology in $
 \mathscr{L}^1(\vee^k\zed)$ . Moreover, we denote the  set of all these subsequential limits  by
 $\mathscr{H}(\zed)$.  In Section \ref{sec:01}, we prove  that   $\mathscr{H}(\zed)$
 is  a non trivial convex set admitting the following characterization:
\begin{equation}
\label{HKchar}
\mathscr{H}(\zed)=\left\{
\int_{X_1} |\varphi^{\otimes k} \rangle \langle \varphi^{\otimes k} | \;d\mu(\varphi)\,, \;\mu\in \mathscr{M}(\zed)\right\}\,,
\end{equation}
where $\mathscr{M}(\zed)$ is the set, given by \eqref{invmea}  with $\mathfrak{H}=\zed$,  and  constituted of probability measures which are $U(1)$-invariant and concentrated on the closed unit ball $ X_1:=B_{\zed}(0, 1)$ of $\zed$. In fact, for any $(\gamma^{(k)})_{k\in \N}\in \mathscr{H}(\zed)$ there exists a unique Borel probability measure $\mu$ on  $ X_1$  such that it  is  $U(1)$-invariant and satisfying for any $k\in\N$,
\begin{equation}
\label{pre-dfinet}
\gamma^{(k)}=\int_{X_1} |\varphi^{\otimes k} \rangle \langle \varphi^{\otimes k} | \;d\mu(\varphi)\,.
\end{equation}
Conversely, for any $\mu\in \mathscr{M}(\zed)$ the above expression defines a symmetric hierarchy (see Prop.~\ref{str.BEh}  and \ref{bij}).

In the following, we want to write a hierarchy equation that generalizes  the ones in \eqref{GPH-cubic} to any
nonlinearity or equivalently to any initial value problem \eqref{IVP} which is $U(1)$-invariant.
For that purpose, we introduce two operations  extending the actions  $B^\pm_{j,k}$
 given in  \eqref{Bjk-1}-\eqref{Bjk-2} to any initial value problem \eqref{IVP} satisfying \eqref{A0}.  Indeed, we  define for any  $k\in\N$  and   $j=1,\cdots,k$,  the following operations  on  $\gamma \in \mathscr{H}(\zed)$ as,
\begin{equation}
\begin{aligned}
\bullet\; C_{j,k}^{+} \gamma^{} &:= \displaystyle\int_{\zeds\cap X_1} \big| x^{\otimes k} \rangle \langle x^{\otimes j-1} \otimes v(t,x)\otimes x ^{\otimes k-j}\big| \;d\mu_{}(x)  \,,\\ \medskip
\bullet  \;C_{j,k}^{-} \gamma_{}^{} &:=  \displaystyle\int_{\zeds\cap X_1} \big| x^{\otimes j-1} \otimes v(t,x)\otimes x ^{\otimes k-j} \rangle \langle x^{\otimes k} \big| \;d\mu_{}(x) \,,
 \end{aligned}
\end{equation}
where $\gamma$ and $\mu$ are related according to the integral representation  \eqref{pre-dfinet}.

We call a \emph{(symmetric) hierarchy equation} related to the initial value problem \eqref{IVP} the following integral equation defined on a open bounded interval $I$,
\begin{equation}
\label{int-hier}
\forall t \in I ~,\qquad \gamma_{t}^{(k)} = \gamma_{t_{0}}^{(k)} + \int_{t_{0}}^{t} \sum_{j=1}^{k}  (C_{j,k}^{+} \gamma_{\tau}^{} +  C_{j,k}^{-} \gamma_{\tau}^{}) \;d\tau ~,~
\end{equation}
with an initial datum  $\gamma_{t_{0}}\in\mathscr{H}(\mathscr{Z}_0)$ for some $t_0\in I$. Of course, the latter
equation may not make sense and as usual some regularity is required on the solutions.
So, we will be interested only on solutions which are curves of symmetric hierarchies $t\in I\to \gamma_t\in \mathscr{H}(\mathscr{Z}_0)$  satisfying the following regularity assumption:
\begin{equation}
\left\{
\label{A2}\tag{A1}
\begin{array}{rl}
\bullet & \gamma_t\in\mathscr{H}(\mathscr{Z}_0)  \text{ for all } t\in I\,,\\[8pt]
\bullet & \exists R>0, \; \displaystyle ||(A^{s/2})^{\otimes k}\gamma_t^{(k)} (A^{s/2})^{\otimes k} ||_{ \mathscr{L}^1(\vee^k\mathscr{Z}_0)}\leq R^{2k}, \; \forall k\in\N, \forall t\in I\,,\\[8pt]
\bullet & \forall k\in\N, \; t\in I\to (A^{-\sigma/2})^{\otimes k}\gamma_t^{(k)}   (A^{-\sigma/2})^{\otimes k}  \;\text{ is weak-$*$ continuous in } \mathscr{L}_{}^1(\vee^k\mathscr{Z}_0)\,.
\end{array}
\right.
\end{equation}
Such assumption is actually quite natural since the first condition emerges from well justified physical constraints, while the second puts the regularity of  $\gamma_t$ at the same level as the one for the related initial value problem \eqref{IVP}  thus justifying the operations $C^\pm_{j,k}$; and the last is a very mild condition that ensures a rigorous meaning to the right hand side of \eqref{int-hier} as a Bochner integral in some Banach spaces (see  Section \ref{sec.equiv} for more details).

\bigskip
\noindent
\emph{Liouville equations}:
The Liouville equation, related to the initial value problem \eqref{IVP}, is given as in finite dimension by the formal expression,
\[
\partial_{t} \mu_{t} + \nabla^{T} ({v}\mu_{t}) = 0\,.
\]
However, since we are in infinite dimensional spaces we shall understand the above equation in a weak sense using a convenient   space of  cylindrical test functions defined below.   In particular,
we  shall use the   real structure of the Hilbert space $\mathscr Z_{-\sigma}$ and interpret $\nabla$ as a  real gradient and the subscript $^T$ as a transpose operation with respect to the real scalar product
of $ \mathscr Z_{-\sigma}$.  We refer the reader to \cite{MR3721874} for more details. %dire un peu plus...

\bigskip

Consider $\mathscr{Z}_{-\sigma}$ as a real Hilbert space $ \mathscr{Z}_{-\sigma,\R}$ endowed with the scalar product ${\rm Re}\langle \cdot, \cdot \rangle_{\mathscr{Z}_{-\sigma}}$ simply denoted by $\langle \cdot, \cdot \rangle_{\mathscr Z_{-\sigma,\R}}$.  Let $\mathbb{P}_n$ be the set of all projections $\pi:\mathscr Z_{_{-\sigma,\R}} \to  \R^n$  given by
\begin{equation}
\label{eq.pi}
\pi(x)=(\langle x, e_1\rangle_{\mathscr Z_{-\sigma},\R}, \cdots, \langle x, e_n\rangle_{\mathscr Z_{-\sigma},\R})\,,
\end{equation}
where $\{e_1,\cdots,e_n\}$ is any orthonormal family of $\mathscr Z_{-\sigma, \R}$.
We denote by
$\mathscr{C}_{0,cyl}^{\infty}(\mathscr Z_{-\sigma})$ the space of  functions  $\varphi=\psi\circ \pi$ with $\pi\in \mathbb{P}_n$ for some $n\in\N$ and $\psi\in \mathscr C_0^\infty(\R^n)$. In particular, one  checks that the gradient (or the $\R$-differential) of $\varphi$ is equal
to
$$
\nabla_{\mathscr Z_{-\sigma,\mathbb{R}}}\varphi=\pi^T\circ\nabla\psi\circ\pi,
$$
where $\pi^T:\R^n\to \mathscr{Z}_{-\sigma,\R} $ denotes the transpose map of $\pi$ given by
$$
\pi^T(x_1,\cdots,x_n)=\sum_{i=1}^n x_i \,e_i\,.
$$
Let $I$ be a bounded open interval, then we say that a function $\varphi : I \times \mathscr{Z}_{-\sigma}  \to \mathbb{R}$ belongs to $\mathscr C_{0,cyl}^{\infty}(I \times \mathscr{Z}_{-\sigma})$ if there exists, for some $n\in\N$, a projection $\pi \in \mathbb{P}_n$ and $\phi \in \mathscr C_{0}^{\infty}(I \times \R^n)$ such that:
\[
\forall (t,z) \in I \times\mathscr{Z}_{-\sigma} ~,\quad \varphi(t,z) = \phi(t,\pi(z)) \,.
\]
In particular, the functions of the form $\chi(.)\phi(.)$ where $\chi \in \mathscr C_{0}^{\infty}(I)$ and $\phi \in \mathscr C_{0,cyl}^{\infty}(\mathscr{Z}_{-\sigma})$ are in the space of cylindrical test functions $\mathscr C_{0,cyl}^{\infty}(I \times \mathscr{Z}_{-\sigma})$.

\bigskip

We consider  the  Liouville  equation, related to the initial value problem \eqref{IVP} and
 defined on a bounded open interval $I$, as the following integral equation,
\begin{equation}
\label{eq.transport}
      \displaystyle\int_{I}\int_{\mathscr Z_{-\sigma}}\partial_{t}\varphi(t,x)+\langle
  v(t,x),{\nabla_{\mathscr Z_{-\sigma,\mathbb{R}}}}\varphi(t,x)\rangle_{{\mathscr Z_{-\sigma,\mathbb{R}}}} \; d\mu_{t}(x)\,dt=0, \quad\forall
    \varphi \in \mathscr{C}_{0,cyl}^{\infty}(I \times \mathscr Z_{-\sigma})\,,
\end{equation}
where $\langle \cdot,\cdot\rangle_{\mathscr Z_{-\sigma,\mathbb{R}}}={\rm Re}
\langle \cdot,\cdot\rangle_{\mathscr Z_{-\sigma}}$, $\nabla_{\mathscr Z_{-\sigma,\mathbb{R}}}$ is the $\mathbb{R}$-gradient (or differential) in $\mathscr Z_{-\sigma,\R}$ and $t\in I\to \mu_t$ is a curve in the space of Borel probability measures $\mathfrak{P}(\mathscr{Z}_{-\sigma})$. The equation \eqref{eq.transport} is always supplemented by a prescribed initial condition $\mu_{t_0}\in \mathfrak{P}(\mathscr{Z}_{-\sigma})$ at a given time $t_0$.

We are interested on solutions of the Liouville equation \eqref{eq.transport} which satisfy the following assumption:
\begin{equation}
\left\{
\label{A1}\tag{A2}
\begin{array}{rl}
\bullet & \mu_t\in\mathscr{M}(\mathscr{Z}_0),  \text{ for all } t\in I.\\[8pt]
\bullet & \mu_t(B_{\mathscr{Z}_s}(0,R)) =1,  \text{ for some } R>0 \text{ and for all } t\in I.\\[8pt]
\bullet & t\in I\to\mu_t \text{ is weakly narrowly continuous in } \mathfrak{P}(\mathscr{Z}_{-\sigma}).
\end{array}
\right.
\end{equation}
Remark that Borel sets of $\mathscr Z_s$ are also  Borel set of $\mathscr Z_{-\sigma}$, see for instance \cite[Appendix]{MR3721874}. So, the assumption \eqref{A1} implies that the integral  with respect to $\mu_t$ in \eqref{eq.transport} is actually  over the closed ball $B_{\mathscr{Z}_s}(0,R)$. Furthermore,  the integrand \eqref{eq.transport} is bounded and the integration with respect to $\mu_t$ and $dt$ is well defined. The first and last requirements in \eqref{A1}  are quite natural as
they are justified by physical constraints and a mild time regularity. However, to make the Liouville equation rigourously make sense it is not necessary to assume the concentration condition $\mu_t(B_{\mathscr{Z}_s}(0,R)) =1$ which can be relaxed. This confers already a serious advantage to the Liouville equation \eqref{eq.transport} over the symmetric hierarchy equation \eqref{int-hier}.

\begin{remark}
\label{balls}
In the assumptions \eqref{A2} and \eqref{A1}, we explicitly asked respectively  for the following bounds to hold for all
$t\in I$,
$$
(\forall k\in\N ,\quad \Tr_{\vee^k \zed}[\gamma_t^{(k)}]\leq 1)  \quad \text{ and } \quad \mu_t(B_{\zed}(0, 1))=1\,.
$$
Such requirements are only made  to meet the important physical constraint \eqref{cont2} related to the  mean-field theory of Bose gases. From a pure mathematical point of view, one can relax these conditions by simply  assuming  for some $R'>0$,
\begin{equation}
\label{bdrk}
(\forall k\in\N ,\quad \Tr_{\vee^k \zed}[\gamma_t^{(k)}]\leq R'^k)  \quad \text{ and } \quad \mu_t(B_{\zed}(0, R'))=1\,.
\end{equation}
In this case, the second condition  in \eqref{A2} and respectively in  \eqref{A1} imply the  bounds \eqref{bdrk} with
$R'=\frac{R}{c}$ (where $c$ is the constant in the inequality \eqref{condopA}).
Our main results in Subsect.~\ref{sub.sec.highresult}  still hold true under this small modification. And this  explains why we do not require that the norm $||\cdot||_{\zed}$ is a conserved quantity for the initial value problem \eqref{IVP} since our results are also valid for non-conservative dynamical systems.
\end{remark}

\subsection{Highlighted results}
\label{sub.sec.highresult}
In this Subsection, we present our main contributions previously  discussed in the introduction; namely
duality between Liouville and hierarchy equations (Theorem \ref{sec.0.thm1}) and  uniqueness and existence principles (Theorem \ref{sec.0.thm2} and \ref{ext-2}). These results will be stated in full generality. Moreover, some appealing applications to the NLS and Hartree equations will  be discussed below.

\begin{thm}(Duality)
\label{sec.0.thm1}
Let ${v}: \mathbb{R} \times \zeds \mapsto \zedsi$ be a vector field satisfying \eqref{A0} and
$I$ a bounded open interval. Then  $t\in I\to\gamma_t=(\gamma_t^{(k)})_{k\in\N}$ is a solution of the symmetric hierarchy equation \eqref{int-hier} satisfying \eqref{A2} if and only if  $t\in I\to\mu_t$ is a solution of the Liouville equation \eqref{eq.transport}  satisfying \eqref{A1} with $\mu_t$ is related to
$\gamma_t$ according to
\begin{equation*}
\gamma_t^{(k)}=\int_{\zeds} |\varphi^{\otimes k} \rangle \langle \varphi^{\otimes k} | \;d\mu_t(\varphi)\,, \quad \forall k\in\N.
\end{equation*}
\end{thm}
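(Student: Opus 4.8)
The plan is to use the de Finetti correspondence $\gamma_t\leftrightarrow\mu_t$ of \eqref{HKchar}--\eqref{pre-dfinet}, established in Prop.~\ref{str.BEh}--\ref{bij}, and to match the two weak formulations by testing both against the family of \emph{Wick monomials}
\[
\mathcal{W}_A(x):=\langle x^{\otimes k}, A\,x^{\otimes k}\rangle\,,\qquad k\in\N,\quad A=(A^{-\sigma/2})^{\otimes k}B\,(A^{-\sigma/2})^{\otimes k}\ \text{ with }\ B\ \text{ finite rank on }\ \vee^k\zed.
\]
Such an $\mathcal{W}_A$ is a smooth $U(1)$-invariant cylindrical function on $\zedsi$ (a polynomial in finitely many duality pairings $\langle x,\psi\rangle$, $\psi\in\mathscr{Z}_\sigma$), so that, after a harmless cut-off, $\chi(t)\,\mathcal{W}_A(x)$ lies in $\mathscr{C}_{0,cyl}^\infty(I\times\zedsi)$; on the operator side $\Tr[\gamma_t^{(k)}A]=\int\mathcal{W}_A\,d\mu_t$ by \eqref{pre-dfinet}.

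First I would check that the regularity conditions \eqref{A2} and \eqref{A1} correspond under $\gamma_t\leftrightarrow\mu_t$. The first bullets match by the bijection $\mathscr{H}(\mathscr{Z}_0)\cong\mathscr{M}(\mathscr{Z}_0)$ (both encode $U(1)$-invariance and concentration on the unit ball). For the second bullets, \eqref{pre-dfinet} gives $\|(A^{s/2})^{\otimes k}\gamma_t^{(k)}(A^{s/2})^{\otimes k}\|_{\mathscr{L}^1(\vee^k\zed)}=\int\|\varphi\|_{\zeds}^{2k}\,d\mu_t(\varphi)$, so the uniform bound by $R^{2k}$ is equivalent, upon letting $k\to\infty$, to $\mu_t(B_{\zeds}(0,R))=1$. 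For the last bullets, one observes that $B_{\zeds}(0,R)$ is convex, bounded in $\zedsi$, hence weak-$\zedsi$ compact and metrizable by $d_w$; the self-adjoint monomials $\mathcal{W}_A$ separate the $U(1)$-orbits of this set (they recover all matrix elements of $|\varphi\rangle\langle\varphi|$ after polarization) and, with the constants, generate by Stone--Weierstrass a dense subalgebra of the $U(1)$-invariant elements of $\mathscr{C}(B_{\zeds}(0,R),d_w)$; since every $\mu_t$ is supported there and $U(1)$-invariant, weak narrow continuity of $t\mapsto\mu_t$ in $\mathfrak{P}(\zedsi)$ is equivalent to continuity of each $t\mapsto\int\mathcal{W}_A\,d\mu_t=\Tr[\gamma_t^{(k)}A]$, i.e. to the weak-$*$ continuity of the dressed matrices required by \eqref{A2}.

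The core of the equivalence is a pointwise differential identity. For $A$ as above, $\mathcal{W}_A$ is smooth and the Leibniz rule together with $\partial_t(x+th)^{\otimes k}|_{t=0}=\sum_{j=1}^k x^{\otimes j-1}\otimes h\otimes x^{\otimes k-j}$ yields
\[
\langle v(\tau,x),\nabla_{\mathscr{Z}_{-\sigma,\mathbb{R}}}\mathcal{W}_A(x)\rangle_{\mathscr{Z}_{-\sigma,\mathbb{R}}}=\sum_{j=1}^k\Big(\langle x^{\otimes j-1}\otimes v(\tau,x)\otimes x^{\otimes k-j},A\,x^{\otimes k}\rangle+\langle x^{\otimes k},A\,(x^{\otimes j-1}\otimes v(\tau,x)\otimes x^{\otimes k-j})\rangle\Big)\,.
\]
Using $\Tr[|u\rangle\langle w|\,A]=\langle w,Au\rangle$ and integrating against $d\mu_\tau$, the right-hand side is exactly $\sum_{j=1}^k\Tr[(C_{j,k}^{+}\gamma_\tau+C_{j,k}^{-}\gamma_\tau)A]$. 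Hence pairing the hierarchy equation \eqref{int-hier} with $A$ and integrating a smooth time factor by parts gives precisely the weak formulation \eqref{eq.transport} tested against $\chi\,\mathcal{W}_A$. All integrals are finite because $v$ is bounded on $B_{\zeds}(0,R)$ by \eqref{A0} while $\mu_t$ concentrates there, and the Bochner-integrability of the right-hand side of \eqref{int-hier} is handled as in Section \ref{sec.equiv}. This already yields one implication: if $\mu_t$ solves \eqref{eq.transport}, then specializing to $\chi\,\mathcal{W}_A$ for all finite-rank $A$ and all $k$, splitting a general $A\in\mathscr{L}^\infty(\vee^k\zed)$ into self-adjoint parts and approximating by finite-rank truncations $P_nAP_n$ (dominated convergence with the uniform concentration bound), one recovers \eqref{int-hier} as an identity of trace-class operators, so $\gamma_t$ solves the hierarchy equation and satisfies \eqref{A2}.

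The converse — and the main obstacle — is to upgrade ``\eqref{eq.transport} holds against every $\chi\,\mathcal{W}_A$'' to ``\eqref{eq.transport} holds against every $\varphi\in\mathscr{C}_{0,cyl}^\infty(I\times\zedsi)$''. Here the $U(1)$-invariance is indispensable. Given such a $\varphi$, I would pass to its gauge average $\tilde\varphi(t,z):=\frac1{2\pi}\int_0^{2\pi}\varphi(t,e^{i\theta}z)\,d\theta$, which is again cylindrical and $U(1)$-invariant; since each $\mu_t$ is $U(1)$-invariant and $v$ is $U(1)$-equivariant (second line of \eqref{A0}), the orthogonality of multiplication by $e^{i\theta}$ on $\mathscr{Z}_{-\sigma,\mathbb{R}}$ and the change of variables $z\mapsto e^{i\theta}z$ show that $\varphi$ and $\tilde\varphi$ give the left-hand side of \eqref{eq.transport} the same value, so one may assume $\varphi$ is $U(1)$-invariant. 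Such a $\varphi$, restricted to the compact metric space $B_{\zeds}(0,R)$ and uniformly in $t\in I$, must then be approximated — together with $\partial_t\varphi$ and with $\nabla\varphi$ in a topology strong enough for the pairing against the merely bounded vector field $v\in\zedsi$ — by $U(1)$-invariant cylindrical polynomials, i.e. by finite linear combinations of functions $\chi(t)\,\mathcal{W}_A(x)$; passing to the limit in the already-established identity concludes. I expect this last step, namely producing polynomial approximations of a $U(1)$-invariant cylindrical test function whose gradients converge strongly enough to survive the pairing with a non-smooth vector field, to be the genuinely delicate point; it is exactly where the transport machinery of \cite{MR3721874} has to be sharpened.
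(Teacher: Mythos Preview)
Your strategy differs from the paper's, and the converse direction has the gap you yourself flag. The paper does \emph{not} test against Wick monomials directly; instead it introduces an intermediate \emph{characteristic equation} (Prop.~\ref{lm.cha}) in the Fourier variable $y\in\mathscr{Z}_\sigma$,
\[
\mu_t(e^{2i\pi\Re\langle y,\cdot\rangle_{\zed}})=\mu_{t_0}(e^{2i\pi\Re\langle y,\cdot\rangle_{\zed}})+2i\pi\int_{t_0}^t\mu_\tau\big(e^{2i\pi\Re\langle y,x\rangle_{\zed}}\Re\langle v(\tau,x),y\rangle_{\zed}\big)\,d\tau,
\]
and proves Liouville $\Leftrightarrow$ characteristic $\Leftrightarrow$ hierarchy. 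The first equivalence is obtained by testing Liouville against cut-off exponentials (your ``harmless cut-off'' step, done carefully) and, for the reverse, by writing an arbitrary $\psi\in\mathscr{C}^\infty_{0,cyl}(\zedsi)$ via finite-dimensional Fourier inversion on $p(\zedsi)$: this represents \emph{both} $\psi$ and $\nabla\psi$ exactly as superpositions of plane waves, so no $C^1$ density argument is needed. The second equivalence is the power-series expansion of the exponential together with the $U(1)$-invariance of $\mu_t$ and $v$, which kills all off-diagonal terms and leaves exactly $\langle y^{\otimes k},\gamma_t^{(k)}y^{\otimes k}\rangle$ and the $C^\pm_{j,k}$ terms; polarization and density in $\mathcal{D}^{(k)}_\sigma$ finish.

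Your route can in fact be completed: after $U(1)$-averaging, the test function is cylindrical over a finite-dimensional \emph{complex} subspace $V\subset\zedsi$, and on the compact set $\pi(B_{\zeds}(0,R))\subset V$ one can approximate any smooth function in $C^1$ by real polynomials (finite-dimensional Weierstrass), then $U(1)$-average the polynomials to get $U(1)$-invariant ones; since $\nabla\varphi$ takes values in the fixed finite-dimensional $\pi^T(V)$, uniform $C^1$ convergence on the compact set gives uniform convergence of $\langle v,\nabla\varphi_n\rangle_{\zedsi,\R}$ on $B_{\zeds}(0,R)$, and dominated convergence closes the argument. So the ``delicate point'' is routine, not a place where \cite{MR3721874} needs sharpening --- that reference concerns the probabilistic representation used for Theorem~\ref{sec.0.thm2}, not the duality. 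The paper's Fourier route simply sidesteps this approximation by replacing it with an exact integral identity.
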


Recall that the notion of weak solutions of the \eqref{IVP} is given in Definition \ref{w-ssol}.
\begin{thm}(Uniqueness principle)
\label{sec.0.thm2}
Let ${v}: \mathbb{R} \times \zeds \mapsto \zedsi$ be a vector field satisfying \eqref{A0}.  Then  uniqueness of weak solutions over a bounded open interval $I$ for the initial value problem
\eqref{IVP} implies the uniqueness of solutions over $I$ of the symmetric hierarchy equation \eqref{int-hier}
 satisfying the assumption \eqref{A2}.
\end{thm}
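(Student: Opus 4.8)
The plan is to deduce the uniqueness principle for the hierarchy equation \eqref{int-hier} directly from the duality established in Theorem \ref{sec.0.thm1}, thereby reducing it to a uniqueness statement for the Liouville equation \eqref{eq.transport}, which in turn must be traced back to uniqueness of weak solutions of \eqref{IVP}. Concretely, suppose $t\in I\mapsto\gamma_t^{1}$ and $t\in I\mapsto\gamma_t^{2}$ are two solutions of \eqref{int-hier} satisfying \eqref{A2} with the same initial datum $\gamma_{t_0}\in\mathscr{H}(\mathscr{Z}_0)$. By Theorem \ref{sec.0.thm1}, each $\gamma^i$ corresponds bijectively to a solution $t\in I\mapsto\mu_t^{i}$ of the Liouville equation \eqref{eq.transport} satisfying \eqref{A1}, via the de Finetti representation $\gamma_t^{i,(k)}=\int |\varphi^{\otimes k}\rangle\langle\varphi^{\otimes k}|\,d\mu_t^i(\varphi)$. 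Since the map $\mu\mapsto(\int|\varphi^{\otimes k}\rangle\langle\varphi^{\otimes k}|\,d\mu)_{k\in\N}$ is a bijection on $\mathscr{M}(\mathscr{Z}_0)$ (by the characterization \eqref{HKchar} and Propositions \ref{str.BEh}, \ref{bij}), the equality $\gamma_{t_0}^1=\gamma_{t_0}^2$ forces $\mu_{t_0}^1=\mu_{t_0}^2$. Hence it suffices to show that the Liouville equation \eqref{eq.transport}, restricted to solutions satisfying \eqref{A1}, has at most one solution for each initial measure; then $\mu_t^1=\mu_t^2$ for all $t\in I$, and applying the bijection again gives $\gamma_t^1=\gamma_t^2$.

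The heart of the matter is therefore the uniqueness statement for \eqref{eq.transport}: \emph{if $v$ satisfies \eqref{A0} and weak solutions of \eqref{IVP} over $I$ are unique, then solutions of \eqref{eq.transport} satisfying \eqref{A1} are unique.} I would prove this by the probabilistic (superposition) representation of solutions of continuity equations, in the spirit of the Ambrosio--Crippa theory and its infinite-dimensional extension in \cite{MR3379490,MR3721874}. The scheme is: (1) Given a solution $(\mu_t)_{t\in I}$ of \eqref{eq.transport} with \eqref{A1}, produce a Borel probability measure $\eta$ on the path space $C(\bar I;\mathscr{Z}_{-\sigma})$ (concentrated on curves valued in the ball $B_{\mathscr{Z}_s}(0,R)$) such that $(e_t)_\#\eta=\mu_t$ for all $t$, where $e_t$ is the evaluation map, and such that $\eta$-a.e.\ curve $\omega$ solves the Duhamel integral equation $\omega(t)=\omega(t_0)+\int_{t_0}^t v(\tau,\omega(\tau))\,d\tau$ — i.e.\ is a weak solution of \eqref{IVP} with initial datum $\omega(t_0)\in\mathscr{Z}_s$. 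Here the boundedness of $v$ on bounded sets from \eqref{A0}, together with the concentration bound in \eqref{A1}, is what makes the integrand in \eqref{eq.transport} bounded and the construction of $\eta$ go through; the cylindrical test functions are used to pass from the weak PDE formulation to the characteristic ODEs via a finite-dimensional projection-and-mollification argument. (2) Invoke the hypothesis that weak solutions of \eqref{IVP} are unique: this means that for any starting point $x\in\mathscr{Z}_s$ there is at most one admissible curve through $x$, so the disintegration of $\eta$ over $\mu_{t_0}$ is supported, for $\mu_{t_0}$-a.e.\ $x$, on a single curve. Consequently $\eta$ is uniquely determined by $\mu_{t_0}$, hence so is $\mu_t=(e_t)_\#\eta$ for every $t\in I$. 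This yields the uniqueness of the Liouville solution.

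There is one technical wrinkle I would flag: the $U(1)$-invariance and the support condition $\mu_t\in\mathscr{M}(\mathscr{Z}_0)$, i.e.\ concentration on the unit ball $X_1$, must be shown to be preserved along the flow so that the two solutions stay in the admissible class throughout — but this is automatic, since $U(1)$-invariance of $v$ (from \eqref{A0}) means the action of $U(1)$ maps weak solutions to weak solutions, and by uniqueness of weak solutions each $\eta$-a.e.\ curve is $U(1)$-equivariant in law, forcing $\mu_t$ to remain $U(1)$-invariant; the norm bound is part of the standing assumption \eqref{A1} and need not be re-derived. The genuinely delicate step — and the one I expect to be the main obstacle — is step (1): establishing the superposition principle in the infinite-dimensional rigged-Hilbert-space setting with only Borel, bounded-on-bounded-sets vector fields and no continuity or Lipschitz regularity. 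This requires care in (a) the choice of topology on path space (weak-$*$/weak narrow, using the metrizability of bounded subsets of $\mathscr{Z}_{-\sigma}$ in the weak topology via \eqref{dweak}), (b) a tightness argument to extract $\eta$ as a limit of approximating measures built from mollified flows, and (c) verifying that the concentration on $B_{\mathscr{Z}_s}(0,R)$ survives the limit. Fortunately this machinery has already been developed in \cite{MR3721874}, with the improvement alluded to in the introduction; I would cite and, where necessary, adapt those results rather than rebuild them from scratch. Once step (1) is in place, steps (2) and the descent back through the duality of Theorem \ref{sec.0.thm1} are essentially formal.
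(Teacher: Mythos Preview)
Your proposal is correct and follows essentially the same route as the paper: reduce to Liouville uniqueness via Theorem~\ref{sec.0.thm1}, invoke the probabilistic representation (Prop.~\ref{prob-rep}, from \cite{MR3721874}), and use uniqueness of weak solutions of \eqref{IVP} to determine $\eta$---hence $\mu_t$---from $\mu_{t_0}$. The only difference is presentational: where you invoke disintegration, the paper constructs an explicit Borel flow map $\phi(t,t_0)$ (via Lemmas~\ref{mes-lem1}--\ref{mes-lem2} and Prop.~\ref{flotpp}, which handle the measurability and the upgrade to $L^\infty(I,\mathscr{Z}_s)$ carefully) and writes $\mu_t=\phi(t,t_0)_\sharp\mu_{t_0}$.
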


\begin{thm}(Existence principle)
\label{ext-2}
Let $v:\R\times \mathscr Z_s\to \mathscr Z_{-\sigma}$ a vector field satisfying \eqref{A0}  and  let $I$ be  a bounded open interval with $t_0\in I$ a fixed initial time. Assume that there exist a Borel
subset $\mathcal{A}$ of $\zeds$ and a Borel  map $\phi:\bar I\times \mathcal{A}\to \zeds$ which is bounded on bounded sets
and such that for any $x\in \mathcal{A}$ the curve $t\in  I \to \phi(t,x)$ is a weak solution of the initial value problem \eqref{IVP} satisfying $ \phi(t_0,x)=x$.  Furthermore, suppose that $||\phi(t,x)||_{\zed}=
||x||_{\zed}$ and $\phi(t,e^{i \theta} x)= e^{i \theta}
\phi(t, x)$ for any $x\in \mathcal{A}$, $t\in \bar I$ and $\theta\in\R$. Then for any  symmetric hierarchy $\gamma=(\gamma^{(k)})_{k\in\N}\in\mathscr{H}(\zed)$ satisfying:
\begin{eqnarray*}
&&\forall k\in\N, \quad  \gamma^{(k)}=\int_{\zeds} |\varphi^{\otimes k} \rangle \langle \varphi^{\otimes k} | \;d\nu(\varphi)\,, \quad \text{ with } \quad \nu(\mathcal{A})=1 , \\
&&\exists R>0, \;\; \displaystyle ||(A^{s/2})^{\otimes k}\gamma^{(k)} (A^{s/2})^{\otimes k} ||_{ \mathscr{L}^1(\vee^k\mathscr{Z}_0)}\leq R^{2k}, \; \forall k\in\N,
\end{eqnarray*}
there exists a solution $t\in I\to \gamma_t=(\gamma_t^{(k)})_{k\in\N}\in\mathscr{H}(\zed)$  of the hierarchy equation \eqref{int-hier}   verifying  the initial condition $\gamma_{t_0}=\gamma$ and the assumption   \eqref{A2}.
\end{thm}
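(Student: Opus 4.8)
\emph{Proof plan.} The idea is to push everything down to the level of the Liouville equation and then invoke the duality of Theorem~\ref{sec.0.thm1}. Let $\nu$ be a measure representing the initial datum $\gamma$ with $\nu(\mathcal{A})=1$. Replacing $\nu$ by its $U(1)$-average leaves every moment $|\varphi^{\otimes k}\rangle\langle\varphi^{\otimes k}|$, hence $\gamma$, unchanged, and (since $\mathcal A$ is $U(1)$-invariant, as is implicit in the relation $\phi(t,e^{i\theta}x)=e^{i\theta}\phi(t,x)$) preserves $\nu(\mathcal{A})=1$; so by the characterization \eqref{HKchar} we may assume $\nu\in\mathscr{M}(\zed)$. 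Since the operator $(A^{s/2})^{\otimes k}\gamma^{(k)}(A^{s/2})^{\otimes k}=\int|(A^{s/2}\varphi)^{\otimes k}\rangle\langle(A^{s/2}\varphi)^{\otimes k}|\,d\nu(\varphi)$ is positive, its trace norm equals its trace, so the second hypothesis reads $\int_{\zeds}\|\varphi\|_{\zeds}^{2k}\,d\nu(\varphi)\le R^{2k}$ for all $k\in\N$; letting $k\to\infty$ this forces $\nu$ to be concentrated on $B_{\zeds}(0,R)$, hence, together with $\nu(\mathcal{A})=1$, on $B_{\zeds}(0,R)\cap\mathcal{A}$.

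Next I would build the candidate curve of measures by transporting $\nu$ along the given solution map: $\mu_t:=\phi(t,\cdot)_{*}\nu$ for $t\in\bar I$ (the image of $\nu$ under $\phi(t,\cdot)$, which is $\nu$-a.e.\ defined). The hypotheses on $\phi$ yield at once the properties in \eqref{A1}: $\mu_{t_0}=\nu$ because $\phi(t_0,\cdot)=\mathrm{id}$ on $\mathcal{A}$; each $\mu_t$ is $U(1)$-invariant because $\phi(t,e^{i\theta}\cdot)=e^{i\theta}\phi(t,\cdot)$ and $\nu$ is; $\mu_t(B_{\zed}(0,1))=1$ because $\|\phi(t,x)\|_{\zed}=\|x\|_{\zed}\le1$ for $\nu$-a.e.\ $x$; and $\mu_t\big(B_{\zeds}(0,R')\big)=1$ with $R':=\sup\{\|\phi(t,x)\|_{\zeds}:t\in\bar I,\ x\in B_{\zeds}(0,R)\cap\mathcal{A}\}<\infty$, the supremum being finite since $\phi$ is bounded on bounded sets and $\bar I$ is compact. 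Finally $t\mapsto\mu_t$ is weakly narrowly continuous in $\mathfrak{P}(\zedsi)$ by dominated convergence: for bounded weakly continuous $f:\zedsi\to\R$ one has $\int f\,d\mu_t=\int f(\phi(t,x))\,d\nu(x)$, the map $t\mapsto\phi(t,x)$ is (absolutely, hence weakly) continuous into $\zedsi$ because a weak solution of \eqref{IVP} lies in $W^{1,\infty}(I,\zedsi)$, and $|f(\phi(t,x))|\le\|f\|_\infty$.

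It then remains to verify that $t\mapsto\mu_t$ solves the Liouville equation \eqref{eq.transport}. Fix $\varphi\in\mathscr C^{\infty}_{0,cyl}(I\times\zedsi)$, written $\varphi(t,z)=\psi(t,\pi(z))$ with $\pi\in\mathbb{P}_n$ and $\psi\in\mathscr C^{\infty}_0(I\times\R^n)$. For each $x\in\mathcal{A}$ the curve $t\mapsto\phi(t,x)$ is a weak solution of \eqref{IVP}, so it satisfies the Duhamel formula \eqref{int-form}; consequently $t\mapsto\pi(\phi(t,x))$ is absolutely continuous into $\R^n$ with a.e.\ derivative $\pi\big(v(t,\phi(t,x))\big)$, and the classical chain rule together with the identity $\pi^{T}\nabla\psi(t,\pi(z))=\nabla_{\mathscr Z_{-\sigma,\R}}\varphi(t,z)$ gives
\[
\tfrac{d}{dt}\,\varphi\big(t,\phi(t,x)\big)=\Big(\partial_{t}\varphi+\big\langle v,\nabla_{\mathscr Z_{-\sigma,\R}}\varphi\big\rangle_{\mathscr Z_{-\sigma,\R}}\Big)\big(t,\phi(t,x)\big)\qquad\text{for a.e. }t\in I.
\]
Since $\varphi$ has compact support in $I$, integrating this identity over $I$ gives $\int_I\big(\partial_{t}\varphi+\langle v,\nabla\varphi\rangle\big)(t,\phi(t,x))\,dt=0$ for every $x\in\mathcal{A}$. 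The integrand is jointly Borel in $(t,x)$ (the Borel map $\phi$ composed with the Borel field $v$ and the continuous functions $\partial_t\varphi,\nabla\varphi$) and bounded on $I\times\big(B_{\zeds}(0,R')\cap\mathcal{A}\big)$, because $v$ is bounded on bounded sets while $\partial_t\varphi$ and $\nabla\varphi$ are bounded; hence Fubini's theorem lets us integrate first in $x$ against $\nu$ and then in $t$, which is precisely \eqref{eq.transport}. Thus $t\mapsto\mu_t$ solves \eqref{eq.transport} and satisfies \eqref{A1}, so by Theorem~\ref{sec.0.thm1} the hierarchy $\gamma_t^{(k)}:=\int_{\zeds}|x^{\otimes k}\rangle\langle x^{\otimes k}|\,d\mu_t(x)$ solves \eqref{int-hier}, satisfies \eqref{A2}, and has $\gamma_{t_0}=\gamma$ since $\mu_{t_0}=\nu$.

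I expect the only genuine difficulty to lie in the bookkeeping of the third step: making the chain rule for $t\mapsto\varphi(t,\phi(t,x))$ rigorous when $\phi(\cdot,x)$ is merely a $W^{1,\infty}(I,\zedsi)$ solution of \eqref{IVP}, and checking the joint measurability and uniform boundedness needed to apply Fubini and to get the $t$-uniform concentration $\mu_t(B_{\zeds}(0,R'))=1$. Both points are routine once one uses assumption \eqref{A0}, the stated Borel regularity and the boundedness-on-bounded-sets of $\phi$, the compactness of $\bar I$, and the already-established equivalence between weak solutions of \eqref{IVP} and solutions of the Duhamel equation \eqref{int-form}; all the analytic content of the statement has been absorbed into the hypothesis that $\phi$ supplies weak solutions of the \eqref{IVP}.
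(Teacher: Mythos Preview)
Your proposal is correct and follows essentially the same route as the paper: push the initial hierarchy to a measure $\nu\in\mathscr{M}(\zed)$ concentrated on a ball of $\zeds$, transport it by $\mu_t=\phi(t,\cdot)_{\sharp}\nu$, verify \eqref{A1} and the Liouville equation via the chain rule along weak solutions, and then invoke the duality Theorem~\ref{sec.0.thm1}. The paper merely packages the middle steps as a separate existence result for the Liouville equation (Prop.~\ref{ext-1}) and the concentration argument as Lemma~\ref{concent}, whereas you have inlined both; your extra care about $U(1)$-averaging $\nu$ (using that $\mathcal{A}$ is $U(1)$-invariant, implicit in the hypothesis on $\phi$) is a point the paper glosses over when it simply takes the canonical $\nu$ from Prop.~\ref{str.BEh}.
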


It is worth noticing that there is a converse for Thm.~\ref{sec.0.thm2} and \ref{ext-2}. Actually, the converse to
Thm.~\ref{sec.0.thm2}  is easy to establish and says that the uniqueness of hierarchy solutions satisfying \eqref{A2} implies uniqueness for the weak solutions of the  initial value problem \eqref{IVP} modulo $U(1)$-gauge invariance. While, the converse of Thm.~\ref{ext-2} is more involved and will be considered elsewhere in connection with other applications.
\subsection{Examples}
\label{subsec.app}

Consider the following nonlinear Schr\"odinger  (NLS)  equation on $\R^d$,
\begin{equation}
\label{eq.ivp}
\left\{
\begin{aligned}
&i \partial_{t}u=-\Delta u+g(u)\\
&u_{|t=0}=u_{0}\,,
\end{aligned}
\right.
\end{equation}
where  $g:\C\to \C$ is a function  satisfying the assumptions
\begin{equation}
\left\{
\label{cond-nls}
\begin{aligned}
& \quad g\in\mathscr{C}^1(\R^2,\R^2), \;g(0)=0, \\
& \quad \exists \alpha\geq 0, \; \exists M>0, \;|g'(\xi)|\leq M |\xi|^\alpha, \quad  \forall \,|\xi|\geq 1,\\
& \quad g(e^{i\theta} x)= e^{i\theta} g( x),  \quad  \forall\, \theta\in\R\,, \forall\, x\in\C\,.
\end{aligned}
\right.
\end{equation}
Here $g'$ denotes the real derivative  of $g$ considered as a function from  $\R^2$ into itself. So that,
$g'(\xi)$  can be identified with $(\partial_{z}g(\xi), \partial_{\bar z} g(\xi))$ and  %$g'(\xi)\in \mathscr{M}_2(\R)$
$$
|g'(\xi)|=|\partial_{z}g(\xi)|+|\partial_{\bar z}g(\xi)|\,.
$$
One can  easily see that  the two first assertions in \eqref{cond-nls}  are equivalent to assuming
that the function $g$ splits into the sum of two function $g=g_1+g_2$ such that $ g_i\in\mathscr{C}^1(\R^2,\R^2)$, $g_i(0)=0$,  for  $i=1,2$ and satisfying additionally,
\begin{eqnarray*}
  |g_1(\xi)-g_1(\eta)| \leq  M \;|\xi-\eta| \,, \qquad \text{ and } \qquad
  |g_2(\xi)-g_2(\eta)| \leq  M \;|\xi-\eta| \max(|\xi|^\alpha,|\eta|^\alpha)\,,
\end{eqnarray*}
for any  $\xi,\eta\in\C$ (see e.g.~\cite{MR877998}).  Notice also that using Sobolev's inequalities, if for $0\leq s<
\frac d 2$ the power $\alpha$ in \eqref{cond-nls} verifies
\begin{equation}
\label{alpha}
0\leq \alpha\leq \frac{d+2s}{d-2s}\,,
\end{equation}
then there exists $\sigma\geq 0$ such that  the nonlinearity in the NLS equation \eqref{eq.ivp},
\begin{equation}
\label{nonlin}
\begin{aligned}
G: H^s(\R^d)&\longrightarrow & H^{-\sigma}(\R^d)\\
u&\longrightarrow & g(u)
\end{aligned}
\end{equation}
is continuous and bounded on bounded sets.  Moreover,  for any $s\geq \frac{d}{2}$ and $\alpha\geq 0$
the map $G:H^s(\R^d)\to L^2(\R^d)$ is also continuous and bounded on bounded sets. This in particular implies that the initial value problem
\eqref{eq.ivp}  make sense in the spaces $H^s(\R^d)$.
A mild solution $u$  of the NLS equation \eqref{eq.ivp}, defined over a bounded open interval $0\in I$,  is a function $u\in L^\infty(I, H^s(\R^d))$ satisfying
for any $t\in I$,
\begin{equation}
\label{nls-int}
u(t)=\mathcal{U}(t)u(0)-i\int_0^t \mathcal{U}(t-\tau) g(u(\tau)) \; d\tau\,,
\end{equation}
where $\mathcal{U}(t)=e^{it \Delta}$.  Since the nonlinearity $G$ is bounded on bounded sets one concludes that
$u\in W^{1,\infty}(I,H^{-\sigma}(\R^d))$. Consequently, if one considers $\tilde u:=\mathcal{U}(-t)u$ then
$u\in L^\infty(I, H^s(\R^d))$ is  a mild solution of  \eqref{eq.ivp} satisfying \eqref{nls-int}  if and only if $\tilde u\in L^\infty(I, H^s(\R^d)) \cap W^{1,\infty}(I, H^{-\sigma}(\R^d))$ is a weak solution, in the sense of Def.~\ref{w-ssol}, of the initial value problem \eqref{IVP} with the vector field $v:\R\times H^s(\R^d)\to H^{-\sigma}(\R^d)$ defined by
\begin{equation}
\label{nls-v}
v(t,x):=-i\, \mathcal{U}(-t) \;g(\mathcal{U}(t) x)\,.
\end{equation}
So, one notices that   the vector field $v$ satisfies  the assumption \eqref{A0} whenever  $g$ verifies  \eqref{cond-nls}  and $\alpha$ is such that \eqref{alpha} holds true if $0\leq s<\frac{d}{2}$ or $\alpha\geq 0$   if $s\geq \frac{d}{2}$.  Hence, one can consider the related hierarchy equation \eqref{int-hier} with this vector field  $v$, given in \eqref{nls-v},  and apply Theorem \ref{sec.0.thm2} with
\begin{equation}
\label{spec-fram}
A=-\Delta+1, \qquad \zeds=H^s(\R^d), \qquad  \zedsi=H^{-\sigma}(\R^d) .
\end{equation}
 Since there is a multitude of results on unconditional uniqueness for  NLS and Hartree equations, so one can lift them straightforwardly  to  the corresponding  hierarchy equations  \eqref{int-hier}  using Thm.~\ref{sec.0.thm2}  (see e.g.~\cite{MR3447005,MR1383498,MR2361505,MR3056755,MR2474179,MR2002047,MR1992354,MR1055532,MR952091}).

We will not try to detail all the uniqueness  results that can be lifted through the uniqueness principle of Thm.~\ref{sec.0.thm2}, but instead we will illustrate it by a few remarkable examples given below.   Although Thm.~\ref{sec.0.thm2} seems only suitable for using   unconditional uniqueness results in spaces of type $L^\infty(I, H^s(\R^d))$  for  the initial value problem \eqref{IVP}, it is not difficult to see that its proof allows also to lift conditional uniqueness results for the  \eqref{IVP}  to the related hierarchy equations \eqref{int-hier}.  However, to avoid intricate statements  in this case, we refrain from giving an abstract conditional uniqueness result for hierarchy equations and prefer to treat some  significant examples below.

It seems that  in the literature the only studied hierarchy equations are related to nonlinearities  given by $g(u)=|u|^\alpha u$ with $\alpha=2,4$ or $g(u)=V*|u|^2 u$. So, here in particular we show how to define a general  symmetric  hierarchy equations for any consistent\footnote{We mean that $g$ defines a  Borel map $G$ as in  \eqref{nonlin} for some $s,\sigma\geq 0$ and it is bounded on bounded sets.}   nonlinearity which is $U(1)$-invariant.
Our formulation of the hierarchy equations in \eqref{int-hier}   is
equivalent to the usual writing in the special cases mentioned before. But there are two slight differences, which are worth highlighting: Firstly,
we prefer to  work in the interaction representation while in the literature integral equations are used and secondly we use trace-class operators while in the literature  kernel representation is preferred. In Subsection \ref{reg-issue}, the above equivalence between these two writing is explained.

\bigskip
\noindent
\emph{ Unconditional results}:
One of the remarkable results on unconditional uniqueness for  NLS equations \eqref{eq.ivp} is due to Kato in \cite{MR1383498}. It says that any two mild solutions
in $L^\infty(I, H^s(\R^d))$   of  \eqref{eq.ivp}  with the same initial condition coincide under some assumptions on the dimension $d\geq 1$, the power $\alpha\geq 0$ and the Sobolev scale $s\geq 0$.
Such a result has the following implication on the related hierarchies.

\begin{prop}[Kato]
\label{uniq-kato}
Let $g:\C\to\C$ a function satisfying \eqref{cond-nls}  and take $s\geq 0$.  Consider  $v:\R\times H^s(\R^d) \to H^{-\sigma}(\R^d)$  to be the vector field  given by \eqref{nls-v}. Then any two solutions  of the  hierarchy equation \eqref{int-hier}  satisfying \eqref{A2}, with the same initial condition, coincide in the following cases:
 \begin{itemize}
 \item [(i)] $s\geq \frac{d}{2}$.
 \item [(ii)] $d\geq 2$, $0\leq s <\frac{d}{2}$ and $\alpha< \min\{\frac{4}{d-2s},\frac{2s+2}{d-2s}\}$.
 \item [(iii)] $d=1$, $0\leq s<\frac{1}{2}$ and $ \alpha\leq \frac{1+2s}{1-2s}$.
 \end{itemize}
\end{prop}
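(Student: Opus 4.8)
The plan is to deduce the statement from the general uniqueness principle of Theorem~\ref{sec.0.thm2} together with Kato's classical unconditional uniqueness theorem for the nonlinear Schr\"odinger equation \cite{MR1383498}. Concretely, I would fix the rigged--Hilbert--space data \eqref{spec-fram}, namely $A=-\Delta+1$, $\zeds=H^s(\R^d)$, $\zedsi=H^{-\sigma}(\R^d)$ for a suitable $\sigma\geq s$, and work with the interaction--picture vector field $v(t,x)=-i\,\mathcal{U}(-t)g(\mathcal{U}(t)x)$ of \eqref{nls-v}. Theorem~\ref{sec.0.thm2} then reduces the assertion to the single statement that weak solutions of the initial value problem \eqref{IVP} associated with this $v$ are unique over any bounded open interval $I$, for every initial datum in $H^s(\R^d)$.

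Two verifications precede the application of Theorem~\ref{sec.0.thm2}. First, that $v$ satisfies \eqref{A0}: by the discussion following \eqref{nls-v} this holds as soon as $g$ obeys \eqref{cond-nls} (which is part of the hypotheses) and the exponent $\alpha$ obeys \eqref{alpha} when $0\leq s<\tfrac d2$, with no restriction on $\alpha$ when $s\geq\tfrac d2$. In case (i) there is nothing to check. In case (ii) one has $d\geq 2$, hence $\tfrac{2s+2}{d-2s}\leq\tfrac{d+2s}{d-2s}$ (this inequality is precisely equivalent to $d\geq 2$), so that $\alpha<\tfrac{2s+2}{d-2s}\leq\tfrac{d+2s}{d-2s}$ and \eqref{alpha} holds. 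In case (iii) one has $d=1$ and the hypothesis $\alpha\leq\tfrac{1+2s}{1-2s}$ is literally \eqref{alpha}. Thus in all three cases $v$ satisfies \eqref{A0}, the operations $C^\pm_{j,k}$ and the hierarchy equation \eqref{int-hier} are meaningful, and one may choose $\sigma\geq s$ (enlarging it if necessary, which is harmless since $H^{-\sigma}\hookrightarrow H^{-\sigma'}$ for $\sigma'\geq\sigma$; for $s\geq\tfrac d2$ one takes $\sigma=s$ using $G\colon H^s\to L^2$). Second, I would recall the equivalence recorded just before \eqref{spec-fram}: a function $u\in L^\infty(I,H^s(\R^d))$ is a mild solution of \eqref{eq.ivp} with datum $u_0$ if and only if $\tilde u:=\mathcal{U}(-\cdot)\,u$ is a weak solution of \eqref{IVP} for the vector field \eqref{nls-v} with the same initial datum; since this correspondence is bijective, uniqueness of weak solutions of \eqref{IVP} is equivalent to uniqueness of mild solutions of \eqref{eq.ivp} in $L^\infty(I,H^s(\R^d))$.

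It therefore remains to invoke Kato's theorem \cite{MR1383498}, which asserts exactly that any two mild solutions of \eqref{eq.ivp} in $L^\infty(I,H^s(\R^d))$ sharing the same initial value coincide, under each of the arithmetic conditions on $(d,s,\alpha)$ listed in (i), (ii), (iii). Feeding this uniqueness statement into Theorem~\ref{sec.0.thm2} yields uniqueness of solutions of the hierarchy equation \eqref{int-hier} among those satisfying \eqref{A2}, which is the claim.

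The proof is essentially a matter of bookkeeping: once Theorem~\ref{sec.0.thm2} — equivalently, the duality with the Liouville equation \eqref{eq.transport} and the transport--uniqueness argument behind it — is available, no new analytic difficulty arises. The one point deserving attention is the compatibility of the two regimes of hypotheses, those making the vector field $v$ well defined into a negative Sobolev space (hence \eqref{A0} and \eqref{int-hier} meaningful) and those under which Kato's uniqueness holds. As the computation in the second paragraph shows, for $d\geq 2$ the threshold $\tfrac{2s+2}{d-2s}$ of (ii) dominates the consistency threshold $\tfrac{d+2s}{d-2s}$ of \eqref{alpha}, while for $d=1$ the two thresholds coincide; this is precisely what makes the hypotheses (i)--(iii) simultaneously sufficient for both requirements.
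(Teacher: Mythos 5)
Your proposal is correct and follows exactly the route the paper intends: instantiate the rigged scale via \eqref{spec-fram}, check that \eqref{A0} holds (case by case, via \eqref{alpha}), use the recorded equivalence between mild solutions of \eqref{eq.ivp} and weak solutions of \eqref{IVP} under the interaction‑picture vector field \eqref{nls-v}, and then feed Kato's unconditional uniqueness theorem into Theorem~\ref{sec.0.thm2}. The paper itself leaves this as a remark rather than a displayed proof; your verification that $\frac{2s+2}{d-2s}\leq\frac{d+2s}{d-2s}$ precisely when $d\geq 2$, so that the Kato threshold in (ii) already forces the boundedness condition \eqref{alpha}, is the one nontrivial piece of bookkeeping and you carry it out correctly.
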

There are several improvement (see e.g. \cite{MR2361505,MR3056755,MR2474179,MR2002047,MR1992354,MR1055532,MR952091}) of the unconditional uniqueness  result of Kato when the nonlinearity is specified as
\begin{equation}
\label{nonlin-p}
g(u)=\pm |u|^\alpha u\,.
\end{equation}
We summarize a straightforward consequence of some of these uniqueness  results on the hierarchy equations
 \eqref{int-hier}.  We respectively  refer to \cite[Theorem 1.5]{MR1992354}, \cite[Theorem 1.1]{MR2361505} and
\cite[Theorem 1.5]{MR3056755}.
\begin{prop}[]
\label{uniq-power}
Consider  $v:\R\times H^s(\R^d) \to H^{-\sigma}(\R^d)$  to be the vector field  defined in  \eqref{nls-v} with the nonlinearity given by \eqref{nonlin-p}. Then any two solutions  of the  corresponding hierarchy equation \eqref{int-hier}  satisfying \eqref{A2}, with the same initial condition coincide in the following cases:
 \begin{itemize}
 \item (Furioli $\&$ Terraneo):
 \begin{eqnarray*}
  && 3\leq d\leq 5, \quad 0<s<1, \quad  \alpha \leq \frac{d+2-2s}{d-2s},
\\\text{ and }  &&\max\{1, \frac{2s}{d-2s}\}<\alpha<\min\{\frac{4}{d-2s},\frac{ d+2s}{d-2s},\frac{ 4s+2}{d-2s}\}.
 \end{eqnarray*}
 \item (Rogers):
 \begin{equation*}
d\geq3,\quad 0\leq s\leq 1, \quad
\frac{2+2s}{d-2s}\leq\alpha<\min\{\frac{2+4s-\frac{4s}{d}}{d-2s},\
\frac{4}{d-2s}\},
\end{equation*}
 \item (Han $\&$ Fang):
 \begin{eqnarray*}
&&d=2,   \quad 0<s<1,   \quad \alpha=\frac{2+2s}{2-2s} ,\\
 \text{ or } && d=3, \quad  \frac{1}{4}<s<\frac{1}{2},  \quad \alpha=\frac{3+2s}{3-2s}.
 \end{eqnarray*}
 \end{itemize}
\end{prop}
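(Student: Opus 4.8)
The plan is to obtain Proposition~\ref{uniq-power} as an immediate consequence of the uniqueness principle of Theorem~\ref{sec.0.thm2}, applied in the framework \eqref{spec-fram}; the whole task then reduces to checking the hypotheses of that theorem and quoting the appropriate unconditional uniqueness theorem for the NLS equation \eqref{eq.ivp} in each of the three parameter regimes. No combinatorics and no multilinear estimates will be needed.

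First I would verify that the nonlinearity $g(u)=\pm|u|^\alpha u$ fits the assumptions \eqref{cond-nls}. Viewing $g$ as a map $\R^2\to\R^2$, it is $\mathscr C^\infty$ away from the origin, and its real partial derivatives, which are $O(|\xi|^\alpha)$, extend continuously by $0$ at $\xi=0$ because $\alpha\geq0$; hence $g\in\mathscr C^1(\R^2,\R^2)$, $g(0)=0$, and $|g'(\xi)|\leq M|\xi|^\alpha$ for $|\xi|\geq1$, while $g(e^{i\theta}z)=|z|^\alpha e^{i\theta}z=e^{i\theta}g(z)$ gives the $U(1)$-equivariance. Next I would confront $\alpha$ with the Sobolev threshold \eqref{alpha}: in the Furioli--Terraneo and Rogers regimes the listed constraints already force $\alpha<\frac{d+2s}{d-2s}$ (for Rogers this uses $d>2$ and $2s<d$, both valid since $s\leq1$ and $d\geq3$), whereas in the Han--Fang cases one has exactly $\alpha=\frac{2+2s}{2-2s}=\frac{d+2s}{d-2s}$ for $d=2$ and $\alpha=\frac{3+2s}{3-2s}=\frac{d+2s}{d-2s}$ for $d=3$. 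In every case $0\leq\alpha\leq\frac{d+2s}{d-2s}$ with $0\leq s<\frac d2$, so the discussion following \eqref{alpha} yields $\sigma\geq0$ for which the vector field $v(t,x)=-i\,\mathcal U(-t)\,g(\mathcal U(t)x)$ maps $\R\times H^s(\R^d)$ into $H^{-\sigma}(\R^d)$, is Borel, bounded on bounded sets and $U(1)$-invariant; that is, $v$ obeys \eqref{A0} in the framework \eqref{spec-fram}.

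Then I would transfer the uniqueness. As recalled around \eqref{nls-v}, the substitution $\tilde u\mapsto \mathcal U(\cdot)\tilde u$ is a bijection between weak solutions of \eqref{IVP} over $I$ in the sense of Definition~\ref{w-ssol} and mild solutions of \eqref{eq.ivp} lying in $L^\infty(I,H^s(\R^d))$, so uniqueness of weak solutions of \eqref{IVP} with a given datum is equivalent to uniqueness of $L^\infty(I,H^s)$-mild solutions of \eqref{eq.ivp}. This last property is precisely the content of the cited theorems: \cite[Theorem~1.5]{MR1992354} in the Furioli--Terraneo range, \cite[Theorem~1.1]{MR2361505} in the Rogers range, and \cite[Theorem~1.5]{MR3056755} in the Han--Fang cases. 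Feeding the resulting uniqueness for \eqref{IVP} into Theorem~\ref{sec.0.thm2}, with $A=-\Delta+1$, $\mathscr Z_s=H^s(\R^d)$ and $\mathscr Z_{-\sigma}=H^{-\sigma}(\R^d)$, gives that any two solutions of the hierarchy equation \eqref{int-hier} satisfying \eqref{A2} and sharing the same initial datum coincide, which is the assertion.

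The hard part is not in the hierarchy machinery—once the initial value problem is uniquely solvable, Theorem~\ref{sec.0.thm2} does everything and nothing combinatorial remains—but in the bookkeeping of the transfer step: one must make sure that the solution class used in \cite{MR1992354,MR2361505,MR3056755} really matches the weak-solution class of \eqref{IVP}, i.e.~that the quoted uniqueness holds among \emph{all} $L^\infty_tH^s_x$ solutions and not only those carrying auxiliary Strichartz norms, and—in the critical endpoint cases of Han--Fang—that the endpoint form of Sobolev's inequality still delivers \eqref{A0} (recalling that \eqref{A0} asks only for a Borel vector field that is bounded on bounded sets, so continuity at the endpoint is not even required). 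I expect both points to be routine given the precise statements of those papers and the endpoint Sobolev embedding already invoked after \eqref{alpha}.
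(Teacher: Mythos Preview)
Your proposal is correct and follows exactly the approach the paper intends: the proposition is stated in the paper as a ``straightforward consequence'' of Theorem~\ref{sec.0.thm2} together with the cited unconditional uniqueness results for NLS, and your argument simply spells out the verification of \eqref{cond-nls}, \eqref{alpha} and the transfer between mild solutions of \eqref{eq.ivp} and weak solutions of \eqref{IVP} that the paper outlines in the paragraphs preceding the statement.
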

In particular, for cubic nonlinearity $\alpha=2$, we recover uniqueness for the hierarchy equation \eqref{GPH-cubic}
proved in \cite{MR3395127} in the cases $d=1,2$ for any  $s\geq\frac{d}{6}$ and  $d\geq 3$ for any $s>\frac{d-2}{2}$.  While for the  quartic  nonlinearity $\alpha=3$, we obtain unconditional uniqueness for hierarchy equations  in the cases:  $d=1,2$ for any  $s\geq\frac{d}{4}$ and  $d\geq 3$ for any $s>\frac{3d-4}{6}$.

\bigskip
\noindent
\emph{ Conditional results}:
In this paragraph we provide  two further uniqueness results of conditional type for hierarchy equations \eqref{int-hier}. Conditional here means that we require additional  implicit conditions on hierarchy solutions in the spirit of nonlinear dispersive equations.  Indeed,  consider a mapping
\begin{equation}
\label{unc-g}
\begin{aligned}
&g:L^2(\R^d)\cap L^r(\R^d)\longrightarrow  L^{r'}(\R^d)\,,&\\
\text{ with  } & 2\leq r <\frac{2d}{d-2} \;\text{  if } \; d\geq 2 \;\text{ or  }\; 2\leq r \leq \infty \text{ if } d=1 \quad(\text{ Here } \; \frac{1}{r'}+\frac{1}{r}=1); &
\end{aligned}
\end{equation}
 and assume the existence of a constant $\alpha>0$ such that for any $M>0$ there exists $C(M)<\infty$  satisfying:
\begin{equation}
\label{unc.eq.1}
||g(u_1)-g(u_2)||_{L^{r'}(\R^d)}\leq C(M) \; \bigg( ||u_1||_{L^r(\R^d)}^\alpha+ ||u_2||_{L^r(\R^d)}^\alpha\bigg)\;
 ||u_1-u_2||_{L^r(\R^d)}\,,
\end{equation}
for any $u_1,u_2\in L^2(\R^d)\cap L^r(\R^d) $ such that $||u_1||_{L^2(\R^d)}\leq M$ and $||u_2||_{L^2(\R^d)}\leq M$.

\medskip
We are interested on the NLS equation \eqref{eq.ivp}, with $g(u)$  given as before, and its corresponding
hierarchy equation \eqref{int-hier}.  To fit our general setting for hierarchy equations  \eqref{int-hier} we need the following  observations: Firstly, one can easily extend the mapping $g:L^2(\R^d)\cap L^r(\R^d)\to  L^{r'}(\R^d)$ to a  Borel map  $G:L^2(\R^d)\to H^{-1}(\R^d)$ (see Lemma \ref{ext-borel-field} in the Appendix). Secondly, the results which we state below are independent from the choice of the extension.

\begin{prop}
\label{uniq-unc1}
Let $g$ be a $U(1)$-invariant mapping satisfying \eqref{unc-g}-\eqref{unc.eq.1} and such that:
$$
\frac{2}{q}:=d\,(\frac{1}{2}-\frac{1}{r})<\frac{2}{\alpha+2}\,.
$$
Consider the  Borel vector field  $v$ given by
\begin{equation}
\label{vect-field}
v(t,x):=-i \,\mathcal{U}(-t) \,G(\mathcal{U}(t) x)
\end{equation}
with $\mathcal{U}(t)=e^{it \Delta}$ and $G$ is any  Borel extension of the mapping $g$.  Let $I$ a bounded open interval and $t\in I\to \gamma_t,\tilde\gamma_t \in\mathscr{H}(L^2(\R^d))$ two solutions of the hierarchy equation \eqref{int-hier}  associated to $v$ and satisfying the assumption \eqref{A2} (with $s=0$, $\sigma=1$ and  $A=-\Delta+1$).    Furthermore, suppose  that
\begin{equation}
\label{cond-Stri}
\int_{I} \int_{L^2} ||\,\mathcal{U}(t) \,x||_{L^r}^q\; d\mu_t(x)\,dt<\infty \quad \text{ and } \quad
\int_{I} \int_{L^2} ||\,\mathcal{U}(t)\, x||_{L^r}^q\; d\tilde\mu_t(x)\,dt<\infty\,,
\end{equation}
where $\mu_t$ and $\tilde\mu_t$ are the two Borel probability measures verifying  for all $k\in\N$,
\begin{equation}
\label{cond-eq2}
\gamma_t^{(k)}=\int_{L^2} |x^{\otimes k} \rangle \langle x^{\otimes k} | \;d\mu_t(x)\, \quad \text{ and } \quad
\tilde\gamma_t^{(k)}=\int_{L^2} |x^{\otimes k} \rangle \langle x^{\otimes k} | \;d\tilde\mu_t(x)\,.
\end{equation}
Then   $\gamma_{t_0}=\tilde\gamma_{t_0}$ for some $t_0\in I$ implies that  $\gamma_{t}=\tilde\gamma_{t}$ for all $t\in I$.
\end{prop}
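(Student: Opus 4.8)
The plan is to pull the problem back, through the duality of Theorem~\ref{sec.0.thm1}, to the Liouville equation, and there to run the argument behind Theorem~\ref{sec.0.thm2} while carrying the a~priori Strichartz integrability \eqref{cond-Stri} along; the only genuinely new ingredient is a conditional uniqueness statement for the NLS equation \eqref{eq.ivp} in the space $L^\infty(I,L^2(\R^d))\cap L^q(I,L^r(\R^d))$. First I would invoke Theorem~\ref{sec.0.thm1}: the two hierarchy solutions $t\in I\mapsto\gamma_t,\tilde\gamma_t$ correspond to two solutions $t\in I\mapsto\mu_t,\tilde\mu_t$ of the Liouville equation \eqref{eq.transport} for the vector field $v$ of \eqref{vect-field}, both satisfying \eqref{A1}, with $\mu_t,\tilde\mu_t$ the de~Finetti measures appearing in \eqref{cond-eq2}. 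Since the correspondence $\mu\mapsto(\gamma^{(k)})_k$ of \eqref{pre-dfinet} is injective (Proposition~\ref{bij}), the hypothesis $\gamma_{t_0}=\tilde\gamma_{t_0}$ forces $\mu_{t_0}=\tilde\mu_{t_0}=:\nu_0$, so it remains to prove $\mu_t=\tilde\mu_t$ for all $t\in I$; and in view of $v(t,x)=-i\,\mathcal U(-t)G(\mathcal U(t)x)$, condition \eqref{cond-Stri} is precisely the moment bound $\int_I\int_{L^2}\|\mathcal U(t)x\|_{L^r}^q\,d\mu_t(x)\,dt<\infty$ (and the same for $\tilde\mu_\cdot$).

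The analytic core is the claim: \emph{if $u_1,u_2\in L^\infty(I,L^2(\R^d))\cap L^q(I,L^r(\R^d))$ are mild solutions of \eqref{eq.ivp} with $u_1(t_0)=u_2(t_0)$, then $u_1\equiv u_2$.} I would prove it by the standard short-interval Strichartz contraction: the identity $\tfrac2q=d(\tfrac12-\tfrac1r)$ makes $(q,r)$ an admissible Schr\"odinger pair, so for any subinterval $J\subset I$ with left endpoint $t_0$ the Duhamel formula \eqref{nls-int} and the inhomogeneous Strichartz estimate give $\|u_1-u_2\|_{L^q(J,L^r)}\lesssim\|g(u_1)-g(u_2)\|_{L^{q'}(J,L^{r'})}$; inserting \eqref{unc.eq.1} with $M:=\max_i\|u_i\|_{L^\infty(I,L^2)}$ (here $M\le1$ by \eqref{A1}) and applying Hölder in time with exponents $\tfrac q\alpha,\ \tfrac1\theta,\ q$ — the strict inequality $\tfrac2q<\tfrac2{\alpha+2}$, i.e.\ $q>\alpha+2$, producing a strictly positive gain $\theta:=1-\tfrac{\alpha+2}{q}>0$ — yields
\[
\|u_1-u_2\|_{L^q(J,L^r)}\le C(M)\,\big(\|u_1\|_{L^q(I,L^r)}^{\alpha}+\|u_2\|_{L^q(I,L^r)}^{\alpha}\big)\,|J|^{\theta}\,\|u_1-u_2\|_{L^q(J,L^r)}.
\]
Choosing $|J|$ small, depending only on $M$ and on the finite quantity $\|u_1\|_{L^q(I,L^r)}+\|u_2\|_{L^q(I,L^r)}$, forces $u_1=u_2$ on $J$; since both solutions are weakly continuous into $L^2$, they then coincide at the right endpoint of $J$ as well, and covering $I$ by finitely many such intervals completes the claim.

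With this lemma in hand I would conclude as in the proof of Theorem~\ref{sec.0.thm2}, now keeping track of \eqref{cond-Stri}. The transport machinery used there — a superposition (probabilistic representation) principle in the spirit of \cite{MR3721874} — attaches to the Liouville solution $\mu_\cdot$ satisfying \eqref{A1} a Borel probability measure $\eta$ on $\mathscr C(\bar I,H^{-1}(\R^d))$ concentrated on weak solutions $\omega$ of \eqref{IVP} (so that $\mathcal U(\cdot)\omega\in L^\infty(I,L^2)$), with $\mu_t=(e_t)_\sharp\eta$ for every $t\in I$ and $(e_{t_0})_\sharp\eta=\nu_0$, where $e_t$ is evaluation at time $t$. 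By Tonelli,
\[
\int_{\mathscr C(\bar I,H^{-1})}\Big(\int_I\|\mathcal U(t)\,\omega(t)\|_{L^r}^q\,dt\Big)\,d\eta(\omega)=\int_I\int_{L^2}\|\mathcal U(t)x\|_{L^r}^q\,d\mu_t(x)\,dt<\infty,
\]
hence for $\eta$-a.e.\ $\omega$ the NLS solution $u_\omega:=\mathcal U(\cdot)\omega(\cdot)$ lies in $L^\infty(I,L^2)\cap L^q(I,L^r)$. Disintegrating $\eta=\int\eta_x\,d\nu_0(x)$ along $e_{t_0}$, each $\eta_x$ is carried by weak solutions through $x$ at time $t_0$ enjoying this Strichartz bound, so the conditional uniqueness claim forces $\eta_x=\delta_{\Phi(x)}$ for $\nu_0$-a.e.\ $x$, with $\Phi(x)$ the unique such curve; therefore $\eta=\Phi_\sharp\nu_0$. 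The identical description holds for $\tilde\mu_\cdot$ with the same $\nu_0$ and the same map $\Phi$, so $\eta=\tilde\eta$ and $\mu_t=(e_t)_\sharp\eta=(e_t)_\sharp\tilde\eta=\tilde\mu_t$ for all $t\in I$; transporting back through Theorem~\ref{sec.0.thm1} gives $\gamma_t=\tilde\gamma_t$ on $I$. (Since along $\eta$-a.e.\ curve $v(t,\cdot)$ coincides with $-i\,\mathcal U(-t)g(\mathcal U(t)\cdot)$ for a.e.\ $t$, the particular Borel extension $G$ of $g$ plays no role, as already noted.)

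I expect the main obstacle to be the interface between the superposition principle and the conditional hypothesis: one must verify that the representing measure $\eta$ produced by the transport machinery genuinely detects the integrability \eqref{cond-Stri} (legitimacy of the Tonelli interchange, Borel-measurability of the set of curves satisfying the Strichartz bound) and that the conditionals $\eta_x$ are, for $\nu_0$-a.e.\ $x$, supported on curves to which the conditional NLS-uniqueness lemma genuinely applies, so that $\eta$ is honestly the pushforward $\Phi_\sharp\nu_0$ rather than merely a measure with the correct one-point conditionals on a set of full measure; the Borel measurability of $\Phi$ must be established along the way as well. This is where the announced improvement of \cite{MR3721874} enters. By contrast, the second step is routine Strichartz theory, and the precise numerology $\tfrac2q=d(\tfrac12-\tfrac1r)<\tfrac2{\alpha+2}$ is used only to secure admissibility of $(q,r)$ and a strictly positive time gain, which is exactly what makes the short-interval contraction close.
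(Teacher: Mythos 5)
Your proposal is correct and follows essentially the same route as the paper: reduce to the Liouville equation via the duality of Theorem~\ref{sec.0.thm1}, invoke the probabilistic superposition principle of Proposition~\ref{prob-rep} (which, as the paper observes, remains valid once one replaces ``$v$ bounded on bounded sets'' by the integrated bound $\int_I\int_{L^2}\|v(t,x)\|_{H^{-1}}\,d\mu_t\,dt<\infty$ supplied by \eqref{unc.eq.1} and \eqref{cond-Stri}), transfer \eqref{cond-Stri} by Tonelli to $\eta$-a.e.\ representing curve exactly as you do, and conclude from conditional uniqueness for NLS in $L^\infty(I,L^2)\cap L^q(I,L^r)$. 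The only differences are cosmetic: the paper cites Tsutsumi \cite{MR915266} and \cite[Theorem~4.6.1]{MR2002047} for the conditional NLS uniqueness that you re-derive by a short-interval Strichartz contraction (the strict inequality $q>\alpha+2$ giving the positive time gain $\theta$ is precisely what those references use too), and the paper closes via the Borel flow $\phi(t,t_0)$ of Proposition~\ref{flotpp} rather than your disintegration of $\eta$ along $e_{t_0}$, which are two equivalent packagings of the same final step.
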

In contrast with the previous examples for unconditional uniqueness, in this case  we do not  assume that the
vector field $v$ is bounded on bounded sets. Nevertheless, the hierarchy equation \eqref{int-hier} still make sense for solutions $t\in I\to \gamma_t$  satisfying \eqref{A2} and \eqref{cond-Stri}.  In fact, one easily checks that
$$
\int_I \int_{L^2} ||v(t,x)||_{H^{-1}(\R^d)} \;d\nu_t(x) \,dt<\infty \,,
$$
where  $\nu_t$ is either $\mu_t$ or $\tilde\mu_t$ given in the right hand side of \eqref{cond-eq2}. So, the related Liouville equation \eqref{int-liouville}   makes sense even though the vector field $v$ may not be bounded on bounded sets. Consequently, using \eqref{est-vect1} one notices that  the hierarchy equation \eqref{int-hier} is meaningful  since
$$
\int_I ||(A^{-1/2})^{\otimes k}\gamma^{(k)}  \,C^{\pm}_{j,k} \gamma_t \,(A^{-1/2})^{\otimes k}||_{\mathscr{L}^1(L^2(\R^{dk}))}  \,dt<\infty \,,
$$
for any $k\in\N$ and $1\leq j\leq k$.  Notice also that the conditional uniqueness result given in Prop.~\ref{uniq-unc1}  holds for nonlinearity of the type
$$
g(u)=V u+f(\cdot,u(\cdot))+(W*|u|^2) \,u\,,
$$
where $V\in L^\delta(\R^d)+L^\infty(\R^d)$ for some $\delta\geq 1, \delta>\frac{d}{2}$,
$W\in L^\beta(\R^d)+L^\infty(\R^d)$ for some $\beta\geq 1, \beta>\frac{d}{2}$ and  $f$ satisfying  $f(x,0)=0$
for all $x\in\R^d$ and
$$
|f(x,z)-f(x,\tilde z)|\leq C (1+|z|+|\tilde z|)^\alpha \,|z-\tilde z|\,,
$$
for some $\alpha<\frac{4}{d}$ (see \cite[Corollary 4.6.5]{MR2002047}).

\medskip
We can also prove the following uniqueness result in the $L^2$-critical case. The proofs of Prop.~\ref{uniq-unc1} and \ref{uniq-unc2} are sketched  in Subsection \ref{sub.sec.uniq-ex}.
\begin{prop}
\label{uniq-unc2}
Take $g(u)=\lambda |u|^\alpha u$ with $\alpha=\frac{4}{d}$ and $\lambda\in\C$. Then the conclusion of Prop.~\ref{uniq-unc1}
holds true also in this case if we assume \eqref{cond-Stri} for $q=\alpha+2$.
\end{prop}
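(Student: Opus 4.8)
\emph{Set-up.} The proof runs along the same lines as that of Proposition~\ref{uniq-unc1}; the only step that genuinely changes is the uniqueness input for the underlying Cauchy problem, which must now be established at the \emph{critical} Strichartz exponent. Let $t\in I\mapsto\gamma_t,\tilde\gamma_t\in\mathscr{H}(L^2(\R^d))$ be two solutions of the hierarchy equation~\eqref{int-hier} attached to the vector field~\eqref{vect-field} (here $g(u)=\lambda|u|^{4/d}u$, $A=-\Delta+1$, $s=0$, $\sigma=1$), both satisfying~\eqref{A2} and the bounds~\eqref{cond-Stri} with $q=\alpha+2$, and let $\mu_t,\tilde\mu_t\in\mathscr{M}(L^2(\R^d))$ be the associated measures of~\eqref{cond-eq2}. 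As in the proof of Proposition~\ref{uniq-unc1}, the duality between~\eqref{int-hier} and~\eqref{eq.transport} (Theorem~\ref{sec.0.thm1}, which in this conditional setting holds without boundedness of $v$, as noted after Proposition~\ref{uniq-unc1}) shows that $t\mapsto\mu_t$ and $t\mapsto\tilde\mu_t$ solve the Liouville equation~\eqref{eq.transport} and satisfy~\eqref{A1}; since $\mathcal{U}(t)$ is unitary, condition~\eqref{cond-Stri} reads $\int_I\int_{L^2}\|\mathcal{U}(t)x\|_{L^r}^{q}\,d\mu_t(x)\,dt<\infty$ with $r=\alpha+2$ the diagonal, non-endpoint $L^2$-admissible exponent ($\tfrac{2}{q}=d(\tfrac12-\tfrac1r)$), and likewise for $\tilde\mu_t$. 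It is then enough to prove $\mu_{t_0}=\tilde\mu_{t_0}\Rightarrow\mu_t\equiv\tilde\mu_t$, the conclusion being transported back to $\gamma_t\equiv\tilde\gamma_t$ by duality.

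\emph{Step 1 (probabilistic representation).} By the dual Sobolev embedding $L^{r'}\hookrightarrow H^{-1}(\R^d)$ and the unitarity of $\mathcal{U}(t)$ one has $\|v(t,x)\|_{H^{-1}}\lesssim\|\mathcal{U}(t)x\|_{L^r}^{\alpha+1}$, so Hölder in time together with Jensen's inequality turns~\eqref{cond-Stri} into $\int_I\int_{L^2}\|v(t,x)\|_{H^{-1}}\,d\mu_t(x)\,dt<\infty$. This integrability of the vector field along the flow is exactly what the (improved) superposition principle for continuity equations on rigged Hilbert spaces of \cite{MR3379490,MR3721874} requires: it produces a Borel probability measure $\eta$ on curves, with $\mu_t=(e_t)_{\#}\eta$ for every $t\in I$ ($e_t$ the evaluation at time $t$), concentrated on solutions of the Duhamel identity~\eqref{int-form}; equivalently, after setting $u:=\mathcal{U}(\cdot)\tilde u$, $\eta$ is carried by mild solutions $u$ of the $L^2$-critical NLS~\eqref{nls-int}. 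Since~\eqref{cond-Stri} forces $\int_I\|u(t)\|_{L^r}^{q}\,dt<\infty$ for $\eta$-a.e.\ $u$, the Duhamel formula~\eqref{nls-int} and the continuity of the Strichartz operators upgrade $u$ to an element of $\mathscr{C}(\bar I,L^2(\R^d))\cap L^{\alpha+2}(I,L^{\alpha+2}(\R^d))$ for $\eta$-a.e.\ $u$. The same statements hold for $\tilde\eta$ and $\tilde\mu_t$.

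\emph{Step 2 (conditional uniqueness for the critical NLS).} One invokes the Cazenave--Weissler uniqueness theorem (see e.g.~\cite{MR2002047}): if $u_1,u_2\in\mathscr{C}(\bar I,L^2(\R^d))\cap L^{\alpha+2}(I,L^{\alpha+2}(\R^d))$ are mild solutions of~\eqref{eq.ivp} with $g(u)=\lambda|u|^{4/d}u$ and $u_1(t_0)=u_2(t_0)$, then $u_1\equiv u_2$. Indeed, applying the non-endpoint Strichartz estimate with the admissible pair $(\alpha+2,\alpha+2)$ to the Duhamel difference, using $\big||u_1|^{\alpha}u_1-|u_2|^{\alpha}u_2\big|\lesssim(|u_1|^{\alpha}+|u_2|^{\alpha})|u_1-u_2|$ and Hölder with $\tfrac1{r'}=\tfrac{\alpha+1}{r}$ in space and $\tfrac1{q'}=\tfrac{\alpha+1}{q}$ in time, one gets for any interval $J\ni t_0$
\[
\|u_1-u_2\|_{L^{\alpha+2}(J,L^{\alpha+2})}\le C\big(\|u_1\|_{L^{\alpha+2}(J,L^{\alpha+2})}^{\alpha}+\|u_2\|_{L^{\alpha+2}(J,L^{\alpha+2})}^{\alpha}\big)\,\|u_1-u_2\|_{L^{\alpha+2}(J,L^{\alpha+2})}.
\]
Since $u_1,u_2\in L^{\alpha+2}(I,L^{\alpha+2})$, absolute continuity of the Lebesgue integral makes the prefactor $<1$ on a sufficiently short $J$ around $t_0$, whence $u_1\equiv u_2$ on $J$; a finite chain of such intervals (their number depending on $\|u_1\|_{L^{\alpha+2}(I,L^{\alpha+2})}$ and $\|u_2\|_{L^{\alpha+2}(I,L^{\alpha+2})}$, but finite) covers $\bar I$ and yields $u_1\equiv u_2$ on $I$. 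Applying this to the $\eta$- and $\tilde\eta$-a.e.\ curves from Step 1 shows the solution through each admissible initial datum is unique; hence $\eta$ and $\tilde\eta$ are the push-forwards of $\mu_{t_0}$ and $\tilde\mu_{t_0}$ by the (well-defined, Borel) solution map, so $\mu_{t_0}=\tilde\mu_{t_0}$ forces $\eta=\tilde\eta$ and finally $\mu_t=(e_t)_{\#}\eta=(e_t)_{\#}\tilde\eta=\tilde\mu_t$ for all $t\in I$.

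\emph{Main obstacle.} All of the above is the subcritical argument of Proposition~\ref{uniq-unc1}, except that there the contraction prefactor is made small purely by Hölder in time, $\|u\|_{L^{\alpha+2}(J,L^r)}\le|J|^{\theta}\|u\|_{L^{q}(J,L^r)}$ with $\theta=\tfrac1{\alpha+2}-\tfrac1q>0$ and a length $|J|$ independent of the curve. In the critical case $\theta=0$, and the smallness can only come from absolute continuity of $t\mapsto\int_J\|u(t)\|_{L^r}^{\alpha+2}\,dt$ for each individual Strichartz-class curve. This makes the length of the good interval curve-dependent, so the passage from local to global uniqueness, as well as the measurable selection of the solution map needed to pin down $\eta$, must be carried out curve-by-curve ($\eta$-almost everywhere) rather than uniformly; checking that the superposition principle of Step 1 uses only the critical integrability furnished by~\eqref{cond-Stri}, with no subcritical gain, is the last point that needs care.
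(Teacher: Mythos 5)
Your proof is correct and follows essentially the same route as the paper: invoke the superposition principle (Proposition~\ref{prob-rep}) under the relaxed integrability $\int_I\int_{L^2}\|v(t,x)\|_{H^{-1}}\,d\mu_t\,dt<\infty$ furnished by \eqref{cond-Stri}, upgrade the $\eta$-a.e.\ path to a Strichartz-class mild solution via the Duhamel formula \eqref{pro.duha}, and then apply the Tsutsumi/Cazenave conditional uniqueness for the $L^2$-critical NLS. The paper simply cites \cite{MR915266} and \cite[Theorem~4.6.1]{MR2002047} for that last step and for the propagation of uniqueness through the flow, whereas you spell out the critical-exponent contraction argument and the curve-dependent interval chaining; this is a helpful clarification but not a different approach.
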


\bigskip
\noindent
\emph{Counter-example:}  Finally, we borrow a nice counter-example from the work of Win-Tsutsumi \cite{MR2474179} for the $L^2$-critical NLS equation (i.e.,
$g(u)=- |u|^\alpha u$ with $\alpha=\frac{4}{d}$)  in order to show that Prop.~\ref{uniq-unc2} breaks down if we
remove the assumption \eqref{cond-Stri}.  Indeed, consider the function
$$
u(t,x)=\frac{1}{(2t)^{d/2}} \, e^{i |x|^2} \, e^{i/t} \, \phi(\frac{x}{2t})\,,
$$
where $\phi$ is a solution of the nonlinear elliptic equation
$$
-\Delta\phi+\phi-\phi^{1+4/d}=0, \quad \phi>0, \quad \phi\in H^1(\R^d)\,.
$$
Then one checks that $t\in\R\setminus\{0\}\to u(t):=u(t,\cdot)\in L^2(\R^d)$ is  continuous and $u(t)$ converges weakly
to $0$ when $t\to 0$. So that $t\to u(t)\in L^2(\R^d)$ is a weakly continuous solution of the NLS equation \eqref{eq.ivp} satisfying $u(0)=0$. Consequently,  the curve $t\to \gamma_t\in\mathscr{H}(L^2(\R^d))$ given
for any $k\in\N$ by
$$
\gamma_t^{(k)}=\int_{L^2} |x^{\otimes k} \rangle \langle x^{\otimes k} | \;d\mu_t(x)\,,
$$
such that  $\mu_t= \delta_{\mathcal{U}(-t) u(t)}$, satisfies \eqref{A2},   $\gamma_0=0$ and the hierarchy equation \eqref{int-hier} with the corresponding vector field \eqref{vect-field}.  So, we conclude  that the hierarchy equation \eqref{int-hier} lacks uniqueness since $\gamma_t\neq 0$ if $t\neq 0$  while  the null hierarchy is also a solution.

\bigskip
\noindent
%\emph{Outline of the article:}

\section{Structure of symmetric hierarchies}
\label{sec:01}
According to the fundamental postulates of  quantum mechanics, a {normal state} of a system of $n$-quantum particles is described by a \emph{density operator}, which is a non-negative trace class  operator $\varrho_n$ of trace one acting on a tensor product of $n$-Hilbert spaces
$\mathfrak{H}_1\otimes\cdots\otimes \mathfrak{H}_n$.  Moreover, when the particles are bosons and identical, the system should obey  the Bose-Einstein statistics.
This means that the Hilbert spaces are identical $\mathfrak{H}=\mathfrak{H}_1=\cdots= \mathfrak{H}_n$  and the multiple-particle state $\varrho_n$ satisfies the following  symmetry,
\begin{equation}
\label{eq.1}
U_\pi \,\varrho_n =\varrho_n  \,,
\end{equation}
where $U_\pi$ is the operator defined for any  permutation $\pi$ as
$$
U_\pi f_1\otimes \cdots  \otimes f_n:=  f_{\pi(1)}\otimes\cdots \otimes f_{\pi(n)}\,.
$$
In particular, the following operator
$$
S_n :=\frac{1}{n!} \sum_{\pi} U_\pi
$$
defines an orthogonal projection on $\mathfrak{H}^{\otimes n}$ and the relation \eqref{eq.1} yields
$$
S_n \varrho_n=\varrho_n S_n = \varrho_n.
$$
This means that   a normal  state of a system of  $n$-identical  bosons is in fact a density  operator on the $n$-symmetric tensor product
$$
\vee^n\mathfrak{H}:=S_n\,\mathfrak{H}^{\otimes n}\, .
$$

\subsection{Symmetric hierarchies}
Consider  a sequence of normal states $(\varrho_n)_{n\in \N}\in \Pi_{n\in \N} \mathscr{L}^1(\vee^n\mathfrak{H})$. Then  for each fixed $k$ the $k$-particle reduced density matrices $(\varrho_n^{(k)})_{n\geq k}$, defined according to \eqref{redmat}, form a sequence of trace-class operators in the same space $\mathscr{L}^1(\vee^k\mathfrak{H})$. Hence, one can study the weak-$*$ convergence of  $(\varrho_n^{(k)})_{n\geq k}$. The following result asserts the existence of subsequential limits given by the same subsequence for all $k\in\N$.

\begin{lem}
\label{lem.1}
For any sequence of normal states  $(\varrho_n)_{n\in \N}\in \Pi_{n\in \N} \mathscr{L}^1(\vee^n\mathfrak{H})$ there exists a subsequence $(\varrho_{n_i})_{i\in \N}$ and a sequence
$(\gamma^{(k)})_{k\in \N} \in  \Pi_{k\in \N} \mathscr{L}^1(\vee^k\mathfrak{H})$
such that for any $k\in \N$,
\begin{equation}
\label{eq.sec1.1}
\lim_{i\to \infty} \Tr_{\vee^k \mathfrak{H}} \big[\varrho_{n_i}^{(k)}  \, A\big]=\Tr_{\vee^k \mathfrak{H}}
\big[\gamma^{(k)}  \, A\big]\,, \quad \forall A \in  \mathscr{L}^\infty(\vee^k\mathfrak{H})\,.
\end{equation}
\end{lem}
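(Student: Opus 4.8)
The plan is to extract a single subsequence that works simultaneously for all $k\in\N$ by a diagonal argument, using for each fixed $k$ the sequential Banach--Alaoglu theorem on the predual of $\mathscr{L}^1(\vee^k\mathfrak{H})$. First I would observe that, since $\mathfrak{H}$ is separable, so is each $\vee^k\mathfrak{H}$, and hence the space of compact operators $\mathscr{L}^\infty(\vee^k\mathfrak{H})$ is a separable Banach space whose dual is $\mathscr{L}^1(\vee^k\mathfrak{H})$. Next I would note that the reduced density matrices are uniformly bounded in trace norm: by the defining relation \eqref{redmat} with $A=1^{(k)}$ one gets $\Tr_{\vee^k\mathfrak{H}}[\varrho_n^{(k)}]=\Tr_{\otimes^n\mathfrak{H}}[\varrho_n]=1$, and since $\varrho_n^{(k)}\geq 0$ this means $\|\varrho_n^{(k)}\|_{\mathscr{L}^1(\vee^k\mathfrak{H})}=1$ for all $n\geq k$. (One should briefly justify that $\varrho_n^{(k)}$ is well-defined and non-negative; this is standard — partial traces of non-negative trace-class operators are non-negative trace-class operators, and existence/uniqueness of $\varrho_n^{(k)}$ follows from Riesz representation applied to the bounded linear functional $A\mapsto\Tr[\varrho_n\,A\otimes 1^{(n-k)}]$ on $\mathscr{L}(\vee^k\mathfrak{H})$, or one may simply take \eqref{redmat} as given from the Preliminaries.)

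With these two ingredients in place, the construction is the usual Cantor diagonal procedure. For $k=1$, the sequence $(\varrho_n^{(1)})_{n\geq 1}$ lies in the closed unit ball of $\mathscr{L}^1(\vee^1\mathfrak{H})=(\mathscr{L}^\infty(\vee^1\mathfrak{H}))^*$, which by Banach--Alaoglu together with separability of $\mathscr{L}^\infty(\vee^1\mathfrak{H})$ is weak-$*$ sequentially compact; extract a subsequence $(\varrho_{n^{(1)}_i}^{(1)})_i$ converging weak-$*$ to some $\gamma^{(1)}$. Passing to this subsequence and repeating for $k=2$, extract a further subsequence along which $\varrho^{(2)}$ converges weak-$*$ to some $\gamma^{(2)}$, and so on. Then the diagonal sequence $n_i:=n^{(i)}_i$ has the property that $(\varrho_{n_i}^{(k)})_{i\geq k}$ converges weak-$*$ to $\gamma^{(k)}$ in $\mathscr{L}^1(\vee^k\mathfrak{H})$ for every $k\in\N$, which is exactly \eqref{eq.sec1.1}.

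There is essentially no serious obstacle here — the statement is a soft compactness fact — but the one point that deserves care is the \emph{consistency of the extraction across $k$}: a priori one might worry that different values of $k$ force incompatible subsequences. This is precisely what the nested-then-diagonal argument resolves, and it is the reason the lemma insists that a single subsequence $(\varrho_{n_i})$ works for all $k$ at once. A secondary, purely bookkeeping point is that the weak-$*$ limits $\gamma^{(k)}$ land in $\mathscr{L}^1(\vee^k\mathfrak{H})$ (not merely in the bidual) because the closed unit ball of a dual space is weak-$*$ closed, so $\|\gamma^{(k)}\|_{\mathscr{L}^1}\leq 1$; one can also record, though it is not needed for this lemma, that $\gamma^{(k)}\geq 0$ by testing against $A=|\varphi\rangle\langle\varphi|$ and passing to the limit. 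These observations will be used later (e.g.\ to identify the limits with the de Finetti representation \eqref{pre-dfinet}), but for the present statement the diagonal extraction is all that is required.
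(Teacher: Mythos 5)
Your proposal is correct and follows exactly the same route as the paper's own (one-line) proof: Banach--Alaoglu applied to $\mathscr{L}^1(\vee^k\mathfrak{H})=\mathscr{L}^\infty(\vee^k\mathfrak{H})^*$ for each $k$, combined with a Cantor diagonal extraction to get a single subsequence working for all $k$. Your write-up simply spells out the standard details (separability of the predual for sequential compactness, the uniform trace-norm bound, weak-$*$ closedness of the unit ball) that the paper leaves implicit.
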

\begin{proof}
This follows by  Banach-Alaoglu theorem on  $  \mathscr{L}^1(\vee^n\mathfrak{H})= \mathscr{L}^\infty(\vee^n\mathfrak{H})^*$  together with a diagonal extraction argument.
\end{proof}

The subsequential limits provided by Lemma \ref{lem.1} are in fact the main analyzed quantities   in the study of  the mean field theory of Bose gases. As explained in the introduction, the many-body Schr\"odinger dynamics \eqref{eq.hn} yields the Gross-Pitaevskii and Hartree  hierarchies  \eqref{GPH-cubic}  in the mean-field limit. Furthermore, the solutions  $(\gamma^{(k)}(t))_{k\in\N}$ are actually the kernels of the subsequential limits of reduced density matrices originated from the quantum states $(\varrho_n(t))_{n\in\N}$ given by \eqref{qstate}.  So, this means that the physically relevant solutions of the  Gross-Pitaevskii and Hartree  hierarchies  \eqref{GPH-cubic}  are not any arbitrary sequences  $(\gamma^{(k)}(t))_{k\in\N} \in \Pi_{k\in\N}\mathscr{L}^1(\vee^k \mathfrak{H})$ of trace-class operators
but they have a more specific structure inherited from the relation \eqref{eq.sec1.1}.  So, this  motivated the following definition.

\begin{defn}
\label{sec1.def1}
We call a { symmetric hierarchy} any sequence $(\gamma^{(k)})_{k\in \N}$ \, $ \in  \Pi_{k\in \N} \mathscr{L}^1(\vee^k\mathfrak{H})$ such that there exists a sequence  of normal states $(\varrho_n)_{n\in \N}\in \Pi_{n\in \N} \mathscr{L}^1(\vee^n\mathfrak{H})$ and a subsequence $ (\varrho_{n_i})_{i\in \N}$ satisfying \eqref{eq.sec1.1} for all $k\in \N$. The set of all  symmetric hierarchies will be denoted by $\mathscr{H}(\mathfrak{H})$.
\end{defn}

The terminology of \emph{ symmetric hierarchy} is probably uncommon in the literature, nevertheless it seems justified on one hand by  the Bose-Einstein symmetry  and on the other one by the fact that these subsequential limits $(\gamma^{(k)})_{k\in \N}$  are countable collections  of linked trace-class operators  (i.e.,  $\gamma^{(k+1)}$ and $\gamma^{(k)}$ are related in some sense) and hence deserving the name of hierarchies.

\medskip
Some simple consequences may be derived from the Definition \ref{sec1.def1}.
In particular, the weak-$*$ convergence yields  that any element of  $(\gamma^{(k)})_{k\in \N}\in \mathscr{H}(\mathfrak{H})$ satisfies the constraint \eqref{cont2}, i.e.:
\begin{eqnarray*}
0 \leq \gamma^{(k)} \leq 1, \quad \forall k\in \N\,.
\end{eqnarray*}
One also can see that we have the following simple example:
\begin{itemize}
\item   Let $\varrho_n=|\varphi_n^{\otimes n} \rangle \langle \varphi_n^{\otimes n} | $ such that $ || \varphi_n||=1$ and $\varphi_n  \rightharpoonup  \varphi$ (with $|| \varphi||\leq 1$ ). Then $\varrho_n^{(k)} =|\varphi_n^{\otimes k} \rangle \langle \varphi_n^{\otimes k} | \overset{*}{\rightharpoonup}|\varphi^{\otimes k} \rangle \langle \varphi^{\otimes k} | $. Hence, for any $\varphi$ in the closed  unit ball of $\mathfrak{H}$,
\begin{equation}
\label{pure.points}
(|\varphi^{\otimes k} \rangle \langle \varphi^{\otimes k} |)_{k\in \N}\in \mathscr{H}(\mathfrak{H}).
\end{equation}
\end{itemize}
This shows for instance that $\mathscr{H}(\mathfrak{H})$ is a nontrivial  (uncountable) set. More fundamental  properties of $\mathscr{H}(\mathfrak{H})$ will be given in the next section.

\subsection{Integral representation}
\label{sec:02}
The set  of symmetric hierarchies $\mathscr{H}(\mathfrak{H})$ can be described as a  convex combinations of the simple elements in \eqref{pure.points}. Such a superposition principle is usually interpreted  as a non-commutative de Finetti theorem. Long ago St{\o}rmer in \cite{MR0241992} have proved for a different purpose, and using Choquet theory, a non-commutative de Finetti theorem for invariant states over some  $C^*$-algebras. Thereafter, Hudson and Moody in \cite{MR2108315,MR660367,MR0397421} specified St{\o}rmer's result to the framework of normal states and gave an integral representation for elements $(\gamma^{(k)})_{k\in \N}\in \mathscr{H}(\mathfrak{H})$  which satisfy $\Tr[\gamma^{(k)}]=1$ for all $k\in \N$. With a different point of view, Ammari and Nier have proved in \cite{MR2465733} an integral representation for all elements of $\mathscr{H}(\mathfrak{H})$ (see Prop.~\ref{str.BEh}) by appealing to Wigner measures. Subsequently, Lewin, Nam and Rougerie gave in \cite{MR3161107} an alternative proof. Thereafter, several other applications to this superposition principle appeared (see e.g.~\cite{MR3335056,MR3385343,MR3210237}).

In fact, each  symmetric hierarchy in $\mathscr{H}(\mathfrak{H})$ can be written as an integral, over a probability measure, of  elements in \eqref{pure.points}. Here we recall such a result and refer the reader for instance to  \cite[Proposition 1.1]{MR3506807}   for more details.

\begin{prop}
\label{str.BEh}
For each $(\gamma^{(k)})_{k\in \N}\in \mathscr{H}(\mathfrak{H})$ there exists a unique Borel probability
measure $\mu$ on the closed unit ball $ X_1:=B_\mathfrak{H}(0, 1)$ of $\mathfrak{H}$ such that it  is  $U(1)$-invariant and satisfies for any $k\in\N$,
\begin{equation}
\label{decomp}
\gamma^{(k)}=\int_{X_1} |\varphi^{\otimes k} \rangle \langle \varphi^{\otimes k} | \;d\mu(\varphi)\,.
\end{equation}
\end{prop}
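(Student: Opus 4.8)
The plan is to deduce the integral representation from the Wigner (semiclassical) measure machinery already available in the literature, since by Definition \ref{sec1.def1} the family $(\gamma^{(k)})_{k\in\N}$ arises as weak-$*$ limits of reduced density matrices of a sequence of normal states $(\varrho_{n_i})_{i\in\N}$ on $\vee^{n_i}\mathfrak{H}$. First I would normalize the setup: each $\varrho_{n_i}$ is a state on $\vee^{n_i}\mathfrak{H}$, so one may view the sequence $(\varrho_{n_i})_i$ as a state on the bosonic Fock space (concentrated on the $n_i$-particle sector) with the mean-field scaling $\varepsilon_i=1/n_i\to 0$. The key input is the existence theorem for Wigner measures (Ammari--Nier): up to a further subsequence there is a Borel probability measure $\mu$ on $\mathfrak{H}$ such that the semiclassical Wigner measures of $(\varrho_{n_i},\varepsilon_i)$ equal $\mu$, and because each $\varrho_{n_i}$ lives on the $n_i$-particle sector with unit trace, $\mu$ is carried by the unit sphere (in particular $\mu(X_1)=1$) and is invariant under the gauge action $\varphi\mapsto e^{i\theta}\varphi$, i.e.\ $\mu\in\mathscr{M}(\mathfrak{H})$ after identifying $\mathfrak{H}=\zed$. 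The gauge invariance is automatic because the number operator commutes with the dynamics-free structure here, or more directly because $\varrho_{n_i}$ is supported on a sector of fixed particle number, which forces the associated Wigner measure to be $U(1)$-invariant.

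Next I would translate convergence of reduced density matrices into the statement \eqref{decomp}. The link is the standard identity expressing $\Tr[\varrho_{n_i}^{(k)}\,A]$ in terms of a $k$-th order Wick-quantized observable acting on $\varrho_{n_i}$: for a compact (or bounded) operator $A$ on $\vee^k\mathfrak{H}$, one has, up to a combinatorial prefactor $\tfrac{n_i!}{(n_i-k)!}\varepsilon_i^k\to 1$, that $\Tr[\varrho_{n_i}^{(k)}A]$ equals the expectation in $\varrho_{n_i}$ of the Wick monomial with symbol $\langle\varphi^{\otimes k},A\varphi^{\otimes k}\rangle$. Passing to the limit $i\to\infty$ via the convergence of Wigner measures (and a uniform bound/truncation argument to control the unbounded Wick symbol on the unit ball, which is harmless since $\mu$ is supported on $X_1$), the right-hand side converges to $\int_{X_1}\langle\varphi^{\otimes k},A\varphi^{\otimes k}\rangle\,d\mu(\varphi)=\Tr\!\big[A\int_{X_1}|\varphi^{\otimes k}\rangle\langle\varphi^{\otimes k}|\,d\mu(\varphi)\big]$. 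Since also $\Tr[\varrho_{n_i}^{(k)}A]\to\Tr[\gamma^{(k)}A]$ by hypothesis \eqref{eq.sec1.1}, and compact operators separate trace-class operators, we conclude $\gamma^{(k)}=\int_{X_1}|\varphi^{\otimes k}\rangle\langle\varphi^{\otimes k}|\,d\mu(\varphi)$ for every $k$.

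Finally I would address uniqueness of $\mu$. Given two $U(1)$-invariant Borel probability measures $\mu,\nu$ on $X_1$ producing the same $(\gamma^{(k)})_{k\in\N}$, the moments $\int_{X_1}|\varphi^{\otimes k}\rangle\langle\varphi^{\otimes k}|\,d\mu=\int_{X_1}|\varphi^{\otimes k}\rangle\langle\varphi^{\otimes k}|\,d\nu$ agree for all $k$; testing against rank-one operators $|\psi^{\otimes k}\rangle\langle\psi^{\otimes k}|$ shows $\int_{X_1}|\langle\psi,\varphi\rangle|^{2k}\,d\mu(\varphi)=\int_{X_1}|\langle\psi,\varphi\rangle|^{2k}\,d\nu(\varphi)$ for all $\psi\in\mathfrak{H}$ and all $k$. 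These are the moments of the pushforward of $\mu$ (resp.\ $\nu$) under $\varphi\mapsto|\langle\psi,\varphi\rangle|^2\in[0,1]$, a compactly supported measure, hence these one-dimensional marginals coincide; since this holds for all finite collections of $\psi$'s (passing to joint polynomials in $\langle\psi_j,\varphi\rangle,\overline{\langle\psi_j,\varphi\rangle}$ and using $U(1)$-invariance to reduce to the invariant monomials), the cylindrical characteristic functionals of $\mu$ and $\nu$ agree, so $\mu=\nu$ on $X_1$. The main obstacle I anticipate is the rigorous passage to the limit in Step 2: the Wick symbol $\varphi\mapsto\langle\varphi^{\otimes k},A\varphi^{\otimes k}\rangle$ is a polynomial of degree $2k$ and is not bounded on all of $\mathfrak{H}$, so one must justify the limit using the uniform mass concentration of $\varrho_{n_i}$ near the unit ball (no escape of mass), exactly the point where the structure theorem for Wigner measures of Ammari--Nier is essential; the rest is bookkeeping. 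In fact, since the statement is quoted from \cite{MR3506807,MR2465733}, I would simply cite those references for the full argument and include only the sketch above.
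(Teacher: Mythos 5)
Your proof takes the same route as the paper: the paper itself gives no proof of Proposition~\ref{str.BEh}, but simply points to \cite[Proposition~1.1]{MR3506807} and the Wigner-measure / quantum de~Finetti literature (Ammari--Nier \cite{MR2465733}, Lewin--Nam--Rougerie \cite{MR3161107}), which is precisely the machinery your sketch deploys, and you conclude by citing the same references. One small imprecision worth correcting: you assert that the Wigner measure $\mu$ is carried by the unit sphere, but in general it is carried only by the closed unit ball $X_1$ (mass can escape into the interior --- e.g.\ $\varrho_{n}=|\varphi_n^{\otimes n}\rangle\langle\varphi_n^{\otimes n}|$ with $\|\varphi_n\|=1$ and $\varphi_n\rightharpoonup 0$ gives $\mu=\delta_0$; concentration on the sphere is precisely what Proposition~\ref{prop.BEh} characterizes); fortunately your argument only uses $\mu(X_1)=1$, so nothing breaks.
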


\noindent
The following comments are useful:
\begin{itemize}
\item The Borel $\sigma$-algebras on $X_1$ equipped respectively with the norm or the weak topology coincide. Moreover,    $\mu$ can  be considered  as Borel probability measure on $\mathfrak{H}$ concentrated on the unit ball  $X_1$ (i.e., $\mu(X_1)=1$).
\item Actually, $\mu$ can be interpreted as the Wigner measure of the subsequence $(\varrho_{n_i})_{i\in \N} $  in  Lemma \ref{lem.1} that leads to the symmetric hierarchy $(\gamma^{(k)})_{k\in \N}\in \mathscr{H}(\mathfrak{H})$. For a short review on the relationship between symmetric hierarchies and Wigner measures,  we refer to
\cite[Section 3]{MR3506807}.
\item The integral  \eqref{decomp} is well defined in a weak sense and also as a Bochner integral in
$ \mathscr{L}^1(\vee^k\mathfrak{H})$ for each $k\in \N$.
\end{itemize}

\begin{prop}
\label{prop.BEh}
Let $(\gamma^{(k)})_{k\in \N}\in \mathscr{H}(\mathfrak{H})$  and  $\mu\in\mathscr{M}(\mathfrak{H})$ given by Prop.~\ref{str.BEh} and satisfying \eqref{decomp}. Then the following assertions are equivalent:
\begin{description}
\item (i)  The measure $\mu$ concentrated  on the unit sphere of $\mathfrak{H}$  (i.e., $\mu(S_{\mathfrak{H}}(0,1))=1$).
\item (ii) $ \Tr[ \gamma^{(k)}] =1$ for all  $k\in\N$ .
\item (iii) If  $(\varrho_n)_{n\in \N}$ is a sequence of normal states in $
\Pi_{n\in \N} \mathscr{L}^1(\vee^n\mathfrak{H})$ satisfying \eqref{eq.sec1.1} (i.e., there exits a subsequence
$(n_i)_{i\in\N}$ such that $\varrho_{n_i}^{(k)}\overset{ *}{ \rightharpoonup}  \gamma^{(k)}$ )  then
$(\varrho_{n_i}^{(k)})_{i\in \N}$ converges  to $ \gamma^{(k)}$ in the norm topology of $ \mathscr{L}^1(\vee^k\mathfrak{H})$ for all  $k\in\N$.
\item (iv) $ \Tr[ \gamma^{(k)}] =1$ for some $k\in\N$ .
\end{description}
\end{prop}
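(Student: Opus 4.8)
The plan is to derive everything from the integral representation \eqref{decomp} of Proposition \ref{str.BEh}, the only genuinely analytic input being a standard trace‑norm convergence fact needed for assertion (iii). First I would compute $\Tr[\gamma^{(k)}]$: since the map $\varphi\mapsto|\varphi^{\otimes k}\rangle\langle\varphi^{\otimes k}|$ is Bochner integrable in $\mathscr{L}^1(\vee^k\mathfrak{H})$ and $\Tr$ is a bounded linear functional there, one may pull the trace inside the integral, obtaining
\[
\Tr_{\vee^k\mathfrak{H}}\big[\gamma^{(k)}\big]=\int_{X_1}\Tr_{\vee^k\mathfrak{H}}\big[\,|\varphi^{\otimes k}\rangle\langle\varphi^{\otimes k}|\,\big]\,d\mu(\varphi)=\int_{X_1}\|\varphi\|_{\mathfrak{H}}^{2k}\,d\mu(\varphi)\,.
\]
Because $0\le\|\varphi\|_{\mathfrak{H}}^{2k}\le 1$ on $X_1$ and $\mu$ is a probability measure, for any fixed $k$ the equality $\Tr[\gamma^{(k)}]=1$ holds if and only if $\|\varphi\|_{\mathfrak{H}}=1$ for $\mu$‑a.e.\ $\varphi$, i.e.\ if and only if $\mu\big(S_{\mathfrak{H}}(0,1)\big)=1$. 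As the right‑hand condition does not depend on $k$, this single computation yields at once the equivalences (i) $\Leftrightarrow$ (ii) $\Leftrightarrow$ (iv).

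Next I would insert (iii) into this chain. For (iii) $\Rightarrow$ (iv): Definition \ref{sec1.def1} provides a sequence of normal states $(\varrho_n)_n$ with a subsequence satisfying $\varrho_{n_i}^{(k)}\overset{*}{\rightharpoonup}\gamma^{(k)}$ for all $k$; taking $A=\Id$ in \eqref{redmat} gives $\Tr[\varrho_{n_i}^{(k)}]=\Tr[\varrho_{n_i}]=1$ whenever $n_i\ge k$, so if (iii) holds the norm convergence of $\varrho_{n_i}^{(k)}$ forces $\Tr[\gamma^{(k)}]=\lim_i\Tr[\varrho_{n_i}^{(k)}]=1$. For (ii) $\Rightarrow$ (iii): given an arbitrary sequence as in \eqref{eq.sec1.1}, one has, for each fixed $k$, $\varrho_{n_i}^{(k)}\ge 0$, $\gamma^{(k)}\ge 0$, $\varrho_{n_i}^{(k)}\overset{*}{\rightharpoonup}\gamma^{(k)}$ and $\Tr[\varrho_{n_i}^{(k)}]=1=\Tr[\gamma^{(k)}]$ by (ii), and the desired convergence $\|\varrho_{n_i}^{(k)}-\gamma^{(k)}\|_{\mathscr{L}^1(\vee^k\mathfrak{H})}\to 0$ is then an instance of the classical statement that, for positive trace‑class operators, weak‑$*$ convergence together with convergence of the traces entails convergence in trace norm. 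Together with the equivalences (i) $\Leftrightarrow$ (ii) $\Leftrightarrow$ (iv) obtained above, the implications (ii) $\Rightarrow$ (iii) $\Rightarrow$ (iv) close the chain, so all four assertions are equivalent.

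The hard part will be that last convergence fact; I would either cite it (it is Gr\"umm's convergence theorem for trace ideals, also associated with Dell'Antonio and Robinson) or prove it directly, and I expect this block‑decomposition estimate to be the only place requiring care. The direct argument: fix $\epsilon>0$ and let $P$ be a finite‑rank spectral projection of $\gamma^{(k)}$ with $\Tr[\gamma^{(k)}(1-P)]<\epsilon$; since $P$ is compact, weak‑$*$ convergence gives $\Tr[\varrho_{n_i}^{(k)}P]\to\Tr[\gamma^{(k)}P]$ and $P\varrho_{n_i}^{(k)}P\to P\gamma^{(k)}P$ within the finite‑dimensional range of $P$, whence $\Tr[\varrho_{n_i}^{(k)}(1-P)]\to\Tr[\gamma^{(k)}(1-P)]<\epsilon$ using $\Tr[\varrho_{n_i}^{(k)}]=\Tr[\gamma^{(k)}]$. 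Decomposing $\varrho_{n_i}^{(k)}-\gamma^{(k)}$ into the four blocks $P\,\cdot\,P$, $P\,\cdot\,(1-P)$, $(1-P)\,\cdot\,P$, $(1-P)\,\cdot\,(1-P)$: the first converges to $0$ in $\mathscr{L}^1$ (finite rank), the last is bounded by $\Tr[\varrho_{n_i}^{(k)}(1-P)]+\Tr[\gamma^{(k)}(1-P)]$ by positivity, and each off‑diagonal block is controlled via $\|PDQ\|_{\mathscr{L}^1}\le\|PD^{1/2}\|_{\mathscr{L}^2}\|D^{1/2}Q\|_{\mathscr{L}^2}=(\Tr[DP])^{1/2}(\Tr[DQ])^{1/2}$, valid for $D\ge 0$ and projections $P,Q$, applied with $D\in\{\varrho_{n_i}^{(k)},\gamma^{(k)}\}$ and $Q=1-P$. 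This gives $\limsup_i\|\varrho_{n_i}^{(k)}-\gamma^{(k)}\|_{\mathscr{L}^1}\lesssim\sqrt{\epsilon}$, and letting $\epsilon\to 0$ finishes the argument. Everything else — interchanging $\Tr$ with the Bochner integral, the elementary step $\int\|\varphi\|_{\mathfrak{H}}^{2k}\,d\mu=1\Leftrightarrow\|\varphi\|_{\mathfrak{H}}=1$ a.e., and the traces of reduced density matrices — is routine bookkeeping around Proposition \ref{str.BEh}.
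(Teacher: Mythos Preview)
Your proof is correct and follows essentially the same route as the paper: the trace formula $\Tr[\gamma^{(k)}]=\int_{X_1}\|\varphi\|^{2k}\,d\mu$ handles the equivalence of (i), (ii), (iv), and the step (ii)$\Rightarrow$(iii) rests on the same convergence fact, which the paper invokes as the Kadec--Klee property (KK*) of $\mathscr{L}^1$ rather than Gr\"umm's theorem or your direct block-decomposition argument. The only cosmetic difference is that you collapse (i)$\Leftrightarrow$(ii)$\Leftrightarrow$(iv) into a single observation, whereas the paper runs the cycle (i)$\Rightarrow$(ii)$\Rightarrow$(iii)$\Rightarrow$(iv)$\Rightarrow$(i).
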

\begin{proof}
(i)$\Rightarrow$(ii): The integral representation \eqref{decomp} yields for all $k\in\N$,
$$
\Tr[\gamma^{(k)}]=\int_{X_1} ||\varphi||^{2k} \; d\mu(\varphi)\,.
$$
Hence, if  the measure $\mu$ concentrates on the unit sphere then $\Tr[\gamma^{(k)}]=1$ for all $k\in\N$.  \\
(ii)$\Rightarrow$(iii): Follows by a general property of spaces of trace-class operators called the Kadec-Klee property (KK*); and  which ensures that  weak-$*$ and norm convergence coincide on the unit sphere of  $\mathscr{L}^1(\vee^k\mathfrak{H})$ (see  Appendix \ref{KKstar} ).  Hence, (ii) converts
the weak-$*$ convergence of  $(\varrho_{n_i}^{(k)})_{i\in\N}$ towards $\gamma^{(k)}$  to a norm convergence. \\
(iii)$\Rightarrow$(iv): is trivial since $\Tr[\varrho_{n_i}^{(k)}]=1$ for all $i\in\N$.\\
(iv)$\Rightarrow$(i): For some $k\in\N$,
\begin{equation*}
\Tr[\gamma^{(k)}]=\int_{||\varphi||=1} 1 \; d\mu(\varphi)+
\int_{||\varphi||<1} ||\varphi||^{2k} \; d\mu(\varphi)=1\,.
\end{equation*}
So, this implies
\begin{equation*}
\int_{||\varphi||<1} 1- ||\varphi||^{2k}  \; d\mu(\varphi)=0
\end{equation*}
Taking any  $0< R<1$, we see that $0\leq (1-R^{2k})  \mu( B_\mathfrak{H}(0,R))\leq \int_{||\varphi||<1} 1- ||\varphi||^{2k}  \; d\mu(\varphi)$. Hence,  $\mu(B_\mathfrak{H}(0,R))=0$ and consequently the measure $\mu$ concentrates on the unit sphere of $\mathfrak{H}$.
\end{proof}

\begin{lem}
Let $(\gamma^{(k)})_{k\in \N}\in \mathscr{H}(\mathfrak{H})$  and  $\mu\in\mathscr{M}(\mathfrak{H})$ given by Prop.~\ref{str.BEh} and satisfying \eqref{decomp}.  Then $ \Tr_{k+1} [ \gamma^{(k+1)}] = \gamma^{(k)}$  for all $k\in\N$
if and only if $\mu(\{0\}\cup S_{\mathfrak{H}}(0,1))=1$.
\end{lem}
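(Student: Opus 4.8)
The plan is to reduce the identity $\Tr_{k+1}[\gamma^{(k+1)}]=\gamma^{(k)}$ to a scalar condition on the probability measure $\mu$ by inserting the de Finetti representation \eqref{decomp} of Prop.~\ref{str.BEh}. The one computational input is the elementary identity, valid for every $\varphi\in X_1$ and every $k\in\N$,
\[
\Tr_{k+1}\big[\,|\varphi^{\otimes(k+1)}\rangle\langle\varphi^{\otimes(k+1)}|\,\big]=\|\varphi\|^{2}\,|\varphi^{\otimes k}\rangle\langle\varphi^{\otimes k}|\,,
\]
obtained by writing $\varphi^{\otimes(k+1)}=\varphi^{\otimes k}\otimes\varphi$ and tracing out the last tensor slot. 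Since $\Tr_{k+1}:\mathscr{L}^1(\vee^{k+1}\mathfrak{H})\to\mathscr{L}^1(\vee^{k}\mathfrak{H})$ is bounded and linear it passes under the Bochner integral in \eqref{decomp}, so
\[
\Tr_{k+1}[\gamma^{(k+1)}]=\int_{X_1}\|\varphi\|^{2}\,|\varphi^{\otimes k}\rangle\langle\varphi^{\otimes k}|\;d\mu(\varphi),\qquad \gamma^{(k)}=\int_{X_1}|\varphi^{\otimes k}\rangle\langle\varphi^{\otimes k}|\;d\mu(\varphi),
\]
and therefore $\Tr_{k+1}[\gamma^{(k+1)}]=\gamma^{(k)}$ holds if and only if $\int_{X_1}(1-\|\varphi\|^{2})\,|\varphi^{\otimes k}\rangle\langle\varphi^{\otimes k}|\;d\mu(\varphi)=0$ in $\mathscr{L}^1(\vee^k\mathfrak{H})$.

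Granting this reformulation, the ``if'' direction is immediate: if $\mu(\{0\}\cup S_{\mathfrak{H}}(0,1))=1$ then for $\mu$-a.e.\ $\varphi$ either $\varphi=0$, whence $\varphi^{\otimes k}=0$ (as $k\geq1$) and $|\varphi^{\otimes k}\rangle\langle\varphi^{\otimes k}|=0$, or $\|\varphi\|=1$ and the scalar $1-\|\varphi\|^{2}$ vanishes; in either case the integrand is zero, so the integral vanishes for all $k$. For the converse, assume the equality holds for some $k\geq1$. Since $\mu$ lives on the closed unit ball, $1-\|\varphi\|^{2}\geq0$ there, so the integrand above is a nonnegative trace-class operator with vanishing integral; taking the trace (again exchangeable with the integral, being bounded linear on $\mathscr{L}^1$) and using $\Tr\,|\varphi^{\otimes k}\rangle\langle\varphi^{\otimes k}|=\|\varphi\|^{2k}$ yields
\[
\int_{X_1}(1-\|\varphi\|^{2})\,\|\varphi\|^{2k}\;d\mu(\varphi)=0.
\]
The integrand is a nonnegative Borel function vanishing exactly on $\{\|\varphi\|\in\{0,1\}\}=\{0\}\cup S_{\mathfrak{H}}(0,1)$, so $\mu$ gives zero mass to $\{0<\|\varphi\|<1\}$, i.e.\ $\mu(\{0\}\cup S_{\mathfrak{H}}(0,1))=1$.

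I do not anticipate any real obstacle here: the argument is one short computation resting on the de Finetti representation of Prop.~\ref{str.BEh}, entirely parallel to the chain of equivalences in Prop.~\ref{prop.BEh}. The only points requiring (routine) care are the justification that the partial trace and the trace commute with the Bochner integral over $\mu$ --- immediate from boundedness on $\mathscr{L}^1$ --- and the observation that, for the converse, a single exponent $k\geq1$ in the scalar identity already determines the support of $\mu$.
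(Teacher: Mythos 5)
Your proof is correct and follows essentially the same route as the paper's: insert the de Finetti representation into the identity $\Tr_{k+1}[\gamma^{(k+1)}]=\gamma^{(k)}$, take the trace to obtain the scalar condition $\int_{X_1}\|\varphi\|^{2k}(1-\|\varphi\|^2)\,d\mu=0$, and read off the support of $\mu$; the converse is immediate. Your version merely spells out the steps (commuting $\Tr_{k+1}$ and $\Tr$ with the Bochner integral, noting one exponent $k\geq1$ suffices) that the paper treats as evident.
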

\begin{proof}
Using the integral representation \eqref{decomp}, one deduces
\begin{equation}
\label{eq.trac}
\int_{X_1} |\varphi^{\otimes k}\rangle\langle \varphi^{\otimes k}|  \;d\mu(\varphi)\,=\gamma^{(k)}=\Tr_{k+1} [ \gamma^{(k+1)}] =\int_{X_1} ||\varphi||^2\,
 |\varphi^{\otimes k}\rangle\langle \varphi^{\otimes k}|  \;d\mu(\varphi)\,   .
\end{equation}
Taking the trace in the latter identity, implies
$$
\int_{X_1}  ||\varphi||^{2k}\, (1- ||\varphi||^{2})  \;d\mu(\varphi)\,=0
$$
Hence, this shows that $\mu(\{0\}\cup S_{\mathfrak{H}}(0,1))=1$.  Conversely, if the measure $\mu$ concentrates on the origin and the unit sphere then   \eqref{eq.trac} holds true.
\end{proof}

%\subsection{Second definition}
It is useful to  identify the set of  symmetric hierarchies $\mathscr{H}(\mathfrak{H})$ in a more simpler way without appealing to convergence of subsequences as in  Definition \ref{sec1.def1}.   In fact, we can show that the two sets $\mathscr{H}(\mathfrak{H})$ and $\mathscr{M}(\mathfrak{H})$
are in one-to-one correspondence.

\begin{prop}
\label{bij}
The following mapping
\begin{eqnarray}
\Phi:\mathscr{M}(\mathfrak{H})&\rightarrow &  \mathscr{H}(\mathfrak{H})\\
\mu &\rightarrow & (\gamma^{(k)})_{k\in \N} =\left(\int_{X_1} |\varphi^{\otimes k} \rangle \langle \varphi^{\otimes k} | \;d\mu\right)_{k\in \N}  \,,
\end{eqnarray}
is a (bijective) one-to-one correspondence.
\end{prop}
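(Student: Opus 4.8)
The statement splits into three parts: \textbf{(a)} that $\Phi$ actually takes values in $\mathscr H(\mathfrak H)$; \textbf{(b)} that $\Phi$ is onto; \textbf{(c)} that $\Phi$ is injective. Part (b) is precisely the existence clause of Proposition~\ref{str.BEh}: every $\gamma\in\mathscr H(\mathfrak H)$ is represented by some $U(1)$-invariant $\mu\in\mathscr M(\mathfrak H)$. The substance of the proposition is part (a), the converse to Proposition~\ref{str.BEh}; once (a) is in hand, part (c) follows from the uniqueness clause of Proposition~\ref{str.BEh}, since the identity $\Phi(\mu)=\Phi(\nu)$ exhibits $\mu$ and $\nu$ as two $U(1)$-invariant Borel probability measures on $X_1$ representing one and the same symmetric hierarchy, hence $\mu=\nu$. (One can also see (c) directly: pairing $\Phi(\mu)^{(k)}$ with rank-one operators recovers the balanced moments $\int_{X_1}\prod_{j\le k}\langle\psi_j,\varphi\rangle\,\overline{\langle\chi_j,\varphi\rangle}\,d\mu(\varphi)$, and by Stone--Weierstrass on the weakly compact metrizable ball $X_1$ these separate points of $X_1$ modulo the $U(1)$-action, which determines any $U(1)$-invariant measure.)

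For (a) I would argue in three steps. \emph{Step 1 (finite combinations of orbit measures).} Let $\mu=\sum_{i=1}^N c_i\,\lambda_{\varphi_i}$, where $\varphi_i\in X_1$ and $\lambda_\varphi$ is the uniform $U(1)$-orbit measure of $\varphi$. Because the phase cancels, $|(e^{i\theta}\varphi)^{\otimes k}\rangle\langle (e^{i\theta}\varphi)^{\otimes k}|=|\varphi^{\otimes k}\rangle\langle\varphi^{\otimes k}|$, so $\Phi(\mu)^{(k)}=\sum_i c_i\,|\varphi_i^{\otimes k}\rangle\langle\varphi_i^{\otimes k}|$. Using that $\mathfrak H$ is infinite dimensional, choose a weakly null sequence of unit vectors $(w_n)_n$ orthogonal to $\mathrm{span}\{\varphi_1,\dots,\varphi_N\}$ and set $\varphi_{i,n}:=\varphi_i+\sqrt{1-\|\varphi_i\|^2}\,w_n$, so that $\|\varphi_{i,n}\|=1$ and $\varphi_{i,n}\rightharpoonup\varphi_i$. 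Then $\varrho_n:=\sum_i c_i\,|\varphi_{i,n}^{\otimes n}\rangle\langle\varphi_{i,n}^{\otimes n}|$ is a normal state on $\vee^n\mathfrak H$, and since tracing out $n-k$ of the factors sends $|u^{\otimes n}\rangle\langle u^{\otimes n}|$ to $\|u\|^{2(n-k)}|u^{\otimes k}\rangle\langle u^{\otimes k}|$ and commutes with finite sums, we get $\varrho_n^{(k)}=\sum_i c_i\,|\varphi_{i,n}^{\otimes k}\rangle\langle\varphi_{i,n}^{\otimes k}|\overset{*}{\rightharpoonup}\Phi(\mu)^{(k)}$ for every $k$. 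Hence $\Phi(\mu)\in\mathscr H(\mathfrak H)$ in this case.

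\emph{Step 2 (approximation).} An arbitrary $\mu\in\mathscr M(\mathfrak H)$, regarded as a probability measure on the weakly compact metrizable ball $X_1$, is the weak narrow limit of finitely supported measures; averaging these over $U(1)$ (which preserves the convergence since $\mu$ is already $U(1)$-invariant) yields measures $\mu_m$ of the form treated in Step~1 with $\mu_m\rightharpoonup\mu$. For each fixed $k$ and each compact operator $A$ on $\vee^k\mathfrak H$, the map $\varphi\mapsto\langle\varphi^{\otimes k},A\varphi^{\otimes k}\rangle$ is bounded and weakly continuous on $X_1$, so $\Tr[\Phi(\mu_m)^{(k)}A]\to\Tr[\Phi(\mu)^{(k)}A]$, i.e.\ $\Phi(\mu_m)^{(k)}\overset{*}{\rightharpoonup}\Phi(\mu)^{(k)}$ for every $k$. \emph{Step 3 (closing up).} The set $\mathscr H(\mathfrak H)$ is sequentially closed under componentwise weak-$*$ convergence: if $\gamma_m\in\mathscr H(\mathfrak H)$ with $\gamma_m^{(k)}\overset{*}{\rightharpoonup}\gamma^{(k)}$ for all $k$, write each $\gamma_m^{(k)}$ (Definition~\ref{sec1.def1}) as a limit of reduced density matrices of normal states; since the weak-$*$ topology is metrizable on the unit ball of $\mathscr L^1(\vee^k\mathfrak H)$, which contains all operators in sight because $0\le\gamma^{(k)}\le1$, a diagonal choice produces normal states $\sigma_m$ on $\vee^{n(m)}\mathfrak H$ with $n(m)\uparrow\infty$ and $\sigma_m^{(k)}\overset{*}{\rightharpoonup}\gamma^{(k)}$ for every $k$; padding the missing particle numbers arbitrarily then shows $\gamma\in\mathscr H(\mathfrak H)$. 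Applying this with $\gamma_m=\Phi(\mu_m)$ yields $\Phi(\mu)\in\mathscr H(\mathfrak H)$, proving (a).

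The main obstacle is Step~1, and precisely the circumstance that when $\|\varphi\|<1$ the naive candidates $|\varphi^{\otimes n}\rangle\langle\varphi^{\otimes n}|$ have reduced density matrices $\|\varphi\|^{2(n-k)}|\varphi^{\otimes k}\rangle\langle\varphi^{\otimes k}|$, which forget $\varphi$ in the limit; the cure is to promote $\varphi$ to a weakly convergent sequence of genuine unit vectors, which is why the construction is phrased for infinite-dimensional $\mathfrak H$ (the case of interest, $\mathfrak H=\mathscr Z_0$). A secondary, more routine, point of care is the diagonal extraction in Step~3, where metrizability of the weak-$*$ topology on bounded subsets of the trace class must be used to make the index $n(m)$ work simultaneously for all $k\le m$.
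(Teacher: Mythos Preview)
Your proof is correct, and the key geometric idea --- promote a sub-unit vector $\varphi$ to a weakly convergent sequence of genuine unit vectors --- coincides with the paper's. The difference is in the packaging.

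The paper carries out your Step~1 construction \emph{uniformly in $\varphi$} and hence dispenses with Steps~2 and~3 entirely. It defines a single continuous map $\Psi_n:X_1\to S_\mathfrak{H}(0,1)$ (adding the missing mass in the $e_n$-coordinate) and sets
\[
\varrho_n \;=\; \int_{X_1} \big|\Psi_n(\varphi)^{\otimes n}\big\rangle\big\langle\Psi_n(\varphi)^{\otimes n}\big|\, d\mu(\varphi),
\]
which is directly a normal state on $\vee^n\mathfrak H$ for the given $\mu$. Since $\Psi_n(\varphi)\rightharpoonup\varphi$ for every $\varphi$, dominated convergence gives $\varrho_n^{(k)}\overset{*}{\rightharpoonup}\Phi(\mu)^{(k)}$ in one stroke. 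Your route --- finite orbit-measures first, then weak narrow approximation, then a diagonal extraction showing $\mathscr H(\mathfrak H)$ is sequentially closed for componentwise weak-$*$ convergence --- is longer but perfectly sound; the diagonal argument in Step~3 is the only place needing care, and you have identified the right ingredient (metrizability on the unit ball of $\mathscr L^1$). The payoff of your approach is that the closedness of $\mathscr H(\mathfrak H)$ emerges as an explicit intermediate fact, which the paper only obtains later as a consequence of the homeomorphism in Corollary~\ref{homeo}; the payoff of the paper's approach is that it avoids any approximation machinery and produces the witnessing sequence of states by a single Bochner integral.
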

\begin{proof}
It is enough to prove that for any $\mu\in \mathscr{M}(\mathfrak{H})$ the sequence $\Phi(\mu)$ is, according to the Definition \ref{sec1.def1}, a  symmetric hierarchy. \\
Let $\{e_n\}_{n\in \N}$ be an O.N.B of $\mathfrak{H}$. For any $\varphi\in X_1$, we have the
decomposition $  \varphi=\sum_{k=1}^\infty \varphi_k e_k$ . Consider the mapping
\begin{eqnarray*}
\Psi_n: B_\mathfrak{H}(0,1)& \to & S_\mathfrak{H}(0,1)\\
\varphi &\to & \Psi_n(\varphi):= \bigg(\sqrt{
1-\sum_{k\neq n} |\varphi_k|^2 } - \varphi_n\bigg) e_n+ \varphi\,.
\end{eqnarray*}
Remark that for every $\varphi\in B_\mathfrak{H}(0,1)$,
$$
||\Psi_n(\varphi)||^2= 1- \sum_{k\neq n} |\varphi_k|^2 +\sum_{k\neq n} |\varphi_k|^2=1\,.
$$
We claim that $\Psi_n$ is continuous for each $n\in\N$.  In fact, the following inequality holds
\begin{equation}
\label{contpsi}
\|\Psi_n(\varphi)-\Psi_n(\tilde\varphi)\|\leq \| \varphi-\tilde\varphi\|
+ \|  \sqrt{1-\sum_{k\neq n} |\varphi_k|^2 }-  \sqrt{
1-\sum_{k\neq n} |\tilde\varphi_k|^2 }\|\,,
\end{equation}
and if  $\varphi\to \tilde\varphi$ in $X_1$ then $ P_n (\varphi)\to P_n (\tilde\varphi)$ with $P_n$ is  the orthogonal projection over the subspace ${\rm Vect}\{e_k,k\neq n\} $. This shows that the right hand side  of \eqref{contpsi} converges to $0$ when  $\varphi\to \tilde\varphi$ at fixed $n$.  \\
Let $\mu\in \mathscr{M}(\mathfrak{H})$  and consider the sequence,
\begin{equation}
\label{eq.seq1}
\varrho_n=\int_{X_1}  |\Psi_n(\varphi)^{\otimes n} \rangle \langle \Psi_n(\varphi)^{\otimes n} | \;d\mu\,.
\end{equation}
Then one checks that  $(\varrho_n)_{n\in\N}\subset   \mathscr{L}^1(\vee^n\mathfrak{H})$ with
$\varrho_n\geq 0$ and $ \Tr[\varrho_n]=1$ since $\Psi_n(\varphi)\in S_\mathfrak{H}(0,1)$ for every
$\varphi\in B_{\mathfrak{H}}(0,1)$.  Notice that the function
$\varphi\to  |\Psi_n(\varphi)^{\otimes n} \rangle \langle \Psi_n(\varphi)^{\otimes n} |\in  \mathscr{L}^1(\vee^k\mathfrak{H})$ is continuous  and Bochner integrable.
Moreover,  we have the reduced density matrices
$$
\varrho_n^{(k)}= \int_{X_1}  |\Psi_n(\varphi)^{\otimes k} \rangle \langle \Psi_n(\varphi)^{\otimes k} |
\;d\mu\,.
$$
The point  is that $\Psi_n(\varphi)\underset{n\to\infty}{\rightharpoonup} \varphi$ for any
$\varphi\in B_\mathfrak{H}(0,1)$. Hence, one shows
$$
|\Psi_n(\varphi)^{\otimes k}\rangle \langle \Psi_n( \varphi)^{\otimes k} |\overset{*}{\rightharpoonup} |\varphi^{\otimes k}\rangle \langle \varphi^{\otimes k} |\,.
$$
when $n\to \infty$. Dominated convergence yields that for any $k\in \mathbb{N}$,
$$
\varrho_n^{(k)} \overset{*}{\rightharpoonup}
\left(\int_{X_1} |\varphi^{\otimes k} \rangle \langle \varphi^{\otimes k} | \;d\mu\right)_{k\in \N} .
$$
\end{proof}

\begin{defn}
Thus, one can naturally propose a second definition  for  symmetric hierarchies,
\begin{equation}
\mathscr{H}(\mathfrak{H})=\left\{\left(\int_{X_1} |\varphi^{\otimes k} \rangle \langle \varphi^{\otimes k} | \;d\mu\right)_{k\in \N} ,\; \mu\in \mathscr{M}(\mathfrak{H})\right\}\,.
\end{equation}
\end{defn}
The previous identification  yields the following simple  consequence.
\begin{cor}
$\mathscr{H}(\mathfrak{H})$ is  a convex subset of $\Pi_{k\in \N} \mathscr{L}^1(\vee^k\mathfrak{H})$.
\end{cor}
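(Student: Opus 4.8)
The plan is to reduce everything to the one-to-one correspondence $\Phi:\mathscr{M}(\mathfrak{H})\to\mathscr{H}(\mathfrak{H})$ established in Proposition \ref{bij}, together with the elementary fact that $\mathscr{M}(\mathfrak{H})$ is convex. First I would fix two symmetric hierarchies $\gamma=(\gamma^{(k)})_{k\in\N}$ and $\tilde\gamma=(\tilde\gamma^{(k)})_{k\in\N}$ in $\mathscr{H}(\mathfrak{H})$ and a scalar $\theta\in[0,1]$, and invoke Proposition \ref{str.BEh} to obtain the associated $U(1)$-invariant Borel probability measures $\mu,\tilde\mu$ on $X_1$ with $\gamma^{(k)}=\int_{X_1}|\varphi^{\otimes k}\rangle\langle\varphi^{\otimes k}|\,d\mu(\varphi)$ and the analogous identity for $\tilde\gamma$.

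Next I would form $\nu:=\theta\mu+(1-\theta)\tilde\mu$ and check that $\nu\in\mathscr{M}(\mathfrak{H})$. It is plainly a Borel probability measure concentrated on $X_1$, and its $U(1)$-invariance follows by linearity of the integral: for any bounded Borel $f:\mathfrak{H}\to\R$ and any $\alpha\in\R$ one has $\int_{\mathfrak{H}}f(e^{i\alpha}x)\,d\nu(x)=\theta\int_{\mathfrak{H}}f(e^{i\alpha}x)\,d\mu(x)+(1-\theta)\int_{\mathfrak{H}}f(e^{i\alpha}x)\,d\tilde\mu(x)=\theta\int_{\mathfrak{H}}f\,d\mu+(1-\theta)\int_{\mathfrak{H}}f\,d\tilde\mu=\int_{\mathfrak{H}}f\,d\nu$.

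Finally I would apply $\Phi$ to $\nu$ and use linearity of the Bochner integral in $\mathscr{L}^1(\vee^k\mathfrak{H})$ (valid here since, by the remarks after Proposition \ref{str.BEh}, the representation \eqref{decomp} is a genuine Bochner integral): for every $k\in\N$, $\Phi(\nu)^{(k)}=\int_{X_1}|\varphi^{\otimes k}\rangle\langle\varphi^{\otimes k}|\,d\nu=\theta\gamma^{(k)}+(1-\theta)\tilde\gamma^{(k)}$, so that $\theta\gamma+(1-\theta)\tilde\gamma=\Phi(\nu)\in\mathscr{H}(\mathfrak{H})$. This proves convexity. There is essentially no real obstacle in this statement; the only point requiring a word of care is the coordinatewise linearity of the integral representation, which is exactly what lets the convex combination of hierarchies be matched with the convex combination of the underlying measures.
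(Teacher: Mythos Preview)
Your argument is correct and is precisely the paper's approach spelled out in detail: the paper's proof is the single line ``This follows from the fact that $\mathscr{M}(\mathfrak{H})$ is a convex set,'' and you have simply made explicit the use of the bijection $\Phi$ from Proposition~\ref{bij} together with the affine nature of the integral representation.
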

\begin{proof}
This follows from the fact that $\mathscr{M}(\mathfrak{H})$ is a convex set.
\end {proof}

\subsection{Topological isomorphism }
It is useful to endow the set   of  symmetric hierarchies $\mathscr{H}(\mathfrak{H})$ with two natural topologies. Remember that the Hilbert space $\mathfrak{H}$ is separable. Consequently, for all $k\in\N$ the Banach spaces
 $\mathscr{L}^\infty(\vee^k \mathfrak{H})$ are also separable. Moreover, $\mathscr{L}^1(\vee^k \mathfrak{H})=\mathscr{L}^\infty(\vee^k \mathfrak{H})^*$ are endowed with two distinguished  topologies, namely the weak-$*$ and the norm topology. Recall that the weak-$*$ topology is metrisable  on bounded sets of  $\mathscr{L}^1(\vee^k \mathfrak{H})$. For instance, the bounded set
$$
 \mathscr{L}_0^1(\vee^k\mathfrak{H}):=\{\gamma\in \mathscr{L}^1(\vee^k\mathfrak{H}),
 0\leq \gamma\leq 1\}\,,
$$
can be equipped with the following metric of weak-$*$ convergence,
\begin{eqnarray*}
\mathbf{d}^{(k)}(\gamma,\tilde\gamma):= \sum_{i\in\N}
\frac{1}{2^i}  \;\frac{|| K_i \,(\gamma-\tilde \gamma)||_{   \mathscr{L}^1(\vee^k\mathfrak{H})}}{
1+|| K_i \,(\gamma-\tilde \gamma)||_{   \mathscr{L}^1(\vee^k\mathfrak{H})}}\,,
\end{eqnarray*}
with $\{K_i\}_{i\in \N}$ is a dense  countable set in $ \mathscr{L}^\infty(\vee^k\mathfrak{H})$. So, the distance $\mathbf{d}^{(k)}$  induces the weak-$*$ topology on $\mathscr{L}_0^1(\vee^k\mathfrak{H})$.
Notice that
\begin{equation}
\label{eq.2}
 \mathscr{H}(\mathfrak{H})\subset\Pi_{k\in\N} \mathscr{L}_0^1(\vee^k\mathfrak{H})
 \subset\Pi_{k\in\N} \mathscr{L}^1(\vee^k\mathfrak{H})\,,
\end{equation}
and that the Cartesian product in  the right hand side of \eqref{eq.2} can be equipped with a product topology (with the weak-$*$ or  norm topology on each component  $ \mathscr{L}^1(\vee^k\mathfrak{H})$).
Thus, one can consider the set of  symmetric hierarchies  $\mathscr{H}(\mathfrak{H})$ as a metric space endowed with one of the two product distances $\mathbf{d}_w$ or $\mathbf{d}_s$ given below,
\begin{eqnarray}
\label{distw}
\mathbf{d}_w(\gamma,\tilde\gamma)&:=& \sum_{k\in\N}
\frac{1}{2^k}  \;\mathbf{d}^{(k)}\big( \gamma^{(k)},\tilde \gamma^{(k)}\big)\,,
\\
\label{dists}
\mathbf{d}_s(\gamma,\tilde\gamma)&:=& \sum_{k\in\N}
\frac{1}{2^k}  \,|| \gamma^{(k)}-\tilde \gamma^{(k)}||_{   \mathscr{L}^1(\vee^k\mathfrak{H})}\,,
\end{eqnarray}
for all $\gamma=(\gamma^{(k)})_{k\in \N}$ and $ \tilde\gamma=(\tilde\gamma^{(k)})_{k\in \N}$ in
$\mathscr{H}(\mathfrak{H})$.  So, $\mathbf{d}_w$ (resp.~$\mathbf{d}_s$) induces the product topology in $\mathscr{H}(\mathfrak{H})$ with the weak-$*$  (resp.~ norm)  topology in each component $\mathscr{L}^1(\vee^k\mathfrak{H})$.

\bigskip
Remember that the set   $\mathscr{M}(\mathfrak{H})$ is endowed with two distinguished topologies, namely the weak and strong narrow convergence topologies (see Sect. \ref{fram}).  It is well known that the weak and strong narrow topologies  on the set of Borel  probability measures $\mathfrak{P}(X_1)$, with $X_1=B_{\mathfrak{H}}(0,1)$, are metrisable and in particular  $\mathfrak{P}(X_1)$ is a compact  metric space when endowed with the weak narrow topology.

\begin{prop}
\label{prop.weakcv}
Let $(\gamma,(\gamma_j)_{j\in\N})$ and $(\mu,(\mu_j)_{j\in\N})$ two sequences respectively in  $\mathscr{H}(\mathfrak{H})$ and $\mathscr{M}(\mathfrak{H})$  with $\Phi(\mu_j)=\gamma_j$ and $\Phi(\mu)=\gamma$. Then $\mu_j\rightharpoonup \mu$ weakly narrowly  in  $\mathscr{M}(\mathfrak{H})$ if and only if for  all
$k\in\N$,
\begin{equation}
\label{convstar}
\gamma_j^{(k)}=\int_{X_1} |\varphi^{\otimes k} \rangle \langle \varphi^{\otimes k} | \;d\mu_j  \overset{*}{\rightharpoonup}  \int_{X_1} |\varphi^{\otimes k} \rangle \langle \varphi^{\otimes k} | \;d\mu= \gamma^{(k)}\,.
\end{equation}
\end{prop}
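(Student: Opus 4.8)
The plan is to translate the weak-$*$ statement \eqref{convstar} into a statement about integration of a distinguished family of bounded, weakly continuous functions against the measures $\mu_j$, and then to close the converse implication by combining compactness of the space of probability measures with the injectivity of $\Phi$ established in Proposition~\ref{bij}.

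The first step is to record the elementary identity: for any compact operator $K$ on $\vee^k\mathfrak{H}$ and any $\nu\in\mathfrak{P}(X_1)$,
\[
\Tr_{\vee^k\mathfrak{H}}\Big[\Big(\int_{X_1}|\varphi^{\otimes k}\rangle\langle\varphi^{\otimes k}|\,d\nu\Big)K\Big]=\int_{X_1}\langle\varphi^{\otimes k},K\varphi^{\otimes k}\rangle\,d\nu(\varphi)=:\int_{X_1}F_{k,K}\,d\nu,
\]
which is legitimate since $\Tr[\,\cdot\,K]$ is a bounded functional on $\mathscr{L}^1(\vee^k\mathfrak{H})$ and the integral defining $\gamma^{(k)}$ converges in the Bochner sense. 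Hence \eqref{convstar} is precisely the assertion that $\int_{X_1}F_{k,K}\,d\mu_j\to\int_{X_1}F_{k,K}\,d\mu$ for every $k\in\N$ and every compact $K$. The crucial observation is that $F_{k,K}\in\mathscr{C}_b(X_1,d_w)$: it is bounded by $\|K\|$ on $X_1$, and it is $d_w$-continuous because if $\varphi_n\rightharpoonup\varphi$ in $\mathfrak{H}$ with $\|\varphi_n\|\le 1$, then testing against elementary tensors and using the uniform bound $\|\varphi_n^{\otimes k}\|\le 1$ gives $\varphi_n^{\otimes k}\rightharpoonup\varphi^{\otimes k}$ in $\vee^k\mathfrak{H}$; compactness of $K$ upgrades this to $K\varphi_n^{\otimes k}\to K\varphi^{\otimes k}$ in norm, and a weak-times-strong estimate yields $F_{k,K}(\varphi_n)\to F_{k,K}(\varphi)$. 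Since the weak topology is metrizable on the bounded set $X_1$, sequential continuity suffices.

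Granting this, the forward implication is immediate: if $\mu_j\rightharpoonup\mu$ weakly narrowly then, by definition of weak narrow convergence in $\mathfrak{P}(X_1)$ applied to the test function $F_{k,K}$, we obtain \eqref{convstar}. For the converse, suppose \eqref{convstar} holds for all $k$. Since $\mathfrak{P}(X_1)$ with the weak narrow topology is a compact metric space, it suffices to show that every weakly narrowly convergent subsequence $(\mu_{j_\ell})_\ell$ has limit $\mu$. Let $\mu_{j_\ell}\rightharpoonup\nu$. Then $\nu$ is again $U(1)$-invariant, the defining identity passing to the limit because $x\mapsto e^{i\theta}x$ is a weak homeomorphism of $X_1$ (and a Borel probability measure on the compact metric space $(X_1,d_w)$ is determined by its action on bounded continuous functions); thus $\nu\in\mathscr{M}(\mathfrak{H})$. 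Applying the forward implication to $\mu_{j_\ell}\rightharpoonup\nu$ gives $\Phi(\nu)^{(k)}=$ weak-$*$ limit of $\gamma_{j_\ell}^{(k)}$ for all $k$, which by hypothesis equals $\gamma^{(k)}=\Phi(\mu)^{(k)}$; uniqueness of weak-$*$ limits then forces $\Phi(\nu)=\Phi(\mu)$, and the injectivity of $\Phi$ from Proposition~\ref{bij} yields $\nu=\mu$, as required.

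The only genuine work I expect is the weak continuity of $F_{k,K}$ on $X_1$, i.e.\ the conjunction of "$\varphi\mapsto\varphi^{\otimes k}$ is weak-to-weak continuous on bounded sets" and "compact operators map weakly convergent sequences to norm convergent ones"; everything else is either the definition of narrow convergence or a soft compactness-plus-uniqueness argument. One should also check measurability of $F_{k,K}$ and the Bochner integrability used in the displayed identity, but these follow routinely from the continuity just discussed together with Proposition~\ref{str.BEh}.
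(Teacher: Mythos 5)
Your forward direction is the same as the paper's: for compact $A$, $\Tr[\gamma_j^{(k)}A]=\int_{X_1}\langle\varphi^{\otimes k},A\varphi^{\otimes k}\rangle\,d\mu_j$, and the integrand is a bounded $d_w$-continuous function on $X_1$ (you spell out the weak-to-weak continuity of $\varphi\mapsto\varphi^{\otimes k}$ and the norm-upgrading by compactness of $A$, which the paper only asserts). For the converse, however, you take a genuinely different route. Both arguments ultimately rest on Prokhorov compactness of $\mathfrak{P}(X_1)$, but they identify the cluster point differently. The paper picks $A=|\xi^{\otimes k}\rangle\langle\xi^{\otimes k}|$ to obtain convergence of the moments $\int_{X_1}|\langle\varphi,\xi\rangle|^{2k}\,d\mu_j$, uses the $U(1)$-invariance of the $\mu_j$ to rewrite $\int e^{i\Re\langle\varphi,\xi\rangle}\,d\mu_j$ as an absolutely convergent series in precisely those moments, concludes pointwise convergence of characteristic functions by dominated convergence, and then invokes Theorem~\ref{thmA}. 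You instead extract a weakly narrowly convergent subsequence $\mu_{j_\ell}\rightharpoonup\nu$, observe that $U(1)$-invariance passes to the limit (since $x\mapsto e^{i\theta}x$ is a $d_w$-isometry of $X_1$, so $\nu\in\mathscr{M}(\mathfrak{H})$), apply the already-established forward direction to $\mu_{j_\ell}$ to get $\Phi(\nu)=\Phi(\mu)$, and then use the injectivity of $\Phi$ --- i.e.\ the uniqueness part of the de~Finetti representation (Propositions~\ref{str.BEh} and \ref{bij}) --- to conclude $\nu=\mu$; compactness plus a unique cluster point then forces full-sequence convergence. Your route is conceptually economical in that it reuses the forward implication and the de~Finetti uniqueness instead of redoing an analytic expansion, at the modest cost of the extra verification that $\mathscr{M}(\mathfrak{H})$ is closed under weak narrow limits. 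Both are correct and of comparable length; yours perhaps makes more transparent why $U(1)$-invariance is structurally necessary (it is what makes $\Phi$ injective), whereas the paper's makes more explicit the moment/characteristic-function machinery that recurs in its later proof of the Liouville--hierarchy duality.
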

\begin{proof}
For any $A\in \mathscr{L}^\infty (
\vee^k\mathfrak{H})$,
$$
\Tr[\gamma_j^{(k)}\, A]=\int_{X_1} \langle\varphi^{\otimes k}, A  \varphi^{\otimes k} \rangle
 \;d\mu_j\,.
$$
Remark that  the function $\chi: X_1\to \C$ , $ \chi(\varphi)= \langle\varphi^{\otimes k}, A  \varphi^{\otimes k} \rangle  $ is bounded and continuous with respect to  the distance $d_w$ (of the weak topology in $X_1$). Hence,  the weakly narrow convergence $\mu_j\rightharpoonup \mu$  shows that for any $A\in \mathscr{L}^\infty (
\vee^k\mathfrak{H})$,
$$
\lim_{j} \Tr[\gamma_j^{(k)}\, A]=\int_{X_1} \langle\varphi^{\otimes k}, A  \varphi^{\otimes k} \rangle
 \;d\mu= \Tr[\gamma^{(k)}\, A]\,,
$$
 with $\gamma^{(k)}=\int_{X_1} |\varphi^{\otimes k} \rangle \langle \varphi^{\otimes k} | \;d\mu$.
This proves that $\gamma_j^{(k)} \overset{*}{\rightharpoonup} \gamma^{(k)}$.

\bigskip

The proof of the inverse statement is a bit more involved.
Suppose that a sequence  $(\gamma_j)_{j\in\N}$ converges to $\gamma\in \mathscr{H}(\mathfrak{H})$  with respect to the distance $\mathbf{d}_w$. This means that \eqref{convstar} holds  for any $k\in\N$. In particular, taking $A=  |\xi^{\otimes k} \rangle \langle \xi^{\otimes k} |\in\mathscr{L}^\infty(\vee^k\mathfrak{H})$,
\begin{equation}
\label{eq.3}
\lim_j\Tr[\gamma_j^{(k)}\, A]=\lim_j\int_{X_1} \langle\varphi^{\otimes k}, \xi^{\otimes k} \rangle \langle \xi^{\otimes k},  \varphi^{\otimes k} \rangle   \;d\mu_j= \int_{X_1}
 | \langle\varphi, \xi \rangle|^{2k} \;d\mu\,,
\end{equation}
with $\mu\in \mathscr{M}(\mathfrak{H})$ such that $\Phi(\mu)=\gamma$. So, the characteristic function of $\mu_j$ have the following absolutely convergent expansion,
\begin{eqnarray*}
\int_{X_1} e^{i{\rm Re} \langle\varphi, \xi \rangle_\mathfrak{H}} \,
d\mu_j&=& \sum_{k=0}^\infty \frac{i^k}{k!}  \,\int_{X_1} {\rm Re} \langle\varphi, \xi \rangle^k \; d\mu_j \\
&=& \sum_{k=0}^\infty \frac{(-1)^k}{4^k (2k)!}  \,\int_{X_1} | \langle\varphi, \xi \rangle|^{2k} \; d\mu_j,
\end{eqnarray*}
where the last equality is a direct consequence of the $U(1)$-invariance of $\mu_j$. So, the dominated
convergence and  \eqref{eq.3} yield the following convergence for the characteristic functions,
$$
\lim_j \int_{X_1} e^{i{\rm Re} \langle\varphi, \xi \rangle_\mathfrak{H}} \,
d\mu_j= \int_{X_1} e^{i{\rm Re} \langle\varphi, \xi \rangle_\mathfrak{H}} \,
d\mu\,.
$$
 According to Theorem \ref{thmA}, in the Appendix \ref{measure}, the sequence $(\mu_j)_{j\in\N}$ converges towards $\mu$ weakly narrowly.
\end{proof}

\begin{prop}
\label{prop.strcv}
Let $(\gamma,(\gamma_i)_{i\in\N})$ and $(\mu,(\mu_i)_{i\in\N})$ two sequences respectively in  $\mathscr{H}(\mathfrak{H})$ and $\mathscr{M}(\mathfrak{H})$  with $\Phi(\mu_i)=\gamma_i$ and $\Phi(\mu)=\gamma$. Then $\mu_i\to \mu$ strongly narrowly  in  $\mathscr{M}(\mathfrak{H})$ if and only if for  all
$k\in\N$,
$$
\gamma_i^{(k)}=\int_{X_1} |\varphi^{\otimes k} \rangle \langle \varphi^{\otimes k} | \;d\mu_i\, {\rightarrow}  \int_{X_1} |\varphi^{\otimes k} \rangle \langle \varphi^{\otimes k} | \;d\mu= \gamma^{(k)}\,,
$$
in the norm topology of  $\mathscr{L}^1(\vee^k\mathfrak{H})$.
\end{prop}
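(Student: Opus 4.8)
The plan is to route the proof through the weak--narrow statement of Proposition \ref{prop.weakcv} and then bridge the gap between weak and strong narrow convergence by controlling the second moment $\int_{X_1}\|\varphi\|^{2}\,d\mu$, which on the unit ball of a Hilbert space is precisely the quantity that upgrades weak to strong convergence of probability measures.

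For the forward implication, suppose $\mu_i\to\mu$ strongly narrowly. Every bounded $d_w$-continuous function on $X_1$ is $\|\cdot\|$-continuous, so strong narrow convergence implies weak narrow convergence, and Proposition \ref{prop.weakcv} gives $\gamma_i^{(k)}\overset{*}{\rightharpoonup}\gamma^{(k)}$ for each $k$. Moreover, since $\varphi\mapsto\|\varphi\|^{2k}$ lies in $\mathscr{C}_b(X_1,\|\cdot\|)$, the integral representation \eqref{decomp} yields $\Tr[\gamma_i^{(k)}]=\int_{X_1}\|\varphi\|^{2k}\,d\mu_i\to\int_{X_1}\|\varphi\|^{2k}\,d\mu=\Tr[\gamma^{(k)}]$. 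Thus, for each fixed $k$, we have a weak-$*$ convergent sequence of non-negative trace-class operators with convergent traces. If $\Tr[\gamma^{(k)}]=0$ then $\|\gamma_i^{(k)}\|_{\mathscr{L}^1(\vee^k\mathfrak{H})}=\Tr[\gamma_i^{(k)}]\to0$; otherwise the normalized operators $\gamma_i^{(k)}/\Tr[\gamma_i^{(k)}]$ and $\gamma^{(k)}/\Tr[\gamma^{(k)}]$ lie on the unit sphere of $\mathscr{L}^1(\vee^k\mathfrak{H})$, converge weak-$*$, hence converge in norm by the Kadec--Klee property recalled in Appendix \ref{KKstar}; multiplying back by the convergent traces gives $\gamma_i^{(k)}\to\gamma^{(k)}$ in $\mathscr{L}^1(\vee^k\mathfrak{H})$.

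For the converse, assume $\gamma_i^{(k)}\to\gamma^{(k)}$ in norm for all $k$. Then $\gamma_i^{(k)}\overset{*}{\rightharpoonup}\gamma^{(k)}$ for all $k$, so Proposition \ref{prop.weakcv} gives $\mu_i\rightharpoonup\mu$ weakly narrowly, and from $k=1$ we get $\int_{X_1}\|\varphi\|^{2}\,d\mu_i=\Tr[\gamma_i^{(1)}]\to\Tr[\gamma^{(1)}]=\int_{X_1}\|\varphi\|^{2}\,d\mu$. It then suffices to establish the upgrade lemma: if $\mu_i\rightharpoonup\mu$ weakly narrowly in $\mathfrak{P}(X_1)$ and $\int_{X_1}\|\varphi\|^{2}\,d\mu_i\to\int_{X_1}\|\varphi\|^{2}\,d\mu$, then $\mu_i\to\mu$ strongly narrowly. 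Since $(X_1,d_w)$ is a compact, hence Polish, metric space, the Skorokhod representation theorem supplies random variables $Y_i,Y$ with laws $\mu_i,\mu$ on a common probability space such that $Y_i\to Y$ almost surely in the weak topology. By weak lower semicontinuity of the norm, $\|Y\|^{2}\le\liminf_i\|Y_i\|^{2}$ a.s., while $\mathbb{E}[\|Y_i\|^{2}]\to\mathbb{E}[\|Y\|^{2}]$ by hypothesis and $0\le\|Y_i\|^{2}\le1$; a Fatou and dominated convergence argument then forces $\|Y_i\|^{2}\to\|Y\|^{2}$ in $L^1$, hence in probability. Along any subsequence one can therefore extract a further subsequence along which $\|Y_i\|\to\|Y\|$ a.s.\ and $Y_i\rightharpoonup Y$ weakly a.s., whence $Y_i\to Y$ in norm a.s.; so $Y_i\to Y$ in norm in probability, and $\int g\,d\mu_i\to\int g\,d\mu$ for all $g\in\mathscr{C}_b(X_1,\|\cdot\|)$, i.e.\ $\mu_i\to\mu$ strongly narrowly.

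The main obstacle is exactly this last upgrade: the operators $(\gamma^{(k)})_{k}$ see only the weakly continuous (polynomial-type) test functions, so the extra input of the second moment is genuinely needed to recover convergence against arbitrary norm-continuous test functions. A Skorokhod-free alternative is to derive norm tightness of $\{\mu_i\}$ directly: writing $\int_{X_1}\|\varphi\|^{2}\,d\mu_i=\sum_{n}\int_{X_1}|\langle\varphi,e_n\rangle|^{2}\,d\mu_i$ and applying Scheffé's lemma in $\ell^1$ to the coordinatewise convergent non-negative sequences, one obtains $\sup_i\sum_{n>N}\int_{X_1}|\langle\varphi,e_n\rangle|^{2}\,d\mu_i\to0$ as $N\to\infty$, and Chebyshev's inequality together with the compactness criterion in $\ell^2$ then produces norm-compact sets carrying uniformly almost all the mass; combined with $\mu_i\rightharpoonup\mu$ this again yields strong narrow convergence.
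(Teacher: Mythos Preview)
Your argument is correct. The forward direction matches the paper's proof (Kadec--Klee after observing weak-$*$ convergence and trace convergence), and you are in fact more careful than the paper in handling the normalization needed to land on the unit sphere of $\mathscr{L}^1$.

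For the converse, your main route is genuinely different from the paper's. The paper tests $\gamma_n^{(1)}$ against the tail projections $A_N=\sum_{i\ge N}|e_i\rangle\langle e_i|$ and uses the trace-norm convergence $\|\gamma_n^{(1)}-\gamma^{(1)}\|_{\mathscr{L}^1}\to 0$ to obtain $\Tr[\gamma_n^{(1)}A_N]\to\Tr[\gamma^{(1)}A_N]$ \emph{uniformly in $N$}; Chebyshev then gives the tightness-type condition of Theorem~\ref{thmB}, which together with the already established weak narrow convergence yields strong narrow convergence. Your Skorokhod argument bypasses Theorem~\ref{thmB} entirely: it is self-contained, using only the weak lower semicontinuity of the norm, a Fatou/dominated-convergence squeeze to get $\|Y_i\|\to\|Y\|$ in probability, and the Hilbert-space fact that weak plus norm convergence implies strong convergence. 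This is a more probabilistic and arguably more elementary proof of the upgrade lemma. Your Scheff\'e alternative is very close in spirit to the paper's argument---both land on the same Chebyshev/tightness conclusion---but the paper's trace-norm bound $|\Tr[(\gamma_n^{(1)}-\gamma^{(1)})A_N]|\le\|\gamma_n^{(1)}-\gamma^{(1)}\|_{\mathscr{L}^1}$ gives the uniform tail control in one line, whereas your route reconstructs it from coordinatewise convergence plus convergence of the sum.
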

\begin{proof}
For any $A\in \mathscr{L} (
\vee^k\mathfrak{H})$,
$$
\Tr[\gamma_i^{(k)}\, A]=\int_{X_1} \langle\varphi^{\otimes k}, A  \varphi^{\otimes k} \rangle
 \;d\mu_i\,.
$$
In particular, this implies that $\gamma_i^{(k)} \overset{*}{\rightharpoonup}  \gamma^{(k)}$ and $ \Tr[\gamma_i^{(k)}]\to \Tr[\gamma_i^{(k)}]$ by taking $A=1$. Hence, appealing to the Kadec-Klee property (KK*) of the space $\mathscr{L}^1(\vee^k\mathfrak{H})$ (see Thm.~\ref{th.kadec} in Appendix \ref{KKstar}), one shows that $\gamma_i^{(k)} {\to}  \gamma^{(k)}$ in the  norm topology.

\bigskip
The inverse statement is proved with the help of Thm.~\ref{thmB}. Suppose that
for any $k\in \N$, $\gamma_i^{(k)} \to  \gamma^{(k)}$ in the norm topology and consider
the sequence of  operators
$$
A_N =\sum_{i=N}^\infty |e_i\rangle\langle e_i| \in \mathscr{L}(\mathfrak{H})\,,
$$
with $(e_i)_{i\in\N}$ is an O.N.B of the Hilbert space $\mathfrak{H}$.
Then, we have
\begin{equation}
\label{eq.4}
\Tr[ \gamma^{(1)}_n \, A_N]  =\int_{X_1}\, \sum_{i=N}^\infty | \langle\varphi, e_i\rangle |^2 \, d\mu_n
\underset{n\to \infty}{\longrightarrow } \int_{X_1}\, \sum_{i=N}^\infty | \langle\varphi, e_i\rangle |^2 \, d\mu\,,
\end{equation}
uniformly  in $N\in \N$ since for all $N\in\N$,
$$
\left| \Tr[ (\gamma_n^{(1)}- \gamma^{(1)}) A_N] \right|
\leq || \gamma_n^{(1)}- \gamma^{(1)}||_{\mathscr{ L}(\mathfrak{H})} \underset{n\to \infty}{\rightarrow} 0\,.
$$
The Chebyshev's inequality gives for any $\varepsilon>0$,
$$
\int_{X_1} \displaystyle 1_{\{\sum_{i=N}^\infty | \langle\varphi, e_i\rangle |^2 \geq \varepsilon\}} \, d\mu_n
\leq \frac{1}{\varepsilon} \int_{X_1} \sum_{i=N}^\infty | \langle\varphi, e_i\rangle |^2  \, d\mu_n \,.
$$
Therefore,  one deduces from the uniform convergence \eqref{eq.4}  and the above inequality the following statement,
$$
\lim_{N\to \infty } \sup_{n\in\N}  \int_{X_1} \displaystyle 1_{\{\sum_{i=N}^\infty | \langle\varphi, e_i\rangle |^2 \geq \varepsilon\}} \, d\mu_n
\leq  \frac{1}{\varepsilon} \lim_{N\to \infty } \sup_{n\in\N} \int_{X_1} \sum_{i=N}^\infty | \langle\varphi, e_i\rangle |^2  \, d\mu_n =0\,.
$$
Moreover, by Proposition \ref{prop.weakcv}, one already  knows that $\mu_n\rightharpoonup\mu$ weakly narrowly  in $\mathscr{M}(\mathfrak{H})$. Thus, applying Theorem \ref{thmB} in the Appendix \ref{measure}, one proves the convergence $\mu_n\to \mu$ with respect to the strong narrow topology in $\mathscr{M}(\mathfrak{H})$.
\end{proof}

The following corollary provides a useful   characterisation of the relevant  topologies on  the set of symmetric hierarchies.
\begin{cor}
\label{homeo}
The mapping
\begin{eqnarray*}
\Phi:(\mathscr{M}(\mathfrak{H}), \tau) &\rightarrow &  (\mathscr{H}(\mathfrak{H}), \mathbf{d})\\
\mu &\rightarrow & (\gamma^{(k)})_{k\in \N} =\left(\int_{X_1} |\varphi^{\otimes k} \rangle \langle \varphi^{\otimes k} | \;d\mu\right)_{k\in \N}  \,,
\end{eqnarray*}
defines an homeomorphism in the two cases:
\begin{description}
 \item (i) $\tau$ is the strong narrow convergence topology and $\mathbf{d}$ is the product distance  $\mathbf{d}_s$  defined in
 \eqref{dists}.
 \item(ii) $\tau$ is the weak narrow convergence topology   and $\mathbf{d}$ is the product distance  $\mathbf{d}_w$ defined in \eqref{distw}.
 \end{description}
\end{cor}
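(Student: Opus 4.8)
The plan is to reduce everything to the three preceding results. By Proposition~\ref{bij} the map $\Phi$ is already a bijection between $\mathscr M(\mathfrak H)$ and $\mathscr H(\mathfrak H)$, so in each of the cases (i) and (ii) it only remains to prove that $\Phi$ and $\Phi^{-1}$ are continuous. Since all four spaces in play are metric spaces --- $\mathscr M(\mathfrak H)$ is a subset of $\mathfrak P(X_1)$, which is metrizable both for the weak and for the strong narrow topology, and $\mathscr H(\mathfrak H)$ carries the metrics $\mathbf d_w$, $\mathbf d_s$ --- it suffices to check sequential continuity of $\Phi$ and of $\Phi^{-1}$.

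First I would record the elementary fact about product distances of the form $\mathbf d=\sum_{k\in\N}2^{-k}\rho_k$ on a countable product: convergence $\mathbf d(\gamma_j,\gamma)\to 0$ is equivalent to coordinatewise convergence $\rho_k(\gamma_j^{(k)},\gamma^{(k)})\to 0$ for every $k\in\N$ (here the sums converge since $\rho_k$ is bounded, because $0\leq\gamma^{(k)}\leq 1$ forces $\|\gamma^{(k)}\|_{\mathscr L^1(\vee^k\mathfrak H)}\leq 1$). Applying this with $\rho_k=\mathbf d^{(k)}$ shows that $\mathbf d_w(\gamma_j,\gamma)\to 0$ if and only if $\gamma_j^{(k)}\overset{*}{\rightharpoonup}\gamma^{(k)}$ in $\mathscr L^1(\vee^k\mathfrak H)$ for all $k$, and with $\rho_k(\cdot,\cdot)=\|\cdot-\cdot\|_{\mathscr L^1(\vee^k\mathfrak H)}$ that $\mathbf d_s(\gamma_i,\gamma)\to 0$ if and only if $\gamma_i^{(k)}\to\gamma^{(k)}$ in norm in $\mathscr L^1(\vee^k\mathfrak H)$ for all $k$.

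With this in hand, case (ii) is immediate: a sequence $\mu_j$ converges to $\mu$ weakly narrowly in $\mathscr M(\mathfrak H)$ if and only if $\Phi(\mu_j)^{(k)}\overset{*}{\rightharpoonup}\Phi(\mu)^{(k)}$ for every $k$ by Proposition~\ref{prop.weakcv}, and by the previous paragraph this holds if and only if $\mathbf d_w(\Phi(\mu_j),\Phi(\mu))\to 0$. Hence $\Phi$ carries weakly narrowly convergent sequences precisely to $\mathbf d_w$-convergent sequences and conversely, so both $\Phi$ and $\Phi^{-1}$ are sequentially continuous, and being a bijection between metric spaces $\Phi$ is a homeomorphism. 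Case (i) is handled identically, using Proposition~\ref{prop.strcv} in place of Proposition~\ref{prop.weakcv} together with the characterization of $\mathbf d_s$-convergence above.

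The argument is essentially bookkeeping, and there is no genuine analytic obstacle: all the substance is already contained in Propositions~\ref{bij}, \ref{prop.weakcv} and \ref{prop.strcv}. The only points deserving a line of care are the metrizability of the weak and strong narrow topologies on $\mathscr M(\mathfrak H)\subset\mathfrak P(X_1)$ (so that sequential arguments are legitimate) and the identification of $\mathbf d_w$ and $\mathbf d_s$ with the coordinatewise weak-$*$, respectively norm, product topologies on $\prod_{k}\mathscr L^1(\vee^k\mathfrak H)$ restricted to $\mathscr H(\mathfrak H)$, both of which have been set up in the text preceding the statement.
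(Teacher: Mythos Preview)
Your proof is correct and follows exactly the approach of the paper, which simply states that the result follows from Propositions~\ref{prop.weakcv} and~\ref{prop.strcv}. You have merely spelled out the routine bookkeeping (bijectivity from Proposition~\ref{bij}, metrizability allowing sequential arguments, and the equivalence between $\mathbf d_w$/$\mathbf d_s$-convergence and coordinatewise weak-$*$/norm convergence) that the paper leaves implicit.
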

\begin{proof}
Follows by Propositions  \ref{prop.weakcv} - \ref{prop.strcv}.
\end{proof}

Remark that the above homeomorphism allows to use the  Krein-Milman and the Choquet-Bishop-de Leeuw  theorems on the convex set of symmetric hierarchies.

\section{The Liouville-hierarchy duality}
\label{sec.equiv}
 We discuss, in this section, the rigorous formulation of    the Liouville and the hierarchy equations and establish their equivalence in full generality as stated in Theorem \ref{sec.0.thm1}.  More precisely, we will  prove that any curve $t\to \mu_t$ in  $\mathscr{M}(\mathscr{Z}_0)$ satisfying \eqref{A2} and solving the Liouville equation \eqref{int-liouville} will give a curve   $t\to\Phi( \mu_t)=\gamma_t$ in $\mathscr{H}(\mathscr{Z}_0)$  satisfying \eqref{A1} and  solving the hierarchy equation \eqref{int-hier}  and vice versa.
Remember that $(\mathscr{Z}_0,\mathscr{Z}_s, \mathscr{Z}_{-\sigma})$ is the triple of spaces introduce in
Subsection \ref{fram} with $0\leq s\leq\sigma$; and  $\Phi$ is the  homeomorphism, in Corollary \ref{homeo},   relating symmetric hierarchies in $\mathscr{H}(\mathscr{Z}_0)$  to probability measures in $\mathscr{M}(\mathscr{Z}_0)$.

\subsection{Regularity issues}
\label{reg-issue}
We first emphasis a general property showing a correspondence between  the concentration property of  measures $\mu\in \mathscr{M}(\mathscr{Z}_0)$ and regularity of the  hierarchies $\gamma=\Phi(\mu)\in \mathscr{H}(\mathscr{Z}_0)$.

\begin{lem}
\label{concent}
Let $\mu\in\mathscr{M}(\mathscr{Z}_0)$ and $\gamma=(\gamma^{(k)})_{k\in \N}\in \mathscr{H}(\mathscr{Z}_0)$ such that
$\Phi(\mu)=\gamma$. Then for any $s\geq 0$ and $R>0$:
\begin{equation}
\label{eq.5}
\mu(B_{\mathscr{Z}_s}(0,R)) =1 \Leftrightarrow \left( \Tr[ (A^{s/2})^{\otimes k} \, \gamma^{(k)} \, (A^{s/2})^{\otimes k} ] \leq R^{2k}, \forall k\in\N \right)\,.
\end{equation}
\end{lem}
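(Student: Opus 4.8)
The plan is to reduce both sides of the claimed equivalence \eqref{eq.5} to one and the same scalar quantity, namely the $k$-th moment
\[
m_k(\mu):=\int_{X_1}\|\varphi\|_{\mathscr{Z}_s}^{2k}\,d\mu(\varphi)\,,
\]
where $\|\varphi\|_{\mathscr{Z}_s}$ is understood to be $+\infty$ whenever $\varphi\notin\mathscr{Z}_s$. Once one establishes that $\Tr[(A^{s/2})^{\otimes k}\gamma^{(k)}(A^{s/2})^{\otimes k}]=m_k(\mu)$ for every $k\in\N$, the equivalence \eqref{eq.5} becomes an elementary consequence of the monotone behaviour of moments.

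First I would give a rigorous meaning to the left-hand quantity. Let $P_N$ be the spectral projection of the self-adjoint operator $A\geq c\,1$ onto $[0,N]$; then $A^{s/2}P_N$ is bounded and self-adjoint, and by the spectral theorem $\|A^{s/2}P_N\varphi\|_{\mathscr{Z}_0}$ increases to $\|\varphi\|_{\mathscr{Z}_s}$ in $[0,+\infty]$ for every $\varphi\in X_1$ (the value being $+\infty$ exactly when $\varphi\notin D(A^{s/2})=\mathscr{Z}_s$). Set $B_N:=(A^{s/2}P_N)^{\otimes k}\in\mathscr{L}(\vee^k\mathscr{Z}_0)$. Since $B_N|\varphi^{\otimes k}\rangle=|(A^{s/2}P_N\varphi)^{\otimes k}\rangle$, the integral representation \eqref{decomp} together with Tonelli's theorem (all integrands and all terms of the trace sum are non-negative) gives
\[
\Tr\big[B_N\gamma^{(k)}B_N\big]=\int_{X_1}\big\|A^{s/2}P_N\varphi\big\|_{\mathscr{Z}_0}^{2k}\,d\mu(\varphi)\,.
\]
The sandwiched trace $\Tr[(A^{s/2})^{\otimes k}\gamma^{(k)}(A^{s/2})^{\otimes k}]$ is, by its standard definition as the trace (equivalently, the $\mathscr{L}^1$-norm) of a non-negative form, the monotone limit $\sup_N\Tr[B_N\gamma^{(k)}B_N]$; letting $N\to\infty$ and applying the monotone convergence theorem to the right-hand side yields $\Tr[(A^{s/2})^{\otimes k}\gamma^{(k)}(A^{s/2})^{\otimes k}]=m_k(\mu)$.

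It then remains to run the scalar argument. If $\mu(B_{\mathscr{Z}_s}(0,R))=1$, then $\|\varphi\|_{\mathscr{Z}_s}\leq R$ for $\mu$-a.e.\ $\varphi$, hence $m_k(\mu)\leq R^{2k}$ for all $k$, which is the right-hand side of \eqref{eq.5}. Conversely, assume $m_k(\mu)\leq R^{2k}$ for all $k\in\N$ and suppose, for contradiction, that $\mu(\{\varphi\in X_1:\|\varphi\|_{\mathscr{Z}_s}>R\})>0$. Writing this set as $\bigcup_{n\in\N}\{\|\varphi\|_{\mathscr{Z}_s}\geq R+1/n\}$ and using continuity of $\mu$ from below, we get $\varepsilon,\delta>0$ with $\mu(\{\|\varphi\|_{\mathscr{Z}_s}\geq R+\varepsilon\})=\delta$, so that $m_k(\mu)\geq\delta\,(R+\varepsilon)^{2k}$ and therefore $\delta\leq\big(R/(R+\varepsilon)\big)^{2k}\to 0$ as $k\to\infty$, a contradiction. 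Hence $\mu$ concentrates on $B_{\mathscr{Z}_s}(0,R)$, which gives the remaining implication.

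The step I expect to require the most care is the measurability and definitional bookkeeping behind the identity $\Tr[(A^{s/2})^{\otimes k}\gamma^{(k)}(A^{s/2})^{\otimes k}]=m_k(\mu)$: one should check that $\varphi\mapsto\|\varphi\|_{\mathscr{Z}_s}$ (extended by $+\infty$) is a Borel function on $X_1$ — which follows by writing $\|\varphi\|_{\mathscr{Z}_s}^2=\sup_N\langle\varphi,A^sP_N\varphi\rangle_{\mathscr{Z}_0}$ as a supremum of weakly continuous functions, hence as a lower semicontinuous function — and, accordingly, that $\{\varphi\in X_1:\|\varphi\|_{\mathscr{Z}_s}\leq R\}$ belongs to the $\sigma$-algebra carrying $\mu$ (cf.\ \cite[Appendix]{MR3721874}), together with the identification of the sandwiched trace with $\sup_N\Tr[B_N\gamma^{(k)}B_N]$. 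Everything else reduces to applications of Tonelli's and the monotone convergence theorems.
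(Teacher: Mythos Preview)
Your proof is correct and follows essentially the same route as the paper: both reduce the equivalence to the moment identity $\Tr[(A^{s/2})^{\otimes k}\gamma^{(k)}(A^{s/2})^{\otimes k}]=\int_{X_1}\|\varphi\|_{\mathscr{Z}_s}^{2k}\,d\mu$ and then run a Chebyshev-type tail bound (your contradiction argument is exactly Markov's inequality written out). The paper simply asserts the moment identity and names Chebyshev, whereas you supply the spectral-cutoff justification and the measurability check; this extra care is welcome but does not change the underlying strategy.
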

\begin{proof}
The equivalence is a consequence of the identity,
$$
\Tr[ (A^{s/2})^{\otimes k} \, \gamma^{(k)} \, (A^{s/2})^{\otimes k} ] =\int_{X_1} ||\varphi||_{\mathscr{Z}_s}^{2k} \, d\mu\,.
$$
If the measure $\mu$ is concentrated  on the closed ball $B_{\mathscr{Z}_s}(0,R)$  then the inequalities in the right hand side of \eqref{eq.5} hold true. Conversely, the right hand side of
\eqref{eq.5} yields for all $k\in\N$,
$$
\int_{X_1} ||\varphi||_{\mathscr{Z}_s}^{2k} \, d\mu\,\leq R^{2k}\,,
$$
which in turn gives, by  Chebyshev's inequality, the concentration of the measure $\mu$ on $
B_{\mathscr{Z}_s}(0,R)$.
\end{proof}

   An important ingredient  in the problems of well-posedness and uniqueness of the Liouville and hierarchy equations is the regularity of the solutions with respect to time.    The following proposition identifies  the  relevant notions of regularity that we shall use.

\begin{prop}
\label{regcurv} Let $I$ be an interval and consider two curves  $t\in I\to\mu_t\in\mathscr{M}(\mathscr{Z}_0)$ and $t\in I\to\gamma_t=(\gamma_t^{(k)})_{k\in \N}\in \mathscr{H}(\mathscr{Z}_0)$ such that $\Phi(\mu_t)=\gamma_t$ for all $t\in I$. Assume that for some
$s\geq 0$ and $R>0$,  $\mu_t(B_{\mathscr{Z}_s}(0,R)) =1$ for all $t\in I$. Then for all $\tau\in (-\infty,s]$,
\begin{enumerate}
\item $t\in I\to\mu_t\in\mathfrak{P}(\mathscr{Z}_\tau)$ weakly narrowly continuous if and only if
$ t\to (A^{\tau/2})^{\otimes k} \, \gamma_t^{(k)} \, (A^{\tau/2})^{\otimes k} $ is continuous with respect to the
weak-$*$ topology in $\mathscr{L}^1(\vee^k\mathscr{Z}_0)$ for all $k\in\N$.
\item   $t\in I\to\mu_t\in\mathfrak{P}(\mathscr{Z}_\tau)$ strongly narrowly continuous if and only if
$ t\to (A^{\tau/2})^{\otimes k} \, \gamma_t^{(k)} \, (A^{\tau/2})^{\otimes k} $ is continuous with respect to the
norm topology in $\mathscr{L}^1(\vee^k\mathscr{Z}_0)$ for all $k\in\N$.
\end{enumerate}
\end{prop}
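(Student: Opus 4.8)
The plan is to reduce both equivalences to the time‑independent identifications already at hand — the homeomorphism $\Phi$ of Corollary \ref{homeo} and Propositions \ref{prop.weakcv}--\ref{prop.strcv} — applied not to the base space $\mathscr Z_{0}$ but to the scale space $\mathscr Z_\tau$, after a harmless linear rescaling that sends the relevant ball to the unit ball and carries the scale space back to $\mathscr Z_{0}$.

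First I would record a preliminary reduction. Fix $\tau\in(-\infty,s]$. From $A\geq c$ one gets $A^{\tau}\leq c^{\tau-s}A^{s}$ on $\mathscr Z_{s}$, hence $\|\varphi\|_{\mathscr Z_\tau}\leq c^{(\tau-s)/2}\|\varphi\|_{\mathscr Z_{s}}$, so that $B_{\mathscr Z_{s}}(0,R)\subset B_{\mathscr Z_\tau}(0,R')$ with $R':=c^{(\tau-s)/2}R$. Since Borel sets of $\mathscr Z_{s}$ are Borel sets of $\mathscr Z_\tau$, each $\mu_t$ is a Borel probability measure on $\mathscr Z_\tau$ concentrated on $B_{\mathscr Z_\tau}(0,R')$, so $t\in I\mapsto\mu_t$ is a bona fide curve in $\mathfrak P(\mathscr Z_\tau)$. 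Using the integral representation $\gamma_t^{(k)}=\int_{X_{1}}|\varphi^{\otimes k}\rangle\langle\varphi^{\otimes k}|\,d\mu_t(\varphi)$ together with $\Tr[(A^{\tau/2})^{\otimes k}\gamma_t^{(k)}(A^{\tau/2})^{\otimes k}]=\int\|\varphi\|_{\mathscr Z_\tau}^{2k}\,d\mu_t\leq R'^{2k}$ (the reasoning of Lemma \ref{concent}), the operators $(A^{\tau/2})^{\otimes k}\gamma_t^{(k)}(A^{\tau/2})^{\otimes k}$ are trace class on $\vee^{k}\mathscr Z_{0}$, so the right‑hand side of the claimed equivalences is meaningful.

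Then I would transport everything to $\mathscr Z_{0}$. Recall that $A^{\tau/2}$ extends to a unitary $\mathscr Z_\tau\to\mathscr Z_{0}$ (bounded with bounded inverse for $\tau\leq 0$; for $\tau>0$ it is unbounded on $\mathscr Z_{0}$ but still unitary from $\mathscr Z_\tau$), commuting with the $U(1)$‑action. Set $T:=(R')^{-1}A^{\tau/2}\colon\mathscr Z_\tau\to\mathscr Z_{0}$ and $\hat\mu_t:=T_{*}\mu_t$. Then $\hat\mu_t\in\mathscr M(\mathscr Z_{0})$ (it is $U(1)$‑invariant and concentrated on the unit ball of $\mathscr Z_{0}$), and a direct computation with the integral representation gives $\Phi(\hat\mu_t)^{(k)}=(R')^{-2k}(A^{\tau/2})^{\otimes k}\gamma_t^{(k)}(A^{\tau/2})^{\otimes k}$ for all $k$. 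Since $T$ is a linear homeomorphism for the norm topologies and bi‑continuous for the weak topologies, the pushforward $T_{*}$ is a homeomorphism $\mathfrak P(\mathscr Z_\tau)\to\mathfrak P(\mathscr Z_{0})$ for both the weak and the strong narrow topologies; hence $t\mapsto\mu_t$ is weakly (resp.\ strongly) narrowly continuous in $\mathfrak P(\mathscr Z_\tau)$ if and only if $t\mapsto\hat\mu_t$ is weakly (resp.\ strongly) narrowly continuous in $\mathscr M(\mathscr Z_{0})$. Now Corollary \ref{homeo} turns this into continuity of $t\mapsto\Phi(\hat\mu_t)$ in $(\mathscr H(\mathscr Z_{0}),\mathbf d_{w})$ (resp.\ $(\mathscr H(\mathscr Z_{0}),\mathbf d_{s})$), which, $\mathbf d_{w}$ (resp.\ $\mathbf d_{s}$) being the product of the weak‑$*$ (resp.\ norm) topologies on the factors $\mathscr L^{1}(\vee^{k}\mathscr Z_{0})$, is exactly weak‑$*$ (resp.\ norm) continuity of each $t\mapsto\Phi(\hat\mu_t)^{(k)}$; and by the displayed identity the constant $(R')^{-2k}$ is irrelevant, which yields the assertion.

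The only genuinely delicate step is the bookkeeping with $\mathscr Z_\tau$ when $\tau>0$: one must check that $\mu_t$ is honestly a Borel probability measure on $\mathscr Z_\tau$ supported in a bounded ball, so that the pushforward by the a priori unbounded operator $A^{\tau/2}$ is legitimate, and that the conjugation identity $\Phi(\hat\mu_t)^{(k)}=(R')^{-2k}(A^{\tau/2})^{\otimes k}\gamma_t^{(k)}(A^{\tau/2})^{\otimes k}$ holds as an identity of trace‑class operators; it is precisely here that the concentration hypothesis $\mu_t(B_{\mathscr Z_{s}}(0,R))=1$ enters, and it also supplies a radius $R'$ uniform in $t$. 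Everything else is the formal transfer of Corollary \ref{homeo} and Propositions \ref{prop.weakcv}--\ref{prop.strcv} through the unitary rescaling $T$, using metrizability of all topologies involved to pass freely between sequential and topological continuity.
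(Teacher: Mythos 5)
Your proof is correct and gives a clean, concrete implementation of what the paper leaves almost entirely implicit (the paper's proof is the single sentence ``follows by the same arguments as in Propositions~\ref{prop.strcv}--\ref{prop.weakcv}''). Where the paper presumably intends a direct re-run of those two propositions with the weight $(A^{\tau/2})^{\otimes k}$ inserted throughout (replacing $\mathscr{Z}_0$ by $\mathscr{Z}_\tau$ in the characteristic-function expansion and in the tail-equicontinuity criterion), you instead \emph{reduce} the $\mathscr{Z}_\tau$-weighted statement to the unweighted one by conjugating with the scaled unitary $T=(R')^{-1}A^{\tau/2}\colon\mathscr{Z}_\tau\to\mathscr{Z}_0$ and then invoking Corollary~\ref{homeo} verbatim. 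This trades re-proving two sequential-convergence equivalences for three clean observations: that $T_\ast$ is a homeomorphism of the relevant probability spaces for both narrow topologies (because $T$ is a linear isomorphism, hence bi-continuous for both norm and weak topologies), that $T_\ast\mu_t$ lies in $\mathscr{M}(\mathscr{Z}_0)$ by the concentration hypothesis, and that the conjugation identity $\Phi(T_\ast\mu_t)^{(k)}=(R')^{-2k}(A^{\tau/2})^{\otimes k}\gamma_t^{(k)}(A^{\tau/2})^{\otimes k}$ holds as a Bochner integral. The advantages of your route are that it makes the role of the concentration hypothesis $\mu_t(B_{\mathscr{Z}_s}(0,R))=1$ completely explicit (it supplies a time-uniform radius $R'$ and legitimizes the pushforward by the unbounded operator $A^{\tau/2}$ when $\tau>0$), and that it does not re-open the (KK*) argument or the characteristic-function expansion of Propositions~\ref{prop.weakcv}--\ref{prop.strcv}. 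The only cosmetic slip is the aside suggesting that $A^{\tau/2}$ is a unitary $\mathscr{Z}_\tau\to\mathscr{Z}_0$ only ``still'' when $\tau>0$: in fact $A^{\tau/2}$ is by construction an isometric isomorphism $\mathscr{Z}_\tau\to\mathscr{Z}_0$ for every $\tau\in\R$, and its (un)boundedness \emph{as an operator on $\mathscr{Z}_0$} is irrelevant to your argument.
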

\begin{proof}
The proof follows by the same arguments as in Propositions \ref{prop.strcv}-\ref{prop.weakcv}.
\end{proof}

As a consequence of the above observations, we have the following equivalence between the two main assumptions
\eqref{A2} and \eqref{A1}.
\begin{lem}
\label{a1a2}
A curve $t\in I\to \gamma_t\in \mathscr{H}(\mathscr{Z}_0)$, defined over an interval $I$, satisfies the
assumption  \eqref{A2}  if and only if the curve $t\in I\to \mu_t=\Phi^{-1}(\gamma_t)\in \mathscr{M}(\mathscr{Z}_0)$ satisfies \eqref{A1}.
\end{lem}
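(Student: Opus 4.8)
The plan is to split each of the two assumptions into its three constituent conditions and match them one at a time, relying entirely on the structural results already established: the bijection $\Phi$ of Proposition \ref{bij}, the concentration--regularity equivalence of Lemma \ref{concent}, and the time-regularity equivalence of Proposition \ref{regcurv}.

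For the membership conditions, I would simply note that for each fixed $t\in I$ the assertion $\gamma_t\in\mathscr{H}(\mathscr{Z}_0)$ is equivalent to $\mu_t=\Phi^{-1}(\gamma_t)\in\mathscr{M}(\mathscr{Z}_0)$, because Proposition \ref{bij} says $\Phi$ is a bijection between these two sets; this disposes of the first bullet of \eqref{A2} and the first bullet of \eqref{A1}. For the growth/concentration condition, I would first observe that since $\gamma_t^{(k)}\ge 0$ the operator $(A^{s/2})^{\otimes k}\gamma_t^{(k)}(A^{s/2})^{\otimes k}$ is non-negative, so its $\mathscr{L}^1$-norm equals its trace (as an element of $[0,+\infty]$); then Lemma \ref{concent}, applied separately at each $t$, gives that $\|(A^{s/2})^{\otimes k}\gamma_t^{(k)}(A^{s/2})^{\otimes k}\|_{\mathscr{L}^1(\vee^k\mathscr{Z}_0)}\le R^{2k}$ for all $k\in\N$ is equivalent to $\mu_t(B_{\mathscr{Z}_s}(0,R))=1$, with the \emph{same} radius $R$ on both sides. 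Hence a single $R>0$ valid for all $t\in I$ on one side produces the same on the other, which matches the second bullets.

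For the time-continuity condition I would invoke Proposition \ref{regcurv}(1) with $\tau=-\sigma$ (legitimate since $0\le s\le\sigma$ forces $-\sigma\le s$): once we know $\mu_t(B_{\mathscr{Z}_s}(0,R))=1$ for all $t\in I$ --- which, by the step just completed, is part of either assumption --- that proposition states that $t\mapsto\mu_t$ is weakly narrowly continuous in $\mathfrak{P}(\mathscr{Z}_{-\sigma})$ if and only if $t\mapsto(A^{-\sigma/2})^{\otimes k}\gamma_t^{(k)}(A^{-\sigma/2})^{\otimes k}$ is weak-$*$ continuous in $\mathscr{L}^1(\vee^k\mathscr{Z}_0)$ for every $k\in\N$, which is exactly the equivalence of the last bullets. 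Assembling the three equivalences proves the lemma. I do not expect any real obstacle; the only things to be careful about are the order of deduction --- one must extract the concentration property from the second bullet before one is entitled to call on Proposition \ref{regcurv} for the third --- and the observation that Lemma \ref{concent} lets us keep the same constant $R$ throughout.
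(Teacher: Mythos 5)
Your proof is correct and follows essentially the same route as the paper's (one-line) proof, which also invokes the homeomorphism $\Phi$, Lemma \ref{concent}, and Proposition \ref{regcurv}; you simply spell out the bullet-by-bullet matching and correctly note that the concentration bound must be established before Proposition \ref{regcurv} can be applied.
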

\begin{proof}
The homeomorphism $\Phi$ in Corollary \ref{homeo} with Lemma \ref{concent} and
Proposition \ref{regcurv} give the equivalence  between the two assumptions \eqref{A1} and \eqref{A2}.
\end{proof}

\bigskip
In the  two next paragraphs we rigorously justify that the Liouville and the symmetric hierarchy equations are meaningful under  the assumptions \eqref{A0}, \eqref{A2} and \eqref{A1}.

\medskip
\emph{Liouville equations}:
In the Liouville equation   \eqref{eq.transport}, one  presumes that the integral with respect to time  is  well defined.
The following Lemma guaranties this property using \eqref{A0}  and  \eqref{A1}.
\begin{lem}
Let $v:\R\times \zeds\to\zedsi$ a vector field satisfying \eqref{A0} and  $t\in I\to \mu_t\in \mathscr{M}(\mathscr{Z}_0)$  a curve satisfying  \eqref{A1}. Then for any $\varphi\in\mathscr C_{0,cyl}^{\infty}(I \times \mathscr{Z}_{-\sigma})$  the map,
\begin{eqnarray}
\label{mes-time}
t\in I &\longrightarrow& \int_{\zeds}  {\mathrm Re} \langle v(t,x), \nabla \varphi(t,x)\rangle_{\zedsi} \;d\mu_t  \,,
\end{eqnarray}
belongs to  $L^\infty(I,dt)$ when $I$ is a bounded open interval.
\end{lem}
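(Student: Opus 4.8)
The plan is to split the assertion into the two ingredients that membership in $L^{\infty}(I,dt)$ requires: a bound on the integral which is uniform in $t\in I$, and Lebesgue measurability of the map in $t$. The uniform bound is elementary and exploits the cylindrical structure of $\varphi$. Writing $\varphi(t,x)=\phi(t,\pi(x))$ with $\pi\in\mathbb{P}_n$ associated to an orthonormal family $\{e_1,\dots,e_n\}\subset\mathscr{Z}_{-\sigma,\R}$ and $\phi\in\mathscr{C}_0^{\infty}(I\times\R^n)$, one has $\nabla_{\mathscr{Z}_{-\sigma,\R}}\varphi(t,x)=\sum_{i=1}^{n}\partial_i\phi(t,\pi(x))\,e_i$, so that $\|\nabla\varphi(t,x)\|_{\zedsi}\le\sqrt{n}\,\|\nabla_y\phi\|_{L^{\infty}}=:C_\varphi$ for every $(t,x)$, since $\phi$ is compactly supported.

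Next I would combine \eqref{A1} and \eqref{A0}. By \eqref{A1} each $\mu_t$ concentrates on $B_{\zeds}(0,R)$, and since $I$ is a bounded interval the set $\bar I\times B_{\zeds}(0,R)$ is a bounded subset of $\R\times\zeds$; hence the boundedness–on–bounded–sets hypothesis in \eqref{A0} gives $M:=\sup\{\|v(t,x)\|_{\zedsi}:t\in\bar I,\ \|x\|_{\zeds}\le R\}<\infty$. Cauchy--Schwarz in the real Hilbert space $\mathscr{Z}_{-\sigma,\R}$ then yields, for every $t\in I$,
\[
\Big|\int_{\zeds}\mathrm{Re}\langle v(t,x),\nabla\varphi(t,x)\rangle_{\zedsi}\,d\mu_t\Big|\le C_\varphi\int_{B_{\zeds}(0,R)}\|v(t,x)\|_{\zedsi}\,d\mu_t\le C_\varphi\,M,
\]
which is exactly the required essential bound.

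For measurability I would argue as follows. The integrand $F(t,x):=\mathrm{Re}\langle v(t,x),\nabla\varphi(t,x)\rangle_{\zedsi}=\sum_{i=1}^{n}\langle v(t,x),e_i\rangle_{\mathscr{Z}_{-\sigma,\R}}\,\partial_i\phi(t,\pi(x))$ is jointly Borel on $\R\times\zeds$, because $v$ is Borel by \eqref{A0}, $x\mapsto\pi(x)$ is continuous, and $\phi$ is smooth; extending it by $0$ off $\R\times\zeds$ produces a bounded Borel function on $\R\times\zedsi$, using that $\zeds$ and its Borel sets are Borel in $\zedsi$ (see \cite[Appendix]{MR3721874}). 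The curve $t\mapsto\mu_t$ is weakly narrowly continuous into $\mathfrak{P}(\zedsi)$, so $t\mapsto\int g\,d\mu_t$ is continuous for every bounded function $g$ that is continuous for the weak topology of $\zedsi$. A functional monotone class argument then concludes: the set of bounded Borel functions $F$ on $\R\times\zedsi$ for which $t\mapsto\int F(t,\cdot)\,d\mu_t$ is Lebesgue measurable is a vector space containing the constants, is stable under uniformly bounded pointwise limits (dominated convergence, pointwise), and contains every product $\chi(t)g(x)$ with $\chi\in\mathscr{C}_b(\R)$ and $g$ bounded and weakly continuous on $\zedsi$ (for such a product the map equals $\chi(t)\int g\,d\mu_t$, a product of two continuous functions); since these products form a multiplicative family generating the product Borel $\sigma$-algebra, the set contains all bounded Borel functions, in particular the $F$ above. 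Together with the uniform bound, this shows that \eqref{mes-time} belongs to $L^{\infty}(I,dt)$.

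The only point that is not purely formal is this last measurability step: since $v$ is merely Borel, the integrand $x\mapsto\mathrm{Re}\langle v(t,x),\nabla\varphi(t,x)\rangle$ need not be continuous for the weak topology, so narrow continuity of $t\mapsto\mu_t$ does not directly yield measurability (let alone continuity) of \eqref{mes-time}, and one has to pass through the monotone class / bounded-Borel approximation argument. Everything else reduces to \eqref{A0}, \eqref{A1} and the cylindrical form of the test function.
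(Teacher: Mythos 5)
Your proof is correct and takes essentially the same two-step approach as the paper: the uniform bound comes from combining \eqref{A0}, the concentration in \eqref{A1}, and the boundedness of $\nabla\varphi$ coming from the cylindrical form of the test function; measurability in $t$ comes from the weak narrow continuity of $t\mapsto\mu_t$, which the paper compresses into the single assertion that $(\mu_t)_{t\in I}$ is a Borel family of measures while you spell it out through the functional monotone class theorem. One small inaccuracy to fix: since $v$ is only bounded on \emph{bounded} sets, extending $F$ by zero off $\R\times\zeds$ need not yield a bounded function on $\R\times\zedsi$ (for large $\|x\|_{\zeds}$ with $|\pi(x)|$ small the factor $\|v(t,x)\|_{\zedsi}$ can be arbitrarily large), so before invoking the monotone class argument you should first replace $F$ by $F\cdot 1_{\bar I\times B_{\zeds}(0,R)}$, which is bounded Borel and leaves every integral $\int F(t,\cdot)\,d\mu_t$ unchanged thanks to \eqref{A1}.
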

\begin{proof}
Since $t\in I\to \mu_t\in \mathfrak{P}(\zedsi)$ is weakly narrowly continuous then it is a Borel family  of probability measures  (i.e., for any Borel set  $B$ of $ \zedsi$ the map $t\to \mu_t(B)$ is measurable).
Thanks to \eqref{A0} and \eqref{A1}, a simple bound gives
$$
 \int_{\zeds}  \bigg| \langle v(t,x), \nabla \varphi(t,x)\rangle_{\zedsi} \bigg|\;d\mu_t \leq
  c \int_{\zeds}  \|v(t,x)\|_{\zedsi}\;d\mu_t<\infty\,.
$$
\end{proof}

\bigskip
\emph{Symmetric hierarchy equations}:
 In order to give a rigorous meaning to the symmetric  hierarchy equation given  formally in  \eqref{int-hier}, we introduce the following  Hilbert rigging  $\mathcal{D}_\alpha^{(k)} \subset \vee^k\mathscr{Z}_0\subset \mathcal{D}_{-\alpha}^{(k)}$, for $\alpha>0$, as in the preliminary Subsect.~\ref{fram},  with $\mathcal{D}_\alpha^{(k)}=D((A^{\alpha/2})^{\otimes k})$, equipped with its graph norm and $\mathcal{D}_{-\alpha}^{(k)}$ identifies with the  dual of the  latter space with respect to the inner product of $ \vee^k\mathscr{Z}_0$. Moreover,  consider the following two  Banach spaces:
\begin{eqnarray}
\label{Ls}
\mathscr{L}_s^1(\vee^k\mathscr{Z}_0)&:=&
\big\{
T\in \mathscr{L}( \mathcal{D}_{-s}^{(k)}, \mathcal{D}_{s}^{(k)}), \, (A^{s/2})^{\otimes k} \, T \, (A^{s/2})^{\otimes k}
\in \mathscr{L}^1(\vee^k\mathscr{Z}_0)\big\}\,,\\
\mathscr{L}_{-\sigma}^1(\vee^k\mathscr{Z}_0)&:=&
\big\{
T\in \mathscr{L}(\mathcal{D}_{\sigma}^{(k)},\mathcal{D}_{-\sigma}^{(k)}), \, (A^{-\sigma/2})^{\otimes k} \, T \, (A^{-\sigma/2})^{\otimes k}
\in \mathscr{L}^1(\vee^k\mathscr{Z}_0)\big\}\,,
\end{eqnarray}
endowed respectively with the norms,
\begin{eqnarray*}
||T||_{ \mathscr{L}_s^1(\vee^k\mathscr{Z}_0)} &:=&|| (A^{s/2})^{\otimes k} \, T \, (A^{s/2})^{\otimes k}||_{ \mathscr{L}^1(\vee^k\mathscr{Z}_0)}\,, \\
||T||_{ \mathscr{L}_{-\sigma}^1(\vee^k\mathscr{Z}_0)} &:=&|| (A^{-\sigma/2})^{\otimes k} \, T \, (A^{-\sigma/2})^{\otimes k}||_{ \mathscr{L}^1(\vee^k\mathscr{Z}_0)}\,.
\end{eqnarray*}
For  a curve of  symmetric hierarchies, $t\in I\to \gamma_t\in \mathscr{H}(\mathscr{Z}_0)$, satisfying the assumption  \eqref{A2}  such that  $\mu_t=\Phi^{-1}(\gamma_t)\in \mathscr{M}(\mathscr{Z}_0)$, we have defined in Section \ref{fram}  the following operations on  $\gamma_t$,
\begin{equation}
\label{defBjk}
\begin{aligned}
\bullet\;\; C_{j,k}^{+} \gamma_t^{} &:= \displaystyle\int_{\zeds} \big| x^{\otimes k} \rangle \langle x^{\otimes j-1} \otimes v(t,x)\otimes x ^{\otimes k-j}\big| \;d\mu_{t}(x)  \,,\\ \medskip
\bullet  \;\;C_{j,k}^{-} \gamma_{t}^{} &:=  \displaystyle\int_{\zeds} \big| x^{\otimes j-1} \otimes v(t,x)\otimes x ^{\otimes k-j} \rangle \langle x^{\otimes k} \big| \;d\mu_{t}(x) \,,
 \end{aligned}
\end{equation}
for any  $t\in I$, $k\in\N$  and   $j=1,\cdots,k$.  Here the projectors in the above integrals,  $$
P=\big| x^{\otimes k} \rangle \langle x^{\otimes j-1} \otimes v(t,x)\otimes x ^{\otimes k-j}\big| \quad \text{ and } \quad Q=\big| x^{\otimes j-1} \otimes v(t,x)\otimes x ^{\otimes k-j} \rangle \langle x^{\otimes k} \big|\,,
$$
are well defined operators in $\mathscr{L}(\mathscr{D}_{\sigma}^{(k)},\mathscr{D}_{-\sigma}^{(k)})$, respectively acting as follows for each fixed $x\in \mathscr{Z}_s$ and for any $\phi\in\mathscr{D}_{\sigma}^{(k)}$,
$$
P( \phi)= \langle x^{\otimes j-1} \otimes v(t,x)\otimes x ^{\otimes k-j},\phi\rangle_{\vee^k
\mathscr{Z}_0}\; x^{\otimes k} \; \text{ and } \;
Q( \phi)= \langle x^{\otimes k}, \phi\rangle_{\vee^k
\mathscr{Z}_0}\; \; x^{\otimes j-1} \otimes v(t,x)\otimes x ^{\otimes k-j} \,.
$$

\begin{lem}
\label{opBjk}
Let $v:\R\times \zeds\to\zedsi$ a vector field satisfying \eqref{A0} and  $t\in I\to \gamma_t\in \mathscr{H}(\mathscr{Z}_0)$  a curve satisfying
 \eqref{A2}. Then for any $t\in I$, $k\in\N$  and   $j\in\{1,\cdots,k\}$,  the operations \eqref{defBjk},
\begin{eqnarray*}
C_{j,k}^\pm: \mathscr{H}(\mathscr{Z}_0) &\longrightarrow & \Pi_{k\in \N}  \mathscr{L}_{-\sigma}^1(\vee^k\mathscr{Z}_0)\\
(\gamma_t^{(k)})_{k\in\N} &\longrightarrow& \bigg(C_{j,k}^\pm \gamma_t^{} \bigg)_{k\in\N}\,,
\end{eqnarray*}
are well defined  as Bochner integrals in    $\mathscr{L}_{-\sigma}^1(\vee^k\mathscr{Z}_0)$ with respect to $\mu_t$.
\end{lem}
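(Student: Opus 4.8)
The plan is to verify, for each fixed $t\in I$, $k\in\N$ and $j\in\{1,\dots,k\}$, the two hypotheses of Bochner's theorem for the $\mathscr{L}_{-\sigma}^1(\vee^k\mathscr{Z}_0)$-valued integrand occurring in \eqref{defBjk} --- namely strong $\mu_t$-measurability together with integrability of its norm. It suffices to treat the $C_{j,k}^+$ integrand
\[
x\longmapsto P_x:=\big| x^{\otimes k} \big\rangle \big\langle x^{\otimes j-1} \otimes v(t,x)\otimes x ^{\otimes k-j}\big|\,,
\]
the $C_{j,k}^-$ integrand being its Hilbert-space adjoint and handled verbatim. First I would invoke the second bullet of \eqref{A2} together with Lemma \ref{concent} to record that $\mu_t$ is concentrated on $B_{\mathscr{Z}_0}(0,1)\cap B_{\mathscr{Z}_s}(0,R)$, so every integral below runs over this set.

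For the norm bound I would use that $A^{-\sigma/2}$ extends to an isometric isomorphism $\mathscr{Z}_{-\sigma}\to\mathscr{Z}_0$. Since $P_x$ is only ever paired against symmetric vectors of $\mathcal{D}_\sigma^{(k)}$, one may replace its bra-vector by its symmetrisation without altering the operator; then $(A^{-\sigma/2})^{\otimes k}P_x(A^{-\sigma/2})^{\otimes k}$ is a rank-one operator on $\vee^k\mathscr{Z}_0$ generated by $(A^{-\sigma/2}x)^{\otimes k}$ and the symmetrisation of $(A^{-\sigma/2}x)^{\otimes j-1}\otimes A^{-\sigma/2}v(t,x)\otimes (A^{-\sigma/2}x)^{\otimes k-j}$, whose trace norm is bounded by the product of the $\vee^k\mathscr{Z}_0$-norms of these two vectors. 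This gives
\[
\|P_x\|_{\mathscr{L}_{-\sigma}^1(\vee^k\mathscr{Z}_0)}\ \le\ \|x\|_{\mathscr{Z}_{-\sigma}}^{\,2k-1}\,\|v(t,x)\|_{\mathscr{Z}_{-\sigma}}\,.
\]
On the support of $\mu_t$ one has $\|x\|_{\mathscr{Z}_{-\sigma}}\le c^{-\sigma/2}\|x\|_{\mathscr{Z}_0}\le c^{-\sigma/2}$ (from $A\ge c\,1$), and $\|v(t,x)\|_{\mathscr{Z}_{-\sigma}}\le \sup_{\|y\|_{\mathscr{Z}_s}\le R}\|v(t,y)\|_{\mathscr{Z}_{-\sigma}}<\infty$ by the boundedness-on-bounded-sets clause of \eqref{A0}; hence $\int\|P_x\|_{\mathscr{L}_{-\sigma}^1(\vee^k\mathscr{Z}_0)}\,d\mu_t<\infty$. (In passing this also yields the estimate $\|C_{j,k}^\pm\gamma_t\|_{\mathscr{L}_{-\sigma}^1(\vee^k\mathscr{Z}_0)}\le c^{-(2k-1)\sigma/2}\sup_{\|y\|_{\mathscr{Z}_s}\le R}\|v(t,y)\|_{\mathscr{Z}_{-\sigma}}$, which is exactly what is needed to make sense of the right-hand side of \eqref{int-hier}.)

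For strong measurability I would first observe that $\mathscr{L}_{-\sigma}^1(\vee^k\mathscr{Z}_0)$ is separable, being isometric --- via $T\mapsto (A^{-\sigma/2})^{\otimes k}T(A^{-\sigma/2})^{\otimes k}$ --- to $\mathscr{L}^1(\vee^k\mathscr{Z}_0)$, which is separable since $\vee^k\mathscr{Z}_0$ is a separable Hilbert space. By Pettis' measurability theorem it then suffices to check that $x\mapsto P_x$ is $\mu_t$-measurable, and for this I would show it is Borel on the $\mu_t$-full Borel subset $B_{\mathscr{Z}_s}(0,R)$ of $\mathscr{Z}_0$. The auxiliary map $(y,w)\in\mathscr{Z}_0\times\mathscr{Z}_{-\sigma}\mapsto \big| y^{\otimes k} \big\rangle\big\langle y^{\otimes j-1} \otimes w\otimes y ^{\otimes k-j}\big|\in\mathscr{L}_{-\sigma}^1(\vee^k\mathscr{Z}_0)$ is continuous --- it is a composition of the bounded linear maps $y\mapsto A^{-\sigma/2}y$ and $w\mapsto A^{-\sigma/2}w$, of tensor powers, of a symmetrisation, and of $(u,b)\mapsto|u\rangle\langle b|$ --- hence Borel; and $x\mapsto(x,v(t,x))$ is Borel on $B_{\mathscr{Z}_s}(0,R)$, because $v(t,\cdot)$ is Borel by \eqref{A0} and because, by \cite[Appendix]{MR3721874}, the Borel $\sigma$-algebras inherited from $\mathscr{Z}_s$ and from $\mathscr{Z}_0$ agree on $B_{\mathscr{Z}_s}(0,R)$. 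Composing the two and applying Bochner's theorem would conclude the proof. The one point I expect to require genuine care is this measurability bookkeeping across the Hilbert scale --- reconciling the fact that \eqref{A0} only furnishes $v(t,\cdot)$ as a Borel map $\mathscr{Z}_s\to\mathscr{Z}_{-\sigma}$ with the need to integrate against $\mu_t$, a Borel measure living on $\mathscr{Z}_0$ --- together with the routine but necessary care about symmetrisation indicated above; everything else is a direct verification.
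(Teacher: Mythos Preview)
Your proposal is correct and follows essentially the same route as the paper's proof: verify weak (equivalently, Borel) measurability of the rank-one integrand, invoke Pettis' theorem via separability of $\mathscr{L}_{-\sigma}^1(\vee^k\mathscr{Z}_0)$ to upgrade to strong measurability, and then bound the $\mathscr{L}_{-\sigma}^1$-norm by $\|x\|^{2k-1}\,\|v(t,x)\|_{\mathscr{Z}_{-\sigma}}$ using the concentration of $\mu_t$ and the boundedness of $v$ on bounded sets. You are in fact more explicit than the paper on the measurability bookkeeping across the Hilbert scale and on the symmetrisation, which the paper's proof leaves implicit.
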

\begin{proof}
One can show that the following  maps,
\begin{eqnarray*}
x\in \mathscr{Z}_s \to  | x^{\otimes k} \rangle \langle x^{\otimes j-1} \otimes v(t,x)
\otimes x^{\otimes k-j} |\in  \mathscr{L}_{-\sigma}^1(\vee^k\mathscr{Z}_0),\\ \noalign{\medskip}
x\in \mathscr{Z}_s \to  |  x^{\otimes j-1} \otimes v(t,x)
\otimes x^{\otimes k-j}\rangle \langle x^{\otimes k}  |\in
 \mathscr{L}_{-\sigma}^1(\vee^k\mathscr{Z}_0),
\end{eqnarray*}
are weakly measurable. Since $\mathscr{L}_{-\sigma}^1(\vee^k\mathscr{Z}_0)$ is a separable Banach space then by Pettis theorem the above maps are $\mu_t$-Bochner measurable. Moreover, using assumption \eqref{A0},\eqref{A2} and Lemma \ref{a1a2},  one shows
\begin{eqnarray}
\label{est-vect1}
\int_{\mathscr{Z}_s} \bigg\|   | x^{\otimes k} \rangle \langle x^{\otimes j-1} \otimes v(t,x)
\otimes x^{\otimes k-j} |\bigg\|_{ \mathscr{L}_{-\sigma}^1(\vee^k\mathscr{Z}_0)} \, d\mu_t
&\leq&  \int_{\mathscr{Z}_s} \| x\|_{\zed}^{2 k-1} \, \|v(t,x)\|_{\mathscr{Z}_{-\sigma}}  \, d\mu_t
\\ \label{est-vect2}
&\leq& M \,,
\end{eqnarray}
for some finite constant $M>0$. So, we see that the operations \eqref{defBjk} are well defined
as Bochner integrals in   $\mathscr{L}_{-\sigma}^1(\vee^k\mathscr{Z}_0)$ with respect to $\mu_t$.
%these integrals \eqref{defBjk} can also be understood  in a weak sense in $\mathscr{D}_{\sigma}^{(k)}$.
\end{proof}

\begin{lem}
\label{intBjk}
Let $v:\R\times \zeds\to\zedsi$ a vector field satisfying \eqref{A0} and $t\in I\to \gamma_t\in \mathscr{H}(\mathscr{Z}_0)$ be a curve satisfying
 \eqref{A2}. Then, for any  $k\in\N$  and   $j\in\{1,\cdots,k\}$, the map $t\in I\to  C_{j,k}^\pm \gamma_t \in \mathscr{L}_{-\sigma}^1(\vee^k\mathscr{Z}_0)$ is Bochner integrable  with respect to the Lebesgue measure over the bounded interval $I$.
\end{lem}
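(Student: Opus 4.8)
The plan is to invoke Bochner's theorem: the map $t\in I\to C_{j,k}^{\pm}\gamma_t$ is Bochner integrable over the bounded interval $I$ as soon as it is strongly (Bochner) measurable into the separable Banach space $\mathscr{L}_{-\sigma}^{1}(\vee^k\mathscr{Z}_0)$ and $\int_I\|C_{j,k}^{\pm}\gamma_t\|_{\mathscr{L}_{-\sigma}^{1}(\vee^k\mathscr{Z}_0)}\,dt<\infty$. The integrability of the norm is essentially already contained in Lemma \ref{opBjk}: since the norm of a Bochner integral is bounded by the integral of the norm, the estimate \eqref{est-vect1}--\eqref{est-vect2} gives $\|C_{j,k}^{\pm}\gamma_t\|_{\mathscr{L}_{-\sigma}^{1}(\vee^k\mathscr{Z}_0)}\le\int_{\zeds}\|x\|_{\zed}^{2k-1}\,\|v(t,x)\|_{\zedsi}\,d\mu_t(x)$, and by Lemma \ref{a1a2} each $\mu_t$ is concentrated on the fixed ball $B_{\zeds}(0,R)$, on which, $I$ being bounded, $v$ is bounded over $\bar I\times B_{\zeds}(0,R)$ by \eqref{A0}. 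Hence $\|C_{j,k}^{\pm}\gamma_t\|_{\mathscr{L}_{-\sigma}^{1}(\vee^k\mathscr{Z}_0)}\le M_k$ with $M_k$ independent of $t\in I$, so the time integral is at most $M_k|I|<\infty$.

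For the strong measurability I would use that $T\mapsto (A^{-\sigma/2})^{\otimes k}T(A^{-\sigma/2})^{\otimes k}$ is an isometric isomorphism of $\mathscr{L}_{-\sigma}^{1}(\vee^k\mathscr{Z}_0)$ onto the separable space $\mathscr{L}^{1}(\vee^k\mathscr{Z}_0)$, whose dual is $\mathscr{L}(\vee^k\mathscr{Z}_0)$. By Pettis' theorem, strong measurability of $t\mapsto C_{j,k}^{\pm}\gamma_t$ is then equivalent to its weak measurability, i.e. to the measurability of $t\mapsto\Tr\big[(A^{-\sigma/2})^{\otimes k}(C_{j,k}^{\pm}\gamma_t)(A^{-\sigma/2})^{\otimes k}B\big]$ for every $B\in\mathscr{L}(\vee^k\mathscr{Z}_0)$. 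Commuting this continuous functional with the Bochner integral defining $C_{j,k}^{+}\gamma_t$ in \eqref{defBjk} and using $\Tr[|u\rangle\langle w|\,L]=\langle w,Lu\rangle$, this reduces to showing that $t\mapsto\int_{\zeds}h(t,x)\,d\mu_t(x)$ is Borel, where $h(t,x)=\big\langle(A^{-\sigma/2})^{\otimes k}\big(x^{\otimes j-1}\otimes v(t,x)\otimes x^{\otimes k-j}\big),\,B\,(A^{-\sigma/2})^{\otimes k}x^{\otimes k}\big\rangle_{\vee^k\mathscr{Z}_0}$ (and similarly for $C_{j,k}^{-}$).

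To conclude I would use two facts. First, $h$ is jointly Borel on $\R\times\zeds$ and bounded on $\bar I\times B_{\zeds}(0,R)$: $v$ is Borel and bounded on bounded sets by \eqref{A0}, while $x\mapsto x^{\otimes\ell}$, the tensoring, and the application of the bounded operator $(A^{-\sigma/2})^{\otimes k}$ and of $B$ are all continuous. Second, $(\mu_t)_{t\in I}$ is a Borel family of probability measures on $\mathscr{Z}_{-\sigma}$: by \eqref{A1} the curve $t\mapsto\mu_t$ is weakly narrowly continuous, so $t\mapsto\int g\,d\mu_t$ is continuous for every bounded weakly continuous $g:\mathscr{Z}_{-\sigma}\to\R$, and since all $\mu_t$ concentrate on $B_{\zeds}(0,R)$ — a bounded, hence weakly metrizable and standard Borel, subset of $\mathscr{Z}_{-\sigma}$ — this extends by a monotone class argument to $t\mapsto\mu_t(E)$ being Borel for every Borel $E$. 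A further monotone class argument, starting from product functions $h=\mathbf{1}_A(t)\mathbf{1}_E(x)$ for which $t\mapsto\mathbf{1}_A(t)\mu_t(E)$ is plainly Borel, then yields that $t\mapsto\int h(t,\cdot)\,d\mu_t$ is Borel for every bounded jointly Borel $h$ supported in $I\times B_{\zeds}(0,R)$. This gives weak measurability of $t\mapsto C_{j,k}^{\pm}\gamma_t$, hence strong measurability by Pettis, and combined with the uniform norm bound above, Bochner integrability over $I$ follows. The only genuinely delicate point in this scheme is reconciling the weak narrow continuity of $t\mapsto\mu_t$, a statement about weakly continuous test functions, with the merely Borel regularity of $h$ inherited from the Borel vector field $v$; everything else is either a direct quotation of Lemma \ref{opBjk} or routine measure theory.
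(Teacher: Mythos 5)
Your proposal follows essentially the same route as the paper: Pettis' theorem to upgrade weak measurability to Bochner (strong) measurability, combined with the norm estimate \eqref{est-vect1}--\eqref{est-vect2} under assumptions \eqref{A0} and \eqref{A2} to get integrability of the norm over the bounded interval $I$. The paper's proof simply asserts the weak measurability of $t\mapsto (A^{-\sigma/2})^{\otimes k} C_{j,k}^\pm \gamma_t (A^{-\sigma/2})^{\otimes k}$ without detail, whereas you supply a monotone-class argument reducing it to the Borel measurability of $t\mapsto\int h(t,\cdot)\,d\mu_t$; this is a reasonable way to fill the gap the paper leaves implicit.
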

\begin{proof}
The map $t\to  (A^{-\sigma/2})^{\otimes k} C_{j,k}^\pm \gamma_t (A^{-\sigma/2})^{\otimes k}  \in \mathscr{L}^1(\vee^k\mathscr{Z}_0)$ is weakly measurable. So, by Pettis theorem $t\to  C_{j,k}^\pm \gamma_t \in \mathscr{L}_{-\sigma}^1(\vee^k\mathscr{Z}_0)$  is Bochner measurable. Moreover, one easily checks using \eqref{est-vect1}-\eqref{est-vect2},  (for $t>t_0$),
\begin{eqnarray*}
\int_{t_0}^t ||  C_{j,k}^\pm \gamma_\tau||_{ \mathscr{L}_{-\sigma}^1(\vee^k\mathscr{Z}_0)}  d\tau
&\leq & c_1 \int_{t_0}^t   \int_{\mathscr{Z}_s} \| x\|_{\zed}^{2 k-1} \, \|v(\tau,x)\|_{\mathscr{Z}_{-\sigma}}  \, d\mu_\tau d\tau \\
&\leq & c_2  \, |t-t_0|\,,
\end{eqnarray*}
for some constants $c_1,c_2$.
\end{proof}
Hence, under the assumption \eqref{A0} and the a priori condition \eqref{A2}  on solutions of the  symmetric hierarchy equation \eqref{int-hier}, the following  integral  make sense,
\begin{equation}
\label{hier}
 t\in I \to  \int_{t_{0}}^{t} \sum_{j=1}^{k}  (C_{j,k}^{+} \gamma_{\tau}^{} +  C_{j,k}^{-} \gamma_{\tau}^{}) \;d\tau \,\in \mathscr{L}_{-\sigma}^1(\vee^k\mathscr{Z}_0)\,,
\end{equation}
and it is continuous with respect to time. So, the hierarchy equation is meaningful and it is consistent with the
requirement  \eqref{A2}.

\bigskip
\emph{Kernel representation of hierarchy equations}:
In this paragraph we show that both the Gross-Pitaevskii  and the Hartree  hierarchies in \eqref{GPH-cubic} are particular  cases of the abstract symmetric hierarchy equation given in \eqref{int-hier} corresponding respectively  to the nonlinearities
$g(u)=|u|^2 u$ and $g(u)=V*|u|^2 u$ with the vector field $v$ given according to  \eqref{nls-v}  with the choice  \eqref{spec-fram} and such that the mapping \eqref{nonlin} is continuous and bounded on bounded sets. The hierarchy equations   \eqref{GPH-cubic} are usually understood via the corresponding integral equation,
  \begin{equation}
  \label{hier-kernel-int}
  \gamma^{(k)}(t)= \mathcal{U}^{(k)}(t)\gamma^{(k)}(0)-i \int_{0}^t  \mathcal{U}^{(k)}(t-\tau) \,
  B_k\gamma^{(k+1)}(\tau) \,d\tau\,,
  \end{equation}
where $\gamma^{(k)}(t)\in L^2(\R^{dk}\times \R^{dk})$ satisfying \eqref{cont1}-\eqref{cont2}
are kernels of non-negative trace class operators and $\mathcal{U}^{(k)}(t)$ is the one-parameter group,
$$
\mathcal{U}^{(k)}(t)=\displaystyle \prod_{j=1}^{k}  \;e^{i t (\Delta_{x_j}-\Delta_{x'_j})}\,.
$$
For the equation \eqref{hier-kernel-int}  to make sense, one usually looks for solutions satisfying for some $R>0$ and  for all $k\in\N$ and  all $t\in I$ the estimate,
$$
\sup_{t\in I}  \bigg\|\prod_{j=1}^k (-\Delta_{x_j}+1)^{s/2} (-\Delta_{x'_j}+1)^{s/2}\,\gamma^{(k)}(t) \bigg\|_{ \mathscr{L}^1(
L^2(\R^{dk}))}\leq R^{2k}\,.
$$
This is exactly the second assumption in  \eqref{A2}, with the choice \eqref{spec-fram}, expressed in terms of trace-class operators.  Moreover, if we consider  $\tilde \gamma^{(k)}(t):= \mathcal{U}^{(k)}(t)\gamma^{(k)}(t)$ then the integral equation \eqref{hier-kernel-int} gives,
\begin{equation}
\label{equiv-hier}
 \tilde\gamma^{(k)}(t)= \tilde\gamma^{(k)}(0)+ \int_{0}^t  \,  \sum_{j=1}^{k}  (C_{j,k}^{+} \tilde\gamma_{\tau}^{} +  C_{j,k}^{-} \tilde\gamma_{\tau}^{}) \;d\tau\,,
\end{equation}
where $C_{j,k}^{\pm}$ are the operations defined in \eqref{defBjk}. In fact, a simple computation yields
\begin{eqnarray*}
\langle \psi, C^+_{j,k}\tilde \gamma(\tau) \,\phi\rangle_{L^2(\R^{dk})}&= &
\int_{H^1(\R^d)}
\langle\psi  | x^{\otimes k} \rangle \langle x^{\otimes j-1} \otimes v(\tau,x)\otimes x ^{\otimes k-j}| \phi\rangle \;d\tilde\mu_{\tau}(x)
\\
&=& -i
\int_{H^1(\R^d)}
\langle \mathcal{U}(\tau)^{\otimes k}\psi  |(\mathcal{U}(\tau)x)^{\otimes k} \rangle \times
\\
&&\hspace{.6in}\langle (\mathcal{U}(\tau)x)^{\otimes j-1} \otimes g( \mathcal{U}(\tau)x)\otimes (\mathcal{U}(\tau)x) ^{\otimes k-j}|  \mathcal{U}(\tau)^{\otimes k}\phi\rangle \;d\tilde\mu_{\tau}(x)
\\
&=& -i
\int_{H^1(\R^d)}
\langle \mathcal{U}(\tau)^{\otimes k}\psi  |y^{\otimes k} \rangle \langle y^{\otimes j-1} \otimes g(y)\otimes y ^{\otimes k-j}|  \mathcal{U}(\tau)^{\otimes k}\phi\rangle \;d\mu_{\tau}(y)\,.
\end{eqnarray*}
Here, $\tilde\mu_t=\Phi^{-1}((\tilde \gamma^{(k)})_{k\in\N}) $ and $\mu_t=\Phi^{-1}(( \gamma^{(k)})_{k\in\N})$ where
$\Phi$ is the homeomorphism of Corollary \ref{homeo}.
On the other hand, using \eqref{Bjk-1}-\eqref{Bjk-2} one obtains
\begin{eqnarray*}
\langle \psi, \mathcal{U}^{(k)}(-\tau)B^+_{j,k} \gamma^{(k+1)}(\tau) \,\phi\rangle_{L^2(\R^{dk})}&= &
\langle \mathcal{U}(\tau)^{\otimes k}\psi, B^+_{j,k} \gamma^{(k+1)}(\tau) \,\mathcal{U}(\tau)^{\otimes k}\phi\rangle_{L^2(\R^{dk})}\,,
\end{eqnarray*}
and a simple computation gives the following kernel-operator identification
$$
B^+_{j,k} \gamma^{(k+1)}(\tau) \equiv \int_{H^1(\R^d)}
\big|y^{\otimes k} \rangle \langle y^{\otimes j-1} \otimes g(y)\otimes y ^{\otimes k-j}\big|   \;d\mu_{\tau}(y)
\in\mathscr{L}_{-\sigma}^1(L_s^2(\R^{dk}))\,.
$$
So that, one proves
$$
-i\;\mathcal{U}^{(k)}(-\tau)B^+_{j,k} \gamma^{(k+1)}(\tau) \equiv C^+_{j,k}\tilde \gamma(\tau) \,.
$$
This shows the equivalence between  the two formulations  \eqref{equiv-hier} and \eqref{hier-kernel-int} .

\subsection{Characteristic equation}

In the sequel, we prove that the Liouville equation \eqref{eq.transport}  is equivalent to another simpler  formulation in terms of characteristic functions. This is based on the elementary fact that probability  measures
can be characterized  by their characteristic functions.

\begin{prop}
\label{lm.cha}
Let $v:\R\times \zeds\to\zedsi$ a vector field satisfying \eqref{A0} and  $(\mu_{t})_{t \in I}$ a curve in $\mathscr{M}(\mathscr{Z}_0)$ satisfying the assumption  \eqref{A1}. Then the following assertions are equivalent:
\begin{itemize}
\item[(i)] $(\mu_{t})_{t \in I}$ is a solution of the Liouville equation \eqref{eq.transport}.
\item[(ii)] $(\mu_{t})_{t \in I}$ solves the following characteristic equation, i.e.: $\forall t \in I ~,~ \forall y \in \mathscr{Z}_{\sigma} $,
\begin{equation}
\label{K}
%\tag{{\bf{Charachteristic}}
  \mu_{t}(e^{2i \pi \Re \langle y,. \rangle_{\zed}})
  =  \mu_{t_{0}}(e^{2i \pi \Re \langle y,. \rangle_{\zed}})
  + 2i\pi \int_{t_{0}}^{t} \mu_{\tau}\big( e^{2i\pi
   \Re \langle y,x \rangle_{\zed}} \;\Re \langle v(\tau,x);y \rangle_{\mathscr{Z}_{0}}\big)\,d\tau\,,
\end{equation}
\end{itemize}
where we have used the notation $\mu_{t}(e^{2i \pi Re \langle y,. \rangle_{\zed}}) =
 \displaystyle\int_{\zeds} e^{2i\pi \Re \langle y,x \rangle_{\zed}} \,d\mu_{t}(x)$.
\end{prop}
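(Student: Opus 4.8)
The plan is to prove the equivalence (i) $\Leftrightarrow$ (ii) by exploiting the fact that the cylindrical test functions in $\mathscr{C}_{0,cyl}^\infty(I\times\mathscr Z_{-\sigma})$ are rich enough to ``see'' the same information as the characters $x\mapsto e^{2i\pi\operatorname{Re}\langle y,x\rangle}$ with $y\in\mathscr Z_\sigma$, together with a density/approximation argument. More precisely, for a fixed $y\in\mathscr Z_\sigma\subset\mathscr Z_0$ the real and imaginary parts of $\langle y,\cdot\rangle_{\mathscr Z_0}$ can be written as $\langle\cdot,e_1\rangle_{\mathscr Z_{-\sigma,\R}}$ and $\langle\cdot,e_2\rangle_{\mathscr Z_{-\sigma,\R}}$ for suitable $e_1,e_2\in\mathscr Z_{-\sigma,\R}$ (the Riesz representatives in $\mathscr Z_{-\sigma,\R}$ of the bounded functionals $x\mapsto\operatorname{Re}\langle y,x\rangle$, $x\mapsto\operatorname{Im}\langle y,x\rangle$ on $\mathscr Z_{-\sigma}$, noting these functionals are indeed $\mathscr Z_{-\sigma}$-continuous precisely because $y\in\mathscr Z_\sigma$). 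Hence $x\mapsto e^{2i\pi\operatorname{Re}\langle y,x\rangle}$ is, up to splitting into real and imaginary parts, of the form $\psi\circ\pi$ with $\pi\in\mathbb P_2$ and $\psi$ smooth but \emph{not} compactly supported; multiplying by a cutoff $\chi\in\mathscr C_0^\infty(I)$ in time and a spatial cutoff and then letting the spatial cutoff tend to $1$ will recover the character. This is where the concentration hypothesis in \eqref{A1}, namely $\mu_t(B_{\mathscr Z_s}(0,R))=1$ for all $t$, is essential: on the ball $B_{\mathscr Z_s}(0,R)$ all the relevant quantities ($\langle y,x\rangle_{\mathscr Z_0}$, $\|v(t,x)\|_{\mathscr Z_{-\sigma}}$ via \eqref{A0}) are bounded, so the spatial cutoffs can be removed by dominated convergence.

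The key steps, in order, are as follows. First, (i) $\Rightarrow$ (ii): given $y\in\mathscr Z_\sigma$ and a time cutoff $\chi\in\mathscr C_0^\infty(I)$, plug into \eqref{eq.transport} the test function $\varphi(t,x)=\chi(t)\,\eta_j\!\big(2\pi\operatorname{Re}\langle y,x\rangle,\,2\pi\operatorname{Im}\langle y,x\rangle\big)$ where $\eta_1(a,b)=\theta(a^2+b^2)\cos(a)$ and $\eta_2(a,b)=\theta(a^2+b^2)\sin(a)$ with $\theta\in\mathscr C_0^\infty(\R)$, $\theta\equiv 1$ on $[0,(2\pi R\|y\|_{\mathscr Z_{-\sigma}})^2]$ — these genuinely lie in $\mathscr C_{0,cyl}^\infty(I\times\mathscr Z_{-\sigma})$. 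Compute $\nabla_{\mathscr Z_{-\sigma,\R}}\varphi$ using the chain rule and $\nabla_{\mathscr Z_{-\sigma,\R}}(\operatorname{Re}\langle y,\cdot\rangle)=e_1$, $\nabla_{\mathscr Z_{-\sigma,\R}}(\operatorname{Im}\langle y,\cdot\rangle)=e_2$, and observe $\langle v(t,x),e_1\rangle_{\mathscr Z_{-\sigma,\R}}=\operatorname{Re}\langle v(t,x),y\rangle_{\mathscr Z_0}$ and similarly for $e_2$ (this is the pairing identity relating the $\mathscr Z_{-\sigma,\R}$ inner product against a representative of a $\mathscr Z_\sigma$-element back to the $\mathscr Z_0$ pairing). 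On $\operatorname{supp}\mu_t$ the cutoff $\theta$ equals $1$, so $\varphi$ and its gradient coincide there with $\chi(t)\cos/\sin(\cdots)$ and the corresponding derivatives; combining the $j=1$ and $j=2$ contributions (multiplying the second by $i$) yields
\[
\int_I \chi'(t)\,\mu_t\big(e^{2i\pi\operatorname{Re}\langle y,x\rangle}\big)\,dt
+2i\pi\int_I \chi(t)\,\mu_t\big(e^{2i\pi\operatorname{Re}\langle y,x\rangle}\,\operatorname{Re}\langle v(t,x),y\rangle_{\mathscr Z_0}\big)\,dt=0 .
\]
Since $\chi\in\mathscr C_0^\infty(I)$ is arbitrary and both $t\mapsto\mu_t(e^{2i\pi\operatorname{Re}\langle y,\cdot\rangle})$ (weakly narrowly continuous, hence continuous) and $t\mapsto\mu_t(e^{2i\pi\operatorname{Re}\langle y,\cdot\rangle}\operatorname{Re}\langle v(t,\cdot),y\rangle)$ (in $L^\infty(I)$ by the bound from the preceding lemma) are measurable, this says exactly that the distributional derivative of $t\mapsto\mu_t(e^{2i\pi\operatorname{Re}\langle y,\cdot\rangle})$ is $2i\pi\,\mu_t(e^{2i\pi\operatorname{Re}\langle y,\cdot\rangle}\operatorname{Re}\langle v(t,\cdot),y\rangle)$, which integrates to \eqref{K}.

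For (ii) $\Rightarrow$ (i): take an arbitrary $\varphi=\phi(t,\pi(x))\in\mathscr C_{0,cyl}^\infty(I\times\mathscr Z_{-\sigma})$ with $\pi\in\mathbb P_n$ built from $e_1,\dots,e_n\in\mathscr Z_{-\sigma,\R}$. Approximate $\phi(t,\cdot)\in\mathscr C_0^\infty(\R^n)$ by its Fourier inversion, $\phi(t,\xi)=\int_{\R^n}\hat\phi(t,\zeta)\,e^{2i\pi\langle\zeta,\xi\rangle_{\R^n}}\,d\zeta$, so that $\varphi(t,x)=\int_{\R^n}\hat\phi(t,\zeta)\,e^{2i\pi\langle\zeta,\pi(x)\rangle}\,d\zeta$ and $\langle\zeta,\pi(x)\rangle_{\R^n}=\langle\pi^T\zeta,x\rangle_{\mathscr Z_{-\sigma,\R}}=\operatorname{Re}\langle y_\zeta,x\rangle_{\mathscr Z_0}$ for the appropriate $y_\zeta\in\mathscr Z_\sigma$ (again each $e_j$, being a $\mathscr Z_{-\sigma}$-element pairing continuously, corresponds to an element of $\mathscr Z_\sigma$). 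Apply \eqref{K} to each character $e^{2i\pi\operatorname{Re}\langle y_\zeta,\cdot\rangle}$, differentiate in $t$, multiply by $\hat\phi(t,\zeta)$ and $\partial_t$-terms, and integrate in $\zeta$ over $\R^n$ and in $t$ over $I$; the rapid decay of $\hat\phi$ together with the uniform (in the support) bounds from \eqref{A0}–\eqref{A1} justify interchanging $\int_{\R^n}d\zeta$ with the integrals $\int_I dt$ and $d\mu_t$ by Fubini/dominated convergence. Re-assembling the Fourier integrals reconstructs $\partial_t\varphi$ and $\nabla_{\mathscr Z_{-\sigma,\R}}\varphi$ (using $\nabla_{\mathscr Z_{-\sigma,\R}}e^{2i\pi\operatorname{Re}\langle y_\zeta,\cdot\rangle}=2i\pi\,e^{2i\pi\operatorname{Re}\langle y_\zeta,\cdot\rangle}\,\pi^T\zeta$ and then pairing against $v$), yielding \eqref{eq.transport}.

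The main obstacle — or rather the point requiring care — is the non-compact support of the characters: $e^{2i\pi\operatorname{Re}\langle y,\cdot\rangle}\notin\mathscr C_{0,cyl}^\infty$, so in the direction (i) $\Rightarrow$ (ii) one must legitimately insert a spatial cutoff, use that it is inactive on $\bigcup_{t\in I}\operatorname{supp}\mu_t\subset B_{\mathscr Z_s}(0,R)$, and verify that the gradient terms are unaffected there; and in the direction (ii) $\Rightarrow$ (i) one must control the Fourier synthesis, i.e.\ justify the Fubini interchange between $\int_{\R^n}d\zeta$ and $\int_I\!\!\int_{\mathscr Z_{-\sigma}}d\mu_t\,dt$, which hinges on the Schwartz decay of $\hat\phi$ combined with the at-most-polynomial growth $|\operatorname{Re}\langle v(t,x),y_\zeta\rangle|\le c\,\|v(t,x)\|_{\mathscr Z_{-\sigma}}\|\zeta\|_{\R^n}$ and the boundedness of $\|v(t,x)\|_{\mathscr Z_{-\sigma}}$ on the ball (by \eqref{A0}). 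Both are routine once the identification $\mathscr Z_\sigma\ni y\leftrightarrow$ ($\mathscr Z_{-\sigma}$-continuous real functional) is made precise, and the concentration assumption \eqref{A1} does all the heavy lifting for the uniform bounds.
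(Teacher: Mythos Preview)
Your proof is correct and follows essentially the same approach as the paper: in the direction (i) $\Rightarrow$ (ii) the paper also inserts a spatial cutoff (there a one-variable cutoff $\psi(\Re\langle z,x\rangle_{\zedsi}/m)$ with $m\to\infty$ rather than your fixed two-variable cutoff chosen large enough, but the idea and justification via dominated convergence/concentration are the same), and in the direction (ii) $\Rightarrow$ (i) the paper likewise reconstructs the cylindrical test function by Fourier inversion and integrates the characteristic identity against the transform. The only cosmetic difference is that the paper works with $\cos$/$\sin$ separately and converts between $y\in\mathscr Z_\sigma$ and $z=A^{\sigma}y\in\zedsi$ at the end, whereas you work directly with complex exponentials and the Riesz representatives in $\mathscr Z_{-\sigma,\R}$.
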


\begin{proof}
We suppose that $(\mu_{t})_{t \in I}$ is a solution of the Liouville equation \eqref{eq.transport} satisfying the assumption \eqref{A1}. Consider a test function $\varphi(t,x) = \chi(t) \varphi_{m}(x)$, with $\chi \in \mathscr C_{0}^{\infty}(I)$ and $\varphi_{m}$ is given by:
\[
\varphi_{m}(x) = \cos(2\pi \Re \langle z,x \rangle_{\zedsi}) \;\psi(\frac{\Re \langle z,x \rangle_{\zedsi}}{m}) ~,
\]
for some $z \in \zedsi$ fixed and $ \psi \in \mathscr C_{0}^{\infty}(\mathbb{R}) $ such that
$0\leq \psi \leq 1$  and equal to $1$ in a neighbourhood of $0$.  So that, the functions $\varphi_{m}$ converges pointwisely  to $\cos(2\pi \Re \langle z,. \rangle_{\zedsi})$ when $m$ tend to $+ \infty$.  As $(\mu_{t})_{t \in I}$ satisfies the Liouville equation \eqref{eq.transport} and $\varphi\in \mathscr C_{0,cyl}^{\infty}(I\times \zedsi)$, one can write:
\[
 \int_{I}\int_{\zeds} \chi^{'}(t)\varphi_{m}(x) + \Re \langle v(t,x); \nabla \varphi_{m}(x) \rangle_{\zedsi}
 \;\chi(t)\, d\mu_{t}(x)dt=0.
 \]
A simple computation yields the gradient of $\varphi_{m}$,
\begin{eqnarray*}
\nabla \varphi_{m}(x) &=& -2\pi \sin(2\pi \Re \langle z,x \rangle_{\zedsi})\psi(\frac{\Re \langle z,x \rangle_{\zedsi}}{m})\cdot z+ \\&&\cos(2\pi \Re\langle z,x \rangle_{\zedsi}) \frac{1}{m} \,\psi^{'}( \frac{\Re \langle z,x \rangle_{\zedsi}}{m}) \cdot z\in \zedsi.
\end{eqnarray*}
Then, if we replace this writing of $\nabla \varphi_{m}$ in the previous integral, we have, using the dominated convergence theorem and the Fubini's theorem,
\[
 \int_{I} \chi^{'}(t)\int_{\zeds} \cos(2\pi \Re\langle z,x \rangle_{\zedsi}) d\mu_{t}(x)dt = \int_{I} \chi(t)\int_{\zeds}  2\pi \sin(2\pi \Re \langle z,x \rangle_{\zedsi}) \Re \langle v(t,x); z \rangle_{\zedsi} d\mu_t(x)dt .
 \]
In the same way (with $\varphi_{m}(x) = \sin(2\pi \Re \langle z,x \rangle_{\zedsi}) \psi( \frac{\Re \langle z,x \rangle_{\zedsi}}{m})$), we have a similar identity:
\[
 \int_{I} \chi^{'}(t)\int_{\zeds} \sin(2\pi \Re\langle z,x \rangle_{\zedsi}) d\mu_{t}(x)dt =- \int_{I} \chi(t)\int_{\zeds}  2\pi \cos(2\pi \Re \langle z,x \rangle_{\zedsi}) \Re \langle {v}(t,x); z \rangle_{\zedsi} d\mu_tdt .
 \]
And if one sums the first integral with $i$ times the second one, one obtains:
\[
  \int_{I} \chi^{'}(t)\int_{\zeds} e^{2i\pi \Re \langle z,x \rangle_{\zedsi}} d\mu_{t}(x)dt = -2i\pi \int_{I} \chi(t)\int_{\zeds} \Re \langle {v}(t,x); z \rangle_{\zedsi} e^{2i \pi \Re \langle x,z \rangle_{\zedsi}}  d\mu_t(x)dt.
  \]
Set
\[
m(t) := \int_{\zeds} e^{2i\pi \Re \langle z,x \rangle_{\zedsi}} d\mu_{t}(x).
\]
Then the previous equation becomes, in a distributional sense,
\[ \frac{d}{dt} m(t) = 2i \pi \int_{\zeds} \Re \langle{v}(t,x) ; z \rangle_{\zedsi} e^{2i\pi \Re \langle z,x \rangle_{\zedsi}}  d\mu_{t}(x).  \]
Since  $m  \in L^{1}(I,dt)$ and $m{'} \in L^{1}(I,dt)$, then $m \in W^{1,1}(I,\mathbb{R})$. This proves that $m$ is absolutely continuous over $I$ and so the fundamental theorem of analysis holds true for $m$, i.e,
\[
\forall (t,t_{0}) \in I^{2} ~,~ m(t) = m(t_{0}) + \int_{t_{0}}^{t}m{'}(s) ds.
\]
Rewriting the latter  equality,  one essentially obtains the characteristic equation for all $ z \in \zedsi$:
\begin{eqnarray*}
\int_{\zeds} e^{2i\pi \Re \langle z,x \rangle_{\zedsi}} d\mu_{t}(x) &=& \int_{\zeds} e^{2i\pi \Re \langle z,x \rangle_{\zedsi}} d\mu_{t_{0}}(x) +\\&& 2i\pi \int_{t_{0}}^{t} \int_{\zeds} \Re \langle {v}(\tau,x) ; z \rangle_{\zedsi}  e^{2i\pi \Re \langle z,x \rangle_{\zedsi}} d\mu_{t}(x)dt.
\end{eqnarray*}
The scalar product   $\langle z,x \rangle_{\zedsi} = \langle A^{-\sigma}z,x \rangle_{\zed}$. So that, if we set $y = A^{-\sigma}z \in \mathscr{Z}_{\sigma}$, we have:
\begin{equation*}
\int_{\zeds} e^{2i\pi \Re \langle y,x \rangle_{\zed}} d\mu_{t}(x) = \int_{\zeds} e^{2i\pi \Re \langle y,x \rangle_{\zed}} d\mu_{t_{0}}(x) + 2i\pi \int_{t_{0}}^{t} \int_{\zeds} \Re \langle {v}(\tau,x) ; y \rangle_{\zed}  e^{2i\pi \Re \langle y,x \rangle_{\zed}} d\mu_{\tau}(x)d\tau.
\end{equation*}

\bigskip
 Conversely, suppose that the measures $(\mu_{t})_{t \in I}$ satisfy the characteristic equation \eqref{K}. Let $\psi \in \mathscr C_{0,cyl}^{\infty}(\zedsi)$,  then we can write $\psi(x) \equiv \phi(p(x))$ where $p$ is an orthogonal  projection  on a finite dimensional  subspace of $\zedsi$ and $\phi \in \mathscr C_{0}^{\infty}(p(\zedsi))$.  As $\psi$ is real-valued, one can write using
 inverse Fourier transform,
\begin{equation}
\label{fourier-inv}
 \psi(x) = \int_{p(\zedsi)} \cos(2\pi \Re \langle z;x \rangle_{\zedsi}) \mathscr{F}_{R}(\psi)(z) + \sin(2\pi \Re \langle z;x \rangle_{\zedsi})\mathscr{F}_{I}(\psi)(z) dL(z)\,,
\end{equation}
where $dL$ denotes the Lebesgue mesure on $p(\zedsi)$ and
\begin{eqnarray*}
\mathscr{F}_{R}(\psi)(z) &= & \int_{p(\zedsi)} \cos(2\pi \Re \langle z;x \rangle_{\zedsi}) \;\psi(x) \;dL(x),
\\
\mathscr{F}_{I}(\psi)(z) &= &\int_{p(\zedsi)} \sin(2\pi \Re \langle z;x \rangle_{\zedsi}) \;\psi(x) \;dL(x)\,.
\end{eqnarray*}
Splitting  the characteristic equation \eqref{K} into real and imaginary part, yields:
\begin{eqnarray*}
A =  \int_{\zedsi} \cos (2\pi \Re \langle z;x \rangle_{\zedsi}) d\mu_{t}(x) &=&  \int_{\zedsi} \cos (2\pi \Re \langle z;x \rangle_{\zedsi}) d\mu_{t_{0}}(x) + \\
&& \hspace{-.8in}\int_{t_{0}}^{t} \int_{\zedsi} \Re \langle v(\tau,x);z \rangle_{\zedsi} (-2\pi \sin(2\pi \Re \langle z;x \rangle_{\zedsi})d\mu_{\tau}(x)d\tau,
\end{eqnarray*}
\begin{eqnarray*}
 B = \int_{\zedsi} \sin (2\pi \Re \langle z;x \rangle_{\zedsi}) d\mu_{t}(x) &= & \int_{\zedsi} \sin(2\pi \Re \langle z;x \rangle_{\zedsi}) d\mu_{t_{0}}(x) + \\
&&  \hspace{-.8in}\int_{t_{0}}^{t} \int_{\zedsi} \Re \langle v(\tau,x);z \rangle_{\zedsi} (2\pi \cos(2\pi \Re \langle z;x \rangle_{\zedsi})d\mu_{\tau}(x)d\tau.
\end{eqnarray*}
Now, a computation of $\displaystyle\int_{p(\zedsi)} (\mathscr{F}_{R}(\psi) \times A + \mathscr{F}_{I}(\psi) \times B) \,dL(z)$ gives:
\[
 \int_{\zedsi} \psi(x) d\mu_{t}(x) = \int_{\zedsi} \psi(x) d\mu_{t_{0}}(x) + \int_{t_{0}}^{t} \int_{\zedsi} \Re \langle v(\tau,x); \nabla \psi(x) \rangle_{\zedsi} d\mu_{\tau}(x)d\tau,
  \]
 with the last term in the above right hand side is obtained using the Fourier inverse formula,
 \begin{eqnarray*}
 \nabla\psi(x)[u]&=& \int_{p(\zedsi)} \bigg(2\pi \Re\langle u,z\rangle_{\zedsi} \; \cos(2\pi \Re \langle z;x \rangle_{\zedsi})\mathscr{F}_{I}(\psi)(z)  -\\
 &&\hspace{1in}2\pi \Re\langle u,z\rangle_{\zedsi} \;  \sin(2\pi \Re \langle z;x \rangle_{\zedsi}) \mathscr{F}_{R}(\psi)(z) \bigg)\,dL(z)\,,
 \end{eqnarray*}
 for any $x,u\in p(\zedsi)$.  This equality shows that, in the distributional sense, we have
\[
\frac{d}{dt} \int_{\zedsi} \varphi(x) d\mu_{t}(x) = \int_{\zedsi} \Re \langle v(t,x); \nabla \varphi(x) \rangle_{\zedsi} \;d\mu_{t}(x).
 \]
 Multiplying by $\chi(t)$, with $\chi\in \mathscr C_{0}^{\infty}(I)$, the latter equality and integrating by part the right hand side,    one  concludes that $(\mu_{t})_{t \in I}$ satisfies the Liouville equation \eqref{eq.transport}  for any test functions   of the form  $\varphi(t,x)\equiv\chi(t) \psi(x)$ with $\psi \in \mathscr C_{0,cyl}^{\infty}(\zedsi)$. Then using a standard density argument one obtains the Liouville  equations \eqref{eq.transport}  for any $\varphi\in
\mathscr C_{0,cyl}^{\infty}(I\times \zedsi)$.
\end{proof}

\subsection{Duality}
Once we have defined the characteristic equation \eqref{K} and proved its equivalence with the Liouville equation, we proceed to the proof of the duality  between the hierarchy equations \eqref{hier} and the Liouville equations \eqref{eq.transport}  stated in Theorem  \ref{sec.0.thm1}.

\begin{prop}
\label{thm.main}
Let ${v}: \mathbb{R} \times \zeds \mapsto \zedsi$ be a vector field satisfying \eqref{A0} and  $t \in I \to \mu_{t}$ a curve in $\in \mathscr{M}(\zed)$ verifying \eqref{A1} and solving the Liouville equation \eqref{eq.transport}. Then $t\in I\to\gamma_t=\Phi(\mu_{t})$ is a curve in
$\mathscr{H}(\mathscr{\zed})$ satisfying \eqref{A2} and solving the symmetric hierarchy equation \eqref{int-hier}.
\end{prop}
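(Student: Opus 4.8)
The plan is to route the argument through the scalar characteristic equation \eqref{K}: by Proposition \ref{lm.cha} the curve $(\mu_t)_{t\in I}$ solves \eqref{K}, and from \eqref{K} I would extract the symmetric hierarchy equation \eqref{int-hier} by expanding in the test vector and identifying Taylor coefficients. The regularity part of the statement costs nothing: since $t\in I\to\mu_t$ satisfies \eqref{A1}, Lemma \ref{a1a2} gives that $t\in I\to\gamma_t=\Phi(\mu_t)$ satisfies \eqref{A2}, and Lemmas \ref{opBjk}--\ref{intBjk} ensure that each $C^\pm_{j,k}\gamma_\tau$ lies in $\mathscr{L}_{-\sigma}^1(\vee^k\mathscr{Z}_0)$ and that $t\mapsto\int_{t_0}^t\sum_{j=1}^k(C_{j,k}^+\gamma_\tau+C_{j,k}^-\gamma_\tau)\,d\tau$ is a well defined, time-continuous $\mathscr{L}_{-\sigma}^1(\vee^k\mathscr{Z}_0)$-valued curve. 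So everything reduces to showing that $(\gamma_t)_{t\in I}$ satisfies \eqref{int-hier}.

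First I would fix $y\in\mathscr{Z}_\sigma$ and, for $\lambda\in\R$, substitute $\lambda y$ in \eqref{K}. By \eqref{A1} the measure $\mu_\tau$ is carried by $B_{\mathscr{Z}_s}(0,R)$, and by \eqref{A0} the field $v$ is bounded on the bounded set $I\times B_{\mathscr{Z}_s}(0,R)$; hence for $\tau\in I$ and $x\in\mathrm{supp}\,\mu_\tau$ both $|\Re\langle y,x\rangle_{\mathscr{Z}_0}|$ and $|\Re\langle v(\tau,x),y\rangle|$ are bounded by finite constants depending only on $y,R,I$. Consequently the exponentials in \eqref{K} can be developed into power series converging absolutely, and uniformly for $\lambda$ in compact sets, and the summation can be interchanged with $\int_{t_0}^t\!\int_{\mathscr{Z}_s}(\cdot)\,d\mu_\tau\,d\tau$ by dominated convergence. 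Both sides of \eqref{K} are then entire in $\lambda$, so their coefficients agree term by term. The odd powers of $\lambda$ carry no information (they vanish on both sides because $\mu_\tau$ is $U(1)$-invariant and $v(\tau,e^{i\theta}x)=e^{i\theta}v(\tau,x)$), whereas the coefficient of $\lambda^{2k}$, $k\ge1$, gives, after dividing by $(2i\pi)^{2k}$,
\[
\tfrac{1}{(2k)!}\,\mu_t\big((\Re\langle y,\cdot\rangle_{\mathscr{Z}_0})^{2k}\big)=\tfrac{1}{(2k)!}\,\mu_{t_0}\big((\Re\langle y,\cdot\rangle_{\mathscr{Z}_0})^{2k}\big)+\tfrac{1}{(2k-1)!}\int_{t_0}^{t}\mu_\tau\big((\Re\langle y,\cdot\rangle_{\mathscr{Z}_0})^{2k-1}\,\Re\langle v(\tau,\cdot),y\rangle\big)\,d\tau .
\]

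Next I would convert this scalar identity into the operator equation. Writing $\Re\langle y,\cdot\rangle=\tfrac12(\langle y,\cdot\rangle+\langle\cdot,y\rangle)$ and $\Re\langle v,y\rangle=\tfrac12(\langle v,y\rangle+\langle y,v\rangle)$, expanding the powers and averaging over the $U(1)$-phase (using the $U(1)$-invariance of $\mu_\tau$ and of $v$), all non-resonant monomials drop out and one is left with
\[
\mu_\tau\big((\Re\langle y,\cdot\rangle)^{2k}\big)=\tfrac{1}{4^{k}}\tbinom{2k}{k}\,\langle y^{\otimes k},\gamma^{(k)}_\tau\,y^{\otimes k}\rangle ,
\]
\[
\mu_\tau\big((\Re\langle y,\cdot\rangle)^{2k-1}\Re\langle v(\tau,\cdot),y\rangle\big)=\tfrac{1}{4^{k}}\,\tfrac{1}{k}\,\tbinom{2k-1}{k}\,\big\langle y^{\otimes k},\textstyle\sum_{j=1}^{k}(C_{j,k}^{+}\gamma_\tau+C_{j,k}^{-}\gamma_\tau)\,y^{\otimes k}\big\rangle ,
\]
where in the second line the placement of $v$ inside the tensor is immaterial once one pairs against the symmetric vector $y^{\otimes k}$. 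Since $\binom{2k}{k}/(2k)!=\binom{2k-1}{k}/(k\,(2k-1)!)=1/(k!)^2$, all numerical constants cancel and the previous display becomes
\[
\langle y^{\otimes k},\gamma^{(k)}_t\,y^{\otimes k}\rangle=\langle y^{\otimes k},\gamma^{(k)}_{t_0}\,y^{\otimes k}\rangle+\int_{t_0}^{t}\big\langle y^{\otimes k},\textstyle\sum_{j=1}^{k}(C_{j,k}^{+}\gamma_\tau+C_{j,k}^{-}\gamma_\tau)\,y^{\otimes k}\big\rangle\,d\tau ,
\]
for all $k\in\N$ and $y\in\mathscr{Z}_\sigma$. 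Finally, I would set $T_t:=\gamma^{(k)}_t-\gamma^{(k)}_{t_0}-\int_{t_0}^{t}\sum_{j=1}^{k}(C_{j,k}^{+}\gamma_\tau+C_{j,k}^{-}\gamma_\tau)\,d\tau\in\mathscr{L}_{-\sigma}^1(\vee^k\mathscr{Z}_0)$ and $S_t:=(A^{-\sigma/2})^{\otimes k}\,T_t\,(A^{-\sigma/2})^{\otimes k}\in\mathscr{L}^1(\vee^k\mathscr{Z}_0)$. From $(C_{j,k}^{+}\gamma_\tau)^{*}=C_{j,k}^{-}\gamma_\tau$, the operators $T_t,S_t$ are self-adjoint, and the line above reads $\langle z^{\otimes k},S_t\,z^{\otimes k}\rangle=0$ for $z=A^{\sigma/2}y$, $y\in\mathscr{Z}_\sigma$; as $A^{\sigma/2}$ maps $\mathscr{Z}_\sigma$ onto $\mathscr{Z}_0$, this holds for every $z\in\mathscr{Z}_0$, and polarisation of symmetric tensors then forces $S_t=0$, i.e. $T_t=0$, which is \eqref{int-hier}. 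The reverse implication in Theorem \ref{sec.0.thm1} is obtained by running this chain of equivalences backwards.

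The hard part will be the third step, turning the scalar characteristic identity into the operator equation: one must use simultaneously the $U(1)$-invariance of $\mu_\tau$ and of $v$ to isolate the resonant monomials, verify that the combinatorial factors collapse to $1/(k!)^2$, and then run the polarisation/density argument in the rigged symmetric tensor spaces. The power-series expansions and the interchanges of sum and integral, by contrast, are comparatively routine, being controlled entirely by the uniform bounds supplied by \eqref{A0} and \eqref{A1}.
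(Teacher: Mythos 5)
Your proposal follows essentially the same route as the paper's own proof: pass to the characteristic equation via Proposition~\ref{lm.cha}, expand in $\lambda$ using the bounds from \eqref{A0} and \eqref{A1}, match coefficients of $\lambda^{2k}$, exploit the $U(1)$-invariance of $\mu_\tau$ and of $v$ to isolate the resonant monomials, verify that the combinatorial constants collapse (the paper computes $\tfrac{2k\binom{2k-1}{k}}{\binom{2k}{k}}=k$, you equivalently note everything reduces to $1/(k!)^2$), and conclude by polarization over symmetric tensors and density. The only cosmetic deviation is your closing step, where you package the final identity as the vanishing of a self-adjoint operator $S_t$ whose quadratic form is zero on elementary tensors, whereas the paper writes out the explicit polarization integral over $\theta,\varphi$; these are the same argument.
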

\begin{proof}
Remember that Lemma \ref{a1a2} says that the assumptions  \eqref{A1}  and \eqref{A2} are equivalent. So, it is enough to prove that $t\to \gamma_t=\Phi(\mu_{t})$ solves the hierarchy equation \eqref{int-hier}, i.e.,
\[
\forall t \in I ~,~ \gamma_{t}^{(k)} = \gamma_{t_{0}}^{(k)} + \int_{t_{0}}^{t} \sum_{j=1}^{k}  (C_{j,k}^{+} \gamma_{\tau}^{} +  C_{j,k}^{-} \gamma_{\tau}^{}) d\tau \;\in \mathscr{L}^1_{-\sigma}(\vee^k\zed) ~.~
\]
According to Proposition \ref{lm.cha},  $(\mu_{t})_{t \in I}$  satisfies the characteristic equation \eqref{K}. So, for any $y \in \mathscr{Z}_{\sigma}$, we have:
\[
\frac{d}{dt} \mu_{t}(e^{2i\pi \Re \langle y;x \rangle_{\zed}}) = 2i \pi \displaystyle\int_{\zeds} \Re \langle v(t,x); y \rangle_{\zed}\, e^{2i\pi \Re \langle y;x \rangle_{\zed}} d\mu_{t}(x) ~,~ ~a.e.~ t \in I.  \]

We  use the analyticity of the function $ \lambda \to  \mu_{t}(e^{2i\pi \Re \langle y;x \rangle_{\zed}. \lambda})$ in order to obtain equalities between different order derivatives. Indeed, since $\mu_{t}(B_{\zed}(0,1))=1$, there exists a constant $C >0$ such that
\[
\displaystyle\int_{\zeds} |  \Re \langle y;x \rangle_{\zed} |^{n} d\mu_{t}(x) \leq || y ||_{\mathscr{Z}_{0}}^{n} \displaystyle\int_{\zeds} ||x||_{\zed}^{n} d\mu_{t}(x) \leq C^{n}.
\]
One also obtains a similar estimate,
\begin{eqnarray*}
 \int_{I} \int_{\zeds} | \Re \langle v(t,x) ; y \rangle_{\zed} |.| \Re \langle y,x \rangle_{\zed} |^{n} d\mu_{t}(x)dt &\leq& \int_{I}\int_{\zeds} ||v(t,x)||_{\zedsi}\,||x||_{\zed}^{n} d\mu_{t}(x)dt \times ||y||_{\mathscr{Z}_{\sigma}}^{n+1}\,, \\
&\leq& ||y||_{\mathscr{Z}_{\sigma}}^{n+1} \int_{I} \int_{\zeds} ||v(t,x)||_{\zedsi}d\mu_{t}(x)dt \leq C^{n+2}\,,
\end{eqnarray*}
since $I$ is bounded, $v$ satisfies \eqref{A0} and $\mu_t$ concentrates on $B_{\zeds}(0,R)$ for some constant  $R>0$ independent of time. Hence, with these inequalities, we can write, since $(\mu_{t})_{t \in I}$ satisfies \eqref{K}:
\begin{eqnarray*}
\int_{\zeds} \sum_{n=0}^{\infty} \frac{(2i\pi \Re \langle y,x \rangle_{\zed})^{n}\lambda^{n}}{n!}d\mu_{t}(x) &=& \int_{\zeds} \sum_{n=0}^{\infty} \frac{(2i\pi \Re \langle y,x \rangle_{\zed})^{n}\lambda^{n}}{n!}d\mu_{t_{0}}(x) +
\\
&& \int_{t_{0}}^{t} \int_{\zeds} \Re \langle v(\tau,x) ; \lambda y \rangle_{\zed} \sum_{n = 0}^{\infty} \frac{(2i\pi \Re \langle y,x \rangle_{\zed})^{n}\lambda^{n}}{n!} d\mu_{\tau}(x)dt\,.
\end{eqnarray*}
Part by part, this gives the equality:
\begin{eqnarray*}
\int_{\zeds} \frac{(2i\pi)^{n}}{n!}(\Re \langle y,x \rangle _{\zed})^{n} d\mu_{t}(x) &=& \int_{\zeds}  \frac{(2i\pi)^{n}}{n!}(\Re \langle y,x \rangle_{\zed} )^{n} d\mu_{t_{0}}(x) +
\\
 &&2i\pi \int_{t_{0}}^{t} \int_{\zeds} \Re \langle v(\tau,x);y \rangle_{\zed} \frac{(2i\pi)^{n-1}}{(n-1)!} (\Re \langle y,x
 \rangle_{\zed} )^{n-1} d\mu_{\tau}(x)d\tau
\end{eqnarray*}
Writing
$$
(\Re \langle y,x \rangle_{\zed} )^{n} = \frac{1}{2^{n}} \sum_{k=0}^{n} C_{n}^{k} \langle y,x \rangle_{\zed}^{k} \langle x,y \rangle_{\zed}^{n-k}\,,
$$
and noticing that the $U(1)$-invariance of the measures $\mu_t$ yields for any $0\leq k\leq n$ except $k=n/2$,
$$
\int_{\zeds} \langle y,x \rangle^{k} \langle x,y \rangle^{n-k} \;d\mu_{t}(x) = 0\,;
$$
then one concludes that in the case $n=2k$,
\[
\int_{\zeds} (\Re \langle y,x \rangle)^{n} \;d\mu_{t}(x) = \frac{1}{2^{2k}} \int_{\zeds} C_{2k}^{k} |
\langle y,x \rangle |^{2k} d\mu_{t}(x)\,.
\]
This gives
\begin{eqnarray*}
\frac{1}{2^{2k}} \int_{\zeds} C_{2k}^{k} | \langle y,x \rangle |^{2k} d\mu_{t}(x) &=& \frac{1}{2^{2k}} \int_{\zeds} C_{2k}^{k} | \langle y,x \rangle |^{2k} d\mu_{t_{0}}(x) +
\\
 &&\hspace{-.6in}\int_{t_{0}}^{t} \int_{\zeds} \Re \langle v(\tau,x) ; y \rangle_{\zed} \bigg( \frac{2k}{2^{2k-1}} \sum_{j=0}^{2k-1} C_{2k-1}^{j} \langle y,x \rangle^{j} \langle x,y \rangle^{2k-1-j}\bigg) d\mu_{\tau}(x)d\tau \,.
\end{eqnarray*}
The last term in  the above equality can also be simplified thanks the $U(1)$-invariance of the vector field
$v$ and the measures $\mu_t$. Indeed, if we develop
\[
\Re \langle v(\tau,x) ; y \rangle_{\zed} = \frac{1}{2} \langle v(\tau,x) , y \rangle_{\zed} +\frac{1}{2}  \langle y, v(\tau,x) \rangle_{\zed},
\]
and  write,
\begin{eqnarray}
\int_{\zeds} \Re \langle v(\tau,x) ; y \rangle_{\zed} \bigg( \sum_{j=0}^{2k-1} C_{2k-1}^{j} \langle y,x \rangle_{\zed}^{j} \langle x,y \rangle_{\zed}^{2k-1-j}\bigg) d\mu_{\tau}(x) &=& \nonumber\\ \label{equiv.eq1}
 &&\hspace{-2.4in} \frac{1}{2} \sum_{j=0}^{2k} C_{2k-1}^{j} \int_{\zeds} \langle v(\tau,x),y \rangle_{\zed} \langle x,y \rangle_{\zed}^{2k-1-j}\langle y,x \rangle_{\zed}^{j}d\mu_{\tau}(x)
\\ \label{equiv.eq2}
 &&\hspace{-2.4in}+ \frac{1}{2} \sum_{j=0}^{2k} C_{2k-1}^{j} \int_{\zeds} \langle y,v(\tau,x) \rangle_{\zed} \langle x,y \rangle_{\zed}^{2k-1-j}\langle y,x \rangle_{\zed}^{j}d\mu_{\tau}(x)\,,
\end{eqnarray}
then using the gauge invariance, one notices that  the sum in \eqref{equiv.eq1} reduces to $j=k$ while
the sum \eqref{equiv.eq2} reduces to $j=k-1$. So that, we  have:
\[
 \int_{\zeds} \Re \langle {v}(\tau,x) ; y \rangle_{\zed} ( \sum_{j=0}^{2k-1} C_{2k-1}^{j} \langle y,x \rangle_{\zed}^{j} \langle x,y \rangle_{\zed}^{2k-1-j}) d\mu_{\tau}(x) =
 \]
 \[
  \frac{1}{2} C_{2k-1}^{k} \int_{\zeds} \langle {v}(\tau,x) ; y \rangle_{\zed}  \langle x,y \rangle_{\zed}^{k-1} \langle y,x \rangle_{\zed}^{k} d\mu_{\tau}(x) +  \frac{1}{2} C_{2k-1}^{k-1} \int_{\zeds} \langle y, {v}(\tau,x)  \rangle_{\zed} \langle x,y \rangle_{\zed}^{k} \langle y,x \rangle_{\zed}^{k-1} d\mu_{\tau}(x)\,.
 \]
And then, we can finally write:
\begin{eqnarray*}
\int_{\zeds} | \langle y,x \rangle |^{2k} d\mu_{t}(x) &= & \int_{\zeds} | \langle y,x \rangle |^{2k} d\mu_{t_{0}}(x) +  \\
&& \int_{t_{0}}^{t} \int_{\zeds}\bigg( \frac{2k C_{2k-1}^{k}}{C_{2k}^{k}} \langle y^{\otimes k};x^{\otimes k} \rangle_{\zed} \langle {v}(\tau,x) \otimes x^{\otimes (k-1)}; y^{\otimes k} \rangle_{\zed} +\\&&\hspace{.7in} \frac{2k C_{2k-1}^{k-1}}{C_{2k}^{k}} \langle y^{\otimes k}; {v}(\tau,x) \otimes x^{\otimes (k-1)} \rangle_{\zed} \langle x^{\otimes k}; y^{\otimes k} \rangle_{\zed}\bigg) \;d\mu_{\tau}(x)dt
\end{eqnarray*}
Checking that
\[
\frac{2k C_{2k-1}^{k}}{C_{2k}^{k}} = k \qquad \text{ and } \qquad  \frac{2k C_{2k-1}^{k-1}}{C_{2k}^{k}} = k\,,
\]
and using the integral representation of $\gamma_t$ given by $\gamma_t=\Phi(\mu_{t})$, the last equality can be written as,
\begin{equation}
\label{equiv.eq4}
\begin{aligned}
 \langle y^{\otimes k}, \gamma_{t}^{(k)} y^{\otimes k} \rangle_{\zed} &=\langle y^{\otimes k}, \gamma_{t_{0}}^{(k)} y^{\otimes k} \rangle _{\zed}+ \\
 &\int_{t_{0}}^{t} \sum_{j=1}^{k} \int_{\zeds} \bigg(  \langle y^{\otimes k} | x^{\otimes (j-1)} \otimes {v}(\tau,x)
 \otimes x^{\otimes (k-j)}\rangle_{\zed}  \langle x^{\otimes k} | y^{\otimes k}\rangle_{\zed}+\\& \hspace{.9in} \langle y^{\otimes k} | x^{\otimes k} \rangle_{\zed} \langle x^{\otimes (j-1)} \otimes {v}(\tau,x)\otimes x^{\otimes (k-j)} | y^{\otimes k} \rangle_{\zed} \bigg) \;d\mu_{\tau}(x) d\tau
 \end{aligned}
\end{equation}
So,  one recognizes the operations $C_{j,k}^\pm$ given by \eqref{defBjk},
\begin{equation}
\label{equiv.eq3}
 \langle y^{\otimes k}, \gamma_{t}^{(k)} \,y^{\otimes k} \rangle_{\zed} = \langle y^{\otimes k}, \big( \gamma_{t_{0}}^{(k)}  +\int_{t_{0}}^{t} \sum_{j=1}^{k} \int_{\zeds} C_{j,k}^{+} \gamma_{\tau}^{} + C_{j,k}^{-} \gamma_{\tau}^{} d\mu_{\tau}(x)d\tau \big)  \,y^{\otimes k} \rangle_{\zed} \,.
\end{equation}
Notice that the assumptions \eqref{A2}-\eqref{A1} are satisfied  and $y\in\mathscr{Z}_\sigma$,  so all the above calculations are well justified. To conclude, one just have to use a  polarization formula  plus a standard density argument. Indeed, the identity \eqref{equiv.eq3} extends to  $\langle \eta^{\otimes k} , \gamma_{t}^{(k)} y^{\otimes k} \rangle$ for all $\eta, y\in\mathscr{Z}_{\sigma}$ using
\[
\langle \eta^{\otimes k} , \gamma_{t}^{(k)} y^{\otimes k} \rangle = \int_{0}^{1} \int_{0}^{1} \langle (e^{2i\pi \theta} \eta + e^{2i\pi \varphi}y)^{\otimes k} , \gamma_{t}^{(k)} (e^{2i\pi \theta} \eta + e^{2i\pi \varphi}y)^{\otimes k} \rangle e^{2i\pi	(k\theta-k\varphi)} d\theta d\varphi \,.
\]
Now, since ${\rm Vect}\{\eta^{\otimes k}, \eta\in \mathscr{Z}_\sigma\} $ is dense subspace of  $\mathcal{D}_{\sigma}^{(k)}= D((A^{\sigma/2})^{\otimes k})$ endowed with the graph norm,   one obtains the hierarchy equation
\eqref{int-hier} as an integral equation valued in $\mathscr{L}^1_{-\sigma}(\vee^k\zed)$.
\end{proof}

\begin{prop}
\label{thm.main2}
Let ${v}: \mathbb{R} \times \zeds \mapsto \zedsi$ be a vector field satisfying \eqref{A0} and $t\in I\to\gamma_t$ is a curve in $\mathscr{H}(\mathscr{\zed})$ satisfying \eqref{A2} and solving the symmetric hierarchy equation \eqref{hier}. Then  $t \in I \to \mu_{t}=\Phi^{-1}(\gamma_{t})$ is a curve in $\in \mathscr{M}(\zed)$ verifying \eqref{A1} and solving the Liouville equation \eqref{eq.transport}.
\end{prop}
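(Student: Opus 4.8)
The plan is to reverse the chain of equalities in the proof of Proposition~\ref{thm.main}. Since $\gamma_t$ satisfies \eqref{A2}, Lemma~\ref{a1a2} already gives that $\mu_t=\Phi^{-1}(\gamma_t)$ is a curve in $\mathscr{M}(\zed)$ satisfying \eqref{A1}; so only the Liouville equation \eqref{eq.transport} has to be derived, and by Proposition~\ref{lm.cha} it is equivalent to prove that $(\mu_t)_{t\in I}$ solves the characteristic equation \eqref{K}. The point is that the proof of Proposition~\ref{thm.main} in fact establishes a string of \emph{equivalences} between \eqref{K}, the family of power-series coefficient identities obtained by replacing $y$ with $\lambda y$, the family of moment identities \eqref{equiv.eq4}, and the hierarchy equation tested on the vectors $y^{\otimes k}$; here we only need to traverse it in the opposite direction.

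Concretely, I would first test \eqref{hier} — an identity valued in $\mathscr{L}^1_{-\sigma}(\vee^k\zed)\hookrightarrow\mathscr{L}(\mathcal{D}^{(k)}_{\sigma},\mathcal{D}^{(k)}_{-\sigma})$ — against $y^{\otimes k}$ with $y\in\mathscr{Z}_{\sigma}$. Since $T\mapsto\langle y^{\otimes k},T\,y^{\otimes k}\rangle$ is continuous on $\mathscr{L}^1_{-\sigma}(\vee^k\zed)$ it commutes with the Bochner integrals defining $\int_{t_0}^t(\cdot)\,d\tau$ and $C^\pm_{j,k}\gamma_\tau$, and, using the integral representation $\gamma^{(k)}_t=\int_{\zeds}|x^{\otimes k}\rangle\langle x^{\otimes k}|\,d\mu_t(x)$ afforded by $\gamma_t=\Phi(\mu_t)$, this yields for every $k\in\N$ and every $y\in\mathscr{Z}_{\sigma}$ precisely identity \eqref{equiv.eq4}, that is
\[
\int_{\zeds}|\langle y,x\rangle_{\zed}|^{2k}\,d\mu_t(x)=\int_{\zeds}|\langle y,x\rangle_{\zed}|^{2k}\,d\mu_{t_0}(x)+\int_{t_0}^t\!\int_{\zeds}\!\Big(\text{r.h.s.\ integrand of }\eqref{equiv.eq4}\Big)\,d\mu_\tau(x)\,d\tau .
\]
Then I would multiply this by $\dfrac{(-1)^k\pi^{2k}}{(k!)^2}$ and sum over $k\ge 0$. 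Because $\mu_\tau$ is concentrated on $B_{\zed}(0,1)$ one has $|\langle y,x\rangle_{\zed}|\le\|y\|_{\zed}$ for $\mu_\tau$-a.e.\ $x$, so the series on the left converges absolutely and, by the $U(1)$-invariance of $\mu_\tau$ (which annihilates the odd moments), reassembles into $\mu_\tau(e^{2i\pi\Re\langle y,\cdot\rangle_{\zed}})$ — this is exactly the computation of the proof of Proposition~\ref{thm.main} read from the moments back to the characteristic function. For the last term, a crude bound gives that the $k$-th summand is $\le 2k\,\|y\|_{\zed}^{2k-1}\|y\|_{\mathscr{Z}_{\sigma}}\|v(\tau,x)\|_{\zedsi}$ in modulus, hence $\sum_k\frac{\pi^{2k}}{(k!)^2}|\cdot|\le C_y\,\|v(\tau,x)\|_{\zedsi}$, which is $d\mu_\tau\,d\tau$-integrable on $B_{\zeds}(0,R)\times I$ by \eqref{A0} and \eqref{A1}; Fubini and dominated convergence then permit the interchange of $\sum_k$ with $\int_{t_0}^t(\cdot)\,d\tau$ and $\int_{\zeds}(\cdot)\,d\mu_\tau$. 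Using once more the $U(1)$-invariance of $\mu_\tau$ and of $v$ — the same bookkeeping that in the proof of Proposition~\ref{thm.main} collapsed the sums \eqref{equiv.eq1}--\eqref{equiv.eq2} to a single term — the summed right-hand side is identified with $2i\pi\int_{t_0}^t\!\int_{\zeds}e^{2i\pi\Re\langle y,x\rangle_{\zed}}\,\Re\langle v(\tau,x),y\rangle_{\zed}\,d\mu_\tau(x)\,d\tau$, so $(\mu_t)_{t\in I}$ satisfies \eqref{K} and Proposition~\ref{lm.cha} finishes the proof.

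The only delicate point — and where the bulk of the (routine) verification lies — is checking that each link of the chain in the proof of Proposition~\ref{thm.main} is genuinely reversible: the passage between \eqref{K} and the moment identities proceeds by matching the coefficients of an absolutely convergent power series in $\lambda$, the $U(1)$-invariance turning the odd-order coefficient equations into trivialities and the even-order ones into \eqref{equiv.eq4}; running this backwards needs only the convergence of the series involved, which is precisely what the concentration hypothesis in \eqref{A1} (equivalently \eqref{A2}) provides. Note also that, unlike in the proof of Proposition~\ref{thm.main}, no polarization/density step is required here, since we work throughout with the diagonal quantities $\langle y^{\otimes k},\cdot\,y^{\otimes k}\rangle$ and then sum over $k$. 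In short, the statement uses no analytic ingredient beyond those already invoked for the duality's forward direction.
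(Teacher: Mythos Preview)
Your proposal is correct and follows essentially the same approach as the paper: both argue by reversing the chain of equalities from the proof of Proposition~\ref{thm.main}, passing from the hierarchy equation tested on $y^{\otimes k}$ through \eqref{equiv.eq4} back to the characteristic equation \eqref{K}, and then invoking Proposition~\ref{lm.cha}. The paper's own proof is more terse (it expands the characteristic function as a series, identifies its coefficients with $\langle y^{\otimes k},\gamma_t^{(k)}y^{\otimes k}\rangle$, and then simply states ``the computation of the last proof can be done reversely''), whereas you spell out more carefully the dominated-convergence and Fubini justifications needed for the summation over $k$; but the substance is the same.
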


\begin{proof}
Thanks to the concentration of $\mu_{t}$ over a bounded set of $\zeds$ (i.e., $\mu_t(B_{\zeds}(0,R))=1$), the following series expansion of $  \displaystyle\int_{\zedsi} e^{2i\pi \Re \langle x,z\rangle_{\zedsi}} d\mu_{t}(x) $ holds true,
\[
\displaystyle\int_{\zedsi} e^{2i\pi \Re \langle x,z \rangle_{\zedsi}} d\mu_{t}(x) = \sum_{k \geq 0} \frac{(2i\pi)^{k}}{k!} \sum_{j=0}^{k} \frac{C_{j}^{k}}{2^{k}} \displaystyle\int_{\zedsi} \langle x,z \rangle_{\zedsi}^{j} \langle z,x \rangle_{\zedsi}^{k-j} d\mu_{t}(x) ~.~
\]
Indeed,  we have the estimate,
\[
\displaystyle\int_{\zedsi} |\langle x,z \rangle_{\zedsi}^{2k}| d\mu_{t}(x) \leq ||z||_{\zedsi}^{2k} \displaystyle\int_{\zedsi} C^{2k} ||x||_{\zeds}^{2k} d\mu_{t}(x)
\leq  ||z||_{\zedsi}^{2k}  C^{2k} R^{2k}\,.
\]
Taking into account the $U(1)$- invariance of the measures $\mu_{t}$, one shows
\[
\displaystyle\int_{\zedsi} e^{2i \pi\Re \langle  z,x \rangle_{\zedsi}} d\mu_{t}(x) = \sum_{k \geq 0} \frac{(-1)^{k}\pi^{2k}}{(k!)^{2}} \displaystyle\int_{\zedsi} |\langle z,x \rangle_{\zedsi}^{2k}| d\mu_{t}(x) \,.
\]
Taking $y=A^{-\sigma/2} z\in \mathscr{Z}_{\sigma}$ hence
 $\langle z, x\rangle_{\zedsi}=\langle y, x\rangle_{\zed}$,  and consequently
\[
\displaystyle\int_{\zedsi} e^{2i \pi\Re \langle  z,x \rangle_{\zedsi}} d\mu_{t}(x) = \sum_{k \geq 0} \frac{(-1)^{k}\pi^{2k}}{(k!)^{2}}  \,\langle y^{\otimes k}, \gamma_{t}^{(k)} \,y^{\otimes k} \rangle_{\zed}  \,,
\]
with $\gamma_{t}$  satisfying the symmetric hierarchy equation \eqref{int-hier} and having the integral representation given by $\gamma_{t}=\Phi(\mu_{t})$. This leads us to the equality \eqref{equiv.eq4}.  As the computation of the last proof can be done reversely, we conclude that the the set of measures $(\mu_{t})_{t \in I}$  satisfies the characteristic equation  \eqref{K}  and therefore the Liouville \eqref{eq.transport} according to Prop.~\ref{lm.cha}.
\end{proof}

\section{Uniqueness and existence principles }

As explained in the introduction the duality between the hierarchy and the  Liouville  equations allows us to benefit from the recent advances in measure transportation  theory, see e.g. \cite{MR2400257,MR2439520,
MR2129498,MR2335089} .  In particular, the questions of uniqueness for continuity equations in finite dimension
are by now quite well understood through either the method of characteristics or a superposition principle   \cite{MR2668627}. The latter approach is very powerful  and it is based in a sort of probabilistic representation
of solutions of a continuity equation.   In particular, the two first authors consider in \cite{MR3721874}  the question of well-posedness of general Liouville equations related to  nonlinear  PDEs. In this section, we will  improve
the results of \cite{MR3721874} .

\subsection{Probabilistic representation}
\label{subsec.probrep}
We recall a  powerful  probabilistic representation for solutions of the Liouville's equation \eqref{eq.transport} given in Prop.~\ref{prob-rep} below and proved in a previous work of the two first authors in \cite{MR3721874}.  This result was inspired and stimulated by the ones proved  in finite dimension  in \cite[Theorem 4.1]{MR2335089} and \cite[Theorem 8.2.1 and 8.3.2]{MR2129498} and the one in infinite dimension proved in \cite[Proposition C.2]{MR3379490}.

\bigskip
Recall that $(\zeds,\zed,\zedsi)$  is the triple of spaces introduced in Section \ref{fram} with $0\leq s\leq \sigma$ and $I$ is always considered as  bounded open interval.  We denote by
\begin{equation}
\label{eq.X}
\mathfrak X=\zedsi\times  \mathscr{C}(\bar I, \zedsi) \,,
\end{equation}
and endow such a product space with the   following norm
\begin{equation}
\label{normX}
||(x,\varphi)||_{\mathfrak X}= ||x||_{\mathscr{Z}_{-\sigma}}+\sup_{t\in \bar I}||\varphi(t)||_{\mathscr{Z}_{-\sigma}}\, .
\end{equation}
 Here $\zedsi$ is  considered as a real Hilbert space. For each $t \in I$, we define the evaluation map over the space $\mathfrak X$  as,
$$
e_{t}:(x,\varphi) \in \mathfrak X \longmapsto \varphi(t) \in \zedsi\,.
$$

\begin{prop}
\label{prob-rep}
 Let $v:\R\times \mathscr Z_s\to \mathscr Z_{-\sigma}$ be  a Borel  vector field such that $v$ is bounded on bounded sets. Let $t\in I\to\mu_{t}\in \mathfrak{P}(\mathscr Z_s)$ be a weakly narrowly continuous solution in $\mathfrak{P}(\mathscr Z_{-\sigma})$ of the Liouville equation \eqref{eq.transport} defined on an open bounded interval $I$. Then there exists  $\eta$ a Borel probability measure, on the space $(\mathfrak X, ||\cdot||_{\mathfrak X})$ given in \eqref{eq.X}, satisfying:
\begin{enumerate}[label=\textnormal{(\roman*)}]
  \item  \label{concen-item1}$\eta$ is concentrated on the set of points $(x,\gamma)\in \mathfrak X$ such that the curves  $\gamma\in   W^{1,1}(I, \mathscr Z_{-\sigma})$  and are solutions of the initial value problem $\dot\gamma(t)= v(t,\gamma(t))$  for a.e. $t\in I$ and $\gamma(t)\in \mathscr Z_s$ for a.e. $t\in I$ with $\gamma(t_0)=x\in \mathscr Z_s$ for some fixed  $t_0\in I$.
 \item \label{concen-item2} $\mu_t=(e_{t})_{\sharp}\eta$ for any $t\in I$.
\end{enumerate}
\end{prop}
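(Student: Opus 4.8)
The statement to prove is Proposition~\ref{prob-rep}, a probabilistic (superposition) representation for solutions of the infinite-dimensional Liouville equation \eqref{eq.transport}. The plan is to adapt the classical finite-dimensional superposition principle of Ambrosio--Gigli--Savar\'e (\cite[Thm.~8.2.1]{MR2129498}) to the rigged-Hilbert-space setting, exploiting the cylindrical structure of the test functions and the concentration hypothesis $\mu_t(B_{\zeds}(0,R))=1$.

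\textbf{Step 1: Finite-dimensional projections.} For each $n$, fix an orthonormal family $e_1,\dots,e_n$ in $\mathscr Z_{-\sigma,\R}$ and let $\pi_n\in\mathbb P_n$ be the associated projection $\mathscr Z_{-\sigma}\to\R^n$. Push the measures forward: set $\mu_t^n:=(\pi_n)_\sharp\mu_t$ on $\R^n$. Because the test functions in \eqref{eq.transport} are exactly cylindrical functions $\psi\circ\pi_n$ with $\psi\in\mathscr C_0^\infty(\R^n)$, the curve $t\mapsto\mu_t^n$ solves a finite-dimensional continuity equation $\partial_t\mu_t^n+\nabla^T(b_t^n\mu_t^n)=0$ on $\R^n$, where the (time-dependent) drift is the conditional expectation $b_t^n(y)=\int \pi_n\big(v(t,x)\big)\,d\mu_t^{\,x,n}(x)$, $\mu_t^{\,x,n}$ being the disintegration of $\mu_t$ with respect to $\pi_n$. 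The uniform bound $\mu_t(B_{\zeds}(0,R))=1$ together with ``$v$ bounded on bounded sets'' gives $\int_I\int_{\R^n}|b_t^n|\,d\mu_t^n\,dt<\infty$, so the finite-dimensional superposition principle applies: there is a probability measure $\eta^n$ on $\R^n\times\mathscr C(\bar I,\R^n)$ concentrated on solutions of $\dot\gamma=b_t^n(\gamma)$ with $(e_t)_\sharp\eta^n=\mu_t^n$.

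\textbf{Step 2: Tightness and a limiting measure on $\mathfrak X$.} View each $\eta^n$ as a measure on $\mathfrak X=\zedsi\times\mathscr C(\bar I,\zedsi)$ by composing paths with $\pi_n^T$ (identifying $\R^n$ with a subspace of $\mathscr Z_{-\sigma,\R}$ via the transpose map). Using the a priori $\mathscr Z_s$-bound on $\mu_t$ and the uniform $L^1$ bound on the drifts (which controls the modulus of continuity of the paths in $\zedsi$, since $\mathscr Z_s\hookrightarrow\zedsi$ is compact on bounded balls when $s>0$ — and more care, via equi-integrability of $\|v(\tau,\cdot)\|_{\zedsi}$ along $\mu_\tau$, when $s=0$), one proves the family $(\eta^n)$ is tight on $\mathfrak X$ by an Ascoli-type argument. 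Extract a weakly convergent subsequence $\eta^{n_k}\rightharpoonup\eta$.

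\textbf{Step 3: Identifying the limit.} Two things must be checked for the limit $\eta$. First, $(e_t)_\sharp\eta=\mu_t$: this follows from $(e_t)_\sharp\eta^n=\mu_t^n=(\pi_n)_\sharp\mu_t$ and the fact that the projections $\pi_n$ recover $\mu_t$ in the narrow topology as $n\to\infty$ (the metric $d_w$ on bounded sets is generated by countably many such coordinates). Second, $\eta$ is concentrated on genuine solutions of $\dot\gamma=v(t,\gamma(t))$ in $\zedsi$ with $\gamma(t)\in\mathscr Z_s$ a.e.: one tests the Duhamel relation $\langle\gamma(t)-x-\int_{t_0}^t v(\tau,\gamma(\tau))\,d\tau,\,h\rangle_{\zedsi}=0$ against $h$ ranging over a countable dense set, passes to the limit using that $\langle v(\tau,\cdot),h\rangle_{\zedsi}$ can be approximated (after discarding the negligible set where paths leave $\mathscr Z_s$) by the cylindrical drifts $\langle b_\tau^n,\pi_n h\rangle$, and invokes the disintegration identity $b_\tau^n=E[\pi_n v(\tau,\cdot)\mid\pi_n]$ together with the martingale-convergence/Jensen argument to upgrade the conditional expectation to the true drift in the limit. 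The constraint $\gamma(t)\in\mathscr Z_s$ for a.e.\ $t$ follows from $\int_I\int_{\mathfrak X}\mathbf 1_{\mathscr Z_s\setminus B_{\mathscr Z_s}(0,R)}(\gamma(t))\,d\eta\,dt=\int_I\mu_t(\zedsi\setminus B_{\mathscr Z_s}(0,R))\,dt=0$, while $\gamma\in W^{1,1}(I,\zedsi)$ comes from the $\zedsi$-valued Duhamel formula and the integrability of $t\mapsto\|v(t,\gamma(t))\|_{\zedsi}$ under $\eta$.

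\textbf{Main obstacle.} The delicate point is Step~3: passing from the conditional-expectation drifts $b_t^n$ to the true vector field $v$ in the limit, in an infinite-dimensional space where $v$ is merely Borel and bounded on bounded sets (no continuity). One must combine (i) the increasing filtration generated by the $\pi_n$ so that $b_t^n\to v(t,\cdot)$ in $L^1(\mu_t)$ by martingale convergence, with (ii) a careful interchange of this limit with the weak limit $\eta^{n_k}\rightharpoonup\eta$ — typically handled by a lower-semicontinuity/Fatou estimate on $\int_I\int_{\mathfrak X}\big|\langle\gamma(t)-x-\int_{t_0}^t v(\tau,\gamma(\tau))d\tau,h\rangle\big|\,d\eta\,dt$ together with equi-integrability of the drifts. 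When $s=0$ one also loses the compact embedding $\mathscr Z_s\hookrightarrow\zedsi$, so tightness of the path measures in Step~2 must instead be extracted purely from equi-integrability of $\tau\mapsto\|v(\tau,\cdot)\|_{\zedsi}$ along the $\mu_\tau$ (which holds because these are push-forwards of a fixed integrable envelope), and this is where the argument of \cite{MR3721874} needs the improvement alluded to in the text.
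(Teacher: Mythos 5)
The paper does not itself prove Proposition~\ref{prob-rep}: the remark that immediately follows it states that the proof is in \cite[Prop.~4.1]{MR3721874}, and the only original content supplied here is the short reconciliation of the two statements, namely Lemma~\ref{tribu} in Appendix~C, which shows that the Borel $\sigma$-algebras of $\mathfrak X$ for the norm $\|\cdot\|_{\mathfrak X}$ used here and for the weaker norm $\|\cdot\|_{\mathfrak X_w}$ used in that reference coincide. So what you are really comparing your sketch with is the proof in that external reference, not a proof in this paper.

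Your outline --- cylindrical projections, finite-dimensional continuity equations for the push-forwards $\mu_t^n$ with conditional-expectation drifts, the classical superposition principle of \cite[Thm.~8.2.1]{MR2129498}, tightness, and an identification step --- is indeed the template of the argument in \cite{MR3721874}, and in that sense you have the right shape. Two steps, however, contain gaps that are more than cosmetic. In Step~2 you justify tightness by invoking compactness of the embedding $\zeds\hookrightarrow\zedsi$ on bounded balls ``when $s>0$''; but the paper assumes only $A\geq c\,1$, not that $A$ has compact resolvent, and without compact resolvent the embedding $\zeds\hookrightarrow\zedsi$ is not compact for any choice of $s,\sigma$. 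The tightness argument must be carried out with respect to the weaker norm $\|\cdot\|_{\mathfrak X_w}$ (where bounded balls of $\zedsi$ are totally bounded independently of the spectral nature of $A$), which is exactly why the paper needs Lemma~\ref{tribu}: the limiting measure lives a priori on $(\mathfrak X,\|\cdot\|_{\mathfrak X_w})$ and one has to know the Borel sets are the same as for $\|\cdot\|_{\mathfrak X}$. In Step~3, you correctly identify the passage from the conditional-expectation drifts $b_t^n$ to the Borel vector field $v$ as the hard point, but ``martingale convergence plus a lower-semicontinuity/Fatou estimate'' names a plan rather than closes it: one must establish that the functional $\Phi(x,\gamma)=\sup_{t}\big\|\gamma(t)-x-\int_{t_0}^t v(\tau,\gamma(\tau))\,d\tau\big\|_{\zedsi}$ is Borel, that it interacts correctly with the topology in which $\eta^{n_k}\rightharpoonup\eta$, and that $\int\Phi\,d\eta^{n_k}\to 0$, and each of these needs uniform-in-$n$ equi-integrability of $\|v\|_{\zedsi}$ along the disintegrations $\mu_t^{x,n}$. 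None of that is automatic when $v$ is merely Borel and bounded on bounded sets, and filling it in is precisely the substantive work of the cited reference that your sketch defers.
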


\begin{remark}
Proposition \ref{prob-rep} is proved in \cite[Prop.~4.1]{MR3721874}. However, some slight differences between the two statements may catch the reader's attention.  So, we explain how the Prop.~\ref{prob-rep} is a straightforward reformulation of \cite[Prop.~4.1]{MR3721874}. In fact, there are two points:
\begin{itemize}
\item In \cite[Prop.~4.1]{MR3721874}, an abstract  rigged Hilbet space $(\mathscr{Z}_{1},\zed,\mathscr{Z}'_{1})$ is used. With the framework here we are allowed to take $\mathscr{Z}_{1}\equiv \mathscr{Z}_{s}$,
$\mathscr{Z}_{0}\equiv \mathscr{Z}_{\frac{s-\sigma}{2}}$ so that $\mathscr{Z}'_{1}$ identifies with $\zedsi$.
\item The space $\mathfrak X$ in  \cite[Prop.~4.1]{MR3721874} is equipped with a different norm  from the one used here. However, it is easy to see that  the Borel sets of $\mathfrak X$ are the same for both norms (see  Lemma \ref{tribu} in the Appendix C).
\end{itemize}
\end{remark}

As one can see below the existence of such measure $\eta$ has important implications. In particular, the existence
of well defined flow for the initial value problem \eqref{IVP}.  For this we have first to establish some  properties of the measure $\eta$.   Consider the set
\begin{equation}
\label{spa.infty}
 \mathfrak{L}^\infty(\bar I,\mathscr{Z}_{s})=\{u\in \mathscr{C}(\bar I, \zedsi) :  \sup_{t\in \bar I} ||u(t)||_{\zeds}<\infty\}\,.
\end{equation}

\begin{lem}
\label{mes-lem1}
Assume the same assumptions as in Prop.~\ref{prob-rep} and suppose that the curve $t\in I\to\mu_{t}\in \mathfrak{P}(\mathscr Z_s)$ satisfies \eqref{A1}. Then
$$
\mathcal{F}_{t_0}:=\left\{ (x,\gamma)\in  \mathscr{Z}_s\times \mathfrak{L}^\infty(\bar I,\mathscr{Z}_{s}); \,\gamma(t)=x+
\int_{t_0}^t v(s,\gamma(s)) \, ds, \,\forall t\in \bar I\right\},
$$
is a Borel subset   of $\mathfrak{X}$ satisfying $\eta( \mathcal{F}_{t_0})=1$ where  $\eta$ is the probability measure given in Prop \ref{prob-rep}.
\end{lem}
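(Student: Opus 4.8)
The plan is to strengthen the almost-everywhere information produced by the probabilistic representation of Proposition~\ref{prob-rep} into a statement valid for every $t\in\bar I$, the extra input being the concentration hypothesis in \eqref{A1}. Denote by $\mathcal{G}\subset\mathfrak{X}$ the set on which $\eta$ is concentrated according to Proposition~\ref{prob-rep}, namely the set of $(x,\gamma)$ with $\gamma\in W^{1,1}(I,\mathscr{Z}_{-\sigma})$, $\gamma(t_0)=x\in\mathscr{Z}_s$, and $\gamma(t)\in\mathscr{Z}_s$, $\dot\gamma(t)=v(t,\gamma(t))$ for a.e.\ $t\in I$. Since every element of $\mathfrak{X}$ is by definition a continuous curve $\bar I\to\mathscr{Z}_{-\sigma}$, a curve in $\mathcal{G}$ is absolutely continuous on $\bar I$, so the fundamental theorem of calculus in $\mathscr{Z}_{-\sigma}$ already yields, for every $(x,\gamma)\in\mathcal{G}$ and every $t\in\bar I$, the Duhamel identity $\gamma(t)=x+\int_{t_0}^t v(s,\gamma(s))\,ds$. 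Hence, compared with the definition of $\mathcal{F}_{t_0}$, the only missing property is that $t\mapsto\gamma(t)$ take values in $\mathscr{Z}_s$ for \emph{every} $t\in\bar I$, with a bound uniform in $t$.

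First I would settle measurability. Fix a countable set $D\subset I$, dense in $I$ (hence in $\bar I$) and containing $t_0$. Extend $v$ to a Borel map $\bar v:\mathbb{R}\times\mathscr{Z}_{-\sigma}\to\mathscr{Z}_{-\sigma}$ by setting $\bar v\equiv 0$ off $\mathbb{R}\times\mathscr{Z}_s$; this is legitimate because $\mathscr{Z}_s$ is a Borel subset of $\mathscr{Z}_{-\sigma}$, and $\bar v(s,\gamma(s))=v(s,\gamma(s))$ whenever $\gamma(s)\in\mathscr{Z}_s$. The map $(s,(x,\gamma))\mapsto\bar v(s,\gamma(s))$ is then Borel on $\bar I\times\mathfrak{X}$, since $(s,(x,\gamma))\mapsto(s,\gamma(s))$ is jointly continuous. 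Writing $\mathfrak{L}^\infty(\bar I,\mathscr{Z}_s)$ as the countable union of the sets $\{\gamma:\sup_{t\in D}\|\gamma(t)\|_{\mathscr{Z}_s}\le n\}$, which are Borel because $\|\cdot\|_{\mathscr{Z}_s}:\mathscr{Z}_{-\sigma}\to[0,+\infty]$ is lower semicontinuous and hence Borel, and on each of which the integrand is bounded, the parametrized Bochner integral $(x,\gamma)\mapsto\int_{t_0}^t\bar v(s,\gamma(s))\,ds$ is well defined and Borel; moreover the Duhamel identity for all $t\in\bar I$ is equivalent, by continuity in $t$ and density of $D$, to a countable family of Borel conditions. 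This shows that $\mathcal{F}_{t_0}$ is a Borel subset of $\mathfrak{X}$ (alternatively one may quote Lemma~\ref{tribu}).

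Next I would exhibit a full-measure subset of $\mathcal{F}_{t_0}$. Since $\mu_t=(e_t)_\sharp\eta$ and, by \eqref{A1}, $\mu_t(B_{\mathscr{Z}_s}(0,R))=1$ for every $t\in I$, for each $t\in D$ one has $\eta\big(e_t^{-1}(B_{\mathscr{Z}_s}(0,R))\big)=1$; intersecting over $D$ yields a Borel set $\mathcal{N}$ with $\eta(\mathcal{N})=1$ on which $\|\gamma(t)\|_{\mathscr{Z}_s}\le R$ for all $t\in D$. I claim $\mathcal{G}\cap\mathcal{N}\subset\mathcal{F}_{t_0}$. Indeed, if $(x,\gamma)\in\mathcal{G}\cap\mathcal{N}$, the curve $\gamma:\bar I\to\mathscr{Z}_{-\sigma}$ is continuous and lies in the ball $B_{\mathscr{Z}_s}(0,R)$ at every point of the dense set $D$; as this ball is closed in $\mathscr{Z}_{-\sigma}$ (equivalently, $\|\cdot\|_{\mathscr{Z}_s}$ is lower semicontinuous on $\mathscr{Z}_{-\sigma}$; or: a bounded sequence of $\mathscr{Z}_s$ converging in $\mathscr{Z}_{-\sigma}$ has a $\mathscr{Z}_s$-weak limit, necessarily equal to its $\mathscr{Z}_{-\sigma}$-limit), we obtain $\|\gamma(t)\|_{\mathscr{Z}_s}\le R$ for all $t\in\bar I$, i.e.\ $\gamma\in\mathfrak{L}^\infty(\bar I,\mathscr{Z}_s)$ and $x=\gamma(t_0)\in\mathscr{Z}_s$. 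Together with the Duhamel identity for all $t\in\bar I$ established above (where now the integrand $v(s,\gamma(s))$ is genuinely defined for every $s$ and bounded on the bounded interval $I$), this gives $(x,\gamma)\in\mathcal{F}_{t_0}$. Consequently $\eta(\mathcal{F}_{t_0})\ge\eta(\mathcal{G}\cap\mathcal{N})=1$, which is the assertion.

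The main obstacle I anticipate is the measurability bookkeeping of the second paragraph — turning $(x,\gamma)\mapsto\int_{t_0}^t v(s,\gamma(s))\,ds$ into a Borel map on $\mathfrak{X}$ and reducing the ``for all $t\in\bar I$'' conditions to countably many Borel conditions — rather than the analytic part, which amounts to the fundamental theorem of calculus for absolutely continuous $\mathscr{Z}_{-\sigma}$-valued curves and the lower semicontinuity of $\|\cdot\|_{\mathscr{Z}_s}$.
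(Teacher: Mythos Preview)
Your proof is correct and follows the same overall architecture as the paper's: first establish that $\mathcal{F}_{t_0}$ is Borel by stratifying $\mathfrak{L}^\infty(\bar I,\mathscr{Z}_s)$ into bounded pieces on which the Bochner integral $(x,\gamma)\mapsto\int_{t_0}^t v(s,\gamma(s))\,ds$ is well defined and Borel (the paper packages this step as Lemma~\ref{classmo}), then reduce the ``for all $t$'' condition to a countable dense set by continuity.

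Where you differ is in the full-measure argument, and your route is shorter. The paper deduces $\gamma\in\mathfrak{L}^\infty(\bar I,\mathscr{Z}_s)$ $\eta$-a.e.\ via moment estimates: from $\mu_t(B_{\mathscr{Z}_s}(0,R))=1$ it obtains $\int_{\mathfrak X}\|u(t)\|_{\mathscr Z_s}^{2k}\,d\eta\le R^{2k}$ for every $t$, then applies H\"older in $t$ and Fatou as $k\to\infty$ to conclude $\int_{\mathfrak X}\|u\|_{L^\infty(I,\mathscr Z_s)}\,d\eta\le R$, after which the essential supremum must still be upgraded to a true supremum via weak continuity. Your argument---intersecting the full-measure sets $e_t^{-1}(B_{\mathscr{Z}_s}(0,R))$ over a countable dense $D$ and then invoking the closedness of $B_{\mathscr{Z}_s}(0,R)$ in $\mathscr{Z}_{-\sigma}$ (equivalently, lower semicontinuity of $\|\cdot\|_{\mathscr{Z}_s}$)---gives the pointwise bound $\|\gamma(t)\|_{\mathscr{Z}_s}\le R$ for all $t\in\bar I$ directly, with no detour through $L^{2k}$. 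The paper's route yields an integrated bound as a by-product, but that is not used later; your approach is more economical. One cosmetic point: when you write $\eta(\mathcal{G}\cap\mathcal{N})=1$, note that Proposition~\ref{prob-rep} only says $\eta$ is \emph{concentrated} on $\mathcal{G}$, not that $\mathcal{G}$ is Borel; this is harmless because $\mathcal{F}_{t_0}$ is Borel and $\mathcal{G}\cap\mathcal{N}\subset\mathcal{F}_{t_0}$ already forces $\eta(\mathcal{F}_{t_0})=1$.
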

\begin{proof}
We first prove that $ \mathfrak{L}^\infty(\bar I,\mathscr{Z}_{s})$ is  a Borel subset of the space $\mathscr{C}(\bar I, \zedsi)$ endowed with the norm of the uniform convergence,
$$
||u||_{\mathscr{C}(\bar I, \mathscr{Z}_{-\sigma})}=\sup_{t\in \bar I}||u(t)||_{\mathscr{Z}_{-\sigma}}\,.
$$
Indeed, the map
\begin{eqnarray*}
\phi_n: \mathscr{C}(\bar I, \zedsi) &\longrightarrow & \R\\
u&\longrightarrow & \sup_{t\in\bar I} || A^{\frac{s+\sigma}{2}} (1+\frac{A}{n})^{-\frac{s+\sigma}{2}} u(t)||_{\zedsi}\,
\end{eqnarray*}
is clearly continuous (since $\phi_n$ defines an equivalent norm on  $\mathscr{C}(\bar I, \zedsi)$) and converges, as $n\to \infty$, to
\begin{equation}
\phi(u)=
\left\{
\begin{aligned}
&\infty & \text{ if  } u\notin \mathfrak{L}^\infty(\bar I,\mathscr{Z}_{s})\\
 &\sup_{t\in\bar I} || u(t)||_{\zeds} & \text{ if  } u\in \mathfrak{L}^\infty(\bar I,\mathscr{Z}_{s})\,.
\end{aligned}
\right.
\end{equation}
Since $\phi$ is measurable, the subsets
$$
\mathfrak{L}^\infty(\bar I,\mathscr{Z}_{s})=\phi^{-1}(\R) \qquad \text{ and } \qquad \mathfrak{L}_m^\infty(\bar I,\mathscr{Z}_{s}):=\phi^{-1}([0,m]),
$$
are Borel. Furthermore,  with a similar argument, one  also proves that $\zeds$ is a Borel subset of $\zedsi$
 (see e.g.~\cite[Appendix]{MR3721874}).    Hence,  $\zeds\times \mathfrak{L}_m^\infty(\bar I, \zeds)$ is a
 Borel subset of the space $\mathfrak X$ endowed with the norm \eqref{normX}. And consequently,
 the Borel  $\sigma$-algebra   of $(\zeds\times\mathfrak{L}_m^\infty(\bar I, \zeds), ||\cdot||_{\mathfrak X})$
 coincides with the $\sigma$-algebra   of all Borel sets of $(\mathfrak X,||\cdot||_{\mathfrak X})$ contained in  $\zeds\times\mathfrak{L}_m^\infty(\bar I, \zeds)$.

\bigskip
\noindent
Now, we claim that the map $\psi_m: \zeds\times \mathfrak{L}_m^\infty(\bar I, \zeds) \longrightarrow  \R$ defined by
\begin{eqnarray*}
\psi_m(x,u)= \sup_{t\in\bar I}
||u(t)-x-\int_{t_0}^t v(\tau,u(\tau)) d\tau||_{\zedsi}\,
\end{eqnarray*}
is measurable. In fact, we have the following composition of measurable maps
\[
\begin{array}{ccccccc}
[t_0,t]\times \mathfrak{L}_m^\infty(\bar I, \zeds) & \overset{(1)}{\longrightarrow} & \bar I\times \zeds & \overset{(2)}{\longrightarrow}  &  \zedsi
 & \overset{(3)}{\longrightarrow} & \R \\
(\tau,u) & \longrightarrow & (\tau,u(\tau)) & \longrightarrow &  v(\tau,u(\tau)) & \longrightarrow &  \Re\langle v(\tau,u(\tau)), y\rangle_{\zedsi} \,
\end{array}
\]
where  $(2)$ is measurable by \eqref{A0},  $(3)$  is continuous for any $y\in\zedsi$ and $(1)$ is also measurable
since $\zeds$ is a Borel subset of $\zedsi$ and (1) is continuous if it is considered as a mapping into $I\times\zedsi$.  Applying Lemma \ref{classmo} in the Appendix C, one concludes that the following mappings  are measurable for  any $t\in \bar I$,
\begin{eqnarray*}
 \mathfrak{L}_m^\infty(\bar I, \zeds)  &\longrightarrow & \R\\
u&\longrightarrow & \int_{t_0}^t  \Re \langle v(\tau,u(\tau)),y\rangle_{\zedsi}\, d\tau \,.
\end{eqnarray*}
Since $\zedsi$ is a separable Hilbert space then  by Pettis theorem,
weak measurability and strong measurability coincide; and this implies that the  mappings
\begin{eqnarray*}
 \mathfrak{L}_m^\infty(\bar I, \zeds)  &\longrightarrow & \zedsi\\
u&\longrightarrow & \int_{t_0}^t   v(\tau,u(\tau))\, d\tau \,.
\end{eqnarray*}
are actually measurable for any $t\in \bar I$. Notice that the latter integrand is Bochner integrable thanks to the assumption \eqref{A0} and the fact that $u(\cdot)$ is a bounded function valued in $\zeds$.
So, combining this with the continuity of the mappings,
\[
\begin{array}{ccc}
\zeds\times \mathfrak{L}_m^\infty(\bar I, \zeds) & {\longrightarrow} & \zedsi \\
(x,u) & \longrightarrow & u(t)-x \,,
\end{array}
\]
 then one concludes that
\[
\begin{array}{ccc}
\zeds\times \mathfrak{L}_m^\infty(\bar I, \zeds) & \longrightarrow & \R \\
(x,u) & \longrightarrow &  \displaystyle\sup_{t\in\mathbb{Q}\cap I}||u(t)-x-\int_{t_0}^t  v(\tau,u(\tau)) \, d\tau||_{\zedsi}\,
\end{array}
\]
is measurable. Using the assumption  \eqref{A0}, one shows that the curve $t\to u(t)-x-\int_{t_0}^t  v(\tau,u(\tau)) \, d\tau\in\zedsi$ is continuous for any fixed $x\in\zeds$ and  $u\in  \mathfrak{L}_m^\infty(\bar I, \zeds)
\subset \mathscr{C}(\bar I, \zedsi)$.  Hence,
$$
\sup_{t\in\mathbb{Q}\cap I}||u(t)-x-\int_{t_0}^t  v(\tau,u(\tau)) \, d\tau||_{\zedsi}=\sup_{t\in\bar I}||u(t)-x-\int_{t_0}^t  v(\tau,u(\tau)) \, d\tau||_{\zedsi}\,,
$$
and  therefore
$$
\mathcal{F}_{t_0}=\bigcup_{m\in\N}\psi_m^{-1}(\{0\})
$$
is a Borel subset of $\mathfrak X$. Furthermore, using  Prop.~\ref{prob-rep}-\ref{concen-item2}, one proves  for all $t\in I$,  $k\in\N$ and $M\geq R$,
$$
\int_{\mathscr{Z}_{-\sigma}} 1_{B_{\zeds}(0,M)}(x) \, ||x||^{2k}_{\mathscr{Z}_s} \,d\mu_t(x)=\int_{\mathfrak{X}}
 1_{B_{\zeds}(0,M)}(u(t)) \, ||u(t)||^{2k}_{\mathscr{Z}_s} \,d\eta(x,u)\leq R^{2k}\,,
$$
since the  function $\varphi:\zedsi\to \R$, $\varphi(x)=1_{B_{\zeds}(0,M)}(x) \, ||x||^{2k}_{\mathscr{Z}_s} $ is Borel and bounded. So, by Fatou's lemma and assumption \eqref{A1}, letting $M\to\infty$ yields :
$$
\int_{\mathfrak{X}}   ||u(t)||^{2k}_{\mathscr{Z}_s} \,d\eta(x,u)\leq R^{2k}\,.
$$
On the other hand, by H\"older's inequality
$$
\int_{\mathfrak{X}} ||u(\cdot)||_{L^{2k}(I, \mathscr{Z}_s)} \,d\eta(x,u)\leq
\left(\int_I \int_{\mathfrak{X}}  ||u(t)||^{2k}_{\mathscr{Z}_s}   \,d\eta(x,u) \,dt\right)^{1/2k} \leq
|I|^{1/2k} R\,.
$$
Again by Fatou's lemma, letting $k\to \infty$ gives
$$
\int_{\mathfrak{X}} ||u(\cdot)||_{L^{\infty}(I, \mathscr{Z}_s)} \,d\eta(x,u)\leq R\,.
$$
So the norm $ ||u(\cdot)||_{L^{\infty}(I, \mathscr{Z}_s)}$  is  finite  for $\eta$-a.e.~$(x,u) \in \mathfrak{X}$.  Combining this fact with Prop.~\ref{prob-rep}-\ref{concen-item1} one concludes that there exists an  $\eta$-negligible set  $\mathcal{N}$ such that
$$
\mathcal{N}^c\subset \mathcal{F}_{t_0} \quad \text{ and } \quad \eta(\mathcal N)=0\,.
$$
Notice that if $u(\cdot)$ is a solution of \eqref{IVP} with  $u(\cdot) \in L^{\infty}(I, \mathscr{Z}_s) \cap W^{1,1}(I,\zedsi)$ then $u(\cdot)$ satisfies the integral equation \eqref{int-form}  and
 $u(\cdot)\in L^{\infty}(I, \mathscr{Z}_s)\cap W^{1,\infty}(I,\zedsi)$ (i.e. $u(\cdot)$ is a weak solution according to Definition \ref{w-ssol}). In particular, $u(\cdot)$ belongs to $\mathfrak{L}^\infty(\bar I,\mathscr{Z}_{s})$ and it is weakly continuous in $\zeds$. \\
Finally since $\mathcal{F}_{t_0}$ is measurable, then $\eta( \mathcal{F}_{t_0})=1$.

\end{proof}

\begin{lem}
\label{mes-lem2}
Assume the same assumptions as in Prop.~\ref{prob-rep} and suppose that uniqueness of weak solution for the \eqref{IVP} holds true. Then
$$
\mathcal{G}_{t_0}:=\left\{ x\in  \mathscr{Z}_s : \exists \gamma\in \mathfrak{L}^\infty(\bar I,\mathscr{Z}_{s})
\text{ s.t. } (x,\gamma)\in \mathcal{F}_{t_0}\right\},
$$
is a Borel subset   of $ \mathscr{Z}_s$.
\end{lem}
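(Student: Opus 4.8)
Under the assumptions of Proposition~\ref{prob-rep} together with uniqueness of weak solutions for \eqref{IVP}, the set
\[
\mathcal{G}_{t_0}:=\left\{ x\in  \mathscr{Z}_s : \exists \gamma\in \mathfrak{L}^\infty(\bar I,\mathscr{Z}_{s})
\text{ s.t. } (x,\gamma)\in \mathcal{F}_{t_0}\right\}
\]
is a Borel subset of $\mathscr{Z}_s$.

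\emph{Plan.} The obstacle here is the existential quantifier over $\gamma$: $\mathcal{G}_{t_0}$ is the projection onto the first factor of the Borel set $\mathcal{F}_{t_0}\subset\mathfrak X$, and projections of Borel sets are in general only analytic, not Borel. The way around this is precisely the uniqueness hypothesis: under uniqueness of weak solutions, for each $x\in\mathcal G_{t_0}$ there is \emph{exactly one} $\gamma$ with $(x,\gamma)\in\mathcal F_{t_0}$ (recall from the proof of Lemma~\ref{mes-lem1} that any such $\gamma$ is a weak solution of \eqref{IVP} with $\gamma(t_0)=x$). So the projection $\mathcal F_{t_0}\to\mathcal G_{t_0}$, $(x,\gamma)\mapsto x$, is injective. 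A classical theorem of Lusin–Souslin states that the injective Borel image of a Borel subset of a Polish space is Borel; equivalently, the injective continuous (here just coordinate-projection, hence continuous) image of a Borel set is Borel. This immediately gives that $\mathcal G_{t_0}$ is Borel in $\mathscr Z_{-\sigma}$, hence Borel in $\mathscr Z_s$ since by Lemma~\ref{mes-lem1} (and the cited \cite[Appendix]{MR3721874}) $\mathscr Z_s$ is itself a Borel subset of $\mathscr Z_{-\sigma}$ and the two Borel structures are compatible.

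\emph{Steps, in order.} First, record that $\mathcal F_{t_0}$ is a Borel subset of the Polish space $\mathfrak X=\mathscr Z_{-\sigma}\times\mathscr C(\bar I,\mathscr Z_{-\sigma})$: this is exactly the content of Lemma~\ref{mes-lem1}. Second, observe that the first-coordinate projection $p:\mathfrak X\to\mathscr Z_{-\sigma}$, $p(x,\gamma)=x$, restricted to $\mathcal F_{t_0}$ is injective. For this, suppose $(x,\gamma)\in\mathcal F_{t_0}$ and $(x,\tilde\gamma)\in\mathcal F_{t_0}$ with the same $x$; then, as noted in the proof of Lemma~\ref{mes-lem1}, both $\gamma$ and $\tilde\gamma$ lie in $L^\infty(I,\mathscr Z_s)\cap W^{1,\infty}(I,\mathscr Z_{-\sigma})$ and satisfy the Duhamel equation \eqref{int-form} with initial datum $x$ at time $t_0$, hence both are weak solutions of \eqref{IVP} in the sense of Definition~\ref{w-ssol} with $u(t_0)=x$; uniqueness of weak solutions then forces $\gamma=\tilde\gamma$. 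Third, apply the Lusin–Souslin theorem: $p|_{\mathcal F_{t_0}}$ is continuous and injective on a Borel subset of a Polish space, so $\mathcal G_{t_0}=p(\mathcal F_{t_0})$ is Borel in $\mathscr Z_{-\sigma}$. Fourth and last, use that $\mathscr Z_s$ is a Borel subset of $\mathscr Z_{-\sigma}$ with coinciding trace Borel structure (Lemma~\ref{mes-lem1}, \cite[Appendix]{MR3721874}); since $\mathcal G_{t_0}\subset\mathscr Z_s$, it is a Borel subset of $\mathscr Z_s$ as well.

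\emph{Where the difficulty sits.} The entire point is the passage from an analytic set (generic projection of a Borel set) to a genuinely Borel set, and the only leverage for this is the injectivity coming from the uniqueness hypothesis — this is why the statement is not an abstract measure-theoretic triviality but rather the place where the uniqueness assumption on \eqref{IVP} is genuinely used to build a measurable flow. The verification that competing curves through a common point are both weak solutions is essentially a reference back to the bookkeeping already done in Lemma~\ref{mes-lem1}, so the substantive content is just the citation of the Lusin–Souslin theorem and the injectivity argument; no new estimates are needed.
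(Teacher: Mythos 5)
Your proof is correct and follows essentially the same route as the paper: both proofs note that the uniqueness hypothesis makes the first-coordinate projection restricted to $\mathcal{F}_{t_0}$ injective, and both then invoke the classical descriptive-set-theoretic fact (you call it Lusin--Souslin; the paper cites it as Theorem 3.9 of Parthasarathy) that an injective Borel image of a Borel subset of a Polish space is Borel. The only difference is presentational — you spell out why the existential quantifier would otherwise only yield an analytic set — but the substance is identical.
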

\begin{proof}
Recall the following known result in measure theory \cite[Thm.~3.9]{MR0226684}. Let  $X_1,X_2$  two complete metric spaces and $E_1\subset X_1$,  $E_2\subset X_2$ two subsets such that $E_1$ is Borel.  If  $\varphi: E_1\to X_2$ is a measurable one-to-one map such that $\varphi(E_1)=E_2$; then $E_2$ is a Borel subset of $X_2$.
Using  such argument,  one shows the claimed result. Indeed, consider
$X_1=(\mathfrak X,||\cdot||_{\mathfrak X})$, $X_2= (\zedsi, ||\cdot||_{\zedsi})$  two complete normed spaces and $\varphi$ given by
\[
\begin{array}{cccc}
 \varphi : & \mathfrak X  & \longrightarrow & \zedsi\\
                &   (x,u)            & \longrightarrow  & x\,.
\end{array}
\]
Then, clearly $\varphi$ is a  continuous map. Hence by Lemma \ref{mes-lem1},  its restriction
$\varphi_{| \mathcal{F}_{t_0}}:\mathcal{F}_{t_0}\to \zedsi$ is a measurable map. Now, the uniqueness  hypothesis for weak solutions
of the initial value problem \eqref{IVP},  shows that $\varphi_{| \mathcal{F}_{t_0}}$ is one-to-one. Hence, the set
$$
\mathcal{G}_{t_0}= \varphi_{|\mathcal{F}_{t_0}}(\mathcal{F}_{t_0}),
$$
is Borel in $\zedsi$. Since $\mathcal{G}_{t_0}\subset \zeds$ then it is also a Borel subset of $\zeds$.
\end{proof}

\begin{prop}
\label{flotpp}
Assume the same assumptions as in Prop.~\ref{prob-rep} and suppose that uniqueness of weak solutions of  the \eqref{IVP} holds true. Then the map
\begin{eqnarray*}
\phi(t,t_0): \mathcal{G}_{t_0} &\to & \mathscr{Z}_s \\
x &\to & u(t)
\end{eqnarray*}
where  $u(\cdot)$ is the unique curve in $ \mathfrak{L}^\infty(\bar I,\mathscr{Z}_{s})\cap
W^{1,\infty} (I,  \mathscr{Z}_{-\sigma}) $ satisfying $u(t)=x+
\int_{t_0}^t v(\tau,u(\tau)) \, d\tau,$ for all $ t\in \bar I$; is a well defined Borel map.
\end{prop}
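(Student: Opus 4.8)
The plan is to realize $\phi(t,t_0)$ as the composition of the evaluation map $e_t$ with the inverse of the projection $\varphi\colon (x,u)\in\mathfrak X\mapsto x\in\zedsi$ restricted to the Borel set $\mathcal F_{t_0}$ appearing in Lemma \ref{mes-lem2}, and to obtain the measurability of that inverse from the same descriptive set theory fact already invoked in the excerpt.

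\emph{Well-posedness of the definition.} First I would check that $\phi(t,t_0)$ is unambiguously defined and $\zeds$-valued. Let $x\in\mathcal G_{t_0}$. By definition there is a curve $\gamma\in\mathfrak L^\infty(\bar I,\zeds)$ with $(x,\gamma)\in\mathcal F_{t_0}$, i.e.\ $\gamma(t)=x+\int_{t_0}^t v(\tau,\gamma(\tau))\,d\tau$ for every $t\in\bar I$. As recorded at the end of the proof of Lemma \ref{mes-lem1}, since $v$ is bounded on bounded sets and $\sup_{t\in\bar I}\|\gamma(t)\|_{\zeds}<\infty$, such a $\gamma$ is a weak solution of \eqref{IVP} in $\mathfrak L^\infty(\bar I,\zeds)\cap W^{1,\infty}(I,\zedsi)$ with $\gamma(t_0)=x$, and $\gamma(t)\in\zeds$ for all $t$. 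The hypothesis that weak solutions of \eqref{IVP} are unique then forces this $\gamma$ to be the only such curve, so $\phi(t,t_0)(x):=\gamma(t)\in\zeds$ does not depend on the choice of $\gamma$.

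\emph{Measurability.} This is the core step. By Lemma \ref{mes-lem1}, $\mathcal F_{t_0}$ is a Borel subset of the separable Banach (hence Polish) space $\mathfrak X$; by Lemma \ref{mes-lem2}, the restriction $\varphi|_{\mathcal F_{t_0}}$ is continuous (it is $1$-Lipschitz), injective by the uniqueness hypothesis, and $\varphi(\mathcal F_{t_0})=\mathcal G_{t_0}$ is Borel in $\zedsi$. I would then show that the inverse $\Psi:=(\varphi|_{\mathcal F_{t_0}})^{-1}\colon\mathcal G_{t_0}\to\mathcal F_{t_0}$ is Borel: for any Borel set $B\subseteq\mathcal F_{t_0}$ one has $\Psi^{-1}(B)=\varphi(B)$, and since $B$ is then Borel in $\mathfrak X$ and $\varphi|_B$ is an injective Borel map between Polish spaces, $\varphi(B)$ is Borel in $\zedsi$ by \cite[Thm.~3.9]{MR0226684}, the very result already used in the proof of Lemma \ref{mes-lem2}. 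Finally, the evaluation map $e_t\colon(x,u)\in\mathfrak X\mapsto u(t)\in\zedsi$ is $1$-Lipschitz, hence continuous, and by construction $\phi(t,t_0)=e_t\circ\Psi$ on $\mathcal G_{t_0}$; thus $\phi(t,t_0)$ is Borel from $\mathcal G_{t_0}$ into $\zedsi$. Since its range lies in $\zeds$ and every Borel subset of $\zeds$ is also a Borel subset of $\zedsi$ (see \cite[Appendix]{MR3721874}), the map $\phi(t,t_0)\colon\mathcal G_{t_0}\to\zeds$ is Borel, which is the assertion.

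The one genuinely delicate point is the Borel measurability of $\Psi$, i.e.\ of the inverse of the injective Borel map $\varphi|_{\mathcal F_{t_0}}$; everything else is bookkeeping about Borel structures on the rigged spaces and about passing from $\zedsi$-valued to $\zeds$-valued maps. It is precisely here that the Lusin--Souslin type statement enters, and, conveniently, the exact form invoked in Lemma \ref{mes-lem2} already suffices---no further measure-theoretic input is needed.
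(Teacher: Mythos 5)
Your proof is correct and follows essentially the same route as the paper's: both express $\phi(t,t_0)$ as the composition $e_t\circ(\varphi|_{\mathcal F_{t_0}})^{-1}$ and derive Borel measurability of the inverse from the Lusin--Souslin theorem already invoked in Lemma~\ref{mes-lem2}. The only difference is cosmetic: the paper says ``examining the proof of Lemma~\ref{mes-lem2} one notices the inverse is measurable,'' whereas you spell out the short argument that $\Psi^{-1}(B)=\varphi(B)$ for Borel $B\subseteq\mathcal F_{t_0}$ and apply the Lusin--Souslin theorem to $\varphi|_B$, which is a helpful clarification but not a different idea.
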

\begin{proof}
Examining the proof of Lemma \ref{mes-lem2}, one notices that the inverse  map
\[
\begin{array}{cccc}
 \varphi^{-1} : & \mathcal{G}_{t_0}  & \longrightarrow & \mathcal{F}_{t_0}\\
                &       x    & \longrightarrow  & (x,u)
\end{array}
\]
is well defined and measurable. Moreover, the function $u$  such that $\varphi^{-1}(x)=(x,u)$ is the unique
weak solution of the initial value problem \eqref{IVP} satisfying $u(t_0)=x$.  The following composition
\[
\begin{array}{cccccc}
 \phi(t,t_0): & \mathcal{G}_{t_0}  & \overset{\varphi^{-1}}{\longrightarrow} & \mathcal{F}_{t_0}
  &\overset{e_t}{\longrightarrow} & \zedsi\\
                &       x    & \longrightarrow  & (x,u) & \longrightarrow  & u(t)
\end{array}
\]
yields a well defined measurable  map. Since $\phi(t,t_0)(\mathcal{G}_{t_0} )\subset \zeds$   and $\zeds$ is a Borel subset of $\zedsi$, then the claimed statement is proved.
\end{proof}

\subsection{Existence and uniqueness of solutions}
\label{sub.sec.uniq-ex}

\begin{thm}
\label{sec.0.thm3}
Let ${v}: \mathbb{R} \times \zeds \mapsto \zedsi$ be a vector field satisfying \eqref{A0}.  Then  uniqueness of weak solutions over a bounded open interval $I$ for the initial value problem
\eqref{IVP} implies the uniqueness of solutions over $I$ of the Liouville  equation \eqref{eq.transport}
 satisfying the assumption \eqref{A1}.
\end{thm}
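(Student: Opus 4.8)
The plan is to exploit the probabilistic representation of Proposition \ref{prob-rep} together with the well-posedness of the flow established in Proposition \ref{flotpp}. Let $t\in I\to\mu_t$ and $t\in I\to\tilde\mu_t$ be two solutions of the Liouville equation \eqref{eq.transport} satisfying \eqref{A1} with the same initial datum $\mu_{t_0}=\tilde\mu_{t_0}$ at some fixed time $t_0\in I$. First I would apply Proposition \ref{prob-rep} to each of them, obtaining Borel probability measures $\eta$ and $\tilde\eta$ on $\mathfrak{X}=\zedsi\times\mathscr{C}(\bar I,\zedsi)$ concentrated on curves solving the initial value problem \eqref{IVP}, with $\mu_t=(e_t)_\sharp\eta$ and $\tilde\mu_t=(e_t)_\sharp\tilde\eta$ for all $t\in I$. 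By Lemma \ref{mes-lem1}, both $\eta$ and $\tilde\eta$ are in fact concentrated on the Borel set $\mathcal{F}_{t_0}$ of pairs $(x,\gamma)$ with $\gamma\in\mathfrak{L}^\infty(\bar I,\mathscr{Z}_{s})$ a weak solution of \eqref{IVP} with $\gamma(t_0)=x\in\mathscr{Z}_s$.

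The key point is that, under the uniqueness hypothesis for weak solutions of \eqref{IVP}, each element of $\mathcal{F}_{t_0}$ is completely determined by its initial value: by Proposition \ref{flotpp} the map $x\in\mathcal{G}_{t_0}\to\phi(\cdot,t_0)(x)=u(\cdot)\in\mathfrak{L}^\infty(\bar I,\mathscr{Z}_{s})$ is a well-defined Borel map and $\mathcal{F}_{t_0}$ is precisely the graph of the Borel map $\Gamma:x\in\mathcal{G}_{t_0}\mapsto(x,\phi(\cdot,t_0)(x))\in\mathfrak{X}$, which is the inverse of the projection $\varphi:(x,u)\mapsto x$ restricted to $\mathcal{F}_{t_0}$. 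Consequently $\eta=\Gamma_\sharp(\varphi_\sharp\eta)=\Gamma_\sharp\mu_{t_0}$, because $\varphi=e_{t_0}$ on $\mathcal{F}_{t_0}$ so $\varphi_\sharp\eta=(e_{t_0})_\sharp\eta=\mu_{t_0}$; and likewise $\tilde\eta=\Gamma_\sharp\tilde\mu_{t_0}$. Since $\mu_{t_0}=\tilde\mu_{t_0}$ (and both are concentrated on $\mathcal{G}_{t_0}$, which must be checked using that $\eta,\tilde\eta$ live on $\mathcal{F}_{t_0}$), we get $\eta=\tilde\eta$, and therefore $\mu_t=(e_t)_\sharp\eta=(e_t)_\sharp\tilde\eta=\tilde\mu_t$ for every $t\in I$.

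I expect the main obstacle to be the measure-theoretic bookkeeping needed to legitimately write $\eta=\Gamma_\sharp\mu_{t_0}$. Concretely, one must verify that $\mu_{t_0}=(e_{t_0})_\sharp\eta$ is concentrated on $\mathcal{G}_{t_0}$ — this follows from $\eta(\mathcal{F}_{t_0})=1$ (Lemma \ref{mes-lem1}) and the definition of $\mathcal{G}_{t_0}=\varphi(\mathcal{F}_{t_0})$ (Lemma \ref{mes-lem2}) — and that the reconstruction map $\Gamma$ agrees $\eta$-almost everywhere with the identity on $\mathcal{F}_{t_0}$, i.e. for $\eta$-a.e.\ $(x,u)$ one has $u=\phi(\cdot,t_0)(x)$, which is exactly the uniqueness statement for weak solutions applied pointwise. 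A clean way to phrase the conclusion without transporting measures through $\Gamma$ is the following disintegration argument: for any bounded Borel $F:\mathfrak{X}\to\R$,
\begin{equation*}
\int_{\mathfrak{X}}F\,d\eta=\int_{\mathcal{F}_{t_0}}F\,d\eta=\int_{\mathcal{G}_{t_0}}F\bigl(x,\phi(\cdot,t_0)(x)\bigr)\,d\mu_{t_0}(x),
\end{equation*}
where the second equality uses $\eta(\mathcal{F}_{t_0})=1$ and the fact that on $\mathcal{F}_{t_0}$ the second coordinate $u$ equals $\phi(\cdot,t_0)(x)$ by uniqueness, together with $\mu_{t_0}=(e_{t_0})_\sharp\eta$; the right-hand side depends on $\eta$ only through $\mu_{t_0}$. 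The identical identity holds for $\tilde\eta$ with $\tilde\mu_{t_0}$, so $\int F\,d\eta=\int F\,d\tilde\eta$ for all such $F$, whence $\eta=\tilde\eta$ and the proof is complete upon applying $(e_t)_\sharp$. One should also remark that this theorem is the analytic core behind Theorem \ref{sec.0.thm2}: combining it with the duality of Theorem \ref{sec.0.thm1} (equivalently Lemma \ref{a1a2} and Propositions \ref{thm.main}--\ref{thm.main2}) immediately transfers uniqueness from Liouville solutions satisfying \eqref{A1} to hierarchy solutions satisfying \eqref{A2}.
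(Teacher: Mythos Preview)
Your proof is correct and follows essentially the same route as the paper: apply Proposition \ref{prob-rep} to each solution, use Lemma \ref{mes-lem1} and Proposition \ref{flotpp} together with the uniqueness hypothesis to see that each path measure is completely determined by $\mu_{t_0}$, and conclude $\mu_t=\phi(t,t_0)_\sharp\mu_{t_0}=\tilde\mu_t$. The paper writes this directly at the level of $\mu_t$ via the chain $\int f\,d\mu_t=\int_{\mathcal F_{t_0}} f(\phi(t,t_0)(x))\,d\eta=\int_{\mathcal G_{t_0}} f\circ\phi(t,t_0)\,d\mu_{t_0}$ rather than first establishing $\eta=\tilde\eta$, but your disintegration identity is exactly this computation and your remark linking the result to Theorem \ref{sec.0.thm2} via duality matches the paper as well.
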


{\bf Proof of Thm.~\ref{sec.0.thm2} and \ref{sec.0.thm3}}:
Thanks to the duality between hierarchy equations and Liouville equations, one only needs  to prove Thm.~\ref{sec.0.thm3}. Indeed, assume the assumptions stated in Thm.~\ref{sec.0.thm3} and suppose that we have two curves $t\in I\to \mu_t\in\mathfrak P(\zeds)$ and   $t\in I\to \nu_t\in\mathfrak P(\zeds)$ both satisfying \eqref{A2} and such that $\mu_{t_0}=\nu_{t_0}$ for some $t_0\in I$.  Then applying Prop. \ref{prob-rep}, one gets the existence of two probability measures $\eta_1$ and  $\eta_2$ on the space $\mathfrak X$ satisfying  respectively \ref{concen-item1}-\ref{concen-item2}. So that, for any bounded Borel function $f:\zeds \to \R$, we have
$$
\int_{\zeds} f(x) \, d\mu_t(x)= \int_{\mathfrak X} f(u(t)) \, d\eta_1(x,u) = \int_{\mathcal F_{t_0}} f(\phi(t,t_0)(x)) \, d\eta_{1}(x,u)= \int_{\mathcal G_{t_0}} f\circ\phi(t,t_0)(x) \, d\mu_{t_0}(x)\,.
$$
Recall that  $\mathcal F_{t_0}$, $\mathcal G_{t_0}$ and $\phi(\cdot,\cdot)$  are given respectively in Lemma
\ref{mes-lem1}, Lemma \ref{mes-lem2} and Proposition \ref{flotpp}.  Moreover,  in the last identities
we have used the fact that $(e_t)_{\sharp}\eta_1=\mu_t$, the concentration property  $\eta_1(\mathcal F_{t_0})=1$  in Lemma \ref{mes-lem1} and the measurability of the map $\phi(t,t_0)$  in Lemma \ref{mes-lem2}.  So, one concludes for any
$t\in I$,
$$
\mu_t= \phi(t,t_0)_{\sharp}\mu_{t_0}\,.
$$
Repeating the same argument for $\nu_t$ yields the same result so that for any $t\in I$,
$$
\mu_t= \phi(t,t_0)_{\sharp}\mu_{t_0}=\phi(t,t_0)_{\sharp}\nu_{t_0}=\nu_t\,.
$$
$\hfill\square$

\bigskip
\noindent
We state an  existence result of solutions for the Liouville equation \eqref{eq.transport}.
\begin{prop}
\label{ext-1}
Let $v:\R\times \mathscr Z_s\to \mathscr Z_{-\sigma}$ a Borel vector field which is bounded on bounded sets  and  let $I$ be  a bounded open interval with $t_0\in I$ a fixed initial time. Assume that there exists a Borel
set $\mathcal{A}$ of $\zeds$ and a Borel  map $\phi:\bar I\times \mathcal{A}\to \zeds$ which is bounded on bounded sets
and such that for any $x\in \mathcal{A}$ the curve $t\in \bar I \to \phi(t,x)$ is a weak solution of the initial value problem \eqref{IVP} satisfying $ \phi(t_0,x)=x$.  Then for any Borel probability measure $\nu\in\mathfrak{P}(\zeds)$, such that $\nu$ is concentrated on $ \mathcal{A}$ and on a bounded subset of $\zeds$, there exists a solution $t\in I\to \mu_t$ to the Liouville equation
\eqref{int-liouville} given by
\begin{equation}
\label{mes-propag}
\mu_t=\phi(t,\cdot)_{\sharp}\nu\,,
\end{equation}
satisfying $\mu_{t_0}=\nu$. Furthermore,  $t\in I\to \mu_t$ is strongly narrowly continuous in $\mathfrak{P}(\zedsi)$.
\end{prop}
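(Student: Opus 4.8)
The plan is to check directly that the explicit family $\mu_t:=\phi(t,\cdot)_\sharp\nu$ has all the stated properties; no appeal to the probabilistic representation of Proposition~\ref{prob-rep} is needed, since the flow $\phi$ is supplied by hypothesis.

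\textbf{Step 1: the formula is well posed.} Since $\phi:\bar I\times\mathcal A\to\zeds$ is Borel and $\nu(\mathcal A)=1$, each slice $\phi(t,\cdot):\mathcal A\to\zeds$ is Borel, so $\mu_t=\phi(t,\cdot)_\sharp\nu$ is a well-defined probability measure on $\zeds$, hence also on $\zedsi$ (Borel sets of $\zeds$ being Borel in $\zedsi$). Because $\phi(t_0,\cdot)$ is the identity map on $\mathcal A$, one gets $\mu_{t_0}=\nu$ at once. If $\nu$ is carried by a ball $B_{\zeds}(0,\rho)$, then $\phi$ being bounded on bounded sets forces $\phi\big(\bar I\times(\mathcal A\cap B_{\zeds}(0,\rho))\big)$ into some ball $B_{\zeds}(0,R)$, so that $\mu_t(B_{\zeds}(0,R))=1$ for all $t\in\bar I$ with $R$ independent of $t$; this is exactly the concentration property that makes the integrand in \eqref{eq.transport} meaningful (arguing as in Lemma~\ref{opBjk}).

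\textbf{Step 2: strong narrow continuity.} For $f$ bounded and continuous for the norm topology of $\zedsi$, write $\int_{\zedsi}f\,d\mu_t=\int_{\mathcal A}f(\phi(t,x))\,d\nu(x)$. For each fixed $x\in\mathcal A$ the curve $t\mapsto\phi(t,x)$ is a weak solution of \eqref{IVP}, hence belongs to $W^{1,\infty}(I,\zedsi)$ and, satisfying the Duhamel identity \eqref{int-form}, is absolutely continuous from $\bar I$ into $\zedsi$; therefore $t\mapsto f(\phi(t,x))$ is continuous and dominated by $\|f\|_\infty$. Dominated convergence yields continuity of $t\mapsto\int f\,d\mu_t$, i.e.\ strong narrow continuity of $t\in\bar I\to\mu_t$ in $\mathfrak P(\zedsi)$.

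\textbf{Step 3: the Liouville equation.} Fix a cylindrical test function $\varphi(t,z)=\psi(t,\pi(z))$ with $\pi\in\mathbb P_n$ and $\psi\in\mathscr C_0^\infty(I\times\R^n)$. For each $x\in\mathcal A$, since $\phi(\cdot,x)\in W^{1,\infty}(I,\zedsi)$ and $\pi$ is bounded and linear, $t\mapsto\pi(\phi(t,x))$ is Lipschitz on $\bar I$ with a.e.\ derivative $\pi\big(v(t,\phi(t,x))\big)$; composing with the $\mathscr C^1$ function $\psi$ and using $\nabla_{\mathscr Z_{-\sigma,\mathbb R}}\varphi=\pi^T\circ\nabla\psi\circ\pi$, the real function $t\mapsto\varphi(t,\phi(t,x))$ is absolutely continuous on $\bar I$ and, for a.e.\ $t\in I$,
\[
\frac{d}{dt}\varphi(t,\phi(t,x))=\partial_t\varphi(t,\phi(t,x))+\big\langle v(t,\phi(t,x)),\nabla_{\mathscr Z_{-\sigma,\mathbb R}}\varphi(t,\phi(t,x))\big\rangle_{\mathscr Z_{-\sigma,\mathbb R}}.
\]
As $\psi$ has compact support in the $t$-variable, the fundamental theorem of calculus gives $\int_I\frac{d}{dt}\varphi(t,\phi(t,x))\,dt=0$ for every $x\in\mathcal A$. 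Integrating this identity against $\nu$, then using Fubini's theorem and the change of variables $\mu_t=\phi(t,\cdot)_\sharp\nu$, reproduces precisely the integral identity \eqref{eq.transport}, so $t\in I\to\mu_t$ solves the Liouville equation with $\mu_{t_0}=\nu$.

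\textbf{Main obstacle.} The only point requiring care is the measurability bookkeeping that legitimizes the use of Fubini: one must check that $(t,x)\mapsto\partial_t\varphi(t,\phi(t,x))+\langle v(t,\phi(t,x)),\nabla_{\mathscr Z_{-\sigma,\mathbb R}}\varphi(t,\phi(t,x))\rangle_{\mathscr Z_{-\sigma,\mathbb R}}$ is jointly Borel on $I\times\mathcal A$ — which follows from $v$ being Borel on $\R\times\zeds$, $\phi$ being Borel, the continuity of $\partial_t\varphi$ and $\nabla\varphi$, and the Borel-structure facts recalled in the Appendix — and that it is bounded on $I\times(\mathcal A\cap B_{\zeds}(0,\rho))$, which holds since $\varphi$ has bounded derivatives, $v$ is bounded on bounded sets, $\phi$ maps bounded sets into bounded sets, and $I$ is bounded. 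The a.e.\ chain rule for $t\mapsto\varphi(t,\phi(t,x))$ in the merely $W^{1,\infty}$ setting is disposed of by the same cylindrical-reduction device already used in the proof of Proposition~\ref{lm.cha}. Beyond these routine verifications there is no genuine analytic difficulty here, since the flow $\phi$ is provided by hypothesis.
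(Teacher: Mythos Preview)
Your proof is correct and follows essentially the same route as the paper's: define $\mu_t$ as the push-forward of $\nu$ by the flow, verify strong narrow continuity via dominated convergence, and derive the Liouville equation by applying the chain rule along each trajectory $t\mapsto\phi(t,x)$ and then integrating against $\nu$. The only cosmetic difference is that the paper differentiates $t\mapsto\int\varphi(x)\,d\mu_t$ under the integral sign for spatial test functions and then passes to general $\varphi(t,x)$ by density, whereas you treat a general cylindrical $\varphi(t,x)$ directly via the fundamental theorem of calculus and Fubini; both arguments rest on the same ingredients (absolute continuity of $t\mapsto\phi(t,x)$ in $\zedsi$, boundedness of $v$ and $\phi$ on bounded sets, dominated convergence).
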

\begin{proof}
Since the map $\phi$ is Borel then  $\phi(t, \cdot): \mathcal{A}\to \zeds$ is  also Borel. So, one can define
$\mu_t$ according to \eqref{mes-propag} as a Borel probability measure on $\zeds$ or $\zedsi$. Moreover, for any bounded continuous  function $f:\zedsi\to \R$, one easily checks that
$$
t\in I\longrightarrow \int_{\zedsi} f(x) \,d\mu_t=  \int_{\mathcal{A}} f(\phi(t,x)) \,d\nu\,,
$$
is continuous. So, the curve $t\in I\to \mu_t$ is strongly (weakly) narrowly continuous in $\mathfrak{P}(\zedsi)$ and satisfies $\mu_{t_0}=\nu$.  It still to prove that  $t\in I\to \mu_t$ is a solution of the Liouville equation \eqref{int-liouville}.
Let $\psi\in \mathscr{C}_0^\infty(\R^n)$ and $(e_1,\cdots,e_n)$ an orthonormal family in $\zedsi$. Here  $\zedsi
\equiv \mathscr{Z}_{-\sigma,\R}$ is considered as real Hilbert space. We use the notation in \eqref{eq.pi},
\begin{equation*}
\pi(x)=(\langle x, e_1\rangle_{\mathscr Z_{-\sigma},\R}, \cdots, \langle x, e_n\rangle_{\mathscr Z_{-\sigma},\R})\,,
\end{equation*}
so that  $\varphi(x)=\psi(\pi(x))\in\mathscr{C}_{0,cyl}^\infty(\zedsi)$. Then a simple computation yields
\begin{eqnarray*}
\frac{d}{dt} \int_{\zeds} \varphi(x) \, d\mu_t(x) %&=&\frac{d}{dt} \int_{\mathcal{A}} \psi\big(\langle \phi(t,x), e_1\rangle_{\zedsi,\R},\cdots , \langle \phi(t,x)x, e_n\rangle_{\zedsi,\R} \big)
%\, d\nu(x) \\
&=& \int_{\mathcal{A}} \sum_{j=1}^n \frac{d}{dt} \langle  \phi(t,x), e_j\rangle_{\zedsi,\R} \;
\partial^j \psi\big(\langle  \phi(t,x), e_1\rangle_{\zedsi,\R},\cdots,  \langle  \phi(t,x), e_n\rangle_{\zedsi,\R} \big)
\, d\nu  \\
%&=& \int_{\mathcal{A}}  \langle  v(t,\phi(t,x)), \nabla\psi\big(\pi(\phi(t,x)))\rangle_{\zedsi,\R}  \, d\nu(x)
% \\
&=& \int_{\zeds}  \langle  v(t,y), \nabla\varphi(y)\rangle_{\zedsi,\R}  \, d\mu_t(y)\,.
\end{eqnarray*}
The last equalities follow using two arguments: First, $t\to \phi(t,x)$ is a weak solution  of the initial value problem
\eqref{IVP} for any $x\in\mathcal{A}$  and it is an  absolutely continuous curve in $\zedsi$.
Second,  $\nu$ is concentrated on a bounded set of $\zeds$ and $\phi$ and $v$ are bounded on bounded sets. So,
one can use dominated convergence in order to switch between  time derivatives and integration.  A standard density argument gives the Liouville equation \eqref{int-liouville} with the measures $(\mu_t)_{t\in I}$.
\end{proof}

{\bf Proof of Thm.~\ref{ext-2}}:
Again we use the duality between hierarchy equations and Liouville equations  proved in Prop.~\ref{thm.main} and \ref{thm.main2}. Recall that we have also proved a duality between the assumptions \eqref{A2}  and \eqref{A1} in Lemma \ref{a1a2}. So, for any $\gamma\in \mathscr{H}(\zed)$ satisfying the hypothesis of Thm.~\ref{ext-2} there exists, by Prop.~\ref{str.BEh}, a probability measure $\nu\in\mathscr{M}(\zed)$ such that
\begin{equation*}
\gamma^{(k)}=\int_{\zeds} |\varphi^{\otimes k} \rangle \langle \varphi^{\otimes k} | \;d\nu(\varphi)\,, \quad\forall k\in\N\,.
\end{equation*}
Moreover, according to  Lemma \ref{concent}  the measure $\nu$ is concentrated on a bounded subset of $\zeds$. Applying Prop.~\ref{ext-1} then there exits a solution $t \in I \to \mu_t\in\mathfrak{P}(\zeds)$ of the Liouville equation \eqref{eq.transport} satisfying the initial condition $\mu_{t_0}=\nu$. Notice also that since the map $\phi$ transform bounded sets on bounded sets of $\zeds$, then $\mu_t$ concentrates on a ball $B_{\zeds}(0,R)$  for all $t\in I$.
Taking for any $k\in\N$,
$$
\gamma_t^{(k)}=\int_{\zeds} |\varphi^{\otimes k} \rangle \langle \varphi^{\otimes k} | \;d\mu_t(\varphi)\,,
$$
then one easily checks that $t \in I \to \gamma_t=(\gamma_t^{(k)})_{k\in\N}$ satisfies  \eqref{A2} and solves the
hierarchy equation \eqref{int-hier}  according to Prop.~\ref{thm.main} since  $t \in I \to \mu_t$ solves the Liouville equation.  $\hfill\square$

\medskip
{\bf Proof of Prop~\ref{uniq-unc1} and \ref{uniq-unc2}}:
We can not use directly  Thm.~\ref{sec.0.thm2} because the results in  Prop~\ref{uniq-unc1} and \ref{uniq-unc2}
are of conditional type. However, the proof is quite similar and uses Proposition \ref{prob-rep} as well and follows the
same  scheme as in Thm.~\ref{sec.0.thm2}.  So, we just indicate the main point in the proof.
Notice that  the vector field $v$ given in \eqref{vect-field}  may not be  bounded on bounded sets of $L^2(\R^d)$. Nevertheless, the Proposition \ref{prob-rep}  (or \cite[Prop.~4.1]{MR3721874}) still holds true  under the following  weaker assumption,
$$
\int_{I} \int_{L^2} || v(t,x) ||_{H^{-1}(\R^d)} \;d\mu_t(x) \,dt <\infty\,.
$$
The above inequality can be checked by simply using \eqref{unc.eq.1} and \eqref{cond-Stri}. Hence, there exists a probability measure $\eta$ over the space $\mathfrak{X}$, defined  as in \eqref{eq.X}, which concentrates on the solutions $u\in   L^\infty(I, L^2(\R^d))\cap W^{1,1}(I, H^{-1}(\R^d))$ of the NLS equation \eqref{eq.ivp} written in the interaction representation, i.e.,
$$
u(t)=x+\int_{0}^t  v(\tau,u(\tau)) \, d\tau\,,
$$
where $v$ is the Borel vector field given in \eqref{vect-field}. The additional assertion $u\in   L^\infty(I, L^2(\R^d))$
is deduced from the  assumption  \eqref{A1} with $s=0$ and $A=-\Delta+1$ satisfied by $(\mu_t)_{t\in I}$ solution of the corresponding Liouville equation.   Moreover, the requirement  \eqref{cond-Stri} with Prop.~\ref{prob-rep}-\ref{concen-item2}  yield
$$
\int_{I} \int_{L^2} ||\,\mathcal{U}(t) \,x||_{L^r}^q\; d\mu_t(x)\,dt =\int_{\mathfrak{X}} \int_I  ||\,\mathcal{U}(t) \,u(t)||_{L^r}^q \,dt\,d\eta(x,u)<\infty\,.
$$
Hence, one notices that $\mathcal{U}(\cdot)u(\cdot) \in L^q(I, L^r(\R^d))$ for $\eta$-a.e.~$(x,u)\in \mathfrak{X}$.   Applying Strichartz's estimate to the Duhamel formula
\begin{equation}
\label{pro.duha}
\mathcal{U}(t)u(t)=\mathcal{U}(t)x-i\int_{0}^t  \mathcal{U}(t-\tau) \,G(\mathcal{U}(\tau) u(\tau)) \, d\tau\,,
\end{equation}
one concludes that $\mathcal{U}(\cdot)u(\cdot)\in \mathscr{C}(\bar I, L^2(\R^d))\cap  L^q(I, L^r(\R^d))$ for $\eta$-a.e.~$(x,u)\in \mathfrak{X}$.
So now using the result of Tsutsumi \cite{MR915266} or more precisely \cite[Theorem 4.6.1]{MR2002047} one
can complete the proof as in Prop.~\ref{sec.0.thm3} and  obtains the
uniqueness for the Liouville solutions satisfying \eqref{A1} and \eqref{cond-Stri}.   Consequently, the duality result of Thm.~\ref{sec.0.thm2}  gives the  uniqueness for the corresponding hierarchy equation.
$\hfill\square$

\appendix
\begin{center}
{\bf Appendix}
\end{center}
In this appendix we recall a few useful known results concerning the topology of the space of trace class operators
and  the space of probability measures as well as some useful arguments for measurable sets and maps.
\section{Kadec-Klee property }
\label{KKstar}
A dual Banach space $(E,||\cdot||)$ have the Kadec-Klee property (KK*) if  any sequence
$(x_n)_{n\in\N}$ which converges with respect to the weak-$*$ topology
to a limit $x\in E$ with $\lim_n ||x_n||=||x||$, satisfies  $x_n\to x$ in the norm topology of $E$
(see e.g.~ \cite{MR943795,MR2154153}).

\begin{thm}
\label{th.kadec}
The space of trace class-operators $\mathscr{L}^1(\mathfrak{H})$, over a separable
Hilbert space $\mathfrak{H}$, satisfies the Kadec-Klee property (KK*).
\end{thm}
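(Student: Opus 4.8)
The plan is to prove directly the defining property for $E=\mathscr L^1(\mathfrak H)=\mathscr L^\infty(\mathfrak H)^{*}$: if $T_n\overset{*}{\rightharpoonup}T$ (i.e.\ $\Tr[T_nK]\to\Tr[TK]$ for every $K\in\mathscr L^\infty(\mathfrak H)$) and $\|T_n\|_{\mathscr L^1(\mathfrak H)}\to\|T\|_{\mathscr L^1(\mathfrak H)}$, then $\|T_n-T\|_{\mathscr L^1(\mathfrak H)}\to 0$. First I would settle the case of nonnegative operators, which carries all the analytic content, and then reduce the general case to it by a $2\times 2$ dilation.

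\noindent\emph{The positive case.} Suppose $0\le T_n$ and $0\le T$. The point is that here $\Tr[T_n]=\|T_n\|_{\mathscr L^1}$, so the hypothesis gives $\Tr[T_n]\to\Tr[T]$ — a convergence which is \emph{not} implied by $T_n\overset{*}{\rightharpoonup}T$ alone (the identity is not compact) and which is precisely what makes the statement true. Let $P_N$ be the finite-rank orthogonal projection onto the eigenspaces of the $N$ largest eigenvalues of the compact operator $T$, and write $P^\perp:=1-P$; then $\delta_N:=\Tr[TP_N^{\perp}]=\sum_{j>N}\lambda_j(T)\to 0$ as $N\to\infty$. At fixed $N$: since $P_N$ is compact, $\Tr[T_nP_N]\to\Tr[TP_N]$, hence $\Tr[T_nP_N^{\perp}]=\|T_n\|_{\mathscr L^1}-\Tr[T_nP_N]\to\delta_N$; and on the finite-dimensional range of $P_N$ (matrix elements converging because rank-one operators are compact) one gets $\|P_N(T_n-T)P_N\|_{\mathscr L^1}\to 0$. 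For the off-diagonal blocks I would use the Cauchy–Schwarz inequality for the trace norm: for $0\le S$ and a projection $P$,
\[
\|PSP^{\perp}\|_{\mathscr L^1}=\bigl\|(PS^{1/2})(S^{1/2}P^{\perp})\bigr\|_{\mathscr L^1}\le \Tr[PSP]^{1/2}\,\Tr[P^{\perp}SP^{\perp}]^{1/2}\le \Tr[S]^{1/2}\,\Tr[P^{\perp}SP^{\perp}]^{1/2},
\]
applied with $S=T_n$ and with $S=T$, together with the bound $\|P_N^{\perp}(T_n-T)P_N^{\perp}\|_{\mathscr L^1}\le\Tr[T_nP_N^{\perp}]+\Tr[TP_N^{\perp}]$ coming from positivity. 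Writing $T_n-T$ as the sum of its four $P_N/P_N^{\perp}$ blocks and letting $n\to\infty$ at fixed $N$ yields $\limsup_n\|T_n-T\|_{\mathscr L^1}\le 2\delta_N+4\|T\|_{\mathscr L^1}^{1/2}\delta_N^{1/2}$; since $\delta_N\to 0$, the positive case follows.

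\noindent\emph{The general case.} For the polar decomposition $T_n=W_n|T_n|$ I would set, on $\mathfrak H\oplus\mathfrak H$,
\[
B_n:=\begin{pmatrix} |T_n| & T_n^{*}\\ T_n & |T_n^{*}|\end{pmatrix}=\begin{pmatrix}1\\ W_n\end{pmatrix}|T_n|\begin{pmatrix}1 & W_n^{*}\end{pmatrix}\ \ge 0,\qquad \Tr[B_n]=2\|T_n\|_{\mathscr L^1},
\]
using $|T_n^{*}|=W_n|T_n|W_n^{*}$ and the factorization $X^{*}|T_n|X\ge 0$. By Banach–Alaoglu and metrizability of the weak-$*$ topology on bounded subsets of $\mathscr L^1(\mathfrak H\oplus\mathfrak H)$ (its predual is separable), every subsequence of $(B_n)$ has a weak-$*$ convergent sub-subsequence $B_{n_k}\overset{*}{\rightharpoonup}\widehat B$; here $\widehat B\ge 0$ (testing against rank-one projections), its off-diagonal blocks are the weak-$*$ limits of $T_n,T_n^{*}$, namely $T,T^{*}$, and if $S,S'$ denote its diagonal blocks then $S,S'\ge 0$ with $\Tr[S],\Tr[S']\le\lim_k\|T_{n_k}\|_{\mathscr L^1}=\|T\|_{\mathscr L^1}$ by weak-$*$ lower semicontinuity of the norm. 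On the other hand, for any positive block operator $\left(\begin{smallmatrix}A & B^{*}\\ B & D\end{smallmatrix}\right)\ge 0$ with $A,D\in\mathscr L^1$, testing against $\left(\begin{smallmatrix}1\\ -C\end{smallmatrix}\right)$ with $C$ a contraction and taking the supremum over such $C$ gives $\Tr[A]+\Tr[D]\ge 2\sup_{\|C\|\le1}\Re\Tr[C^{*}B]=2\|B\|_{\mathscr L^1}$; applied to $\widehat B$ this yields $\Tr[S]+\Tr[S']\ge 2\|T\|_{\mathscr L^1}$. Hence $\Tr[S]=\Tr[S']=\|T\|_{\mathscr L^1}$, so $\Tr[\widehat B]=2\|T\|_{\mathscr L^1}=\|\widehat B\|_{\mathscr L^1}$. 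Now $B_{n_k}\overset{*}{\rightharpoonup}\widehat B$ with $B_{n_k},\widehat B\ge 0$ and $\|B_{n_k}\|_{\mathscr L^1}=\Tr[B_{n_k}]\to\Tr[\widehat B]=\|\widehat B\|_{\mathscr L^1}$, so the positive case gives $\|B_{n_k}-\widehat B\|_{\mathscr L^1}\to 0$ and, reading off the off-diagonal block, $\|T_{n_k}-T\|_{\mathscr L^1}\to 0$. Since every subsequence of $(T_n)$ admits such a sub-subsequence, with the same limit $T$, the subsequence principle gives $\|T_n-T\|_{\mathscr L^1}\to 0$.

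The step I expect to be the real obstacle is the passage from the positive case to the general one: weak-$*$ convergence of $T_n$ does \emph{not} entail weak-$*$ convergence of $|T_n|$, so one cannot simply work with the ``model'' dilation $\left(\begin{smallmatrix}|T| & T^{*}\\ T & |T^{*}|\end{smallmatrix}\right)$, and the norm hypothesis is indispensable. The device that makes this go through is to argue with cluster points of $(B_n)$ and to pit two matching trace inequalities against each other — weak-$*$ lower semicontinuity on one side, positivity of the $2\times2$ block on the other — forcing $\Tr[\widehat B]=2\|T\|_{\mathscr L^1}$ and thereby bringing the already-proven positive case into play, which avoids any delicate analysis of the equality case. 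Along the way I would record the standard ingredients used: the Hölder bound $\|AB\|_{\mathscr L^1}\le\|A\|_{\mathscr L^2}\|B\|_{\mathscr L^2}$, the identity $|T^{*}|=W|T|W^{*}$ for the polar decomposition, unitary invariance of the trace, and the duality formula $\|B\|_{\mathscr L^1}=\sup_{\|C\|\le1}\Re\Tr[C^{*}B]$.
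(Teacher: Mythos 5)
The paper itself does not supply a proof of Theorem~\ref{th.kadec}: it states the result and defers to the references (Lennard's article on the uniform Kadec--Klee property of $\mathcal{C}_1$, and Simon's book on trace ideals). Your proof is correct and self-contained, so strictly speaking there is no in-paper argument to compare it against; a few remarks on the method are nonetheless useful. The positive case you give---exhaust the limit $T$ by its finite-rank spectral projections $P_N$, use weak-$*$ convergence against the compact operators $P_N$ to control the $P_N$--block and the tail of the trace, and bound the off-diagonal blocks via $\|PSP^{\perp}\|_{\mathscr L^1}\le \Tr[PSP]^{1/2}\Tr[P^{\perp}SP^{\perp}]^{1/2}$---is the classical argument, essentially the positive case of Gr\"umm's convergence theorem as it appears in Simon's book. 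The real obstruction, which you correctly flag, is that weak-$*$ convergence of $T_n$ gives no information about $|T_n|$, so the general case cannot be reduced to the positive one by passing to moduli. Your $2\times 2$ positive dilation $B_n$, followed by pinching the trace of an arbitrary cluster point $\widehat B$ between the weak-$*$ lower semicontinuity bound from above and the block-positivity bound $\Tr[S]+\Tr[S']\ge 2\|T\|_{\mathscr L^1}$ from below, is a clean device that keeps everything at the level of soft trace inequalities and sidesteps the nonuniqueness of the diagonal blocks of $\widehat B$. The cited references argue differently (Lennard proves a stronger quantitative ``uniformly Kadec--Klee'' statement via a modulus; Simon uses a finite-rank truncation scheme), but neither is shorter than what you wrote. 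Finally, note that in this paper the theorem is applied only to nonnegative operators (reduced density matrices in Prop.~\ref{prop.BEh} and Prop.~\ref{prop.strcv}), so the positive half of your argument already covers all of the paper's uses; the general case is a genuine bonus.
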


\section{Weak and strong narrow convergence }
\label{measure}

Let $\mathfrak{H}$ be a separable Hilbert space and $X_1$ its closed unit ball $B_\mathfrak{H}(0,1)$.
The set of Borel probability measures $\mathfrak{P}(X_1)$ can be endowed with the weak and strong narrow convergence topology defined according to \eqref{defnarroww}-\eqref{defnarrows}. Let $(e_i)_{i\in \N}$ be an O.N.B
of the Hilbert space $\mathfrak{H}$. We have the following  two useful results.

\begin{thm}
\label{thmA}
Let $(\mu,(\mu_j)_{j\in\N})$ be a sequence in $\mathfrak{P}(X_1)$. Then the  pointwise  convergence of  the characteristic functions,
$$
\hat\mu_j(y)=\int_{X_1} e^{-i {\rm Re}\langle x,y\rangle} \,d\mu_j \;\to \hat\mu(y)=\int_{X_1} e^{-i {\rm Re}\langle x,y\rangle} \,d\mu, \quad \forall y\in \mathfrak{H},
$$
implies the weak narrow converges of the  measures $\mu_j\rightharpoonup \mu$.
\end{thm}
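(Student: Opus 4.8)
\textbf{Proof plan for Theorem \ref{thmA}.}

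The plan is to reduce this infinite-dimensional statement to finite dimensions via the projections $\pi\in\mathbb{P}_n$ already introduced, and then invoke the classical Lévy continuity theorem on $\R^n$ together with a tightness argument coming from the fact that all measures live on the bounded set $X_1$. First I would note that $(\mu_j)_{j\in\N}$ is automatically tight in $\mathfrak{P}(X_1)$ when $X_1$ is equipped with the weak topology $d_w$: since the weak topology on bounded sets of a separable Hilbert space is metrizable and $X_1$ is weakly compact (Banach–Alaoglu, $\mathfrak H$ being reflexive and separable), the space $(X_1,d_w)$ is a compact metric space, so \emph{every} Borel probability measure on it is tight and $\mathfrak{P}(X_1)$ is itself compact for the weak narrow topology. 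Hence from any subsequence of $(\mu_j)$ we can extract a further subsequence $(\mu_{j_\ell})_\ell$ converging weakly narrowly to some $\nu\in\mathfrak{P}(X_1)$.

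The second step is to identify the limit: by the direct (easy) implication, weak narrow convergence $\mu_{j_\ell}\rightharpoonup\nu$ implies pointwise convergence of characteristic functions, because $x\mapsto e^{-i\,\mathrm{Re}\langle x,y\rangle}$ is bounded and weakly continuous on $X_1$ for each fixed $y\in\mathfrak H$. Therefore $\hat\nu(y)=\lim_\ell\hat\mu_{j_\ell}(y)=\hat\mu(y)$ for all $y\in\mathfrak H$. The remaining point is that the characteristic functional determines a Borel probability measure on $\mathfrak H$ uniquely: for any $\pi\in\mathbb{P}_n$, the pushforward $\pi_\sharp\nu$ and $\pi_\sharp\mu$ are Borel probability measures on $\R^n$ with the same (finite-dimensional) Fourier transform — indeed $\widehat{\pi_\sharp\nu}(\xi)=\hat\nu(\pi^T\xi)$ — hence $\pi_\sharp\nu=\pi_\sharp\mu$ by the classical uniqueness theorem on $\R^n$. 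Since the cylindrical $\sigma$-algebra generated by the $\pi\in\mathbb{P}_n$, $n\in\N$, coincides with the Borel $\sigma$-algebra of $\mathfrak H$ (separability), a monotone class / $\pi$–$\lambda$ argument gives $\nu=\mu$.

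Combining the two steps: every subsequence of $(\mu_j)$ has a further subsequence converging weakly narrowly to the \emph{same} limit $\mu$; by the standard subsequence principle in a metrizable space ($\mathfrak{P}(X_1)$ with the weak narrow topology is metrizable, e.g. by a Prokhorov-type metric), the whole sequence converges: $\mu_j\rightharpoonup\mu$. I expect the only genuinely delicate point to be the uniqueness of a Borel probability measure on the infinite-dimensional $\mathfrak H$ determined by its characteristic functional; this is where one must be careful to pass through the finite-dimensional projections $\pi$ and to know that cylinder sets generate the Borel $\sigma$-algebra — both of which are available here because $\mathfrak H$ is separable and $X_1$ is weakly compact metrizable. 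Everything else is either the elementary direction of the equivalence or a soft compactness/metrizability argument.
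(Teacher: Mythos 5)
Your argument is correct and follows essentially the same route as the paper: weak compactness and metrizability of $(X_1,d_w)$ give compactness of $\mathfrak{P}(X_1)$ for the weak narrow topology, every cluster point of $(\mu_j)$ is shown to have characteristic functional $\hat\mu$, and the subsequence principle then yields $\mu_j\rightharpoonup\mu$. The only place you go beyond what is written in the paper is in justifying that the characteristic functional determines a Borel probability measure on $\mathfrak H$ (via the finite-dimensional marginals $\pi_\sharp\nu$, the classical L\'evy uniqueness on $\R^n$, and the fact that cylinder sets generate the Borel $\sigma$-algebra of a separable Hilbert space); the paper takes this injectivity for granted when it asserts uniqueness of the cluster point, so your version is a correct filling-in of that implicit step rather than a different proof.
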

\begin{proof}
Since the closed unit ball $X_1$ is compact and separable with respect to the weak topology (induced  for instance by the metric $d_w$ in \eqref{dweak}), then by Prokhorov's theorem the sequence $(\mu_j)_{j\in\N}$ is relatively sequentially compact in $\mathfrak{P}(X_1)$ with respect to  the weak narrow topology.
Moreover, $(\mu_j)_{j\in\N}$  admits a unique cluster point. Because otherwise, the characteristic functions
$\hat \mu_j$ would converge pointwisely  to two distinct  limits. So, the sequence $(\mu_j)_{j\in\N}$ should converge weakly narrowly to the measure $\mu$.
\end{proof}

\begin{thm}
\label{thmB}
Let $(\mu,(\mu_j)_{j\in\N})$ be a sequence in $\mathfrak{P}(X_1)$. Then
$$
\left(\mu_j\rightharpoonup \mu \text{ and } \quad \forall \varepsilon>0, \quad \lim_{N\to \infty } \sup_{j\in\N}
\int_{X_1}\; 1_{\{\sum_{i=N}^\infty |\langle \varphi, e_i\rangle|^2\geq \varepsilon\}} \; d\mu_j=0 \right) \;\Longleftrightarrow \;
\mu_j\to \mu\,.
$$
\end{thm}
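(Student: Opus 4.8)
The plan is to prove Theorem~\ref{thmB}, which characterizes strong narrow convergence in $\mathfrak{P}(X_1)$ as the conjunction of weak narrow convergence together with a uniform tightness condition relative to the ``tails'' of an orthonormal basis. The underlying principle is that on the weakly compact set $X_1$ the weak and norm topologies differ only in the high-frequency directions, so strong narrow convergence is weak narrow convergence plus a uniform control of mass concentrated where the norm and weak topologies disagree. The natural tool is a Prokhorov-type argument: weak narrow convergence always holds on $X_1$ up to subsequences, and the extra hypothesis upgrades this to tightness for the \emph{strong} (norm) topology.

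For the forward implication ($\Rightarrow$), suppose $\mu_j \to \mu$ strongly narrowly. Then in particular $\mu_j \rightharpoonup \mu$ weakly narrowly, since $\mathscr{C}_b(X_1,d_w)\subset \mathscr{C}_b(X_1,\|\cdot\|_{\mathfrak H})$. For the tail condition, first note that for each fixed $N$ the function $\varphi \mapsto \sum_{i=N}^{\infty}|\langle\varphi,e_i\rangle|^2 = \|\varphi\|^2 - \sum_{i=1}^{N-1}|\langle\varphi,e_i\rangle|^2$ is norm-continuous and bounded on $X_1$. Since $\mu_j\to\mu$ strongly narrowly, $\int_{X_1}\sum_{i=N}^\infty |\langle\varphi,e_i\rangle|^2\,d\mu_j \to \int_{X_1}\sum_{i=N}^\infty |\langle\varphi,e_i\rangle|^2\,d\mu$ as $j\to\infty$, for each fixed $N$. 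I would then argue that this convergence is in fact uniform in $N$: the map $N\mapsto \int_{X_1}\sum_{i=N}^\infty |\langle\varphi,e_i\rangle|^2\,d\mu_j$ is monotone decreasing in $N$ with limit $0$ (by dominated convergence, using $\mu_j(X_1)=1$), so a Dini-type argument combined with the convergence in $j$ gives $\lim_{N\to\infty}\sup_j \int_{X_1}\sum_{i=N}^\infty|\langle\varphi,e_i\rangle|^2\,d\mu_j = 0$. Finally, Chebyshev's inequality,
\[
\int_{X_1} 1_{\{\sum_{i=N}^\infty |\langle\varphi,e_i\rangle|^2\geq \varepsilon\}}\,d\mu_j \leq \frac{1}{\varepsilon}\int_{X_1}\sum_{i=N}^\infty |\langle\varphi,e_i\rangle|^2\,d\mu_j,
\]
converts this into the stated uniform smallness of the indicator integrals.

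For the reverse implication ($\Leftarrow$), which I expect to be the main obstacle, I would show that the hypotheses force $(\mu_j)_{j\in\N}$ to be tight in $\mathfrak{P}(X_1)$ with respect to the \emph{norm} topology. The set $K_{M} := \{\varphi\in X_1 : \sum_{i=N}^\infty|\langle\varphi,e_i\rangle|^2 \leq \varepsilon_N \text{ for all } N\}$, for a suitably chosen sequence $\varepsilon_N\downarrow 0$, is norm-compact (it is closed, bounded, and uniformly approximable by finite-rank truncations, hence totally bounded in $\mathfrak H$). By the tail hypothesis one can, given $\delta>0$, pick for each $N$ an index $N$-threshold so that $\sup_j \mu_j(\{\sum_{i=N}^\infty|\langle\varphi,e_i\rangle|^2 \geq \varepsilon_N\}) \leq \delta/2^N$, which yields $\sup_j \mu_j(X_1\setminus K) \leq \delta$ for the corresponding compact $K$. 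Thus $(\mu_j)$ is uniformly tight for the norm topology, and by Prokhorov's theorem it is relatively compact in $\mathfrak{P}(X_1)$ with the strong narrow topology. Any strong-narrow cluster point $\nu$ is also a weak-narrow cluster point, hence equals $\mu$ by the assumed weak narrow convergence; since all cluster points coincide and the sequence is relatively compact, $\mu_j\to\mu$ strongly narrowly. The delicate points to handle carefully are the verification of norm-compactness of $K$ (total boundedness from uniform control of basis tails) and the diagonal choice of $\varepsilon_N$ and thresholds making the tightness estimate work simultaneously for all $j$.
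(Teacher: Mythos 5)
Your proof is correct. Note that the paper itself supplies no argument for Theorem~\ref{thmB}—it defers entirely to \cite[Theorem~1]{MR1004337}—so your self-contained proof is not a variant of the paper's but a replacement for a citation. Both directions are sound. In the forward direction, writing $a_j(N):=\int_{X_1}\sum_{i\geq N}|\langle\varphi,e_i\rangle|^2\,d\mu_j$ and $a(N)$ for the corresponding integral against $\mu$, your ``Dini-type'' step goes through cleanly: given $\delta>0$, choose $N_0$ with $a(N_0)<\delta/2$; by $a_j(N_0)\to a(N_0)$ only finitely many $j$ have $a_j(N_0)\geq\delta$, and for those finitely many use $a_j(N)\downarrow 0$ to enlarge $N_0$; monotonicity of $a_j(\cdot)$ then gives $\sup_j a_j(N)\leq\delta$ for all $N$ past the resulting threshold, and Chebyshev converts this uniform expectation bound into the stated uniform probability bound. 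In the reverse direction, your compact $K=\bigcap_k\{\varphi\in X_1:\sum_{i\geq N_k}|\langle\varphi,e_i\rangle|^2\leq 1/k\}$ is indeed norm-compact (norm-closed, bounded, and totally bounded because the finite-rank truncations approximate it uniformly), the uniform tightness $\sup_j\mu_j(X_1\setminus K)\leq\sum_k\delta 2^{-k}=\delta$ follows from countable subadditivity of each $\mu_j$, and Prokhorov's theorem in the Polish space $(X_1,\|\cdot\|_{\mathfrak{H}})$ together with uniqueness of weak-narrow cluster points forces $\mu_j\to\mu$ strongly narrowly. The only loose thread worth tightening if you write this up is the indexing in the compact-set construction (the thresholds $N_k$ should be chosen against $\varepsilon_k=1/k$ with budget $\delta 2^{-k}$, not ``an index $N$-threshold'' against $\varepsilon_N$), but the underlying idea is exactly right.
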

\begin{proof} We refer for instance to \cite[Theorem 1]{MR1004337}.
\end{proof}

\section{Measurable maps and sets}
Let $I$ be a bounded open interval and consider the space $\mathfrak X=\zedsi\times \mathscr{C}(\bar I,\zedsi)$,
introduced in Subsection \ref{subsec.probrep}, and  endowed with the two norms,
$$
||(x,\varphi)||_{\mathfrak X}= ||x||_{\mathscr{Z}_{-\sigma}}+\sup_{t\in \bar I}||\varphi(t)||_{\mathscr{Z}_{-\sigma}}\,, \qquad ||(x,\varphi)||_{\mathfrak X_{w}}= ||x||_{\mathscr{Z}_{-\sigma,w}}+\sup_{t\in \bar I}||\varphi(t)||_{\mathscr{Z}_{-\sigma,w}}\,,
$$
where
$$
 ||x||^2_{\mathscr{Z}_{-\sigma,w}}=\sum_{n\in\N} \frac{1}{2^n} |\langle e_n, x\rangle_{\zedsi}|^2 \,,\qquad ||\varphi(t)||_{\mathscr{Z}_{-\sigma,w}}^2=\sum_{n\in\N} \frac{1}{2^n} |\langle e_n, \varphi(t)\rangle_{\zedsi}|^2\,,
$$
with $(e_n)_{n\in\N}$ is an  O.N.B.~of $\zedsi$.

\begin{lem}
\label{tribu}
The $\sigma$-algebras of Borel sets of $(\mathfrak X,||\cdot||_{\mathfrak X} )$  and  $(\mathfrak X, ||\cdot||_{\mathfrak X_{w}})$ coincide.
\end{lem}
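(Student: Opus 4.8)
The plan is to show that the two norms $\|\cdot\|_{\mathfrak X}$ and $\|\cdot\|_{\mathfrak X_w}$, while not topologically equivalent, generate the same Borel $\sigma$-algebra on $\mathfrak X$. The standard tool here is the following general principle: if $(E,\|\cdot\|)$ is a separable normed space and $\|\cdot\|_w$ is a weaker norm (or metric) such that the identity map $(E,\|\cdot\|)\to(E,\|\cdot\|_w)$ is continuous, then the two Borel $\sigma$-algebras coincide provided that bounded $\|\cdot\|$-balls are $\|\cdot\|_w$-separable (or, more simply, that the weaker topology restricted to $\|\cdot\|$-balls is the same as a fixed separable metrizable topology). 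Since $\mathscr Z_{-\sigma}$ is a separable Hilbert space, both $(\mathscr Z_{-\sigma},\|\cdot\|_{\mathscr Z_{-\sigma}})$ and $\mathscr C(\bar I,\mathscr Z_{-\sigma})$ with the uniform norm are separable; hence $(\mathfrak X,\|\cdot\|_{\mathfrak X})$ is a separable metric space.

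First I would record the elementary inequality $\|(x,\varphi)\|_{\mathfrak X_w}\le \|(x,\varphi)\|_{\mathfrak X}$ (with a harmless constant, since $\sum 2^{-n}=2$), which shows that the identity map $\iota:(\mathfrak X,\|\cdot\|_{\mathfrak X})\to(\mathfrak X,\|\cdot\|_{\mathfrak X_w})$ is continuous. Continuity of $\iota$ immediately gives one inclusion: every $\|\cdot\|_{\mathfrak X_w}$-open set is $\|\cdot\|_{\mathfrak X}$-open, hence every $\|\cdot\|_{\mathfrak X_w}$-Borel set is $\|\cdot\|_{\mathfrak X}$-Borel, i.e.\ $\mathscr B(\mathfrak X,\|\cdot\|_{\mathfrak X_w})\subseteq \mathscr B(\mathfrak X,\|\cdot\|_{\mathfrak X})$.

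For the reverse inclusion, the key observation is that the norm $x\mapsto\|x\|_{\mathscr Z_{-\sigma}}$ is itself $\|\cdot\|_{\mathfrak X_w}$-Borel measurable: indeed $\|x\|_{\mathscr Z_{-\sigma}}^2=\sum_{n}|\langle e_n,x\rangle_{\mathscr Z_{-\sigma}}|^2$ is a countable supremum (over partial sums) of functions that are continuous for $\|\cdot\|_{\mathfrak X_w}$, hence lower semicontinuous and therefore Borel; similarly $\varphi\mapsto\sup_{t\in\bar I}\|\varphi(t)\|_{\mathscr Z_{-\sigma}}$ is $\|\cdot\|_{\mathfrak X_w}$-Borel (take the supremum over $t$ in a countable dense subset of $\bar I$, using weak continuity of $t\mapsto\langle e_n,\varphi(t)\rangle$ which holds because $\varphi\in\mathscr C(\bar I,\mathscr Z_{-\sigma})$). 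Consequently each closed ball $B_R:=\{(x,\varphi):\|(x,\varphi)\|_{\mathfrak X}\le R\}$ lies in $\mathscr B(\mathfrak X,\|\cdot\|_{\mathfrak X_w})$, and $\mathfrak X=\bigcup_{R\in\mathbb N}B_R$. It then suffices to prove that on each $B_R$ the trace $\sigma$-algebras agree. On $B_R$ the two norms induce the \emph{same} topology: the weak topology of $\mathscr Z_{-\sigma}$ coincides with the norm topology on bounded sets is false in general, so instead I would argue directly that on $B_R$ the map $\iota$ is a continuous injection between separable metric spaces (the image $\iota(B_R)$ is a bounded set on which $\|\cdot\|_{\mathfrak X_w}$ is a genuine metric), and invoke the classical Lusin–Souslin theorem (as already cited in the excerpt via \cite[Thm.~3.9]{MR0226684}, used in the proof of Lemma~\ref{mes-lem2}): a continuous injective image of a Borel subset of a Polish space is Borel, and the restriction of $\iota^{-1}$ to $\iota(B_R)$ is then Borel measurable. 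This yields $\mathscr B(B_R,\|\cdot\|_{\mathfrak X})\subseteq \mathscr B(B_R,\|\cdot\|_{\mathfrak X_w})$, and taking the countable union over $R$ gives $\mathscr B(\mathfrak X,\|\cdot\|_{\mathfrak X})\subseteq\mathscr B(\mathfrak X,\|\cdot\|_{\mathfrak X_w})$, completing the proof.

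The main obstacle is the reverse inclusion, and more precisely making rigorous the claim that $\iota$ restricted to a ball is a Borel isomorphism onto its image. The cleanest route, as indicated above, is to reduce everything to balls (using that balls are $\mathfrak X_w$-Borel), confirm that $(\mathfrak X,\|\cdot\|_{\mathfrak X})$ is Polish (it is a product of a separable Banach space and a separable Banach space of continuous functions, hence complete and separable), and then apply Lusin–Souslin exactly as in Lemma~\ref{mes-lem2}; the only point needing care is checking weak-Borel measurability of the two norm functionals, which is the semicontinuity argument sketched. An alternative, slightly more self-contained approach avoids Lusin–Souslin: show that the single-coordinate evaluation maps $(x,\varphi)\mapsto\langle e_n,x\rangle_{\mathscr Z_{-\sigma}}$ and $(x,\varphi)\mapsto\langle e_n,\varphi(t)\rangle_{\mathscr Z_{-\sigma}}$ (for $t$ rational) generate $\mathscr B(\mathfrak X,\|\cdot\|_{\mathfrak X})$ — because they separate points and the space is separable, a standard fact — and are all $\|\cdot\|_{\mathfrak X_w}$-continuous, hence $\mathscr B(\mathfrak X,\|\cdot\|_{\mathfrak X})\subseteq\mathscr B(\mathfrak X,\|\cdot\|_{\mathfrak X_w})$ directly. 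I would present this second argument as the primary one since it is shorter and uses only elementary facts about cylinder $\sigma$-algebras on separable metric spaces.
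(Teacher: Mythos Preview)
Your proposal is correct, but the route differs from the paper's. For the easy inclusion you both do the same thing. For the reverse inclusion, the paper builds an explicit approximation of the identity: setting
\[
\Psi_{\varepsilon}(x,\varphi)=\sum_{n\in\N}\frac{1}{1+n\varepsilon}\big(\langle x,e_n\rangle_{\zedsi}e_n,\;\langle\varphi,e_n\rangle_{\zedsi}e_n\big),
\]
it checks that each $\Psi_\varepsilon:(\mathfrak X,\|\cdot\|_{\mathfrak X_w})\to(\mathfrak X,\|\cdot\|_{\mathfrak X})$ is continuous and that $\Psi_\varepsilon\to\imath^{-1}$ pointwise in $(\mathfrak X,\|\cdot\|_{\mathfrak X})$ as $\varepsilon\to 0$; hence $\imath^{-1}$ is Borel. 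Your argument instead proves that the strong norm $(x,\varphi)\mapsto\|(x,\varphi)\|_{\mathfrak X}$ is $\|\cdot\|_{\mathfrak X_w}$-lower semicontinuous (via Parseval and countable suprema), so strong balls are weak-Borel, and separability of $(\mathfrak X,\|\cdot\|_{\mathfrak X})$ then gives the inclusion. Both are short and elementary; the paper's approximation-of-identity trick exploits the concrete Hilbert structure, while your lower-semicontinuity argument is the more portable one (it works whenever the stronger norm is a countable supremum of weak-continuous functions). One caution on your Method~A: the Lusin--Souslin theorem you invoke needs a Polish target, and $(\mathfrak X,\|\cdot\|_{\mathfrak X_w})$ is not complete; you would have to pass to its completion first. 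Since your Method~B already avoids this entirely and contains the full argument (the lower-semicontinuity step plus separability is all you need), I would present only that and drop the detour through Lusin--Souslin.
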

\begin{proof}
Let $\imath: (\mathfrak X, ||\cdot||_{\mathfrak X}) \to (\mathfrak X, ||\cdot||_{\mathfrak X_{w}})$ be the identity map. It is clear that  $\imath$ is continuous and hence it is measurable. This implies  the following inclusion
of  $\sigma$-algebras,
$$\mathscr{T}(\mathfrak X, ||\cdot||_{\mathfrak X_{w}})\subset\mathscr{T}(\mathfrak X, ||\cdot||_{\mathfrak X})\,. $$
To show the inversion inclusion, one uses an approximation of identity,
\begin{eqnarray*}
\Psi_{\varepsilon}: (\mathfrak X, ||\cdot||_{\mathfrak X_{w}}) & \longrightarrow & (\mathfrak X, ||\cdot||_{\mathfrak X})\\
(x,\varphi) & \longrightarrow & \sum_{n\in\N} \frac{1}{1+n\varepsilon} \,
\big(\langle x, e_n\rangle_{\zedsi} e_n ; \langle \varphi, e_n\rangle_{\zedsi} e_n\big)\,,
\end{eqnarray*}
for some $\varepsilon>0$. It is clear that $\Psi_\varepsilon$ is continuous. Furthermore, for $x$ and $\varphi$ fixed
$$
\lim_{\varepsilon\to 0} \sum_{n\in\N} \frac{1}{1+n\varepsilon} \,\langle x, e_n\rangle_{\zedsi} e_n =x\quad
\text{ in } (\zedsi,||\cdot||_{\zedsi})\,;
$$
and
$$
\lim_{\varepsilon\to 0} \sum_{n\in\N} \frac{1}{1+n\varepsilon}  \langle \varphi, e_n\rangle_{\zedsi} e_n=\varphi
\quad
\text{ in } \mathscr{C}(\bar I,\zedsi)\,.
$$
The last limit follows using the fact that $\varphi(\bar I)$ is compact and the operator $\sum_{n\in\N} \frac{1}{1+n\varepsilon}  |e_n\rangle \langle e_n|$ converges strongly, as $\varepsilon\to 0$, to the identity on $\zedsi$.
Since one can show that $\Psi_\varepsilon$ converges pointwisely to the identity map $\imath^{-1}: (\mathfrak X, ||\cdot||_{\mathfrak X_{w}}) \to (\mathfrak X, ||\cdot||_{\mathfrak X})$, then the inverse inclusion holds true.

\end{proof}

\begin{lem}
\label{classmo}
Let $(M,d)$ be a metric space. Then for any  bounded measurable function  $f:[a,b]\times M\to \R$, the mapping
\begin{equation}
\label{mes.eq.1}
\begin{aligned}
M & \longrightarrow  \R\\
x  & \longrightarrow  \int_a^b f(\tau,x) \,d\tau
\end{aligned}
\quad\text{ is measurable}.
\end{equation}
\end{lem}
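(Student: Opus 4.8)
The statement to prove is Lemma~\ref{classmo}: for a bounded measurable function $f:[a,b]\times M\to\R$ on the product of an interval with a metric space, the map $x\mapsto\int_a^b f(\tau,x)\,d\tau$ is measurable.

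\textbf{Approach.} The plan is to run the standard \emph{monotone class} (or \emph{functional monotone class}) argument. Let $\mathcal{H}$ be the collection of all bounded measurable functions $f:[a,b]\times M\to\R$ for which the conclusion \eqref{mes.eq.1} holds. The goal is to show $\mathcal{H}$ contains all bounded measurable functions. First I would check that $\mathcal{H}$ is a vector space closed under uniform limits and under bounded monotone limits: linearity of the integral gives the vector space structure; for a uniformly convergent sequence $f_n\to f$ the functions $x\mapsto\int_a^b f_n(\tau,x)\,d\tau$ converge uniformly (hence pointwise) to $x\mapsto\int_a^b f(\tau,x)\,d\tau$, so measurability is preserved; for a monotone sequence $0\le f_n\uparrow f$ with $f$ bounded, the monotone (or dominated, using the bound and finiteness of $[a,b]$) convergence theorem gives pointwise convergence of the integrals, so again measurability passes to the limit.

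\textbf{Key steps in order.} (1) Show $\mathcal{H}$ contains the ``rectangle'' indicators $\mathbf 1_{J\times E}$ where $J\subset[a,b]$ is a subinterval (or Borel set) and $E\subset M$ is Borel: indeed $\int_a^b \mathbf 1_{J\times E}(\tau,x)\,d\tau=|J|\,\mathbf 1_E(x)$, which is a measurable function of $x$. (2) Observe that finite linear combinations of such indicators — i.e. simple functions built from measurable rectangles — lie in $\mathcal{H}$ by linearity, and that the collection of measurable rectangles is a $\pi$-system generating the product $\sigma$-algebra on $[a,b]\times M$. (3) Invoke the functional monotone class theorem: since $\mathcal{H}$ is a vector space containing the constants and the indicators of sets in a $\pi$-system generating the $\sigma$-algebra, and is closed under bounded monotone limits, $\mathcal{H}$ contains every bounded $\sigma([a,b]\times M)$-measurable function. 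This is exactly the claim.

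\textbf{Main obstacle.} There is no deep obstacle here; the only points requiring a little care are purely measure-theoretic bookkeeping. One must make sure the bound on $f$ (say $|f|\le C$) together with the finiteness of the interval $[a,b]$ legitimately licenses the passage to the limit inside $\int_a^b$ at each fixed $x$ (dominated convergence with dominating function $C$), and one must state the functional monotone class theorem in the form that applies to a vector space of bounded functions rather than just to a $\lambda$-system of sets. If one prefers to avoid quoting the functional version, the alternative is the Dynkin $\pi$--$\lambda$ argument: let $\mathcal{D}=\{A\in\sigma([a,b]\times M): \mathbf 1_A\in\mathcal{H}\}$, check directly that $\mathcal{D}$ is a $\lambda$-system containing the $\pi$-system of measurable rectangles, conclude $\mathcal{D}$ is the whole product $\sigma$-algebra, and then extend from indicators to simple functions to bounded measurable functions by linearity and uniform approximation. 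Either route is routine; I would present the $\pi$--$\lambda$ version as it is the most self-contained.
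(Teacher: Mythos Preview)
Your proposal is correct and follows essentially the same monotone class strategy as the paper: define the class $\mathscr{F}$ of bounded measurable $f$ for which \eqref{mes.eq.1} holds, check it is a vector space closed under bounded monotone limits, and verify it contains the indicators of a generating $\pi$-system. The only difference is cosmetic: the paper takes as its $\pi$-system the closed subsets of $[a,b]\times M$, whereas you take measurable rectangles $J\times E$; your choice has the advantage that the integral $\int_a^b\mathbf 1_{J\times E}(\tau,x)\,d\tau=|J|\mathbf 1_E(x)$ is computed explicitly, while the paper leaves the verification for closed sets implicit.
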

\begin{proof}
It suffices to  use the monotone class  theorem. Consider the set
$$
\mathscr{F}:=\{f:[a,b]\times M\to \R \text{ bounded measurable and satisfying } \eqref{mes.eq.1}\}\,.
$$
Then one  checks:
\begin{enumerate}
\item $\mathscr{F}$ is stable with respect to addition and scalar multiplication.
\item  $\mathscr{F}$ is stable with respect to monotone convergence, i.e.: if
$(f_n)_{n\in\N}$ is a bounded sequence of non-negative function in $\mathscr{F}$ such that
$(f_n)_{n\in\N}$  is a non-decreasing sequence of functions which converges pointwisely to  a function $f$, as $n\to\infty$, then $f\in\mathscr{F}$.
\item The set $\mathscr{A}:=\{ F\subset [a,b]\times M ,  F \text{ closed } \}$ is a $\pi$-system and for any $F\in \mathscr{A}$ the  indicator function $1_{F}$ belongs to $\mathscr{F}$.
\end{enumerate}
Consequently,  $\mathscr{F}$ contains all real-valued bounded measurable functions on  $[a,b]\times M$.
\end{proof}

The following observation is useful in Prop.~\ref{uniq-unc1}-\ref{uniq-unc2}   where we consider the uniqueness of hierarchy equations related to the  NLS equation \eqref{eq.ivp} with a nonlinearity $g$ given by \eqref{unc-g}.

\begin{lem}
\label{ext-borel-field}
The space $L^2(\R^d)\cap L^r(\R^d)$ is a Borel subset of $(L^2(\R^d),||\cdot||_{L^2(\R^d)})$. Furthermore, there exists a Borel extension $G:L^2(\R^d)\to H^{-1}(\R^d)$ of the  mapping $$g:L^2(\R^d)\cap L^r(\R^d)\to  L^{r'}(\R^d)$$ given  in  \eqref{unc-g} and satisfying \eqref{unc.eq.1}.
\end{lem}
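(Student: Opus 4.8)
The space $L^2(\R^d)\cap L^r(\R^d)$ is a Borel subset of $(L^2(\R^d),\|\cdot\|_{L^2(\R^d)})$, and the mapping $g:L^2(\R^d)\cap L^r(\R^d)\to L^{r'}(\R^d)$ of \eqref{unc-g}--\eqref{unc.eq.1} admits a Borel extension $G:L^2(\R^d)\to H^{-1}(\R^d)$.

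Here is the plan. The statement splits into two independent tasks: (1) showing $L^2\cap L^r$ is Borel in $L^2$, and (2) extending $g$ to a Borel map on all of $L^2$ valued in $H^{-1}$. For task (1), the point is that the $L^r$-norm, regarded as a $[0,+\infty]$-valued functional on $L^2$, is Borel measurable; then $L^2\cap L^r=\{u\in L^2:\|u\|_{L^r}<\infty\}$ is the preimage of $[0,\infty)$ under this functional, hence Borel. To see the measurability, I would exhibit $\|\cdot\|_{L^r}$ as a countable supremum (or monotone limit) of $L^2$-continuous functionals. One concrete route: fix a countable dense family $\{\phi_j\}_{j\in\N}$ in the unit ball of $L^{r'}(\R^d)\cap L^2(\R^d)$ (dense for the $L^{r'}$-norm among compactly supported bounded functions, which are dense in $L^{r'}$); then for $u\in L^r\cap L^2$ one has $\|u\|_{L^r}=\sup_j \langle u,\phi_j\rangle_{L^2}$ by duality, and each $u\mapsto\langle u,\phi_j\rangle_{L^2}$ is $L^2$-continuous. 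One must check that this supremum is genuinely $+\infty$ when $u\notin L^r$, which follows because if $\sup_j\langle u,\phi_j\rangle_{L^2}=:C<\infty$ then $u$ defines a bounded linear functional on a dense subspace of $L^{r'}$, hence $u\in (L^{r'})^*=L^r$ with $\|u\|_{L^r}\le C$. Thus $u\mapsto\|u\|_{L^r}\in[0,+\infty]$ is a countable supremum of Borel (indeed continuous) functions, hence Borel, and $L^2\cap L^r$ is Borel in $L^2$. An alternative, perhaps cleaner, route is to write $\|u\|_{L^r}^r=\lim_{n}\int_{|x|\le n}\min(|u(x)|^r,n)\,dx$ as an increasing limit of $L^2$-continuous functionals (continuity by dominated convergence for $L^2$-convergent sequences), again producing a Borel functional on $L^2$.

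For task (2), once $L^2\cap L^r$ is known to be a Borel subset of $L^2$, the extension is essentially formal: define $G(u):=g(u)$ for $u\in L^2\cap L^r$ and $G(u):=0$ for $u\in L^2\setminus(L^2\cap L^r)$. Since the second piece is constant and the domain of the first piece is a Borel set, $G$ is Borel as soon as $g:L^2\cap L^r\to L^{r'}\hookrightarrow H^{-1}$ is Borel. The embedding $L^{r'}(\R^d)\hookrightarrow H^{-1}(\R^d)$ is continuous under the hypothesis $2\le r<\tfrac{2d}{d-2}$ (equivalently $r'$ in the dual Sobolev range), so it suffices to check that $g$ is Borel from $L^2\cap L^r$ (with the $L^2$-topology, restricted) into $L^{r'}$. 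This follows from the Lipschitz-type bound \eqref{unc.eq.1}: on each set $\{u\in L^2\cap L^r:\|u\|_{L^2}\le M,\ \|u\|_{L^r}\le M'\}$ the map $g$ is continuous for the $L^r$-topology, hence Borel for the $L^r$-topology; and the $L^r$-Borel structure and the $L^2$-Borel structure on $L^2\cap L^r$ agree because both spaces are separable Banach (so Borel sets are generated by the norm, and on the intersection the two norms generate the same $\sigma$-algebra by a standard Kuratowski-type argument — a continuous injection between Polish spaces maps Borel sets to Borel sets, applied to the inclusion $L^2\cap L^r\hookrightarrow L^2$). Concretely I would write $L^2\cap L^r=\bigcup_{M,M'\in\N}\{\|u\|_{L^2}\le M,\|u\|_{L^r}\le M'\}$ as a countable union of Borel sets on each of which $g$ is norm-continuous in $L^r$, conclude $g$ is $L^r$-Borel, then transfer to $L^2$-Borel.

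The only genuinely delicate point is making precise that the restriction to $L^2\cap L^r$ of the $L^2$-Borel $\sigma$-algebra coincides with the $L^r$-Borel $\sigma$-algebra (or at least that one contains the other in the direction needed), so that "$g$ is $L^r$-Borel" upgrades to "$g$ is Borel for the ambient $L^2$-structure". I expect to handle this via the Lusin--Souslin theorem (a continuous injective image of a Borel subset of a Polish space is Borel): the identity map $(L^2\cap L^r,\|\cdot\|_{L^r})\to (L^2,\|\cdot\|_{L^2})$ is continuous and injective with Borel domain, so it carries the $L^r$-Borel sets to Borel subsets of $L^2$ contained in $L^2\cap L^r$; combined with the already-established fact that $L^2\cap L^r$ is Borel in $L^2$, this gives that $G$ defined piecewise as above is Borel. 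Everything else is routine, and the cited hypotheses on $r$ and the estimate \eqref{unc.eq.1} are exactly what is needed to close the argument.
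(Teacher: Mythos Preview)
Your overall strategy is sound and both parts would go through with minor corrections, but your route differs from the paper's. The paper argues both parts via mollification: setting $T_n(u):=\|\varphi_n*u\|_{L^r}$ (well-defined and continuous on all of $L^2$ by Young's inequality, since $\varphi_n*u\in L^r$ for every $u\in L^2$), one has $T_n(u)\to\|u\|_{L^r}\in[0,\infty]$ pointwise, which gives the Borel measurability of $L^2\cap L^r$. For the extension, the paper sets $G(u):=1_{L^2\cap L^r}(u)\,g(u)$ and approximates by $g_n(u):=1_{L^2\cap L^r}(u)\,g(\varphi_n*u)$; each $g_n$ is Borel because $u\mapsto g(\varphi_n*u)$ is genuinely continuous from $L^2$ to $L^{r'}$ (use Young plus \eqref{unc.eq.1}), and $g_n\to G$ pointwise. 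This is entirely elementary and avoids descriptive set theory.

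Your approach instead invokes Lusin--Souslin to identify the Borel structures on $L^2\cap L^r$ coming from $L^2$ and from $L^r$. This can be made to work but is heavier; note also two slips. First, your duality argument for part (1) has a gap: from $\sup_j|\langle u,\phi_j\rangle|\le C$ you cannot directly conclude $u\in L^r$, because the functional $\phi\mapsto\int u\bar\phi$ is only known to be $L^2$-continuous, not $L^{r'}$-continuous, so density of the $\phi_j$ in the $L^{r'}$-unit ball does not let you pass to the full supremum. Your alternative via the monotone limit $\int_{|x|\le n}\min(|u|^r,n)\,dx$ is fine and should be the primary argument. Second, the identity $(L^2\cap L^r,\|\cdot\|_{L^r})\to(L^2,\|\cdot\|_{L^2})$ is \emph{not} continuous; to run Lusin--Souslin correctly you should equip $L^2\cap L^r$ with the intersection norm $\|\cdot\|_{L^2}+\|\cdot\|_{L^r}$ (a separable Banach, hence Polish, space), from which both inclusions into $L^2$ and $L^r$ are continuous injections, and then conclude that all three Borel structures coincide. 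With these fixes your argument is complete, though the paper's mollifier route is shorter and more self-contained.
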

\begin{proof}
Let $\varphi_n$ be a mollifier (i.e., $\varphi\in \mathscr{C}_0^\infty(\R^d)$, $\varphi\geq 0$, $\int_{\R^d} \varphi(x) dx=1$  and $\varphi_n(x):=n^d \varphi(nx)$). Then the mapping
\begin{eqnarray*}
T_n: L^2(\R^d) &\longrightarrow &  \R_+ \\
f & \longrightarrow & ||\varphi_n*f||_{L^r}
\end{eqnarray*}
is well defined and continuous thanks to  Young's inequality. Moreover, one easily checks that
\begin{equation*}
\lim_{n\to \infty} T_n(f)=\left\{
\begin{aligned}
||f||_{L^r}  &\quad\text{ if }  f\in L^2(\R^d)\cap L^r(\R^d)&\\
\infty   &\quad \text{ ifnot.} &
\end{aligned}
\right.
\end{equation*}
Hence, $L^2(\R^d)\cap L^r(\R^d)$ is a measurable subset of $L^2(\R^d)$. Consider the mapping
\begin{eqnarray*}
G: L^2(\R^d)& \longrightarrow & L^{r'}(\R^d)\subset H^{-1}(\R^d)  \\
 u &\longrightarrow & 1_{L^2\cap L^r}(u) \,g(u)\,.
\end{eqnarray*}
Then $G$ is  a Borel map extending $g$. Indeed, the sequence of mapping  $g_n$ defined as
\begin{eqnarray*}
g_n: L^2(\R^d)& \longrightarrow & H^{-1}(\R^d)  \\
 u &\longrightarrow & 1_{L^2\cap L^r}(u) \,g(\varphi_n*u)\,,
\end{eqnarray*}
are measurable since $u\to g(\varphi_n*u)$ are  continuous using the assumption  \eqref{unc.eq.1} and $\lim_{n\to\infty}g_n(u)=G(u)$ for any  $u\in L^2(\R^d)$.
\end{proof}

\end{document}